\documentclass[reqno,11pt]{amsart}

\usepackage{amsthm,amsfonts,amssymb,euscript,mathrsfs,graphicx,color,amsmath,latexsym,marginnote}
\usepackage{soul}

\usepackage{hyperref}

\graphicspath{{../outputs/structural_unit_fine_convergence/figures/}}

\usepackage[margin=1.4in]{geometry}
\usepackage{bm}
\usepackage{comment}

\numberwithin{equation}{section}

\def\C{{\mathbb{C}}}

\def\hat{\widehat}

\def\bar{\overline}
\def\R{{\mathbb R}}
\def\SS{{\mathbb S}}

\def\H{{\bf H}}

\def\R{{\bf R}}

\def\Z{{\mathbb Z}}

\def\g{{\bf g}}

\def\bar{\overline}

\def\R{\mathbb{R}}

\definecolor{bluegreen}{rgb}{0.0, 0.3, 0.9}

\newcommand{\norm}[1]{\left\lVert #1 \right\rVert}

\newtheorem{theorem}{Theorem}[section]
\newtheorem{lemma}[theorem]{Lemma}
\newtheorem{proposition}[theorem]{Proposition}
\newtheorem{corollary}[theorem]{Corollary}

\newtheorem{remark}[theorem]{Remark}

\begin{document}

\title[On the global asymptotic stability for the 3D Peskin Problem at critical regularity]{On the global asymptotic stability for the 3D Peskin Problem at critical regularity}

\author{Eduardo García-Juárez}
\address{Departamento de An\'alisis Matem\'atico \& IMUS, Universidad de Sevilla, C/Tarfia s/n, Campus Reina Mercedes, 41012, Sevilla, Spain}
\email{egarcia12@us.es}

\author{Susanna V. Haziot}
\address{Department of Mathematics, Princeton University, Fine Hall, Princeton, NJ 08544, USA}
\email{susanna.haziot@princeton.edu}

\author{Po-Chun Kuo}
\address{Department of Mathematics, Purdue University, West Lafayette, IN 47907, USA}
\email{kuo130@purdue.edu}

\author{Yoichiro Mori}
\address{Department of Mathematics, Department of Biology, University of Pennsylvania, Philadelphia, PA 19104, USA}
\email{y1mori@sas.upenn.edu}

\author{Han Zhou}
\address{Department of Mathematics, University of Pennsylvania, Philadelphia, PA 19104, USA}
\email{hzhou24@sas.upenn.edu}

\setcounter{tocdepth}{1}

\begin{abstract}
    We prove global well-posedness and asymptotic stability for the three-dimensional Peskin problem, which models a closed, elastic membrane immersed in an incompressible Stokes fluid. We work with initial data in the optimal regularity space $W^{1,\infty}(\mathbb{S}^2)$, which may contain infinitely many corners. These initial configurations are instantly desingularized by the flow’s parabolic smoothing effect, becoming smooth for all $t > 0$.  Then we establish that the solutions converge exponentially in the $C^1$ topology to a translated and dilated conformal sphere. 
    
    The stability is achieved by combining our nonlinear estimates with an exact structural decoupling of the 10-dimensional manifold of conformal steady states, demonstrating that the infinite-dimensional dissipative perturbation is strictly controlled. The core of our analysis is a functional framework on $\mathbb{S}^2$ that uses spectral Littlewood-Paley projections to control the highly singular multilinear operators arising from the fluid nonlinearity.
\end{abstract}

\maketitle

\tableofcontents

\section{Introduction}\label{sec:Intro}

The Peskin problem is a free-boundary fluid-structure interaction model for an elastic membrane immersed in an incompressible viscous fluid. 
Originating in Peskin's immersed boundary method for cardiac flow \cite{Peskin77, Peskin02}, the mathematical model was introduced for two-dimensional fluids in  \cite{LinTong19, MoriRodenbergSpirn19}. In this paper we deal with the three-dimensional Peskin problem, where a two-dimensional, closed, and Hookean membrane is immersed in an incompressible Stokes flow. The problem admits a boundary integral formulation (see \cite{GJPChunMoriStrain25} for the derivation) and reduces to a nonlocal and nonlinear evolution equation for the elastic interface:
\begin{equation}\label{Intro1}
(\partial_tX^i)(t,x)=\int_{\SS^2}G_{ij}(X(t,x)-X(t,p))\Delta_{\SS^2}X^j(t,p)\,d\mu(p),
\end{equation}
where $i\in\{1,2,3\}$, $x\in\SS^2$, and the map $X=(X^1,X^2,X^3):[0,T]\times\SS^2\to\R^3$ represents the membrane. Here $\Delta_{\mathbb{S}^2}$ denotes the Laplace-Beltrami operator on the sphere, $\mu$ is the induced measure, and $G_{ij}:\R^3\to\R$ denotes the Stokeslet kernel,
\begin{equation}\label{Intro2}
G_{ij}(y):=\frac{1}{8\pi}\Big(\frac{\delta_{ij}}{|y|}+\frac{y^iy^j}{|y|^3}\Big).
\end{equation}
We consider solutions that are perturbations of the identity map,
\begin{equation}\label{Intro3}
X(t,q)=X_0(q)+Y(t,q),\qquad X^j(t,q)=X^j_0(q)+Y^j(t,q),\qquad X^j_0(q):=q^j.
\end{equation}
To measure the perturbations we use Besov spaces of functions on the sphere. These spaces can be defined in two ways, using either spectral theory and spherical harmonics on the sphere or local coordinates. 

Let $\delta_0=1/8$ and let $\mathcal{P}'_j$ denote the spectral Littlewood-Paley projections, defined in Section \ref{LiPaley} below.
The space of solutions is defined by
\begin{equation}\label{Intro7}
\begin{split}
&Z[0,T]:=\big\{f\in C([0,T]:L^\infty(\SS^2)):\|f\|_{Z[0,T]}<\infty\big\},
\end{split}
\end{equation}
where 
\begin{equation*}
    \begin{aligned}
\|f\|_{Z[0,T]}:=\sup_{t\in[0,T]}\big\{\|f(t)\|_{L^\infty}+\|\nabla f(t)\|_{L^\infty}+\sup_{j\in\Z_+}2^j(1+2^jt)^{\delta_0}\|\mathcal{P}'_jf(t)\|_{L^\infty}\big\}.
    \end{aligned}   
\end{equation*}
We are now ready to state our main theorem:

\begin{theorem}\label{MainTHM} (i) There is a constant $\varepsilon_0>0$ such that if 
\begin{equation}\label{Intro4}
\|Y_0\|_{L^\infty}+\|\nabla Y_0\|_{L^\infty}\leq\varepsilon\leq\varepsilon_0
\end{equation}
then there is a unique solution $X=X_0+Y$ of the Peskin evolution equation \eqref{Intro1} on the time interval $[0,1]$, with initial data $Y(0)=Y_0$ and with the property that
\begin{equation}\label{Intro5}
\|Y\|_{Z[0,1]}\lesssim \varepsilon.
\end{equation}

(ii) Moreover, under the initial data assumption \eqref{Intro4}, the solution $X=X_0+Y$ extends to a unique global in time solution $X\in C((0,\infty): C^1(\SS^2))$ which converges in $C^1$ as $t\to\infty$ to a translated and dilated conformal steady state  (see subsection \ref{confmap} below). More precisely, there exist final state parameters $(r_\infty, b_\infty, \Lambda_\infty) \in (0, \infty) \times \mathbb{R}^3 \times \mathrm{SO}^+(3, 1)$ close to $(1, 0, I_4)$ such that
\begin{equation*}
    \|X(t) - \mathcal{S}_{r_\infty, b_\infty, \Lambda_\infty}\|_{C^1(\mathbb{S}^2)} \le C_\lambda \varepsilon e^{-a_2/r_\infty t}, \qquad t \ge 1,
\end{equation*}
where $a_2=8/35$.
Here $\mathcal{S}_{r, b, \Lambda}(\alpha) := b + r S_\Lambda(\alpha)$
is the translated and dilated conformal steady state. The final radius \(r_\infty\) is fixed by the conserved enclosed volume.
\end{theorem}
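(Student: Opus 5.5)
Part (i) will be proved by a fixed point argument in $Z[0,1]$. The first step is to linearize \eqref{Intro1} around $X_0$. For the identity map the Stokeslet operator applied to $\Delta_{\SS^2}$ is an explicit Fourier multiplier on spherical harmonics, of the form $\Lambda(n)\sim -n/4$ on degree-$n$ harmonics; its symbol can be computed from the single and double layer potentials on the sphere. This lets us write
\begin{equation*}
\partial_t Y = \mathcal{L}Y + \mathcal{N}(Y),
\end{equation*}
where $\mathcal{L}$ is a first order dissipative operator. Its kernel consists of the infinitesimal conformal motions: the degree-$0$ and degree-$1$ modes, together with part of the degree-$1$ vector harmonics corresponding to translations, dilations and Lorentz boosts.

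Next we prove the linear estimate
\begin{equation*}
\|e^{t\mathcal{L}}Y_0\|_{Z[0,1]}\lesssim \|Y_0\|_{W^{1,\infty}}.
\end{equation*}
We use the spectral Littlewood--Paley projections: $\mathcal{P}'_j e^{t\mathcal{L}}$ has an $L^\infty$ bound with decay $(1+2^jt)^{-N}$. For the Duhamel term we need
\begin{equation*}
\Big\|\int_0^t e^{(t-s)\mathcal{L}}\mathcal{N}(Y(s))\,ds\Big\|_{Z}\lesssim \|Y\|_Z^2 .
\end{equation*}
To get this, we expand $G_{ij}(X(x)-X(p))$ around $G_{ij}(x-p)$ as a convergent series of multilinear singular integrals in the difference quotients of $Y$; these converge since $\|\nabla Y\|_{L^\infty}\le\varepsilon$. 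Each term is then estimated after dyadic decomposition. The loss of $2^{j}$ from $\Delta_{\SS^2}X$ is compensated by the weight $(1+2^js)^{\delta_0}$, which gives integrable time singularities $s^{-1+\delta_0}$.

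I expect the main obstacle to be the high--high to low interactions and the failure of $L^\infty$ boundedness of Calder\'on--Zygmund operators. We would handle these by working with the projections $\mathcal{P}'_j$ and commutator estimates on $\SS^2$, using the $\delta_0$ gain to absorb logarithmic losses. Contraction then gives existence and uniqueness together with \eqref{Intro5}, and parabolic smoothing gives $Y(t)\in C^\infty$ for $t>0$.

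For part (ii), we start at $t=1$, where the solution is smooth and small in a high norm $H^s$. We split
\begin{equation*}
Y=\Pi Y+(I-\Pi)Y
\end{equation*}
using modulation: we write $X(t)=\mathcal{S}_{r(t),b(t),\Lambda(t)}+W(t)$, with $W$ orthogonal to the $10$-dimensional tangent space of the conformal manifold. The implicit function theorem gives such parameters near $(1,0,I_4)$. Since each $\mathcal{S}$ is a steady state, the equation for $W$ has a linear part with spectral gap $a_2/r$, namely the first nonzero eigenvalue on the orthogonal complement, for the modes of degree $2$. Energy estimates in $H^s$, together with the nonlinear bounds from part (i), then give $\|W(t)\|_{H^s}\lesssim \varepsilon e^{-a_2 t/r}$. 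The modulation ODEs satisfy $|\dot r|+|\dot b|+|\dot\Lambda|\lesssim \|W\|^2$, so the parameters converge exponentially to limits $(r_\infty,b_\infty,\Lambda_\infty)$. Volume conservation (the flow is incompressible) fixes $r_\infty$. Sobolev embedding turns the $H^s$ decay into the $C^1$ decay stated in Theorem~\ref{MainTHM}. The delicate point here is getting the exact decoupling, namely that the parameter ODEs are quadratic in $W$. For this we would use the invariance of \eqref{Intro1} under Möbius reparametrization and under translations and dilations.
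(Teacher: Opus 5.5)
Your part (i) follows the paper's scheme: a Duhamel fixed point in $Z[0,1]$, expansion into the multilinear forms $\mathcal{T}_m$ in difference quotients, dyadic kernel splitting, and the $\delta_0$-weight used to sum logarithmic losses. One structural claim, however, is wrong and has to be repaired.

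\textbf{The linearized operator is not a scalar multiplier.} $\mathcal{N}_1$ is not a scalar multiplier $\Lambda(n)$ acting componentwise. It is diagonal on the vector spherical harmonics $\mathbf{H}^1_{k+1},\mathbf{H}^2_{k-1},\mathbf{H}^3_k$, with three different symbols $-a_k,-b_k,-c_k$, where $c_k\sim k/2$ rather than $k/4$. These families mix componentwise degrees $k\pm1$.

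A scalar symbol vanishing for $n\le 1$ would give a $12$-dimensional kernel. That kernel would contain the symmetric traceless linear maps $x\mapsto Ax$, which in fact decay at rate $b_2=4/5$. It would also miss the boost generators $\mathbf{v}-(\mathbf{v}\cdot p)p=\frac23\mathbf{H}^2_0-\frac13\mathbf{H}^1_2$, whose components have degree $2$.

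Consequently $e^{t\mathcal{N}_1}$ does not commute with $\mathcal{P}'_j$. The bound $\|\mathcal{P}'_je^{t\mathcal{N}_1}X\|_{L^\infty}\lesssim(1+2^jt)^{-4}\|X_{j;3}\|_{L^\infty}$ cannot be read off a scalar kernel estimate. The paper obtains it from the radial--poloidal--toroidal decomposition, which converts $e^{t\mathcal{N}_1}$ into five genuine scalar multipliers acting on the potentials $f_{\mathbf{Y}},f_{\mathbf{\Psi}},f_{\mathbf{\Phi}}$, at the cost of frequency leakage $|k'-k|\le 3$.

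\textbf{The role of $\delta_0$.} The $\delta_0$-weight does not compensate the derivative loss. The multilinear lemma gives $\|\mathcal{P}'_n\mathcal{N}_{\geq 2}(Y)(s)\|_{L^\infty}\lesssim (2^ns)^{-\delta_0}\|Y\|_Z^2$. The factor $2^n$ in the $Z$-norm is then recovered from the time-integrated smoothing of the semigroup: $ds=2^{-n}d\sigma$ and $\int_0^\infty\min(1,\tau^{-4})\,d\tau<\infty$. The weight $\delta_0$ only provides integrability at $s=0$ and summability of the $(1+|n_0-n|)$ losses over $n_0\ge n$.

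\textbf{Part (ii).} The structure is the paper's: modulation via the implicit function theorem, quadratic modulation equations from Möbius, translation and dilation invariance, and volume conservation fixing $r_\infty$. The genuine difference is that you close with $H^s$ energy estimates, while the paper runs a Duhamel bootstrap in $C^{1,\alpha}$ using Lemma \ref{tAop}(ii) and tame Schauder bounds.

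Your route has real advantages:
\begin{itemize}
\item Each family $\mathbf{H}^1,\mathbf{H}^2,\mathbf{H}^3$ has components of a single degree, so the basis is also orthogonal in componentwise $H^s$. Hence $\mathcal{N}_1$ is self-adjoint there, and $e^{\tau\mathcal{N}_1}$ contracts at exactly rate $a_2$ on $(\ker\mathcal{N}_1)^\perp$.
\item The derivative-losing quadratic terms can be absorbed by the $H^{s+1/2}$ dissipation. A pure Duhamel argument in $C^{1,\alpha}$ cannot do this, which is why the paper leans on a $C^{2,\alpha}$ tame bound.
\end{itemize}

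The costs are:
\begin{itemize}
\item You need $L^2$-based commutator/tame estimates for the forms $\mathcal{T}_m$; the critical $L^\infty$ bounds of part (i) do not supply them.
\item You need quantitative $H^s$-smallness at $t=1$. The $Z[0,1]$ bound only gives $Y(1)$ bounded in $B^{1+\delta_0}_{\infty,\infty}$, so an extra smoothing step is required.
\item To keep the sharp rate $a_2$ while absorbing top-order terms, you must split off the eigenspace $E_2$, as in Proposition \ref{prop:decay_exact}.
\end{itemize}

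\textbf{The modulation ansatz.} Write it as the paper does, $X=[(1+\rho)\,\mathrm{id}+Y]\circ\mathcal{S}_{\mathbf q}-\frac23\mathbf{v}+\mathbf{a}$ with $Y\perp\ker\mathcal{N}_1$ in the fixed $L^2(\mathbb{S}^2)$, rather than $X=\mathcal{S}_{r,b,\Lambda}+W$. The linearization at $\mathcal{S}_\Lambda$ is $\mathcal{N}_1$ conjugated by composition with $\mathcal{S}_\Lambda$, which is self-adjoint only in the conformally weighted $L^2$. Unweighted orthogonality of $W$ to the tangent space therefore leaves terms linear in $W$ in the modulation equations, and the exact decoupling you need is lost.
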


\begin{remark}
Theorem \ref{MainTHM} is stated for perturbations of the unit sphere in order to simplify the exposition and make the computations  in the proof more transparent. Nevertheless, the same argument applies to perturbations of any sphere in the steady-state manifold.    
\end{remark}

\begin{remark}
The asymptotic stability result in Theorem \ref{MainTHM} 
implies that, in a neighborhood of the unit sphere, the only steady states are the conformal spheres. However, this result does not exclude the possibility of other steady states far from the unit sphere. We mention that, in fact, the conformal spheres are  the only steady states of the three-dimensional Peskin problem\footnote{Changyou Wang, Purdue University, personal communication.}. The result is a consequence of the conformal invariance of the Dirichlet energy in two dimensions and Hopf's theorem.
\end{remark}

Notice that our choice of initial data described in \eqref{Intro4} allows for the presence of small corners. These corners are instantly desingularized, due to the smoothing factor $(1+2^jt)^{\delta_0}$ in the definition \eqref{Intro7}. In fact, one can show that the solution becomes smooth at positive times $t>0$.
The rate of convergence of the solution to the steady state manifold is exponential, see Proposition \ref{prop:decay_exact}.

\subsection{Previous and related results}

The rigorous theory of the Peskin problem begins with the independent works of Lin--Tong
\cite{LinTong19} and Mori--Rodenberg--Spirn \cite{MoriRodenbergSpirn19}. In these works, a one-dimensional closed elastic filament is immersed in a two-dimensional incompressible fluid. The tension law is linear and the fluid inertia is neglected, using the Stokes equations for the fluid motion. The relative simplicity of the model made it possible to initiate the rigorous study of  fluid-structure problems inspired by Peskin's immersed boundary method, including the long-time behavior. The works \cite{LinTong19, MoriRodenbergSpirn19} showed that, for initial interfaces of subcritical regularity and that satisfy the chord-arc condition, the contour equation behaves as a semilinear, nonlocal, parabolic evolution equation of order one.
They proved local well-posedness and smoothing in that setting, together with convergence towards the steady states for small initial data. These states were characterized as evenly stretched circles, showing the importance of the parametrization as opposed to fluid interfaces. We also mention the work by Tong 
\cite{Tong21}, where the author studied a regularized version of the problem, providing a bridge between analytical contour dynamics and numerical immersed-boundary approximations. 

Subsequent work pushed the theory toward critical spaces. García-Juárez--Mori--Strain \cite{GJMoriStrain23} obtained global well-posedness for initial interfaces near the evenly stretched circle in the Wiener algebra, for the problem with viscosity contrast, where the interior and exterior fluids may have different viscosities. 
Chen--Nguyen \cite{ChenNguyen23} proved local well-posedness for initial interfaces belonging to the closure of $C^2$ in the critical Besov space $\dot{B}_{\infty,\infty}^1$, and global for small non-Lipschitz initial data.
Cameron--Strain \cite{CameronStrain24} extended the local-in-time theory for the fully nonlinear model with general  tension law, working in the critical Besov space $\dot{B}_{2,1}^{3/2}$. García-Juárez--Haziot \cite{GJHaziot25} then obtained global well-posedness for the nonlinear tension model in critical spaces allowing initial interfaces with corners.
The work by Chen-Hu-Nguyen \cite{ChenHuNguyen26} develops the Schauder method for a class of nonlocal quasilinear parabolic equations in critical spaces, including the Peskin problem with nonlinear law.

In parallel, several works focused on broadening the conditions that guarantee global existence. Gancedo--Granero-Belinchón--Scrobogna \cite{GancedoGraneroBelinchonScrobogna23} introduced a toy model capturing the normal motion while discarding tangential stretching. 
Tong 
\cite{Tong24} obtained the tangential Peskin problem, that considers a long straight elastic string deforming tangentially in its own Stokes flow. Building upon this latter work and finding maximum principles for geometric quantities such as the curvature, Tong--Wei
\cite{TongDongyi24} showed global well-posedness under an explicit medium-size condition. This condition only imposes a size constraint on the normal deviation of the initial data.
Very recently, Tong--Wei \cite{TongWei25} extended the 2D Peskin problem to fluids with inertia, namely an elastic filament immersed in fluids governed by the Navier--Stokes equations. In a different direction, 
Kandalam--Spirn \cite{KandalamSpirn26} introduced the anchored Peskin problem, by considering the Peskin problem in the half-plane with an elastic filament whose endpoints are anchored to a no-slip wall.

The three-dimensional setting addressed in this paper is less explored due to its geometrical complexity.
The formulation of the 3D Peskin problem with a general membrane elasticity law was derived by García-Juárez--Po-Chun--Mori--Strain in \cite{GJPChunMoriStrain25}, where the authors also proved local well-posedness of the problem in subcritical H\"older spaces.

There are many other problems closely related to the Peskin problem in which a material interface is immersed in a Stokes fluid. In the inextensible filament problem, the interface exerts a bending force but is constrained to be locally inextensible, a model that is widely used to simulate membrane vesicle dynamics. Well-posedness for this model in 2D was obtained in \cite{GJPoChunMori25}. See also \cite{Li21} for well-posedness for a related model. In the Stokes droplet problem, the fluid interface exerts a surface tension force. The local-in-time well-posedness theory for this problem in subcritical regularity is now well-understood
(see \cite{PrussSimonett16} and the references therein).
In the context of fluid-fluid interfaces, very recent results have appeared for three-dimensional bubbles in different models (see e.g. \cite{LaiWeinstein23}, \cite{MeyerNiebelChristian25}, \cite{BaldiJulinLaManna26},
\cite{Shao26}, \cite{GigaGu26}).
Mathematically, the Peskin problem is particularly connected
with the Muskat problem, where incompressible fluids flow through porous media. They both admit contour formulations as nonlocal and nonlinear equations of parabolic type. Results with critical regularity and dealing with the movement of closed near-circular interfaces  have also very recently appeared for the two-dimensional Muskat problem 
\cite{AlazardNguyen21,
AlazardNguyen22,
AlazardNguyen23,
ChenNguyenXu21,
GJGomezSerranoHaziotPausader24,
GancedoGJPatelStrain23,
GancedoGJPatelStrain25}.

A distinctive feature of the three-dimensional Peskin problem, compared with the fluid interface problems mentioned above, is the appearance of a larger conformal family of steady states. For  fluid interfaces, only the shape of the interface matters, and the natural neutral directions near a sphere come from Euclidean symmetries and changes of scale. In the Peskin problem, by contrast, the parametrization is part of the physical unknown, since the elastic force is generated by the membrane map itself. The conformal invariance of the Dirichlet energy in two dimensions therefore produces an additional finite-dimensional manifold of steady states, generated by the conformal group of $\SS^2$, which is isomorphic to $\mathbb{SO}^+(3,1)$ (see Section \ref{sec:preliminaries}). This is closely related to the classical role of conformal invariance in harmonic map theory, but it seems to be less common in free-boundary fluid models, where conformal maps are often used as analytical coordinates rather than as a genuine family of material steady states. 

The problems discussed above deal with co-dimension 1 structures immersed in an ambient fluid (one-dimensional interfaces in two-dimensional fluids, or two-dimensional interfaces in three-dimensional fluids). The slender body problem, in which a thin filament (essentially a co-dimension 2 structure) is immersed in a three-dimensional Stokes fluid, is also of interest. Its analytical study was initiated in  \cite{MoriOhmSpirn20, MoriOhmSpirn202}. We refer the reader to \cite{Ohm26,Ohm25} for a recent well-posedness theory.

\subsection{Outline of the paper}

Section \ref{sec:Intro} introduces the boundary-integral formulation of the 3D Peskin problem and presents our main result, Theorem \ref{MainTHM}. Section \ref{sec:preliminaries} provides the necessary harmonic analysis on $\mathbb{S}^2$, establishing the 10-dimensional manifold of conformal steady states generated by the group $\mathbb{SO}^+(3,1)$. In Section \ref{sec:linearization}, we compute the linearized operator $\mathcal{N}_1$ about the unit sphere (Proposition \ref{linearSummary}) and identify its kernel with the tangent space of the steady-state manifold (Lemma \ref{LemSteadyKernel}). Section \ref{sec:nonlinear} uses the harmonic analysis tools developed in Sections \ref{sec:preliminaries} and \ref{sec:appendix} to derive the critical nonlinear estimates (Lemma \ref{LemmaNonlinMain}), which are required to close the fixed-point argument for local existence. The global extension and long-time stability are addressed in Section \ref{sec:global}; here, we decouple the flow by projecting onto the steady-state manifold and its orthogonal complement, ultimately proving exponential convergence to the manifold of conformal spheres (Proposition \ref{prop:decay_exact}). Technical results regarding spectral multipliers on $\mathbb{S}^2$, including Littlewood-Paley projection bounds and linear semigroup estimates, are collected in Section \ref{sec:appendix}. Finally, Section \ref{sec:numerical} provides a numerical verification of the exponential convergence rates for the perturbation $Y$ and the parameter vector $p$.

\section{Preliminaries}\label{sec:preliminaries}

In this section we summarize some aspects of the harmonic analysis on the sphere $\mathbb{S}^2$, which are used in our proof of the main theorem.

\subsection{Riemannian structure on the sphere $\mathbb{S}^2$}\label{coord}

We start with the standard spherical coordinates on $\R^3$,
\begin{equation}\label{coo1}
x^1=r\sin\theta\cos\phi,\qquad x^2=r\sin\theta\sin\phi,\qquad x^3=r\cos\theta,
\end{equation}
where $(r,\theta,\phi)\in(0,\infty)\times (0,\pi)\times(0,2\pi)$. Then we calculate the induced vector-fields
\begin{equation}\label{coo2}
\begin{split}
&\partial_r=\sin\theta\cos\phi\cdot\partial_1+\sin\theta\sin\phi\cdot\partial_2+\cos\theta\cdot\partial_3,\\
&\partial_\theta=r\cos\theta\cos\phi\cdot\partial_1+r\cos\theta\sin\phi\cdot\partial_2-r\sin\theta\cdot\partial_3,\\
&\partial_\phi=-r\sin\theta\sin\phi\cdot\partial_1+r\sin\theta\cos\phi\cdot\partial_2,
\end{split}
\end{equation}
where $\partial_j:=\partial_{x^j}$, $j\in\{1,2,3\}$. We can therefore calculate the Euclidean metric components in spherical coordinates,
\begin{equation}\label{coo3}
\begin{split}
&\g_{\R^3}(\partial_r,\partial_r)=1,\qquad\,\,\g_{\R^3}(\partial_r,\partial_\theta)=0,\qquad\,\g_{\R^3}(\partial_r,\partial_\phi)=0,\\
&\g_{\R^3}(\partial_\theta,\partial_\theta)=r^2,\qquad \g_{\R^3}(\partial_\theta,\partial_\phi)=0,\qquad\g_{\R^3}(\partial_\phi,\partial_\phi)=r^2(\sin\theta)^2.
\end{split}
\end{equation}
Therefore 
\begin{equation}\label{coo4}
\begin{split}
&\sqrt{|\g_{\R^3}|}=\sqrt{|\det\g_{\R^3}|}=r^2\sin\theta ,\\
&d\mu_{\R^3}=\sqrt{|\g_{\R^3}|}\,drd\theta d\phi=r^2\sin\theta\,drd\theta d\phi,\\
&\Delta_{\R^3}=\frac{1}{\sqrt{|\g_{\R^3}|}}\partial_i(\sqrt{|\g_{\R^3}|}\g_{\R^3}^{ij}\partial_j)=\partial_r^2+\frac{2}{r}\partial_r+\frac{1}{r^2\sin\theta}\partial_\theta(\sin\theta \partial_\theta)+\frac{1}{r^2(\sin\theta)^2}\partial_\phi^2.
\end{split}
\end{equation}
The Euclidean metric induces a metric on the sphere $\SS^2$, given by
\begin{equation}\label{coo5}
\g_{\SS^2}(\partial_\theta,\partial_\theta)=1,\qquad \g_{\SS^2}(\partial_\theta,\partial_\phi)=0,\qquad\g_{\SS^2}(\partial_\phi,\partial_\phi)=(\sin\theta)^2,
\end{equation}
with corresponding measure and Laplace-Beltrami operator
\begin{equation}\label{coo6}
d\mu_{\SS^2}=\sin\theta\,d\theta d\phi,\qquad \Delta_{\SS^2}=\frac{1}{\sin\theta}\partial_\theta(\sin\theta \partial_\theta)+\frac{1}{(\sin\theta)^2}\partial_\phi^2.
\end{equation}

\subsubsection{The vector-fields $V_{ij}$} We define three vector-fields on the sphere,
\begin{equation}\label{vf11}
V_{12}:=x^1\partial_2-x^2\partial_1,\qquad V_{23}:=x^2\partial_3-x^3\partial_2, \qquad V_{31}:=x^3\partial_1-x^1\partial_3.
\end{equation}
Notice that these vector-fields span the tangent space of the sphere $\mathbb{S}^2$ at all points. Moreover, using the identities \eqref{coo2} we have
\begin{equation}\label{vf12}
V_{12}=\partial_\phi,\qquad V_{23}=-\sin\phi\partial_\theta-\frac{\cos\phi\cos\theta}{\sin\theta}\partial_\phi,\qquad V_{31}=\cos\phi\partial_\theta-\frac{\sin\phi\cos\theta}{\sin\theta}\partial_\phi.
\end{equation}
Using now the metric formulas \eqref{coo5} and the Koszul formula  it follows that  $\mathrm{div}_{\SS^2}(V_{12})=0$, $\mathrm{div}_{\SS^2}(V_{23})=0$, and $\mathrm{div}_{\SS^2}(V_{31})=0$. In particular 
\begin{equation}\label{vf13}
\int_{\mathbb{S}^2}V_{12}(f)\,d\mu=\int_{\mathbb{S}^2}V_{23}(f)\,d\mu=\int_{\mathbb{S}^2}V_{31}(f)\,d\mu=0.
\end{equation}
for any $f\in H^1(\mathbb{S}^2)$. Finally, we can write the Laplace-Beltrami operator as
\begin{equation}\label{vf14}
\Delta_{\mathbb{S}^2}g=V_{12}^2g+V_{23}^2g+V_{31}^2g
\end{equation}
for any $g\in H^2(\mathbb{S}^2)$, as a consequence of the formulas \eqref{coo6} and \eqref{vf12}. 
The formulas \eqref{vf13} are very useful to integrate by parts on the sphere, while the formula \eqref{vf14} is used in the proof of the key Lemma \ref{LemmaNonlinMain}.

\subsection{Conformal symmetries on $\SS^2$}
\label{confmap}

The group of orientation-preserving conformal diffeomorphisms of the unit sphere $\SS^2$ is isomorphic to the restricted Lorentz group, denoted by $\mathbb{SO}^+(3,1)$. The elements of this group can be represented as $4 \times 4$ real matrices $\Lambda$ that satisfy the fundamental property
\begin{equation}\label{LorentzProp}
\Lambda^{\top} J \Lambda=J, \qquad J=\begin{pmatrix} -1 & \mathbf{0}_3^\top \\ \mathbf{0}_3 & \mathbb{I}_3 \end{pmatrix},
\end{equation}
subject to the additional constraints $\Lambda_{11}>0$ and $\det \Lambda=1$. 
To describe the geometric action of $\mathbb{SO}^+(3,1)$ on the sphere, we identify points on $\SS^2$ with projective coordinates in $\R^4$. For any point $p \in \SS^2$, we define the extended vector $\widetilde{p} = (1, p^\top)^\top \in \R^4$. The action of a matrix $\Lambda \in \mathbb{SO}^+(3,1)$ yields a conformal mapping $\mathcal{S}_\Lambda: \SS^2 \to \SS^2$, defined by the projective transformation
\begin{equation}\label{ConformalAction1}
\mathcal{S}_\Lambda(p)=\frac{\mathbb{P}_3\Lambda \widetilde{p}}{e_0^\top\Lambda \widetilde{p}},
\end{equation}
where $\mathbb{P}_3=(\mathbf{0}_3\; \mathbb{I}_3)$ is the coordinate projection from $\R^4$ to $\R^3$, and $e_0=(1, \mathbf{0}^\top_3)^\top$. 

Because $\mathcal{S}_\Lambda$ is a conformal map, the induced metric tensor is proportional to the standard metric $\g_{\SS^2}$ on the sphere. This conformal structure induces precise scaling properties on differential operators. Specifically, for any sufficiently smooth function $f$, the Laplace-Beltrami operator evaluated on the mapped coordinates transforms by a scalar factor governed by the metric determinants.

\subsubsection{Conformal maps as steady states}

We now demonstrate that the family of conformal maps generated by $\mathbb{SO}^+(3,1)$ characterizes a continuous manifold of steady states for the fluid-structure system. 

\begin{lemma}\label{LemConformalSteady}
	For any $\Lambda \in \mathbb{SO}^+(3,1)$, the conformal map $X(p) = \mathcal{S}_\Lambda(p)$ is a steady state of the Peskin evolution equation \eqref{Intro1}.
\end{lemma}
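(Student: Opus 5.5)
The plan is to show that for $X=\mathcal{S}_\Lambda$ the integrand vanishes identically in a suitable sense, i.e. $\Delta_{\SS^2}X^j$ is a multiple of the unit normal, and that the resulting single-layer Stokes potential of a normal force with constant surface density is zero. The argument runs in three steps.

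\textbf{Step 1: the force is normal with constant density on the image sphere.} Write $X=\mathcal{S}_\Lambda:\SS^2\to\SS^2$. Since $\mathcal{S}_\Lambda$ is conformal, $X^*\g_{\SS^2}=e^{2w}\g_{\SS^2}$ for some smooth function $w$ (for a boost with rapidity $\beta$ and direction $\nu$ one can take $e^{-w}$ proportional to $\cosh\beta+\sinh\beta\,\nu\cdot p$). Composing $X:\SS^2\to\SS^2$ with the inclusion into $\R^3$, $X$ is a conformal map between surfaces, hence harmonic into $\SS^2$. This is the 2D conformal invariance of the Dirichlet energy, which gives $\Delta_{\g}X=e^{-2w}\Delta_{X^*\g}X$, and the inclusion $\SS^2\hookrightarrow\R^3$ has Laplacian $-2\cdot(\text{position})$. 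Therefore
\begin{equation*}
\Delta_{\SS^2}X^j(p)=-2e^{2w(p)}X^j(p),
\end{equation*}
a harmonic-map equation into the sphere with energy density $|\nabla X|^2=2e^{2w}$. This could alternatively be checked by direct computation, using that each component of $\mathcal{S}_\Lambda$ is a ratio of degree-one spherical harmonics. I will record it as a lemma.

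\textbf{Step 2: change variables to the image sphere.} Substitute $q=X(p)$ in the integral. Since $d\mu(q)=e^{2w(p)}d\mu(p)$, the factor $e^{2w}$ is exactly absorbed and
\begin{equation*}
\partial_tX^i(x)=-2\int_{\SS^2}G_{ij}(X(x)-q)\,q^j\,d\mu(q).
\end{equation*}
Thus the velocity is the Stokes single-layer potential on the unit sphere of the force density $-2\,n(q)$, which is normal with constant magnitude, evaluated at the point $X(x)\in\SS^2$.

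\textbf{Step 3: the potential vanishes.} The key fact is that a constant normal density $f=c\,n$ on a closed surface enclosing a region $D$ produces zero Stokes velocity. This is the classical fact that uniform pressure generates no flow: take $u=0$ everywhere, $P=c$ inside $D$ and $P=0$ outside. This pair solves Stokes away from the surface, and its stress jump across $\partial D$ equals $c\,n$, so by uniqueness of the single-layer representation the potential is zero. Equivalently, one can use the identity
\begin{equation*}
\int_{\partial D}G_{ij}(x-q)n_j(q)\,d\mu(q)=\int_D\partial_{q^j}G_{ij}(x-q)\,dq=0,
\end{equation*}
which holds because $G$ is divergence-free away from the origin. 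The delicate point is that $x$ lies on the boundary, so I will justify the identity there: $G\sim|y|^{-1}$ is integrable on the surface, so the boundary integral is continuous in $x$ across $\partial D$, and the value zero passes to the limit. I expect this continuity justification, together with the precise harmonic-map identity in Step 1, to be the only points needing care.
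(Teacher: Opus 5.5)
Your proposal is correct. Steps 1 and 2 are the paper's own argument, written out in more detail. The paper substitutes $y=\mathcal{S}_\Lambda(p)$ and asserts that, in two dimensions, the conformal factor picked up by $\Delta_{\SS^2}$ is the inverse of the Jacobian of the measure. That is exactly your identity $\Delta_{\SS^2}\mathcal{S}_\Lambda^j=-2e^{2w}\mathcal{S}_\Lambda^j$ combined with $d\mu(q)=e^{2w}d\mu(p)$.

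One small slip: since $\Delta_{e^{2w}\g}=e^{-2w}\Delta_{\g}$ on a surface, the intermediate relation should read $\Delta_{\g}X=e^{2w}\Delta_{X^*\g}X$. The formula you actually use, $-2e^{2w}X^j$, is the correct one, and it agrees with the harmonic-map form $\Delta X=-|\nabla X|^2X$ that you also quote.

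The two routes differ only in the last step.
\begin{itemize}
\item The paper identifies the transformed integral with $\mathcal{N}_0(X_0)$ evaluated at $\mathcal{S}_\Lambda(x)$. It then kills it by the explicit computation \eqref{Pes5}: using $(x-p)\cdot p=-|x-p|^2/2$, it reduces $G_{ij}(x-p)p^j$ to a combination of $p^i/|x-p|$ and $x^i/|x-p|$, and applies the eigenvalues $4\pi/(2k+1)$ of $S$ on $\mathcal{H}_0$ and $\mathcal{H}_1$ from Lemma \ref{PropExp}.
\item You instead use that the Stokeslet is divergence-free away from the origin. By the divergence theorem, the single-layer potential of the unit normal vanishes outside the unit ball. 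You then pass to $\SS^2$ by continuity of the single-layer potential, which is legitimate because the kernel is $O(|y|^{-1})$ and hence uniformly integrable on a surface.
\end{itemize}

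What each buys: the paper's version stays inside its spherical-harmonic calculus and gives the identity directly on $\SS^2$ with no limiting argument, at the price of a coefficient computation. Yours is coordinate-free and proves the more general fact that a uniform normal force on any closed surface generates no Stokes flow. It is also insensitive to that bookkeeping. In the computation leading to \eqref{Pes5}, the coefficient of $p^i/|x-p|$ should be $\tfrac32$ rather than $3$, so the integral is $\tfrac{x^i}{2}-\tfrac{x^i}{2}$; the conclusion is unaffected.
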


\begin{proof}
	Let $X(p) = \mathcal{S}_\Lambda(p)$ for an arbitrary $\Lambda \in \mathbb{SO}^+(3,1)$. To establish that this is a steady state, we must show that the nonlinearity vanishes, namely $\mathcal{N}^i(X)(x) = 0$ for all $x \in \SS^2$. By definition, the operator evaluates to
	\begin{equation}\label{SteadyProof1}
	\mathcal{N}^i(X)(x) = \int_{\SS^2} G_{ij}(\mathcal{S}_\Lambda(x) - \mathcal{S}_\Lambda(p)) \Delta_{\SS^2} \mathcal{S}_\Lambda^j(p) \, d\mu(p).
	\end{equation}
	We introduce the change of variables $y = \mathcal{S}_\Lambda(p)$. Because $\mathcal{S}_\Lambda$ is an orientation-preserving conformal map defined on a two-dimensional surface, the scaling factor induced on the Laplace-Beltrami operator is the exact inverse of the scaling factor induced on the surface measure. Consequently, changing the variable of integration yields the exact equivalence
	\begin{equation}\label{SteadyProof2}
	\int_{\SS^2} G_{ij}(\mathcal{S}_\Lambda(x) - \mathcal{S}_\Lambda(p)) \Delta_{\SS^2} \mathcal{S}_\Lambda^j(p) \, d\mu(p) = \int_{\SS^2} G_{ij}(\mathcal{S}_\Lambda(x) - y) \Delta_{\SS^2} X_0^j(y) \, d\mu(y),
	\end{equation}
	where $X_0(y) = y$ represents the identity mapping. 
	We recognize the resulting integral as the $0$-th order term of the Peskin operator, $\mathcal{N}^i_0(X_0)$, evaluated at $\mathcal{S}_\Lambda(x)$. Since the identity map $X_0$ is itself a steady state, $\mathcal{N}^i_0(X_0)(z) = 0$ for any  $z \in \SS^2$ (see \eqref{Pes5}). Therefore, $\mathcal{N}^i(X)(x) = 0$, completing the proof.
    
\end{proof}

\subsubsection{A 10 dimensional family of steady states}

In addition to the conformal symmetries, the Peskin evolution equation \eqref{Intro1} admits steady states corresponding to physical translations and dilations of the sphere. We can verify this by directly evaluating the nonlinearity for these transformations, and using the fact that the Laplace-Beltrami operator kills constants and scales linearly with respect to dilations. To summarize, we have found a 10-dimensional manifold of steady solutions $S_{r,b,\Lambda}:\SS^2\to\R^3$ given by
\begin{equation}\label{mainfamily}
\mathcal{S}_{r,b,\Lambda}(p):=b+r\mathcal{S}_\Lambda(p),
\end{equation}
parametrized over $r\in(0,\infty)$, $b\in\R^3$, and $\Lambda\in\mathbb{SO}^+(3,1)$.

\subsection{Spherical harmonics on $\SS^2$}\label{harm}

To understand the dynamics of solutions of the Peskin problem \eqref{Intro1} with critical regularity, we need spectral analysis of the nonlinearity. For this we use spherical harmonics, following the framework and the notation in \cite[Chapter IV]{Steinbook71}. 

For any integer $k\in\Z_+$ let $\mathcal{P}_k$ denote the space of complex-valued homogeneous polynomials of degree $k$ on $\R^3$, and let $\mathcal{A}_k$ denote the subspace of harmonic polynomials,
\begin{equation}\label{intr1}
\mathcal{A}_k:=\{F\in\mathcal{P}_k:\,\Delta_{\R^3} F=0\}.
\end{equation}
Let $\mathcal{H}_k$ denote the induced space of spherical harmonics,
\begin{equation}\label{intr2}
\mathcal{H}_k:=\{f:\SS^2\to\C:\,\text{ there is }F\in\mathcal{A}_k\text{ such that }f(x)=F(x)\text{ for any }x\in\SS^2\}.
\end{equation}
We have the following:

\begin{lemma}\label{Lem1}
(i) For any $k\in\Z_+$ the Laplace operator $\Delta$ defines a surjective mapping from $\mathcal{P}_k$ to $\mathcal{P}_{k-2}$. Moreover, any $F\in\mathcal{P}_k$ can be written uniquely in the form
\begin{equation}\label{intr3}
F(x)=\sum_{l\in[0,k/2]}|x|^{2l}G_{k-2l}(x)
\end{equation}
for some harmonic polynomials $G_{k-2l}\in\mathcal{A}_{k-2l}$.

(ii) The vector spaces $\mathcal{P}_k$, $\mathcal{A}_k$, $\mathcal{H}_k$ have dimensions
\begin{equation}\label{intr4}
\mathrm{dim}(\mathcal{P}_k)=(k+1)(k+2)/2,\qquad \mathrm{dim}(\mathcal{A}_k)=\mathrm{dim}(\mathcal{H}_k)=2k+1.
\end{equation}

(iii) The collection of finite linear combinations of functions in $\bigcup_{k\geq 0}\mathcal{H}_k$ is dense in $L^2(\SS^2)$. Moreover, the spaces $\mathcal{H}_k$ are mutually orthogonal,
\begin{equation}\label{inr5}
\int_{\SS^2}Y_k(p)\bar{Y_l(p)}\,d\mu_{\SS^2}(p)=0
\end{equation}
if $Y_k\in\mathcal{H}_k$, $Y_l\in\mathcal{H}_l$, $k\neq l$.
\end{lemma}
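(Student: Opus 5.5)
The plan follows the classical route in Stein--Weiss, adapted to three variables. Part (i) comes first, then (ii) by a dimension count, and finally (iii) from Stone--Weierstrass together with the self-adjointness of $\Delta_{\SS^2}$.

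For (i), I would equip $\mathcal{P}_k$ with the Fischer inner product $\langle F,G\rangle:=F(\partial)\overline{G}$, meaning that a monomial $x^\alpha$ is replaced by $\partial^\alpha$; this product is positive definite since $\langle x^\alpha,x^\beta\rangle=\alpha!\,\delta_{\alpha\beta}$. With respect to it, multiplication by $|x|^2$ from $\mathcal{P}_{k-2}$ to $\mathcal{P}_k$ is the adjoint of $\Delta:\mathcal{P}_k\to\mathcal{P}_{k-2}$. Because multiplication by $|x|^2$ is injective, $\Delta$ is surjective, and we get the orthogonal decomposition
\begin{equation*}
\mathcal{P}_k=\mathcal{A}_k\oplus|x|^2\mathcal{P}_{k-2}.
\end{equation*}
Iterating this on $\mathcal{P}_{k-2},\mathcal{P}_{k-4},\dots$ produces the representation \eqref{intr3}. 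The decomposition is orthogonal at each stage, and multiplication by $|x|^{2l}$ is injective, so the representation is unique.

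For (ii), the count $\dim\mathcal{P}_k=\binom{k+2}{2}$ is standard. Surjectivity of $\Delta$ then gives $\dim\mathcal{A}_k=\dim\mathcal{P}_k-\dim\mathcal{P}_{k-2}=2k+1$. The restriction map $\mathcal{A}_k\to\mathcal{H}_k$ is onto by definition. It is also injective: a homogeneous polynomial vanishing on $\SS^2$ vanishes identically by homogeneity. Hence $\dim\mathcal{H}_k=2k+1$.

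For density in (iii), I would first note that restrictions of all polynomials to $\SS^2$ are dense in $C(\SS^2)$ by Stone--Weierstrass, and hence dense in $L^2(\SS^2)$. By (i), each homogeneous polynomial restricts on $\SS^2$, where $|x|=1$, to $\sum_l G_{k-2l}|_{\SS^2}$. This is a finite sum of spherical harmonics, which proves density.

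For orthogonality, the key step, and the only one that needs any computation, is to show that $\mathcal{H}_k$ is an eigenspace of the Laplace--Beltrami operator: $\Delta_{\SS^2}Y_k=-k(k+1)Y_k$. To see this, write $F(x)=r^kY_k(\omega)$ and apply the polar formula \eqref{coo4}:
\begin{equation*}
0=\Delta_{\R^3}F=\bigl(k(k-1)+2k\bigr)r^{k-2}Y_k+r^{k-2}\Delta_{\SS^2}Y_k.
\end{equation*}
Next, $\Delta_{\SS^2}=\sum V_{ij}^2$ by \eqref{vf14}, and \eqref{vf13} allows integration by parts with each $V_{ij}$ real and divergence-free. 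It follows that $\Delta_{\SS^2}$ is symmetric on smooth functions. Therefore
\begin{equation*}
-k(k+1)\int Y_k\overline{Y_l}=-l(l+1)\int Y_k\overline{Y_l},
\end{equation*}
and since $k(k+1)\neq l(l+1)$ for $k\neq l$ in $\Z_+$, this forces $\int Y_k\overline{Y_l}\,d\mu=0$.
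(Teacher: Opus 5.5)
Your proof is correct and is essentially the classical Stein--Weiss argument that the paper cites in place of its own proof: the Fischer inner product giving $\mathcal{P}_k=\mathcal{A}_k\oplus|x|^2\mathcal{P}_{k-2}$, the dimension count from surjectivity of $\Delta$, and Stone--Weierstrass for density. The only deviation is the orthogonality step, where you use the eigenvalue relation \eqref{coo7} and the symmetry of $\Delta_{\SS^2}$ from \eqref{vf13}--\eqref{vf14}, instead of the usual Green's identity on the unit ball with $\partial_r\widetilde{Y}_k=k\widetilde{Y}_k$ on $\SS^2$; both work equally well.
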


See \cite[Theorem 2.1, Corollary 2.2, Corollary 2.3, Corollary 2.4]{Steinbook71} for the proof.

Given $F\in\mathcal{A}_k$, the equation $\Delta_{\R^3}F=0$ and the homogeneity assumption give
\begin{equation*}
0=\partial_r^2F+\frac{2}{r}\partial_rF+\frac{1}{r^2} \Delta_{\SS^2}F=\frac{1}{r^2}[\Delta_{\SS^2}F+k(k+1)F].
\end{equation*}
Therefore, if $Y_k\in\mathcal{H}_k$ is a spherical harmonic then
\begin{equation}\label{coo7}
\Delta_{\SS^2}Y_k=-k(k+1)Y_k.
\end{equation}
We can find an orthonormal basis of spherical harmonics for the space $\mathcal{H}_k$ of the form 
\begin{equation}\label{coo8}
\big\{Y_{k,m}(\theta,\phi)=e^{im\phi}c_{k,m}P_{k,m}(\cos\theta):\,m\in\{0,\pm 1,\ldots,\pm k\}\big\},
\end{equation}
where $P_{k,m}:[-1,1]\to\R$ are the associated Legendre polynomials and $c_{k,m}$ are normalization coefficients. 

\subsection{Zonal harmonics and convolutions}\label{zon}

We start with a lemma that describes the zonal spherical harmonics on $\SS^2$.

\begin{lemma}\label{zon2}

(i) For any $k\in\Z_+$ there is a function $Z_k:\SS^2\times\SS^2\to\mathbb{R}$ such that 
\begin{equation}\label{zon5}
Z_k(x,.)\in\mathcal{H}_k\qquad\text{ for any }x\in\SS^2,
\end{equation}
\begin{equation}\label{zon3}
Y(x)=\int_{\SS^2}Y(y)Z_k(x,y)\,d\mu(y)\qquad\text{ for any }Y\in\mathcal{H}_k\text{ and }x\in\SS^2,
\end{equation}
\begin{equation}\label{zon6}
|Z_k(x,y)|\leq Z_k(x,x)=(2k+1)/(4\pi)\qquad\text{ for any }x,y\in\SS^2.
\end{equation}
Moreover, there is a function $F_k:[-1,1]\to\R$ such that
\begin{equation}\label{zon4}
Z_k(x,y)=F_k(x\cdot y)\qquad\text{ for any }x,y\in\SS^2.
\end{equation}
In particular, $Z_k(x,y)=Z_k(y,x)$ and $Z_k(x,y)=Z_k(\rho x,\rho y)$ for any $x,y\in\SS^2$ and any rotation $\rho\in\mathbb{SO}(3)$.

(ii) Assume that $G:\SS^2\times\SS^2\to\C$ is a function such that $G(x,.)\in\mathcal{H}_k$ for any $x\in\SS^2$ and $G(x,y)=G(\rho x,\rho y)$ for any $x,y\in\SS^2$ and any $\rho\in\mathbb{SO}(3)$. Then there is $c\in\C$ such that $G=cZ_k$.
\end{lemma}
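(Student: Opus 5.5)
The plan is to build $Z_k$ from the reproducing kernel of the finite-dimensional space $\mathcal{H}_k$. Lemma \ref{Lem1}(ii) gives $\dim\mathcal{H}_k=2k+1$, so we fix an $L^2(\SS^2)$-orthonormal basis $\{Y_{k,m}\}$ (for instance \eqref{coo8}) and define
\begin{equation*}
Z_k(x,y):=\sum_{m}Y_{k,m}(x)\overline{Y_{k,m}(y)}.
\end{equation*}
Property \eqref{zon3} then follows by expanding $Y=\sum_m a_mY_{k,m}$ and using orthonormality. This kernel does not depend on the chosen basis, since it is the kernel of the orthogonal projection onto $\mathcal{H}_k$. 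Because $\mathcal{H}_k$ is closed under complex conjugation (as $\mathcal{A}_k$ has real coefficients when $\Delta$ does), we may take a real orthonormal basis, so $Z_k$ is real. It is also symmetric, and $Z_k(x,\cdot)\in\mathcal{H}_k$, which gives \eqref{zon5}.

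Next we prove rotation invariance. A rotation $\rho\in\mathbb{SO}(3)$ preserves $\Delta_{\R^3}$ and homogeneity, so $Y\mapsto Y\circ\rho$ maps $\mathcal{H}_k$ to itself. Since $\rho$ also preserves $d\mu$, this map is unitary. Hence $\{Y_{k,m}\circ\rho\}$ is another orthonormal basis, and basis independence gives $Z_k(\rho x,\rho y)=Z_k(x,y)$.

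To obtain \eqref{zon4}, note that any two pairs $(x,y)$, $(x',y')$ with $x\cdot y=x'\cdot y'$ are related by a rotation, since $\mathbb{SO}(3)$ acts transitively on pairs of unit vectors with a prescribed inner product. So $Z_k(x,y)=F_k(x\cdot y)$.

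For \eqref{zon6}, observe that $Z_k(x,x)=\sum_m|Y_{k,m}(x)|^2$ is independent of $x$ by transitivity. Integrating over $\SS^2$ gives $4\pi Z_k(x,x)=2k+1$. The bound $|Z_k(x,y)|\le Z_k(x,x)$ follows from Cauchy--Schwarz:
\begin{equation*}
|Z_k(x,y)|\le Z_k(x,x)^{1/2}Z_k(y,y)^{1/2}.
\end{equation*}

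For part (ii), fix the north pole $e_3$ and let $h=G(e_3,\cdot)\in\mathcal{H}_k$. Invariance under rotations fixing $e_3$ shows that $h$ is invariant under rotations about the $e_3$-axis. The key step, and the only real obstacle, is to show that the subspace of $\mathcal{H}_k$ invariant under rotations about a fixed axis is one-dimensional. I would do this using the basis \eqref{coo8}. Rotation by angle $\alpha$ about $e_3$ multiplies $Y_{k,m}$ by $e^{im\alpha}$, so an invariant element has only an $m=0$ component, which means it is a multiple of $Y_{k,0}$. Consequently $h=cZ_k(e_3,\cdot)$, because $Z_k(e_3,\cdot)$ is itself invariant and nonzero, with value $(2k+1)/4\pi$ at $e_3$. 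Finally, for general $x$ we pick a rotation $\rho$ with $\rho x=e_3$ and compute
\begin{equation*}
G(x,y)=G(e_3,\rho y)=cZ_k(e_3,\rho y)=cZ_k(x,y).
\end{equation*}
Alternatively, we can avoid \eqref{coo8} by an independent argument: an axially invariant element of $\mathcal{H}_k$ is determined by its restriction to a meridian, and the harmonic extension forces a polynomial in $x_3$ and $|x|^2$ of the form given by the unique decomposition \eqref{intr3}.
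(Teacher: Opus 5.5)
Your proof is correct, and it is essentially the standard argument from Stein--Weiss. The paper does not prove this lemma but cites that source. The shared steps are:
\begin{itemize}
\item the projection kernel $Z_k(x,y)=\sum_m Y_{k,m}(x)\overline{Y_{k,m}(y)}$;
\item rotation invariance from basis-independence and unitarity of $Y\mapsto Y\circ\rho$;
\item transitivity of $\mathbb{SO}(3)$ on pairs with prescribed inner product, to get $F_k$;
\item integration of $Z_k(x,x)$ together with Cauchy--Schwarz, for \eqref{zon6};
\item for (ii), reduction to the fact that the elements of $\mathcal{H}_k$ invariant under rotations about $e_3$ form a one-dimensional space.
\end{itemize}

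The only place you genuinely differ is the proof of that last fact. Your main route reads it off from \eqref{coo8}. This is admissible, since the paper states \eqref{coo8}. Be aware, though, that the assertion that the weight $m=0$ occurs exactly once in that basis \emph{is} the one-dimensionality you need. So this route hands the only nontrivial step to a quoted fact.

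The cited argument works differently. It first shows that an axially invariant $\tilde h\in\mathcal{A}_k$ has the form $\sum_j c_j x_3^{k-2j}(x_1^2+x_2^2)^j$. To see this, expand $\tilde h$ in powers of $x_3$. Each coefficient is a rotation-invariant homogeneous polynomial of degree $l$ in $(x_1,x_2)$, hence a multiple of $(x_1^2+x_2^2)^{l/2}$, which vanishes unless $l$ is even. Then $\Delta\tilde h=0$ gives the recurrence $4j^2c_j+(k-2j+2)(k-2j+1)c_{j-1}=0$, so $\tilde h$ is determined by $c_0$.

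Your alternative route is also sound and self-contained, once the vague ``restriction to a meridian'' is replaced by this same structural fact. Substituting $x_1^2+x_2^2=|x|^2-x_3^2$ gives $\tilde h=c\,x_3^k+|x|^2P_{k-2}$ with $c\in\C$ and $P_{k-2}\in\mathcal{P}_{k-2}$. Uniqueness in \eqref{intr3} then forces $\tilde h=c\,G_k$, where $G_k$ is the harmonic component of $x_3^k$; in particular $\tilde h=0$ if $c=0$. This version uses only results already quoted in the paper and avoids the recurrence computation.
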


See \cite[Lemma 2.8, Corollary 2.9, Corollary 2.13]{Steinbook71} for the proof.

The zonal spherical harmonics can be calculated explicitly using the Poisson kernel. Let
\begin{equation}\label{zon5.5}
p(t,x):=\frac{1}{4\pi}\frac{1-|x|^2}{|x-t|^3}.
\end{equation}
Then, if $t,y\in\SS^2$ and $|r|\leq 1/4$ then
\begin{equation*}
p(t,ry)=\sum_{k\geq 0}r^kZ_k(t,y),
\end{equation*}
or equivalently, in terms of the functions $F_k$ defined in \eqref{zon4},
\begin{equation}\label{zon7}
\frac{1-r^2}{4\pi}(1+r^2-2r \alpha)^{-3/2}=\sum_{k\geq 0}r^kF_k(\alpha),\qquad \alpha\in[-1,1],\,r\in[-1/4,1/4].
\end{equation}
See  \cite[Theorem 2.10]{Steinbook71} for the proof. In particular, if $\alpha\in[-1,1]$ then
\begin{equation}\label{zon8}
F_0(\alpha)=\frac{1}{4\pi},\qquad F_1(\alpha)=\frac{3\alpha}{4\pi},\qquad F_2(\alpha)=\frac{15\alpha^2-5}{8\pi}.
\end{equation}
The functions $F_k$ can also be expressed in terms of the Legendre polynomials $P_k=P_{k,0}$, defined by the series expansion
\begin{equation}\label{zon7.5}
(1+r^2-2r\alpha)^{-1/2}=\sum_{k\geq 0}P_k(\alpha)r^k,\qquad \alpha\in[-1,1],\,r\in[-1/4,1/4].
\end{equation}
Taking an $r$-derivative we get
\begin{equation*}
(\alpha-r)(1+r^2-2r\alpha)^{-3/2}=\sum_{k\geq 0}kP_k(\alpha)r^{k-1}.
\end{equation*}
We multiply this by $2r$ and add up the identity $(1+r^2-2r\alpha)(1+r^2-2r\alpha)^{-3/2}=\sum_{k\geq 0}P_k(\alpha)r^k$. We can now compare with the identity \eqref{zon7} to derive the formula
\begin{equation}\label{zon7.6}
(2k+1)P_k(\alpha)=4\pi F_k(\alpha),\qquad k\in\Z_+,\,\alpha\in[-1,1].
\end{equation}

\subsection{Spectral Littlewood--Paley projections}\label{LiPaley} We fix a smooth even function $\varphi:\mathbb{R}\to[0,1]$ supported in the interval $[-2,2]$ and equal to $1$ in the interval $[-1,1]$. 
For any $a\in\R$ and $d\geq 1$ we define the functions $\varphi_{\leq a},\varphi_{>a}:\mathbb{R}^d\to [-1,1]$ by
\begin{equation}\label{lipa1}
\varphi_{\leq a}(x):=\varphi(|x|/2^{a}),\qquad \varphi_{>a}:=1-\varphi_{\leq a}.
\end{equation}
For any $a\in\Z$ and any bounded interval $I\subseteq\R$ we define
\begin{equation}\label{phi_Int}
\varphi_a(x):=\varphi(|x|/2^a)-\varphi(|x|/2^{a-1}),\qquad \varphi_I(x):=\sum_{a\in\Z\cap I}\varphi_a(x).
\end{equation}
In view of Lemmas \ref{Lem1} and \ref{zon2}, any function $Y\in L^2(\mathbb{S}^2)$ can be uniquely decomposed
\begin{equation}\label{lipa2}
\begin{split}
Y=\sum_{k\geq 0}Y_k,\qquad Y_k\in\mathcal{H}_k,\\
Y_k(x)=\int_{\mathbb{S}^2}Y(y)Z_k(x,y)\,d\mu(y),
\end{split}
\end{equation}
where $Z_k$ are the zonal spherical harmonics defined in  Lemma \ref{zon2}. For any $a\in\Z_+$ we define the spectral Littlewood-Paley operators
\begin{equation}\label{lipa3}
\begin{split}
&\mathcal{P}'_{\leq a}Y(x):=\sum_{k\in\Z_+}\varphi_{\leq a}(k)Y_k(x)=\int_{\mathbb{S}^2}Y(y)\mathcal{K}_{\leq a}(x,y)\,d\mu(y),\\
&\mathcal{P}'_{a}Y:=\sum_{k\in\Z_+}\varphi_{a}(k)Y_k=\int_{\mathbb{S}^2}Y(y)\mathcal{K}_{a}(x,y)\,d\mu(y),\qquad a\geq 1,
\end{split}
\end{equation}
where the kernels $\mathcal{K}_{\leq a}$ and $\mathcal{K}_a$ are defined by
\begin{equation}\label{lipa4}
\mathcal{K}_{\leq a}:=\sum_{k\in\Z_+}\varphi_{\leq a}(k)Z_k,\qquad\qquad\mathcal{K}_{a}:=\sum_{k\in\Z_+}\varphi_{a}(k)Z_k,\,a\geq 1.
\end{equation}
For uniformity of notation, we let $\mathcal{P}'_0:=\mathcal{P}'_{\leq 0}$ and $\mathcal{K}_0:=\mathcal{K}_{\leq 0}$.

Notice that our definitions on $\SS^2$ are analogous to the usual definitions on Euclidean spaces. Some care is needed however, since we need to show that these spectral projections, and more generally operators defined by suitable spectral multipliers (such as those appearing in the linearization of the Peskin equation in Proposition \ref{linearSummary}), are compatible with $L^p$ spaces and with classical derivatives (which appear in the analysis of the nonlinearity). We develop a self-contained functional analysis framework in Section \ref{sec:appendix} below.

\section{Linearization of the Peskin nonlinearity}\label{sec:linearization}

Recall the Peskin nonlinearity
\begin{equation}\label{Pes1}
\mathcal{N}^i(X)(x):=\int_{\SS^2}G_{ij}(X(x)-X(p))\Delta_{\SS^2}X^j(p)\,d\mu(p),
\end{equation}
where $i\in\{1,2,3\}$, $x\in\SS^2$, $X=(X^1,X^2,X^3):\SS^2\to\R^3$ is the solution, and $G_{ij}:\R^3\to\R$ denote the Stokeslet kernel defined in \eqref{Intro2}. 
We consider solutions that are perturbations of the identity map,
\begin{equation}\label{Pes3}
X(q)=X_0(q)+Y(q),\qquad X^j(q)=X^j_0(q)+Y^j(q),\qquad X^j_0(q):=q^j.
\end{equation}
We calculate first the $0$-th order term
\begin{equation}\label{Pes4}
\mathcal{N}^i_0(X)(x):=\int_{\SS^2}G_{ij}(X_0(x)-X_0(p))\Delta_{\SS^2}X^j_0(p)\,d\mu(p).
\end{equation}
In view of \eqref{coo7}, we have $\Delta_{\SS^2}X^j_0(p)=-2p^j$, so
\begin{equation*}
\begin{split}
G_{ij}(X_0(x)-X_0(p))&\Delta_{\SS^2}X^j_0(p)=\frac{-1}{4\pi}\Big(\frac{\delta_{ij}}{|x-p|}+\frac{(x-p)^i(x-p)^j}{|x-p|^3}\Big)p^j\\
&=\frac{-1}{4\pi}\Big(\frac{p^i}{|x-p|}-\frac{x^i-p^i}{2|x-p|}\Big)=\frac{-1}{4\pi}\Big(\frac{3p^i}{|x-p|}-\frac{x^i}{2|x-p|}\Big).
\end{split}
\end{equation*}
for any $x,p\in\SS^2$. Using \eqref{exp21}  with $k=0$ and $k=1$ we calculate 
\begin{equation}\label{Pes5}
\mathcal{N}^i_0(X)(x):=\int_{\SS^2}\frac{1}{4\pi}\Big(\frac{x^i}{2|x-p|}-\frac{3p^i}{|x-p|}\Big)\,d\mu(p)=x^i-x^i=0.
\end{equation}
In particular, the identity map is a steady solution of the Peskin evolution equation.

We calculate now the first order term (linear in $Y$)
\begin{equation}\label{Pes6}
\begin{split}
\mathcal{N}^i_1(Y)&=\mathcal{N}^i_{1,1}(Y)+\mathcal{N}^i_{1,2}(Y),\\
\mathcal{N}^i_{1,1}(Y)(x)&:=\int_{\SS^2}G_{ij}(X_0(x)-X_0(p))\Delta_{\SS^2}Y^j(p)\,d\mu(p)\\
\mathcal{N}^i_{1,2}(Y)(x)&:=\int_{\SS^2}(\partial_lG_{ij})(X_0(x)-X_0(p))(Y^l(x)-Y^l(p))\Delta_{\SS^2}X_0^j(p)\,d\mu(p).
\end{split}
\end{equation}
We decompose $Y=\sum_{k\geq 0}Y_k$, $Y^j=\sum_{k\geq 0}Y^j_k$, $j\in\{1,2,3\}$, where $Y^j_k\in\mathcal{H}_k$ are scalar spherical harmonics. Then we calculate, using \eqref{coo7} and \eqref{Intro2},
\begin{equation}\label{Hes7}
\mathcal{N}^i_{1,1}(Y_k)(x)=\int_{\SS^2}\frac{1}{8\pi}\Big(\frac{\delta_{ij}}{|x-p|}+\frac{(x-p)^i(x-p)^j}{|x-p|^3}\Big)(-k)(k+1)Y^j_k(p)\,d\mu(p).
\end{equation}
To calculate $\mathcal{N}^i_{1,2}(Y_k)$ we write, for any $x,p\in\SS^2$,
\begin{equation*}
\begin{split}
(\partial_lG_{ij})&(X_0(x)-X_0(p))\Delta_{\SS^2}X_0^j(p)=-2p^j(\partial_lG_{ij})(x-p)\\
&=\frac{-1}{4\pi}p^j\Big(-\frac{\delta_{ij}(x-p)^l}{|x-p|^3}+\frac{\delta_{il}(x-p)^j}{|x-p|^3}+\frac{\delta_{jl}(x-p)^i}{|x-p|^3}-3\frac{(x-p)^i(x-p)^j(x-p)^l}{|x-p|^5}\Big)\\
&=\frac{-1}{4\pi}\Big(-\frac{p^i(x-p)^l}{|x-p|^3}+\frac{p^l(x-p)^i}{|x-p|^3}-\frac{\delta_{il}}{2|x-p|}+3\frac{(x-p)^i(x-p)^l}{2|x-p|^3}\Big)\\
&=\frac{1}{4\pi}\Big(\frac{x^i(x-p)^l}{|x-p|^3}-\frac{x^l(x-p)^i}{|x-p|^3}+\frac{\delta_{il}}{2|x-p|}-3\frac{(x-p)^i(x-p)^l}{2|x-p|^3}\Big).
\end{split}
\end{equation*}
Therefore, using the definitions \eqref{exp20},
\begin{equation}\label{Hes8}
\begin{split}
&\mathcal{N}^i_{1,2}(Y_k)(x)\\
&=\frac{1}{4\pi}\int_{\SS^2}\Big(\frac{x^i(x-p)^l}{|x-p|^3}-\frac{x^l(x-p)^i}{|x-p|^3}+\frac{\delta_{il}}{|x-p|}-3\frac{(x-p)^i(x-p)^l}{2|x-p|^3}\Big)(Y^l_k(x)-Y^l_k(p))d\mu(p).
\end{split}
\end{equation}

\subsection{Explicit calculations}\label{explicit}

We would like to calculate now explicitly the linear operators $\mathcal{N}^i_{1,1}$ and $\mathcal{N}^i_{1,2}$. For this we prove the following lemma:

\begin{lemma}\label{PropExp}
We define the linear operators $S,T^{ij},R^i$ on $L^2(\SS^2)$ by the formulas
\begin{equation}\label{exp20}
\begin{split}
&S(Y)(x):=\int_{\SS^2}\frac{1}{|x-p|}Y(p)\,d\mu(p),\\
&T^{ij}(Y)(x):=\int_{\SS^2}\frac{(x^i-p^i)(x^j-p^j)}{|x-p|^3}Y(p)\,d\mu(p),\\
&R^i(Y)(x):=\int_{\SS^2}\frac{x^i-p^i}{|x-p|^3}(Y(x)-Y(p))\,d\mu(p).
\end{split}
\end{equation}
for $i,j\in\{1,2,3\}$ and $x\in\SS^2$. Then, for any $k\in\Z_+$ and $Y_k\in\mathcal{H}_k$ we have
\begin{equation}\label{exp21}
S(Y_k)(x)=\frac{4\pi}{2k+1}\widetilde{Y}_k(x),
\end{equation}
\begin{equation}\label{exp22}
\begin{split}
T^{ij}(Y_k)(x)&=\frac{4\pi\delta_{ij}\widetilde{Y}_k(x)}{2k+3}-\frac{16\pi}{(2k+1)(2k+3)}\Big\{\frac{x^i\partial_j\widetilde{Y}_k(x)+x^j\partial_i\widetilde{Y}_k(x)}{2}-\frac{|x|^2\partial_i\partial_j\widetilde{Y}_k(x)}{2k-1}\Big\},
\end{split}
\end{equation}
\begin{equation}\label{exp22.5}
R^{i}(Y_k)(x)=\frac{4\pi}{2k+1}\partial_i\widetilde{Y}_k,
\end{equation}
for any $x\in\SS^2$, where $\widetilde{Y}_k\in\mathcal{A}_k$ denotes the $k$-homogeneous harmonic extension of $Y_k$. In particular, $S(\mathcal{H}_k)\subseteq \mathcal{H}_k$ and $T^{ij}(\mathcal{H}_k)\subseteq \mathcal{H}_k$ for any $k\in\Z_+$.
\end{lemma}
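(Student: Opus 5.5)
The plan is to compute all three operators through harmonic extensions, using the Poisson-kernel expansion \eqref{zon7} and the Legendre generating function \eqref{zon7.5}, together with the rotation invariance in Lemma \ref{zon2}(ii).

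\textbf{The operator $S$.} Its kernel $|x-p|^{-1}$ depends only on $x\cdot p$, since $|x-p|^2=2-2x\cdot p$. Hence $S$ commutes with rotations and preserves each $\mathcal{H}_k$. By Lemma \ref{zon2}(ii), $S$ acts on $\mathcal{H}_k$ as a scalar $\lambda_k$.

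To find $\lambda_k$, take $|r|<1$ and expand
\[
|x-rp|^{-1}=\sum_k r^kP_k(x\cdot p)=\sum_k r^k\frac{4\pi}{2k+1}Z_k(x,p),
\]
using \eqref{zon7.5} and \eqref{zon7.6}. Then let $r\to1$; Abel summation is legitimate in $L^2$ because the kernel is integrable. Integrating against $Y_k$ and using \eqref{zon3} gives $\lambda_k=4\pi/(2k+1)$, which is \eqref{exp21}.

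\textbf{The operator $R^i$.} The idea is to write $R^i$ as a derivative of a single-layer potential. For $z\in\R^3$, define
\[
U(z):=\int_{\SS^2}|z-p|^{-1}Y_k(p)\,d\mu(p).
\]
This is harmonic off the sphere. For $|z|<1$, the expansion above gives $U(z)=\frac{4\pi}{2k+1}\widetilde Y_k(z)$.

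Now
\[
\partial_iU(z)=-\int\frac{(z-p)^i}{|z-p|^3}Y_k(p)\,d\mu(p).
\]
On the other hand, $\int_{\SS^2}\frac{(x-p)^i}{|x-p|^3}\,d\mu(p)$ is a derivative of the potential of the constant density. That potential equals $4\pi$ inside the ball and $4\pi/|z|$ outside, so this integral is $2\pi x^i$ on the sphere, in the principal-value sense.

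I would then take the interior limit of $\partial_iU$ on the principal-value part. The tangential derivative is continuous across the sphere, while the normal derivative jumps by the density. Averaging the interior and exterior limits and combining with the $Y_k(x)$ term produces exactly the interior derivative $\frac{4\pi}{2k+1}\partial_i\widetilde Y_k$. Concretely, the interior limit of $-\partial_iU$ equals the principal value plus $2\pi x^iY_k$, and the constant-density term cancels this. This jump-relation bookkeeping for the double-layer-type singular kernel is the main obstacle. I would justify it by regularizing with $x\mapsto rx$, $r\uparrow1$: write
\[
R^i(Y_k)(x)=\lim_{r\to1}\int\frac{(rx-p)^i}{|rx-p|^3}\bigl(Y_k(x)-Y_k(p)\bigr)\,d\mu(p).
\]
Dominated convergence applies because $|Y_k(x)-Y_k(p)|\lesssim|x-p|$. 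The first term is then computed from the exact potential of the constant density, and the second is $\partial_iU(rx)$.

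\textbf{The operator $T^{ij}$.} Use the identity
\[
\frac{(z-p)^i(z-p)^j}{|z-p|^3}=|z-p|^{-1}\delta_{ij}-(z-p)^j\,\partial_{z^i}|z-p|^{-1}.
\]
Equivalently, use $\partial_i\partial_j|z-p|=\frac{\delta_{ij}}{|z-p|}-\frac{(z-p)^i(z-p)^j}{|z-p|^3}$. Then
\[
T^{ij}Y_k(x)=\delta_{ij}S(Y_k)(x)-\partial_i\partial_jW(x),\qquad W(z):=\int|z-p|\,Y_k(p)\,d\mu(p).
\]
The kernel is now only $|z-p|$, which is smooth enough that the second derivatives pass to the boundary without jumps.

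For $|z|<1$, compute $W$ from the expansion
\[
|z-p|=(|z|^2-2z\cdot p+1)^{1/2}.
\]
Using the Gegenbauer expansion with exponent $-1/2$, projecting onto $\mathcal{H}_k$ leaves only two terms:
\[
W(z)=c_1\,\widetilde Y_k(z)+c_2\,|z|^2\widetilde Y_k(z),
\]
with $c_1=-\frac{4\pi}{(2k-1)(2k+1)}$ and $c_2=\frac{4\pi}{(2k+1)(2k+3)}$. These can be checked from $\Delta_z|z-p|=2|z-p|^{-1}$ together with \eqref{exp21}.

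Differentiating twice, using Euler's relation $x\cdot\nabla\widetilde Y_k=k\widetilde Y_k$, and evaluating at $|x|=1$ gives \eqref{exp22} after simplifying coefficients. The final claims $S(\mathcal{H}_k)\subseteq\mathcal{H}_k$ and $T^{ij}(\mathcal{H}_k)\subseteq\mathcal{H}_k$ follow from the explicit formulas via the decomposition \eqref{intr3}.
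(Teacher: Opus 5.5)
Your proof is correct. For $S$ it is essentially the paper's Funk--Hecke computation for $|x-p|^{-1}$, but for $R^i$ and $T^{ij}$ it takes a genuinely different route.

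\textbf{Comparison with the paper.} The paper stays on the sphere. It replaces $|x-p|^{-3}$ by $(\epsilon+|x-p|^2)^{-3/2}$ and expands the numerators. It then uses Lemma \ref{zon20} to split $y^iY_k$ and $y^iy^jY_k$ into harmonics of degrees $k\pm1$ and $k,k\pm2$, and applies Lemma \ref{zon10} to the single zonal kernel $(2+\epsilon-2\alpha)^{-3/2}$. The answer is read off from the generating function of the coefficients $B_{l,\epsilon}$. These blow up individually as $\epsilon\to0$, but satisfy $B_{l+1,\epsilon}-B_{l,\epsilon}\to-2\pi$; the price is the bookkeeping in \eqref{exp10}.

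You argue instead with potentials in $\R^3$:
\begin{itemize}
\item the single-layer potential $U$ equals $\frac{4\pi}{2k+1}\widetilde Y_k$ in the ball;
\item $R^i(Y_k)(x)$ is the radial interior limit of $\partial_iU$, because the constant-density term $\int(rx-p)^i|rx-p|^{-3}\,d\mu(p)=-\partial_i(4\pi)$ vanishes identically for $r<1$;
\item $T^{ij}=\delta_{ij}S-\partial_i\partial_jW$, where $W(z)=\int|z-p|Y_k(p)\,d\mu(p)=c_1\widetilde Y_k+c_2|z|^2\widetilde Y_k$ in the ball.
\end{itemize}
Your constants are right. The combinations $\frac{4\pi}{2k+1}-2c_2=\frac{4\pi}{2k+3}$, $-2c_2$, and $-(c_1+c_2)=\frac{16\pi}{(2k-1)(2k+1)(2k+3)}$ reproduce \eqref{exp22} exactly.

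Every limit you take is justified by the identity $|rx-p|^2=(1-r)^2+r|x-p|^2\ge r|x-p|^2$. It gives dominating functions $\lesssim|x-p|^{-1}$ for $S$, for the Hessian of $W$, and for $R^i$ (together with $|Y_k(x)-Y_k(p)|\lesssim|x-p|$).

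Your route explains the shape of the formulas: why $R^i$ produces the interior gradient of the harmonic extension, and why $T^{ij}$ is $\delta_{ij}$ times a multiple of $\widetilde Y_k$ plus a Hessian correction. It also avoids divergent coefficients entirely. The paper's route is more mechanical, but it uses nothing beyond Funk--Hecke and the multiplication rule. It therefore applies verbatim to any kernel of the form (polynomial in $x-p$) $\times$ (zonal function), not only to derivatives of $|z-p|^{\pm1}$.

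\textbf{Details to fix.}
\begin{itemize}
\item \emph{Sign in the jump relation.} The interior limit of $-\partial_iU$ is the principal value \emph{minus} $2\pi x^iY_k(x)$, not plus. Test with $Y\equiv1$: the interior gradient is $0$ and the principal value is $2\pi x^i$. With your sign nothing cancels. Your radial regularization makes jump relations unnecessary, so simply drop that heuristic.
\item \emph{First identity for $T^{ij}$.} The derivative is misplaced; it should read $-\partial_{z^i}\big[(z-p)^j|z-p|^{-1}\big]$. The Hessian identity you actually use is correct.
\item \emph{Determining $c_1$.} The check via $\Delta_z|z-p|=2|z-p|^{-1}$ fixes only $c_2$, since $c_1$ multiplies a harmonic function. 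Yet $c_1$ enters the answer through $c_1+c_2$. It must come from the Legendre computation, for example writing $(1+\rho^2-2\rho t)^{1/2}=(1+\rho^2-2\rho t)\sum_n\rho^nP_n(t)$ and using $(2n+1)tP_n=(n+1)P_{n+1}+nP_{n-1}$. Alternatively, use the boundary value $W|_{\SS^2}=-\frac{16\pi}{(2k-1)(2k+1)(2k+3)}Y_k$, obtained from $|x-p|=(2-2x\cdot p)|x-p|^{-1}$, \eqref{exp21} and Lemma \ref{zon20}.
\item \emph{The inclusion $T^{ij}(\mathcal H_k)\subseteq\mathcal H_k$.} This needs the short check that the degree-$k$ bracket in \eqref{exp22} is harmonic. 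Both $\frac12(x^i\partial_j+x^j\partial_i)\widetilde Y_k$ and $\frac{|x|^2\partial_i\partial_j\widetilde Y_k}{2k-1}$ have Laplacian $2\partial_i\partial_j\widetilde Y_k$, so their difference is harmonic.
\end{itemize}
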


The rest of this subsection is concerned with the proof of Lemma \ref{PropExp}. We start with a general lemma involving convolution operators on the sphere.

\begin{lemma}\label{zon10}
Assume $L:[-1,1]\to\C$ is a kernel, $k\geq 0$, and $Y\in\mathcal{H}_k$. Then
\begin{equation}\label{zon11}
\int_{\SS^2}L(x\cdot y)Y(y)\,d\mu(y)=C_{L,k}Y(x)
\end{equation}
for any $x\in\SS^2$, where $F_k$ is defined as in \eqref{zon4},
\begin{equation}\label{zon12}
C_{L,k}:=\frac{8\pi^2}{2k+1}\int_{-1}^1F_k(\alpha)L(\alpha)\,d\alpha.
\end{equation}
\end{lemma}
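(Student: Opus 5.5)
The plan is to reduce the statement to the uniqueness part of Lemma \ref{zon2}(ii) and then compute the constant by evaluating on the diagonal. Define
\[
G(x,z):=\int_{\SS^2}L(x\cdot y)Z_k(y,z)\,d\mu(y),\qquad x,z\in\SS^2 .
\]
We work with $L$ integrable against $d\alpha$ on $[-1,1]$, which covers the kernels $(2-2\alpha)^{-1/2}$ and similar ones needed in Lemma \ref{PropExp}. Then $\int_{\SS^2}|L(x\cdot y)|\,d\mu(y)=2\pi\int_{-1}^1|L(\alpha)|\,d\alpha<\infty$, by rotating $x$ to the north pole and using $d\mu=\sin\theta\,d\theta\,d\phi$ with $\alpha=\cos\theta$.

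\textbf{Step 1: reduce to the case $Y(\cdot)=Z_k(\cdot,z)$.} By \eqref{zon3}, any $Y\in\mathcal{H}_k$ satisfies $Y(y)=\int Y(z)Z_k(y,z)\,d\mu(z)$. By Fubini, which is justified by the integrability above and the bound \eqref{zon6}, we get
\[
\int L(x\cdot y)Y(y)\,d\mu(y)=\int G(x,z)Y(z)\,d\mu(z).
\]
It therefore suffices to show that $G(x,z)=C_{L,k}Z_k(x,z)$. Once this holds, \eqref{zon3} gives exactly $C_{L,k}Y(x)$.

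\textbf{Step 2: identify $G$ via Lemma \ref{zon2}(ii).} We apply the lemma with the roles of the variables arranged so that the second slot is $z$.
\begin{itemize}
\item For fixed $x$, the map $z\mapsto G(x,z)$ is an $L^1$-average of the functions $Z_k(y,\cdot)\in\mathcal{H}_k$. Since $\mathcal{H}_k$ is finite dimensional, hence closed, $G(x,\cdot)\in\mathcal{H}_k$.
\item Rotation invariance follows by substituting $y\mapsto\rho y$. This uses the invariance of $\mu$, the identity $(\rho x)\cdot(\rho y)=x\cdot y$, and $Z_k(\rho y,\rho z)=Z_k(y,z)$, and yields $G(\rho x,\rho z)=G(x,z)$.
\end{itemize}
Hence $G=cZ_k$ for some constant $c$.

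\textbf{Step 3: compute $c$ on the diagonal.} Set $z=x$. Using \eqref{zon6}, $Z_k(x,x)=(2k+1)/(4\pi)$, and using \eqref{zon4},
\[
G(x,x)=\int L(x\cdot y)F_k(y\cdot x)\,d\mu(y).
\]
Placing $x$ at the north pole and using spherical coordinates with $\alpha=\cos\theta$, this integral equals $2\pi\int_{-1}^1L(\alpha)F_k(\alpha)\,d\alpha$. Therefore
\[
c=\frac{4\pi}{2k+1}\cdot 2\pi\int_{-1}^1 F_kL\,d\alpha=\frac{8\pi^2}{2k+1}\int_{-1}^1F_kL\,d\alpha=C_{L,k}.
\]

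\textbf{Main obstacle.} The only delicate points are justifying Fubini and the membership $G(x,\cdot)\in\mathcal{H}_k$ when $L$ is singular at $\alpha=1$. To handle these, I would first record the integrability of $L(x\cdot y)$ in $y$ uniformly in $x$. I would then use the boundedness of $Z_k$ to exchange integrals. Membership in $\mathcal{H}_k$ then follows by expanding $Z_k(y,z)=\sum_m Y_{k,m}(y)\overline{Y_{k,m}(z)}$ in the orthonormal basis \eqref{coo8}. In that expansion, $G(x,\cdot)$ is visibly a finite linear combination of the $\overline{Y_{k,m}}$, which lie in $\mathcal{H}_k$.
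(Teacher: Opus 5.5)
Your proof is correct and follows the paper's argument essentially verbatim: reproduce $Y$ via \eqref{zon3}, swap the integrals to obtain $G(x,z)=\int_{\SS^2}L(x\cdot y)Z_k(y,z)\,d\mu(y)$, identify $G=C_{L,k}Z_k$ by Lemma \ref{zon2}(ii), and read off the constant on the diagonal using $Z_k(x,x)=(2k+1)/(4\pi)$. The paper leaves implicit the integrability hypothesis on $L$ and the justification of Fubini and of $G(x,\cdot)\in\mathcal{H}_k$; you supply these, and they cover all the kernels used later in Lemma \ref{PropExp}.
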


\begin{proof} We start from the identity \eqref{zon3} and write
\begin{equation*}
\int_{\SS^2}L(x\cdot y)Y(y)\,d\mu(y)=\int_{\SS^2}Y(p)\Big[\int_{\SS^2}L(x\cdot y)Z_k(y,p)\,d\mu(y)\Big]d\mu(p).
\end{equation*}
Let
\begin{equation*}
G(x,p):=\int_{\SS^2}L(x\cdot y)Z_k(y,p)\,d\mu(y).
\end{equation*}
Notice that $G(x,.)\in\mathcal{H}_k$ for any $x\in\SS^2$ and $G(\rho x,\rho p)=G(x,p)$ for any $x,p\in\SS^2$ and any rotation $\rho\in\mathbb{SO}(3)$. Therefore, using Lemma \ref{zon2} (ii) $G(x,p)=C_{L,k}Z_k(x,p)$. The constant $C_{L,k}$ can be calculated by setting $x=p=e_3$, so
\begin{equation*}
C_{L,k}Z_k(e_3,e_3)=\int_{\SS^2}L(y\cdot e_3)Z_k(y,e_3)\,d\mu(y)=\int_0^\pi\int_0^{2\pi}F_k(\cos\theta)L(\cos\theta)\sin\theta\,d\theta d\phi,
\end{equation*}
where $F_k$ is as in \eqref{zon4}. The desired conclusions follow since $Z_k(e_3,e_3)=(2k+1)/(4\pi)$. 
\end{proof}

We now prove a formula on the action of multiplication operators on spherical harmonics.

\begin{lemma}\label{zon20}
Assume that $j\in\{1,2,3\}$, $k\in\Z_+$, and $Y_k\in \mathcal{H}_k$, and let $\widetilde{Y}_k\in\mathcal{A}_k$ denote the homogeneous harmonic extension of $Y_k$. Then
\begin{equation}\label{zon21}
\begin{split}
&x^j\widetilde{Y}_k(x)=\widetilde{Y}_{k+1}(x)+|x|^2\widetilde{Y}_{k-1}(x),\\
&\widetilde{Y}_{k+1}:=x^j\widetilde{Y}_k(x)-|x|^2\partial_j\widetilde{Y}_k/(2k+1)\in\mathcal{A}_{k+1},\\
&\widetilde{Y}_{k-1}:=\partial_j\widetilde{Y}_k/(2k+1)\in\mathcal{A}_{k-1}.
\end{split}
\end{equation}
In particular $x^jY_k\in\mathcal{H}_{k+1}+\mathcal{H}_{k-1}$.
\end{lemma}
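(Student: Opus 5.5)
The identity in Lemma \ref{zon20} is purely algebraic: we verify that the two explicitly defined polynomials are homogeneous harmonic polynomials of the stated degrees, and that they sum correctly.

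\textbf{Homogeneity and the decomposition.} Since $\widetilde{Y}_k\in\mathcal{A}_k$ is homogeneous of degree $k$, the function $\partial_j\widetilde{Y}_k$ is homogeneous of degree $k-1$ (it is zero if $k=0$). Hence $x^j\widetilde{Y}_k$ and $|x|^2\partial_j\widetilde{Y}_k$ are both homogeneous of degree $k+1$, so $\widetilde{Y}_{k+1}\in\mathcal{P}_{k+1}$ and $\widetilde{Y}_{k-1}\in\mathcal{P}_{k-1}$. The identity $x^j\widetilde{Y}_k=\widetilde{Y}_{k+1}+|x|^2\widetilde{Y}_{k-1}$ then holds by the definitions, since the two $|x|^2\partial_j\widetilde{Y}_k/(2k+1)$ terms cancel.

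\textbf{Harmonicity.} The lower-degree part is immediate: $\Delta\partial_j\widetilde{Y}_k=\partial_j\Delta\widetilde{Y}_k=0$, so $\widetilde{Y}_{k-1}\in\mathcal{A}_{k-1}$. For the higher-degree part we use the product rule
\[
\Delta(fg)=f\Delta g+2\nabla f\cdot\nabla g+g\Delta f.
\]
With $f=x^j$ and $g=\widetilde{Y}_k$ this gives $\Delta(x^j\widetilde{Y}_k)=2\partial_j\widetilde{Y}_k$. With $f=|x|^2$ and $g=h:=\partial_j\widetilde{Y}_k$, which is harmonic and homogeneous of degree $k-1$, it gives
\[
\Delta(|x|^2h)=6h+4\,x\cdot\nabla h=6h+4(k-1)h=(4k+2)h=2(2k+1)h,
\]
where the middle step is Euler's identity for homogeneous functions. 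Therefore
\[
\Delta\widetilde{Y}_{k+1}=2\partial_j\widetilde{Y}_k-\frac{2(2k+1)}{2k+1}\partial_j\widetilde{Y}_k=0,
\]
so $\widetilde{Y}_{k+1}\in\mathcal{A}_{k+1}$. This Euler-identity computation is the only step requiring any care.

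\textbf{Restriction to the sphere.} Restricting the decomposition to $\SS^2$, where $|x|^2=1$, gives $x^jY_k=\widetilde{Y}_{k+1}|_{\SS^2}+\widetilde{Y}_{k-1}|_{\SS^2}$. By definition \eqref{intr2}, the first term lies in $\mathcal{H}_{k+1}$ and the second in $\mathcal{H}_{k-1}$, which proves $x^jY_k\in\mathcal{H}_{k+1}+\mathcal{H}_{k-1}$. When $k=0$, the second term vanishes, so $x^jY_0\in\mathcal{H}_1$. By uniqueness in Lemma \ref{Lem1}(i), this is exactly the harmonic decomposition of $x^j\widetilde{Y}_k$.
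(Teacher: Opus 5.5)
Your proposal is correct and follows essentially the same route as the paper: compute $\Delta(x^j\widetilde{Y}_k)=2\partial_j\widetilde{Y}_k$ and $\Delta(|x|^2\partial_j\widetilde{Y}_k)=(4k+2)\partial_j\widetilde{Y}_k$ via Euler's identity, so $\Delta\widetilde{Y}_{k+1}=0$. Your added remarks on homogeneity, the case $k=0$, and restriction to $\SS^2$ simply fill in details the paper leaves implicit.
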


\begin{proof} We calculate
\begin{equation*}
\Delta_{\R^3}[x^j\widetilde{Y}_k(x)]=2\partial_j\widetilde{Y}_k(x),
\end{equation*}
\begin{equation*}
\Delta_{\R^3}[|x|^2\partial_j\widetilde{Y}_k(x))]=6\partial_j\widetilde{Y}_k(x)+4x_l\partial_l\partial_j\widetilde{Y}_k(x)=(4k+2)\partial_j\widetilde{Y}_k(x),
\end{equation*}
since $x_l\partial_l\partial_j\widetilde{Y}_k(x)=(k-1)\partial_j\widetilde{Y}_k(x)$ due to homogeneity. In particular $\Delta_{\R^3}\widetilde{Y}_{k+1}(x)=0$, and the desired claims in \eqref{zon21} follow.
\end{proof}

\begin{proof}[Proof of Lemma \ref{PropExp}] {\bf{Step 1.}} Assume first that 
\begin{equation}\label{exp1}
\widetilde{L_1}(x,y)=|x-y|^{-1}=(2-2x\cdot y)^{-1/2}=L_1(x\cdot y),\qquad x,y\in\SS^2.
\end{equation}
For $k\in\Z_+$, we would like to calculate the coefficients $C_{L_1,k}$ in \eqref{zon12},
\begin{equation}\label{exp2}
C_{L_1,k}=\frac{8\pi^2}{2k+1}\int_{-1}^1F_k(\alpha)L_1(\alpha)\,d\alpha=\frac{8\pi^2}{2k+1}\int_{-1}^1F_k(\alpha)(2-2\alpha)^{-1/2}\,d\alpha.
\end{equation}
To use the formula \eqref{zon7}, we calculate the full series (using the change of variables $\alpha=1-2\beta^2$)
\begin{equation*}
\begin{split}
I(r)&:=\int_{-1}^1(1-r^2)(1+r^2-2r \alpha)^{-3/2}(2-2\alpha)^{-1/2}\,d\alpha\\
&=(1-r^2)\int_0^12[(1-r)^2+4r\beta^2]^{-3/2}\,d\beta\\
&=2(1-r^2)\frac{\beta[(1-r)^2+4r\beta^2]^{-1/2}}{(1-r)^2}\Big|_0^1\\
&=2(1-r)^{-1}.
\end{split}
\end{equation*}
Therefore $C_{L_1,k}=4\pi/(2k+1)$ and the desired identity \eqref{exp21} follows.

{\bf{Step 2.}} Assume now that 
\begin{equation}\label{exp4}
\widetilde{L_{2,\epsilon}^{ij}}(x,y)=(x^i-y^i)(x^j-y^j)(\epsilon+|x-y|^2)^{-3/2},\qquad x,y\in\SS^2,
\end{equation}
where $i,j\in\{1,2,3\}$ and $\epsilon\geq 0$.\footnote{We are interested mainly in the case $\epsilon=0$, but some of the algebraic manipulations (for example the convergence of the integrals in \eqref{exp9}) require $\epsilon>0$.} For any $k\in\Z_+$ and $Y_k\in\mathcal{H}_k$ we would like to calculate
\begin{equation}\label{exp5}
T^{ij}_\epsilon(Y_k)(x):=\int_{\SS^2}\widetilde{L_{2,\epsilon}^{ij}}(x,y)Y_k(y)\,d\mu(y).
\end{equation}
The formula \eqref{zon11} does not apply directly in this case, because the kernels $\widetilde{L_{2,\epsilon}^{ij}}$ are not of the assumed form, and we need to use Lemma \ref{zon20} as well. Let $\widetilde{Y}_k\in\mathcal{A}_k$ denote the harmonic extension of $Y_k$, and let 
\begin{equation}\label{exp6}
\begin{split}
&\widetilde{Y}_{k+1}^i(y):=y^i\widetilde{Y}_k(y)-|y|^2\partial_i\widetilde{Y}_k(y)/(2k+1),\qquad\,\widetilde{Y}_{k-1}^i(y):=\partial_i\widetilde{Y}_k(y)/(2k+1),\\
&\widetilde{Y}_{k+1}^j(y):=y^j\widetilde{Y}_k(y)-|y|^2\partial_j\widetilde{Y}_k(y)/(2k+1),\qquad \widetilde{Y}_{k-1}^j(y):=\partial_j\widetilde{Y}_k(y)/(2k+1),
\end{split}
\end{equation}
\begin{equation}\label{exp7}
\begin{split}
&\widetilde{Y}_{k+2}^{ij}(y):=y^iy^j\widetilde{Y}_k(y)-\frac{\delta_{ij}|y|^2\widetilde{Y}_k(y)}{2k+3}-\frac{|y|^2(y^i\partial_j\widetilde{Y}_k(y)+y^j\partial_i\widetilde{Y}_k(y))}{2k+3}+\frac{|y|^4\partial_i\partial_j\widetilde{Y}_k(y)}{(2k+1)(2k+3)},\\
&\widetilde{Y}_{k}^{ij}(y):=\frac{\delta_{ij}\widetilde{Y}_k(y)}{2k+3}+\frac{y^i\partial_j\widetilde{Y}_k(y)+y^j\partial_i\widetilde{Y}_k(y)}{2k+3}-\frac{2|y|^2\partial_i\partial_j\widetilde{Y}_k(y)}{(2k-1)(2k+3)},\\
&\widetilde{Y}_{k-2}^{ij}(y):=\frac{\partial_i\partial_j\widetilde{Y}_k(y)}{(2k+1)(2k-1)}.
\end{split}
\end{equation}
Notice that $\widetilde{Y}_{k+1}^i, \widetilde{Y}_{k+1}^j\in\mathcal{A}_{k+1}$, $\widetilde{Y}_{k-1}^i, \widetilde{Y}_{k-1}^j\in\mathcal{A}_{k-1}$, $\widetilde{Y}_{k+2}^{ij}\in\mathcal{A}_{k+2}$, $\widetilde{Y}_k^{ij}\in\mathcal{A}_{k}$, $\widetilde{Y}_{k-2}^{ij}\in\mathcal{A}_{k-2}$. Moreover, if $y\in\SS^2$ then
\begin{equation}\label{exp8}
\begin{split}
y^i\widetilde{Y}_k(y)&=\widetilde{Y}_{k+1}^i(y)+\widetilde{Y}_{k-1}^i(y),\\
y^j\widetilde{Y}_k(y)&=\widetilde{Y}_{k+1}^j(y)+\widetilde{Y}_{k-1}^j(y),\\
y^iy^j\widetilde{Y}_k(y)&=\widetilde{Y}_{k+2}^{ij}(y)+\widetilde{Y}_k^{ij}(y)+\widetilde{Y}_{k-2}^{ij}(y).
\end{split}
\end{equation}

Let $Y_{k+1}^i,Y_{k-1}^i,Y_{k+1}^j,Y_{k-1}^j,Y_{k+2}^{ij},Y_k^{ij},Y_{k-2}^{ij}$ denote the restrictions of the harmonic functions $\widetilde{Y}_{k+1}^i,\widetilde{Y}_{k-1}^i,\widetilde{Y}_{k+1}^j,\widetilde{Y}_{k-1}^j,\widetilde{Y}_{k+2}^{ij},\widetilde{Y}_k^{ij},\widetilde{Y}_{k-2}^{ij}$ to the sphere $\SS^2$. We use now the definitions \eqref{exp4} and \eqref{exp5} together with \eqref{exp8} and Lemma \ref{zon10} to calculate, for any $\epsilon>0$,
\begin{equation*}
\begin{split}
&T^{ij}_\epsilon(Y_k)(x)=x^ix^j\int_{\SS^2}(\epsilon+2-2x\cdot y)^{-3/2}Y_k(y)\,d\mu(y)\\
&-x^i\int_{\SS^2}(\epsilon+2-2x\cdot y)^{-3/2}y^jY_k(y)\,d\mu(y)\\
&-x^j\int_{\SS^2}(\epsilon+2-2x\cdot y)^{-3/2}y^iY_k(y)\,d\mu(y)+\int_{\SS^2}(\epsilon+2-2x\cdot y)^{-3/2}y^iy^jY_k(y)\,d\mu(y)\\
&=x^ix^jY_k(x)B_{k,\epsilon}-\big[x^iY_{k+1}^j(x)+x^jY_{k+1}^i(x)\big]B_{k+1,\epsilon}-\big[x^iY_{k-1}^j(x)+x^jY_{k-1}^i(x)\big]B_{k-1,\epsilon}\\
&+Y_{k+2}^{ij}(x)B_{k+2,\epsilon}+Y_{k}^{ij}(x)B_{k,\epsilon}+Y_{k-2}^{ij}(x)B_{k-2,\epsilon}.
\end{split}
\end{equation*}
where
\begin{equation}\label{exp9}
B_{l,\epsilon}:=\frac{8\pi^2}{2l+1}\int_{-1}^1F_l(\alpha)(2+\epsilon-2\alpha)^{-3/2}\,d\alpha,\qquad l\in\Z_+.
\end{equation}
We can use the formulas \eqref{exp6}--\eqref{exp7} to express $T_\epsilon^{ij}(Y_k)$ in terms of the functions $\widetilde{Y}_k$, $\partial_i\widetilde{Y}_k$, $\partial_j\widetilde{Y}_k$, $\partial_i\partial_j\widetilde{Y}_k$, in the form
\begin{equation}\label{exp10}
\begin{split}
T^{ij}_\epsilon(Y_k)(x)&=x^ix^j\widetilde{Y}_k(x)\big[B_{k,\epsilon}-2B_{k+1,\epsilon}+B_{k+2,\epsilon}\big]-\delta_{ij}\widetilde{Y}_k(x)\frac{B_{k+2,\epsilon}-B_{k,\epsilon}}{2k+3}\\
&+\big[x^i\partial_j\widetilde{Y}_k(x)+x^j\partial_i\widetilde{Y}_k(x)\big]\Big[\frac{B_{k+1,\epsilon}-B_{k-1,\epsilon}}{2k+1}-\frac{B_{k+2,\epsilon}-B_{k,\epsilon}}{2k+3}\Big]\\
&+\partial_i\partial_j\widetilde{Y}_k(x)\frac{(2k-1)B_{k+2,\epsilon}-2(2k+1)B_{k,\epsilon}+(2k+3)B_{k-2,\epsilon}}{(2k-1)(2k+1)(2k+3)}.
\end{split}
\end{equation}

To calculate the integrals in \eqref{exp9} we use the simple formula
\begin{equation}\label{exp11}
\frac{d}{dx}\big[(ax+b)(Ax^2+2Bx+C)^{-1/2}\big]=\frac{(aB-bA)x+(aC-bB)}{(Ax^2+2Bx+C)^{3/2}},
\end{equation}
for any constants $a,b,A,B,C\in\R$. Using \eqref{exp9} and \eqref{zon7.6} we have 
\begin{equation}\label{exp12}
B_{l,\epsilon}=2\pi\int_{-1}^1P_l(\alpha)(2+\epsilon-2\alpha)^{-3/2}\,d\alpha,\qquad l\in\Z_+.
\end{equation}
We examine the defining formula \eqref{zon7.5} and calculate, using \eqref{exp11},
\begin{equation}\label{exp13}
\begin{split}
&\int_{-1}^1(1+r^2-2r\alpha)^{-1/2}(2+\epsilon-2\alpha)^{-3/2}\,d\alpha\\
&=\int_{-1}^1\frac{-2r\alpha+1+r^2}{[4r\alpha^2-2(1+(2+\epsilon)r+r^2)\alpha+(2+\epsilon)(1+r^2)]^{3/2}}\,d\alpha\\
&=\frac{a\alpha+b}{[4r\alpha^2-2(1+(2+\epsilon)r+r^2)\alpha+(2+\epsilon)(1+r^2)]^{1/2}}\Big|_{-1}^1\\
&=\frac{a+b}{\sqrt\epsilon(1-r)}-\frac{-a+b}{\sqrt{4+\epsilon}(1+r)},
\end{split}
\end{equation}
where
\begin{equation}\label{exp14}
a:=\frac{-2r}{1+r^2-(2+\epsilon)r},\qquad b:=\frac{1+r^2}{1+r^2-(2+\epsilon)r}.
\end{equation}
Using also \eqref{exp12} we derive the identity
\begin{equation}\label{exp15}
\frac{1}{2\pi}\big[B_{0,\epsilon}+B_{1,\epsilon}r+B_{2,\epsilon}r^2+\ldots\big]=\frac{1-r}{\sqrt{\epsilon} [1+r^2-(2+\epsilon)r]}-\frac{1+r}{\sqrt{4+\epsilon} [1+r^2-(2+\epsilon)r]}.
\end{equation}
We multiply this identity by $1-r$, 
\begin{equation}\label{exp16}
\begin{split}
\frac{1}{2\pi}\big[B_{0,\epsilon}&+(B_{1,\epsilon}-B_{0,\epsilon})r+(B_{2,\epsilon}-B_{1,\epsilon})r^2+\ldots\big]\\
&=\frac{1}{\sqrt{\epsilon}}+\frac{\sqrt{\epsilon}r}{1+r^2-(2+\epsilon)r}-\frac{1-r^2}{\sqrt{4+\epsilon} [1+r^2-(2+\epsilon)r]},
\end{split}
\end{equation}
and let $\epsilon\to 0$ to derive the identities
\begin{equation}\label{exp17}
\lim_{\epsilon\to 0}B_{l+1,\epsilon}-B_{l,\epsilon}=-2\pi\qquad \text{ for any }l\in\Z_+.
\end{equation}
Using the formula \eqref{exp10} it follows that
\begin{equation}\label{exp18}
\frac{\lim_{\epsilon\to 0}T^{ij}_\epsilon(Y_k)x}{2\pi}=\frac{2\delta_{ij}\widetilde{Y}_k(x)}{2k+3}-\frac{4\big[x^i\partial_j\widetilde{Y}_k(x)+x^j\partial_i\widetilde{Y}_k(x)\big]}{(2k+1)(2k+3)}+\frac{8\partial_i\partial_j\widetilde{Y}_k(x)}{(2k-1)(2k+1)(2k+3)}.
\end{equation}

{\bf{Step 3.}} The identities \eqref{exp22} follow from \eqref{exp18}. To prove \eqref{exp22.5} we define
\begin{equation}\label{exp30}
R^i_\epsilon(Y)(x):=\int_{\SS^2}\frac{x^i-p^i}{(\epsilon+2-2x\cdot p)^{3/2}}(Y(x)-Y(p))\,d\mu(p).
\end{equation}
Then, using Lemmas \ref{zon10} and \ref{zon20},
\begin{equation*}
\begin{split}
&R^i_\epsilon(Y_k)(x)=x^iY_k(x)\int_{\SS^2}(\epsilon+2-2x\cdot y)^{-3/2}\,d\mu(y)-Y_k(x)\int_{\SS^2}(\epsilon+2-2x\cdot y)^{-3/2}y^i\,d\mu(y)\\
&-x^i\int_{\SS^2}(\epsilon+2-2x\cdot y)^{-3/2}Y_k(y)\,d\mu(y)+\int_{\SS^2}(\epsilon+2-2x\cdot y)^{-3/2}y^iY_k(y)\,d\mu(y)\\
&=x^i\widetilde{Y}_k(x)B_{0,\epsilon}-x^i\widetilde{Y}_k(x)B_{1,\epsilon}-x^i\widetilde{Y}_k(x)B_{k,\epsilon}+\widetilde{Y}_{k+1}^i(x)B_{k+1,\epsilon}+\widetilde{Y}_{k-1}^i(x)B_{k-1,\epsilon},
\end{split}
\end{equation*}
where $\widetilde{Y}_{k+1}^i,\widetilde{Y}_{k-1}^i$ are defined as in \eqref{exp6} and $B_{l,\epsilon}$ are defined as in \eqref{exp9}. The desired identities \eqref{exp22.5} follow using \eqref{exp17} and letting $\epsilon\to 0$.

\end{proof}

\subsection{Vector spherical harmonics and diagonalization of the linear operator}\label{vec1}

We decompose $Y=\sum_{k\geq 0}Y_k$, $Y^j=\sum_{k\geq 0}Y^j_k$, $j\in\{1,2,3\}$, where $Y^j_k\in\mathcal{H}_k$ are scalar spherical harmonics. Then we calculate, using \eqref{Hes7} and \eqref{exp21}-\eqref{exp22},
\begin{equation}\label{Pes7}
\begin{split}
\mathcal{N}^i_{1,1}&(Y_k)(x)=\frac{-k(k+1)}{8\pi}\Big\{\frac{4\pi\widetilde{Y}^i_k(x)}{2k+1}+\frac{4\pi\widetilde{Y}^i_k(x)}{2k+3}\\
&\qquad\qquad\qquad-\frac{16\pi}{(2k+1)(2k+3)}\Big[\frac{x^i\partial_j\widetilde{Y}_k^j(x)+x^j\partial_i\widetilde{Y}_k^j(x)}{2}-\frac{|x|^2\partial_i\partial_j\widetilde{Y}_k^j(x)}{2k-1}\Big]\Big\}\\
&=\frac{-2k(k+1)}{(2k+1)(2k+3)}\Big\{(k+1)\widetilde{Y}^i_k(x)-\Big[\frac{x^i\partial_j\widetilde{Y}_k^j(x)+x^j\partial_i\widetilde{Y}_k^j(x)}{2}-\frac{|x|^2\partial_i\partial_j\widetilde{Y}_k^j(x)}{2k-1}\Big]\Big\}.
\end{split}
\end{equation}
Moreover, using the formula \eqref{Hes8} and the definitions \eqref{exp20},
\begin{equation*}
\begin{split}
\mathcal{N}^i_{1,2}(Y_k)(x)&=\frac{1}{4\pi}\big[x^iR^l(Y_k^l)(x)-x^lR^i(Y_k^l)(x)+(1/2)Y^i_k(x)S(1)(x)\\
&-(1/2)S(Y_k^i)(x)-(3/2)Y_k^l(x)T^{il}(1)(x)+(3/2)T^{il}(Y_k^l)(x)\big].
\end{split}
\end{equation*}
Using the formulas \eqref{exp21}--\eqref{exp22.5} we obtain
\begin{equation}\label{Pes8}
\begin{split}
\mathcal{N}^i_{1,2}(Y_k)(x)&=\frac{x^i\partial_l\widetilde{Y}_k^l(x)}{2k+1}-\frac{x^l\partial_i\widetilde{Y}_k^l(x)}{2k+1}+\frac{\widetilde{Y}_k^i(x)}{2}-\frac{\widetilde{Y}_k^i(x)}{2(2k+1)}-\frac{\widetilde{Y}_k^i(x)}{2}\\
&+\frac{3\widetilde{Y}_k^i(x)}{2(2k+3)}-\frac{6}{(2k+1)(2k+3)}\Big\{\frac{x^i\partial_l\widetilde{Y}^l_k(x)+x^l\partial_i\widetilde{Y}^l_k(x)}{2}-\frac{|x|^2\partial_i\partial_l\widetilde{Y}^l_k(x)}{2k-1}\Big\}\\
&=\frac{x^i\partial_l\widetilde{Y}_k^l(x)-x^l\partial_i\widetilde{Y}_k^l(x)}{2k+1}+\frac{2k\widetilde{Y}_k^i(x)}{(2k+1)(2k+3)}\\
&-\frac{6}{(2k+1)(2k+3)}\Big\{\frac{x^i\partial_l\widetilde{Y}^l_k(x)+x^l\partial_i\widetilde{Y}^l_k(x)}{2}-\frac{|x|^2\partial_i\partial_l\widetilde{Y}^l_k(x)}{2k-1}\Big\}.
\end{split}
\end{equation}
We combine now \eqref{Pes7} and \eqref{Pes8} to derive the formula
\begin{equation}\label{Pes9}
\begin{split}
\mathcal{N}^i_{1}(Y_k)(x)&=\frac{-2k^2(k+2)\widetilde{Y}_k^i(x)}{(2k+1)(2k+3)}+\frac{x^i\partial_l\widetilde{Y}_k^l(x)-x^l\partial_i\widetilde{Y}_k^l(x)}{2k+1}\\
&+\frac{2k^2+2k-6}{(2k+1)(2k+3)}\Big\{\frac{x^i\partial_l\widetilde{Y}^l_k(x)+x^l\partial_i\widetilde{Y}^l_k(x)}{2}-\frac{|x|^2\partial_i\partial_l\widetilde{Y}^l_k(x)}{2k-1}\Big\}.
\end{split}
\end{equation}
Notice that $\mathcal{N}_1^i(Y_0)\equiv 0$ if $k=0$.

We would like to create an orthogonal  3-dimensional basis of spherical harmonics in such a way that the linear operator $\mathcal{N}_{1}$ is diagonalized in this basis. For this we use a suitable choice of vector spherical harmonics. 
Given a scalar spherical harmonic $h_k\in\mathcal{H}_k$, $k\in\Z_+$, let $\widetilde{h}_k\in\mathcal{A}_k$ denote its homogeneous harmonic extension. We define the functions $\H_{k+1}^1,\H_{k-1}^2,\H_{k}^3:\R^3\to\R^3$ by the formulas
\begin{equation}\label{vec2}
\begin{split}
&\H_{k+1}^{1,i}(x):=(2k+1)x^i\widetilde{h}_k(x)-|x|^2\partial_i\widetilde{h}_k(x),\\
&\H_{k-1}^{2,i}(x):=\partial_i\widetilde{h}_k(x),\\
&\H_{k}^{3,i}(x):=\in_{imn}x^m\partial_n\widetilde{h}_k(x),
\end{split}
\end{equation}
where $\in_{imn}$ denotes the Levi-Civita symbol.
It is easy to see that these functions are homogeneous spherical harmonics, $\H_{k+1}^1\in\mathcal{A}_{k+1}^3$, $\H_{k-1}^2\in\mathcal{A}_{k-1}^3$, $\H_{k}^3\in\mathcal{A}_{k}^3$. 

We would like to calculate $\mathcal{N}_1(\H_{k+1}^1)$, $\mathcal{N}_1(\H_{k-1}^2)$, and $\mathcal{N}_1(\H_{k}^3)$, where $\mathcal{N}_1$ is the linear part of the Peskin nonlinearity, as defined in \eqref{Pes6}. For this we would like to use the formula \eqref{Pes9}, applied to the vector-valued homogeneous solid harmonics  $\H_{k+1}^1,\H_{k-1}^2,\H_{k}^3$. Notice that we can rewrite \eqref{Pes9} as
\begin{equation}\label{vec20}
\begin{split}
\mathcal{N}^i_{1}(Y_k)(x)&=\frac{-2k^2(k+2)\widetilde{Y}_k^i(x)}{(2k+1)(2k+3)}+\frac{x^i\partial_l\widetilde{Y}_k^l(x)-\partial_i(x^l\widetilde{Y}_k^l(x))+\widetilde{Y}_k^i(x)}{2k+1}\\
&+\frac{2k^2+2k-6}{(2k+1)(2k+3)}\Big\{\frac{x^i\partial_l\widetilde{Y}^l_k(x)+\partial_i(x^l\widetilde{Y}^l_k(x))-\widetilde{Y}_k^i(x)}{2}-\frac{|x|^2\partial_i\partial_l\widetilde{Y}^l_k(x)}{2k-1}\Big\},
\end{split}
\end{equation}
 for any spherical harmonic $Y_k\in\mathcal{H}_k^3$, where $\widetilde{Y}_k\in\mathcal{A}_k^3$ denotes the homogeneous harmonic extension of degree $k$. In our case, we calculate, using the definitions \eqref{vec2},
\begin{equation}\label{vec21}
x^l\H_{k+1}^{1,l}(x)=(k+1)|x|^2\widetilde{h}_k(x),\qquad x^l\H_{k-1}^{2,l}(x)=k\widetilde{h}_k(x), \qquad x^l\H_{k}^{3,l}(x)=0,
\end{equation}
and
\begin{equation}\label{vec22}
\partial_l\H_{k+1}^{1,l}(x)=(k+1)(2k+3)\widetilde{h}_k(x),\qquad \partial_l\H_{k-1}^{2,l}(x)=0, \qquad \partial_l\H_{k}^{3,l}(x)=0.
\end{equation}

Therefore we calculate
\begin{equation}\label{vec27}
\mathcal{N}^i_{1}(\H_{k-1}^{2})(x)=-\frac{(k-1)(k+1)(k+2)}{(2k-1)(2k+1)}\H_{k-1}^{2,i}(x),
\end{equation}
\begin{equation}\label{vec28}
\mathcal{N}^i_{1}(\H_{k}^{3})(x)=-\frac{(k-1)(k+2)}{2k+1}\H_{k}^{3,i}(x).
\end{equation}
Since $\partial_i\widetilde{h}_k(x)=\H_{k-1}^{2,i}$ and $(2k+1)x^i\widetilde{h}_k(x)=\H_{k+1}^{1,i}+|x|^2\H_{k-1}^{2,i}$, we calculate
\begin{equation}\label{vec29}
\begin{split}
&x^i\partial_l{\bf H}_{k+1}^{1,l}(x)-\partial_i(x^l{\bf H}_{k+1}^{1,l}(x))+{\bf H}_{k+1}^{1,i}(x)=(k+2){\bf H}_{k+1}^{1,i}(x),\\
&x^i\partial_l{\bf H}_{k+1}^{1,l}(x)+\partial_i(x^l{\bf H}_{k+1}^{1,l}(x))-{\bf H}_{k+1}^{1,i}(x)\\
&\qquad\qquad\qquad\qquad=\frac{2k^2+5k+4}{2k+1}{\bf H}_{k+1}^{1,i}(x)+\frac{(k+1)(4k+6)}{2k+1}|x|^2{\bf H}_{k-1}^{2,i}(x).
\end{split}
\end{equation}
\begin{equation}\label{vec30}
\mathcal{N}^i_{1}(\H_{k+1}^{1})(x)=-\frac{k(k-1)(k+2)}{(2k+1)(2k+3)}\H_{k+1}^{1,i}(x).
\end{equation}
The main identities \eqref{vec27}, \eqref{vec28}, and \eqref{vec30} provide a complete diagonalization of the linear operator $\mathcal{N}_1$. More precisely, we have the following proposition:

\begin{proposition}\label{linearSummary}
For any $k\in\Z_+$ we fix an orthogonal basis $h_{k,m}$, $m\in\{-k,\ldots,k\}$, of the space of spherical harmonics $\mathcal{H}_k$ and define the associated functions ${\bf H}_{k+1,m}^1,{\bf H}_{k-1,m}^2,{\bf H}_{k,m}^3$ as in \eqref{vec2} (if $k=0$ then ${\bf H}_{k-1,m}^2={\bf H}_{k,m}^3=0$, so only the function ${\bf H}_{k+1,m}^1={\bf H}_{1,0}^1$ is relevant). Then 

(i) $\H_{k+1,m}^1\in\mathcal{A}_{k+1}^3$, $\H_{k-1,m}^2\in\mathcal{A}_{k-1}^3$, $\H_{k,m}^3\in\mathcal{A}_{k}^3$ for any $k\in\Z_+$ and $m\in\{-k,\ldots,k\}$;

(ii) The vector-valued functions $\big\{{\bf H}_{1,0}^1, {\bf H}_{k+1,m}^1,{\bf H}_{k-1,m}^2,{\bf H}_{k,m}^3:\,k\in\{1,\ldots\}, m\in\{-k,\ldots,k\}\big\}$ when restricted to the sphere $\SS^2$ form an orthogonal basis of the space of functions $L^2(\SS^2:\R^3)$;

(iii) The linear operator $\mathcal{N}_1$ is diagonal in this basis,
\begin{equation}\label{vec31}
\begin{split}
\mathcal{N}_{1}(\H_{k+1,m}^{1})&=-a_k\H_{k+1,m}^{1},\\
\mathcal{N}_{1}(\H_{k-1,m}^{2})&=-b_k\H_{k-1,m}^{2},\\
\mathcal{N}_{1}(\H_{k,m}^{3})&=-c_k\H_{k,m}^{3},
\end{split}
\end{equation}
where the eigenvalues are
\begin{equation}\label{tete4}
a_k:=\frac{k(k-1)(k+2)}{(2k+1)(2k+3)},\qquad b_k:=\frac{(k-1)(k+1)(k+2)}{(2k-1)(2k+1)},\qquad c_k:=\frac{(k-1)(k+2)}{(2k+1)}.
\end{equation}
\end{proposition}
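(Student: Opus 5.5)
The proof splits into the three parts of the statement, with (iii) essentially done by the calculations just completed.

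\emph{Part (i).} Harmonicity of the components is a direct computation in $\R^3$. For $\H^2_{k-1}$, each component $\partial_i\widetilde h_k$ is a derivative of a harmonic polynomial, hence lies in $\mathcal{A}_{k-1}$. For $\H^3_k$, the operator $x^m\partial_n-x^n\partial_m$ commutes with $\Delta_{\R^3}$ and preserves degree. For $\H^1_{k+1}$, we have $\H^{1,i}_{k+1}=(2k+1)\widetilde Y^i_{k+1}$ in the notation of Lemma \ref{zon20}, so it lies in $\mathcal{A}_{k+1}$ by that lemma.

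\emph{Part (ii).} Orthogonality comes first.
\begin{itemize}
\item Functions whose components lie in different $\mathcal{H}_l$ are orthogonal by Lemma \ref{Lem1}(iii). So only same-degree pairs need checking. For a fixed degree $l$, these are $\H^1_{l}$ (from $h_{l-1}$), $\H^2_{l}$ (from $h_{l+1}$) and $\H^3_{l}$ (from $h_l$).
\item On $\SS^2$, $\H^3$ is tangential and $\H^2$ restricted to the sphere equals $\nabla_{\SS^2}h+kx\,h$. The pairings $\langle \H^3,\H^2\rangle$ and $\langle\H^3,\H^1\rangle$ therefore reduce to integrals of the form $\int V(h)\,g$ with $V$ a combination of the divergence-free fields $V_{ij}$ from \eqref{vf11}. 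These vanish by \eqref{vf13} together with the fact that $V_{ij}$ preserve each $\mathcal{H}_k$.
\item The remaining pairing $\langle\H^1_l,\H^2_l\rangle$ is cleanest if I write both on $\SS^2$ as $\alpha x\,h+\beta\nabla_{\SS^2}h$ and integrate by parts with $\int|\nabla_{\SS^2}h|^2=k(k+1)\int|h|^2$. Concretely, $\H^1$ on $\SS^2$ equals $(k+1)x h_k-\nabla_{\SS^2}h_k$ with $h_k\in\mathcal{H}_{l-1}$. The function $\H^2$ built from $h'\in\mathcal{H}_{l+1}$ is not directly comparable, since $h_k$ and $h'$ have different degrees.
\item I would handle this by checking that $x\cdot\H$ and $\mathrm{div}_{\SS^2}$ of the tangential part produce scalars in different degrees, which are then orthogonal. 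Alternatively, I can invoke the Stein-Weiss decomposition $\mathcal{H}_l^3=\{x^i h-\ldots\}$ directly.
\item Within a fixed type, orthogonality over $m$ follows from the orthogonality of the $h_{k,m}$. The relevant inner products are $\int x h\bar g=0$ for equal degree, $\int \nabla h\cdot\overline{\nabla g}\propto\int h\bar g$, and $\int (x\times\nabla h)\cdot\overline{(x\times \nabla g)}=\int\nabla_{\SS^2}h\cdot\overline{\nabla_{\SS^2}g}$.
\end{itemize}

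Completeness is a dimension count. The space $\mathcal{H}^3_l$ has dimension $3(2l+1)$. The families contributing to degree $l$ give $(2l-1)+(2l+3)+(2l+1)$ functions, which equals $3(2l+1)$ for $l\ge1$, and $1+2=3$ for $l=0$ (namely $\H^2_0$ from $k=1$ and $\H^1_{1,0}$ from $k=0$). Hence the orthogonal families span each $\mathcal{H}^3_l$, and density follows from Lemma \ref{Lem1}(iii).

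I should flag an inconsistency in the indexing. For degree $0$, the functions $\H^2_0$ coming from $h_1$ are constants, and these are needed for completeness. The listed set does include them, since $\H^2_{k-1,m}$ with $k=1$ appears. The only degree-$0$ contributions are then $\H^2_{0,m}$, while $\H^1_{1,0}$ is degree $1$. The count at $l=1$ is $1+5+3=9$, which checks out, and at $l=0$ it is $3$ from $\H^2_{0,m}$, which is fine.

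\emph{Part (iii).} Substitute \eqref{vec21}--\eqref{vec22} and \eqref{vec29} into \eqref{vec20}, applied to each vector harmonic with its own degree. This gives \eqref{vec27}, \eqref{vec28} and \eqref{vec30}.

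For $\H^1$, the $|x|^2\H^2$ term produced by \eqref{vec29} must cancel against the term $-|x|^2\partial_i\partial_l\widetilde Y^l/(2k-1)$, which uses $\partial_l\H^{1,l}=(k+1)(2k+3)\widetilde h_k$. This cancellation, along with the bookkeeping of degree shifts ($k\mapsto k\pm1$ in \eqref{vec20}), is the main algebraic obstacle. I would verify it by simplifying the rational coefficients.
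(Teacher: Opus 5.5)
Your plan matches the paper's. Part (i) is a direct computation. Part (ii) shows that for each degree $l\ge1$ the $(2l-1)+(2l+3)+(2l+1)=3(2l+1)$ functions of degree $l$ are pairwise orthogonal on $\SS^2$, using $\H^1=(k+1)xh_k-\nabla_{\SS^2}h_k$, $\H^2=kxh_k+\nabla_{\SS^2}h_k$ and $\H^3=x\times\nabla_{\SS^2}h_k$. Part (iii) substitutes \eqref{vec21}, \eqref{vec22}, \eqref{vec29} into \eqref{vec20}, and the cancellation you single out there is right: both terms are $\pm\frac{(k+1)(2k+3)}{2k+1}|x|^2\H^{2,i}_{k-1}$.

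\textbf{The gap: orthogonality of $\H^3_l$ and $\H^2_l$.} Write $\H^{3,i}_l=V_{(i)}h_l$, with $V_{(1)}=V_{23}$, $V_{(2)}=V_{31}$, $V_{(3)}=V_{12}$, and $\H^{2,i}_l=\partial_i\widetilde g_{l+1}$ with $g_{l+1}\in\mathcal{H}_{l+1}$. Then every factor lies in $\mathcal{H}_l$. So neither \eqref{vf13} nor the fact that the $V_{ij}$ preserve degree makes any single term $\int V_{(i)}(h_l)\,\overline{\partial_i\widetilde g_{l+1}}\,d\mu$ vanish. For example, with $h=x^1$ and $g=x^2x^3$ one gets $\H^3=(0,x^3,-x^2)$ and $\H^2=(0,x^3,x^2)$. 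The component integrals are $\pm4\pi/3$, and only their sum is zero.

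What is missing is this cancellation across components. There are three ways to get it:
\begin{itemize}
\item The paper's route: $x\times\nabla_{\SS^2}h$ is divergence-free on $\SS^2$, so $\int(x\times\nabla_{\SS^2}h)\cdot\overline{\nabla_{\SS^2}g}\,d\mu=-\int\overline{g}\,\mathrm{div}_{\SS^2}(x\times\nabla_{\SS^2}h)\,d\mu=0$.
\item After integrating by parts with \eqref{vf13}, use $\sum_iV_{(i)}\partial_i\widetilde g=0$, which is just the symmetry of second derivatives.
\item Rewrite $(x\times\nabla\widetilde h)\cdot\nabla\overline{\widetilde g}=-\nabla\widetilde h\cdot(x\times\nabla\overline{\widetilde g})$. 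The pairings become $\partial_b\widetilde h_l\in\mathcal{H}_{l-1}$ against $V_{(b)}g_{l+1}\in\mathcal{H}_{l+1}$, and your degree argument then applies.
\end{itemize}
For $\H^3_l$ against $\H^1_l$ your argument does work once the radial part is dropped, since what remains pairs $\mathcal{H}_l$ with $\mathcal{H}_{l-2}$.

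\textbf{Two smaller corrections.} The $\H^1_l$--$\H^2_l$ pairing is not an obstacle. The fact that the generating harmonics have different degrees is exactly what makes it vanish: $\langle\H^1_{k+1}[h_k],\H^2_{k'-1}[h_{k'}]\rangle=(k+1)k'\langle h_k,h_{k'}\rangle-\langle\nabla_{\SS^2}h_k,\nabla_{\SS^2}h_{k'}\rangle=k'(k-k')\langle h_k,h_{k'}\rangle$, which is zero for $k'=k+2$. This is the paper's computation, and your radial/divergence check amounts to the same thing. Also, your first count at $l=0$ was wrong, as you then noticed: $\H^1_{1,0}$ has degree one, and the $\H^2_{0,m}$ alone supply the three degree-zero functions. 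So the statement has no indexing inconsistency.
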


\begin{proof} Part (i) follows easily from the definition \eqref{vec2} and part (iii) follows from our calculations above, see the formulas \eqref{vec27}, \eqref{vec28}, \eqref{vec30}. 

To prove part (ii) it suffices to show that for any $k\geq 1$ the functions $\{\H^1_{k,m_1}:\, |m_1|\leq k-1\}$, $\{\H^2_{k,m_2}:\, |m_2|\leq k+1\}$, and $\{\H^3_{k,m_3}:\, |m_3|\leq k\}$ form a basis of the space of solid spherical harmonics $\mathcal{A}^3_k$. The total number of these functions is $(2k-1)+(2k+1)+(2k+3)=6k+3$, the same as the dimension of the space $\mathcal{A}^3_k$. 
Therefore it suffice to prove that all these functions when restricted to $\mathbb{S}^2$ are pairwise orthogonal to each other.
We first notice that, on $\SS^2$,
\begin{equation*}
\begin{aligned}
        \nabla \tilde{h}_l(x)=\nabla_{\SS^2}h_l(x)+lxh_l(x).
    \end{aligned}
\end{equation*}
Therefore, on $\SS^2$ we can write
\begin{equation*}
\begin{aligned}
        \H_{k,m}^1(x)&=kxh_{k-1,m}(x)-\nabla_{\SS^2}h_{k-1,m}(x),\\
        \H_{k,m}^2(x)&=(k+1)xh_{k+1,m}(x)+\nabla_{\SS^2}h_{k+1,m}(x),\\
        \H_{k,m}^3(x)&=x\wedge \nabla_{\SS^2}h_{k,m}(x).
    \end{aligned}
\end{equation*}
We will repeatedly use that
\begin{equation*}
    \begin{aligned}
        \int_{\SS^2}\nabla_{\SS^2}h_{l,m}\cdot\bar{\nabla_{\SS^2}h_{l',m'}}d\mu=l(l+1)\int_{\SS^2}h_{l,m}\cdot\bar{h_{l',m'}}d\mu,
    \end{aligned}
\end{equation*}
which follows by integration by parts and \eqref{coo7}. In particular, by the orthogonality assumption on the $\{h_{k,m}\}$ basis, the integral above vanishes if either $l\neq l'$ or if $l=l'$ but $m\neq m'$.

{\bf{Step 1.}} We first show the orthogonality inside each family. When $k\neq k'$, the orthogonality simply follows from \eqref{inr5}.
For $\H_{k,m}^1$, we have
\begin{equation*}
    \begin{aligned}
        \int_{\SS^2}\H_{k,m}^1\cdot\bar{\H_{k,m'}^1}d\mu&=k^2\int_{\SS^2}h_{k-1,m}\bar{h_{k-1,m'}}d\mu+\int_{\SS^2}\nabla_{\SS^2}h_{k-1,m}\cdot\bar{\nabla_{\SS^2}h_{k-1,m'}}d\mu,
    \end{aligned}
\end{equation*}
where we use that $x\cdot\nabla_{\SS^2}h_l=0$.
Then, integration by parts followed by \eqref{coo7} gives 
\begin{equation*}
    \begin{aligned}
        \int_{\SS^2}\H_{k,m}^1\cdot\bar{\H_{k,m'}^1}d\mu&=\big(k^2+k(k-1)\big)\int_{\SS^2}h_{k-1,m}\bar{h_{k-1,m'}}d\mu,
    \end{aligned}
\end{equation*}
thus the functions $\H_{k,m}^1$ are mutually orthogonal.
Similarly, 
\begin{equation*}
    \begin{aligned}
        \int_{\SS^2}\H_{k,m}^2\cdot\bar{\H_{k,m'}^2}d\mu&=\big((k+1)^2+(k+1)(k+2)\big)\int_{\SS^2}h_{k+1,m}\bar{h_{k+1,m'}}d\mu.
    \end{aligned}
\end{equation*}
Finally, we use that $\nabla_{\SS^2}h_{k,m}$ is tangent to the sphere and that for any $a,b$ orthogonal to $x\in\SS^2$,
\begin{equation*}
    (x\wedge a)\cdot\bar{x\wedge b}=(x\cdot x)(a\cdot\bar{b})-(x\cdot\bar{b})(x\cdot a)=a\cdot \bar{b}.
\end{equation*}
We get
\begin{equation*}
    \begin{aligned}
        \int_{\SS^2}\H_{k,m}^3\cdot\bar{\H_{k,m'}^3}d\mu&=\int_{\SS^2}(x\wedge \nabla_{\SS^2}h_{k,m})\cdot\bar{(x\wedge \nabla_{\SS^2}h_{k,m'})}d\mu=\int_{\SS^2}\nabla_{\SS^2}h_{k,m}\cdot\bar{\nabla_{\SS^2}h_{k,m'}}d\mu\\
        &=k(k+1)\int_{\SS^2}h_{k,m}\cdot\bar{h_{k,m'}}d\mu,
    \end{aligned}
\end{equation*}
thus the third family is also mutually orthogonal.

{\bf{Step 2.}} We next prove the orthogonality between the different families.
First, 
\begin{equation*}
    \begin{aligned}
        \int_{\SS^2}\H_{k+1,m}^1\cdot\bar{\H_{k'-1,m'}^2}d\mu&=\int_{\SS^2}
\left(
(k+1)h_{k,m}x-\nabla_{\SS^2}h_{k,m}
\right)
\cdot
\bar{
\left(
k' h_{k',m'}x+\nabla_{\SS^2}h_{k',m'}
\right)}
\,d\mu .
    \end{aligned}
\end{equation*}
The mixed normal-tangential terms vanish pointwise because
$x\cdot \nabla_{\SS^2}h_{k,m}=0$, $x\cdot \nabla_{\SS^2}h_{k',m'}=0$.
Hence
\begin{equation*}
    \begin{aligned}
    \int_{\SS^2}\H_{k+1,m}^1\cdot\bar{\H_{k'-1,m'}^2}d\mu&=
(k+1)k'
\int_{\SS^2}
h_{k,m}\bar{h_{k',m'}}\,d\mu-
\int_{\SS^2}
\nabla_{\SS^2}h_{k,m}
\cdot
\bar{\nabla_{\SS^2}h_{k',m'}}\,d\mu.    
    \end{aligned}
\end{equation*}
By integration by parts and the eigenvalue equation \eqref{coo7},
we have
\begin{equation*}
    \begin{aligned}
    \int_{\SS^2}\H_{k+1,m}^1\cdot\bar{\H_{k'-1,m'}^2}d\mu
&=
\big[(k+1)k'-k'(k'+1)\big]
\int_{\SS^2}
h_{k,m}\bar{h_{k',m'}}\,d\mu
\\
&=
k'(k-k')
\int_{\SS^2}
h_{k,m}\bar{h_{k',m'}}\,d\mu .
    \end{aligned}
\end{equation*}
If $k\neq k'$, the scalar product vanishes by orthogonality of scalar
spherical harmonics. If $k=k'$, the coefficient $k'(k-k')$ vanishes, thus we conclude the orthogonality result for the first and second family.

Next, consider the first and third families. We compute
\begin{equation*}
\begin{aligned}
\int_{\SS^2}\H^1_{k+1,m}\cdot\bar{
\mathbf H^3_{k',m'}}d\mu
&=
\int_{\SS^2}
\left(
(k+1)h_{k,m}x-\nabla_{\SS^2}h_{k,m}
\right)
\cdot
\bar{
\left(
x\times \nabla_{\SS^2}h_{k',m'}
\right)}
\,d\mu\\
&=-
\int_{\SS^2}
\nabla_{\SS^2}h_{k,m}
\cdot
\left(
x\times \bar{\nabla_{\SS^2}h_{k',m'}}
\right)
\,d\mu.
\end{aligned}
\end{equation*}
Since the tangent vector field
$x\times \nabla_{\SS^2}\overline{h_{k',m'}}$
is divergence-free on $\SS^2$, integration by parts gives that
\begin{equation*}
    \begin{aligned}
\int_{\SS^2}
\nabla_{\SS^2}h_{k,m}
\cdot
\left(
x\times \nabla_{\SS^2}\bar{h_{k',m'}}
\right)
\,d\mu
&=
-\int_{\SS^2}
h_{k,m}
\mathrm{div}_{\SS^2}
\left(
x\times \nabla_{\SS^2}\bar{h_{k',m'}}
\right)
\,d\mu=0.
\end{aligned}
\end{equation*}

Finally, consider the second and third families. Using the same arguments, we obtain the result
\begin{equation*}
\begin{aligned}
\int_{\SS^2}
\mathbf H^2_{k-1,m}\cdot
\bar{\mathbf H^3_{k',m'}}d\mu
&=
\int_{\SS^2}
\left(
k h_{k,m}x+\nabla_{\SS^2}h_{k,m}
\right)
\cdot
\overline{
\left(
x\times \nabla_{\SS^2}h_{k',m'}
\right)}
\,d\mu\\
&=
\int_{\SS^2}
\nabla_{\SS^2}h_{k,m}
\cdot
\left(
x\times \nabla_{\SS^2}\overline{h_{k',m'}}
\right)
d\mu\\
&=0.
\end{aligned}
\end{equation*}

\end{proof}

It follows from Proposition \ref{linearSummary} that the operator $\mathcal{N}_1$ has only non-positive eigenvalues and a non-trivial kernel, spanned by the functions $\H_{2,m}^{1},\H_{0,m}^{2},\H_{1,m}^{3}$, $m\in\{-1,0,1\}$, (coming from the index $k=1$) and $\H^1_{1,0}$ (coming from the index $k=0$). We can calculate these functions explicitly, for example if we start with the basis of solid harmonics
\begin{equation}\label{vec32}
\widetilde{h}_{0,0}(x)=1,\qquad \widetilde{h}_{1,-1}(x)=x^1,\qquad \widetilde{h}_{1,0}(x)=x^2,\qquad \widetilde{h}_{1,1}(x)=x^3.
\end{equation}
Then
\begin{equation}\label{vec33}
\begin{split}
\H_{1,0}^1(x)&={}^t(x^1,x^2,x^3),\\
\H_{2,-1}^1(x)&={}^t(2(x^1)^2-(x^2)^2-(x^3)^2,3x^1x^2,3x^1x^3),\\
\H_{2,0}^1(x)&={}^t(3x^1x^2,2(x^2)^2-(x^1)^2-(x^3)^2,3x^2x^3),\\
\H_{2,1}^1(x)&={}^t(3x^1x^3,3x^2x^3,2(x^3)^2-(x^1)^2-(x^2)^2),
\end{split}
\end{equation}
\begin{equation}\label{vec34}
\begin{split}
\H_{0,-1}^2(x)={}^t(1,0,0),\qquad\H_{0,0}^2(x)={}^t(0,1,0),\qquad\H_{0,1}^2(x)={}^t(0,0,1),
\end{split}
\end{equation}
\begin{equation}\label{vec35}
\begin{split}
\H_{1,-1}^3(x)={}^t(0,x^3,-x^2),\qquad \H_{1,0}^3(x)={}^t(-x^3,0,x^1),\qquad \H_{1,1}^3(x)={}^t(x^2,-x^1,0).
\end{split}
\end{equation}

\begin{lemma}[Kernel and Steady States] \label{LemSteadyKernel}
	The vector spherical harmonics \eqref{vec33}--\eqref{vec35} span the $10$-dimensional kernel of the linearized operator $\mathcal{N}_1$. These basis fields generate the exact manifold of spherical steady states:  dilations, translations, and the $\mathbb{SO}^+(3,1)$ conformal symmetries.
\end{lemma}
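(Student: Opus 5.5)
The proof has two parts. First we identify the kernel of $\mathcal{N}_1$ from the diagonalization in Proposition \ref{linearSummary}. Then we match each kernel element with the derivative of an explicit one-parameter family of steady states from \eqref{mainfamily}.

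\textbf{Step 1: the kernel.} By Proposition \ref{linearSummary}(ii), the restrictions to $\SS^2$ of ${\bf H}^1_{1,0}$, ${\bf H}^1_{k+1,m}$, ${\bf H}^2_{k-1,m}$, ${\bf H}^3_{k,m}$ form an orthogonal basis of $L^2(\SS^2:\R^3)$, and $\mathcal{N}_1$ is diagonal in this basis. The kernel is therefore spanned exactly by those basis elements with zero eigenvalue. The eigenvalue formulas \eqref{tete4} give the following.
\begin{itemize}
\item $a_k=0$ if and only if $k\in\{0,1\}$. The case $k=0$ gives ${\bf H}^1_{1,0}$, and $k=1$ gives ${\bf H}^1_{2,m}$ for $|m|\le1$.
\item $b_k=0$ if and only if $k=1$ (the $k=0$ case is absent by convention), giving ${\bf H}^2_{0,m}$.
\item $c_k=0$ if and only if $k=1$, giving ${\bf H}^3_{1,m}$.
\item For $k\ge2$ all three eigenvalues are strictly positive, since every factor in \eqref{tete4} is positive.
\end{itemize}
This gives $1+3+3+3=10$ vectors. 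They are orthogonal and nonzero, hence linearly independent. The explicit formulas \eqref{vec33}--\eqref{vec35} follow by substituting the basis \eqref{vec32} into \eqref{vec2}.

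\textbf{Step 2: tangent vectors of the steady-state manifold.} The map $(r,b,\Lambda)\mapsto \mathcal{S}_{r,b,\Lambda}$ consists of steady states by Lemma \ref{LemConformalSteady} and the discussion after it. So for any smooth curve through $(1,0,I_4)$, differentiating $\mathcal{N}(\mathcal{S}_{r(s),b(s),\Lambda(s)})=0$ at $s=0$ gives $\mathcal{N}_1(\dot{\mathcal S})=0$. Here I use that $\mathcal{N}_1$ is the Fréchet derivative of $\mathcal{N}$ at $X_0$, which is how it was defined in \eqref{Pes6}. The individual curves are as follows.
\begin{itemize}
\item Dilations $r=1+s$ give $\dot{\mathcal S}(p)=p={\bf H}^1_{1,0}$.
\item Translations $b=se_j$ give the constant vectors ${\bf H}^2_{0,m}$.
\item Rotations $\Lambda=\mathrm{diag}(1,e^{sA})$ with $A$ skew-symmetric give $\dot{\mathcal S}(p)=Ap$. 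These are exactly $x\times e_j$ up to sign, namely ${\bf H}^3_{1,m}$.
\item Boosts $\Lambda(s)=\exp(sK_j)$, where $K_j$ is the generator mixing $e_0$ and $e_j$, are computed from \eqref{ConformalAction1}. One has $\mathbb{P}_3\Lambda\widetilde p=p+se_j+O(s^2)$ and $e_0^\top\Lambda\widetilde p=1+sp^j+O(s^2)$. Hence $\dot{\mathcal S}(p)=e_j-p^jp$.
\end{itemize}

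\textbf{Step 3: matching the boosts, and conclusion.} The boost derivative $e_j-p^jp$ is not a single basis element. On $\SS^2$ it decomposes as $\tfrac23e_j-\tfrac13{\bf H}^1_{2,m}$ (up to the index labelling). For example, with $j=1$, $\tfrac13(2(x^1)^2-(x^2)^2-(x^3)^2)=(x^1)^2-\tfrac13$ on the sphere, so $e_1-x^1x=\tfrac23e_1-\tfrac13{\bf H}^1_{2,-1}$. Combining boosts with translations therefore recovers each ${\bf H}^1_{2,m}$. Thus the ten tangent directions (1 dilation, 3 translations, 3 rotations, 3 boosts) span the same 10-dimensional space as the kernel.

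Checking linear independence is the main point I would treat carefully. In particular, the boost directions must be shown not to collapse onto the translations; this is exactly the ${\bf H}^1_{2,m}$ component computed above. Given independence, the parametrization $(r,b,\Lambda)\mapsto\mathcal S_{r,b,\Lambda}$ is an immersion near $(1,0,I_4)$ with tangent space equal to $\ker\mathcal N_1$. So the basis fields generate the steady-state manifold, as claimed.
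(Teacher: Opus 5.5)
Your proposal is correct and follows the paper's proof. You read off the zero eigenvalues $a_0=a_1=b_1=c_1=0$ from Proposition \ref{linearSummary} to get the ten kernel modes, then match dilations, translations and rotations with $\mathbf{H}^1_{1,0}$, $\mathbf{H}^2_{0,m}$ and $\mathbf{H}^3_{1,m}$, and the boosts with $\tfrac23\mathbf{H}^2_{0,m}-\tfrac13\mathbf{H}^1_{2,m}$. Your additional steps are all correct and supply details the paper only asserts: deriving the boost generator $e_j-p^jp$ from \eqref{ConformalAction1}, obtaining $\mathcal{N}_1(\dot{\mathcal S})=0$ by differentiating the steady-state identity, and noting that the change of basis is triangular with a nonzero $\mathbf{H}^1_{2,m}$ coefficient, hence invertible.
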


\begin{proof}
	By Proposition \ref{linearSummary}, the linear operator $\mathcal{N}_1$ is completely diagonalized by the orthogonal basis of vector spherical harmonics $\mathbf{H}^1_{k+1,m}$, $\mathbf{H}^2_{k-1,m}$, and $\mathbf{H}^3_{k,m}$. To find the kernel of $\mathcal{N}_1$, we seek all modes $(k,m)$ for which the corresponding eigenvalues vanish. 
	
	From the eigenvalue formulas established in \eqref{vec31}, we examine the roots of the respective multipliers
	\begin{itemize}
		\item For $\mathbf{H}_{k+1,m}^1$, the eigenvalue $-\frac{k(k-1)(k+2)}{(2k+1)(2k+3)}$ vanishes exactly when $k=0$ and $k=1$.
		\item For $\mathbf{H}_{k-1,m}^2$, the eigenvalue $-\frac{(k-1)(k+1)(k+2)}{(2k-1)(2k+1)}$ vanishes when $k=1$ (as $k \ge 1$ is required for this mode to be well-defined).
		\item For $\mathbf{H}_{k,m}^3$, the eigenvalue $-\frac{(k-1)(k+2)}{2k+1}$ vanishes when $k=1$.
	\end{itemize}
	
	Evaluating the dimensions, the $k=0$ mode yields $1$ dimension (restricted to $m=0$), while the $k=1$ modes yield $3$ dimensions each ($m \in \{-1, 0, 1\}$) across the three harmonic families. This generates exactly $1 + 9 = 10$ dimensions. 
	
	To prove these modes exactly represent the physical steady states, we compute the infinitesimal generators (the Lie algebra) of the steady-state transformations evaluated at the identity map $X _0(\mathbf{p}) = \mathbf{p}$. We map these generators to our vector spherical harmonic basis, which is defined for a solid harmonic $\widetilde{h}_k(X )$ at the surface $|\mathbf{p}|=1$ as
	\begin{equation}\label{VSH_Definitions}
	\mathbf{H}^1_{k+1}[\widetilde{h}_k](\mathbf{p}) = (2k+1)\mathbf{p}\widetilde{h}_k(\mathbf{p}) - \nabla \widetilde{h}_k(\mathbf{p}), \quad
	\mathbf{H}^2_{k-1}[\widetilde{h}_k](\mathbf{p}) = \nabla \widetilde{h}_k(\mathbf{p}), \quad
	\mathbf{H}^3_{k}[\widetilde{h}_k](\mathbf{p}) = \mathbf{p} \times \nabla \widetilde{h}_k(\mathbf{p}).
	\end{equation}
	
	\textbf{Case 1. Dilations ($1$ Dimension).}
	An infinitesimal uniform scaling of the sphere is given by $X _\epsilon(\mathbf{p}) = (1+\epsilon)\mathbf{p}$. Taking the derivative with respect to $\epsilon$ at the identity gives the purely radial generator
	\begin{equation}
	\mathbf{V}_{dil}(\mathbf{p}) = \left. \frac{d}{d\epsilon} \right|_{\epsilon=0} (1+\epsilon)\mathbf{p} = \mathbf{p}.
	\end{equation}
	Evaluating the $k=0$ mode with the constant solid harmonic $\widetilde{h}_0(X ) = 1$ (where $\nabla \widetilde{h}_0 = 0$), we obtain
	\begin{equation}
	\mathbf{H}^1_1[\widetilde{h}_0](\mathbf{p}) = (2(0)+1)\mathbf{p}(1) - 0 = \mathbf{p}.
	\end{equation}
	Thus, dilations are perfectly captured by the $\mathbf{H}^1_{1,0}$ mode.
	
	\textbf{Case 2. Translations ($3$ Dimensions).}
	An infinitesimal translation by a constant vector $\mathbf{a} \in \mathbb{R}^3$ is $X _\epsilon(\mathbf{p}) = \mathbf{p} + \epsilon \mathbf{a}$. The generator is the constant vector field
	\begin{equation}
	\mathbf{V}_{trans}(\mathbf{p}) = \left. \frac{d}{d\epsilon} \right|_{\epsilon=0} (\mathbf{p} + \epsilon \mathbf{a}) = \mathbf{a}.
	\end{equation}
	Using the $k=1$ solid harmonic $\widetilde{h}_1(X ) = \mathbf{a} \cdot X $, we find $\nabla \widetilde{h}_1(X ) = \mathbf{a}$. Applying the $\mathbf{H}^2$ formula yields
	\begin{equation}
	\mathbf{H}^2_0[\widetilde{h}_1](\mathbf{p}) = \nabla (\mathbf{a} \cdot \mathbf{p}) = \mathbf{a}.
	\end{equation}
	Hence, translations are completely encapsulated by the $k=1$ modes of $\mathbf{H}^2$.
	
	\textbf{Case 3. Conformal Symmetries of $\mathbb{SO}^+(3,1)$ ($6$ Dimensions).}
	The restricted Lorentz group acting projectively on the sphere generates $3$ rigid rotations and $3$ Lorentzian boosts (see Figure \ref{fig:boost} below).
	
	\textit{(i) Rotations (Toroidal Deformations).} An infinitesimal rotation around an axis $\boldsymbol{\omega}$ generates the vector field $\mathbf{V}_{rot}(\mathbf{p}) = \boldsymbol{\omega} \times \mathbf{p}$. 
	Selecting the $k=1$ solid harmonic $\widetilde{h}_1(X ) = -\boldsymbol{\omega} \cdot X $ such that $\nabla \widetilde{h}_1 = -\boldsymbol{\omega}$, the $\mathbf{H}^3$ formula yields
	\begin{equation}
	\mathbf{H}^3_1[\widetilde{h}_1](\mathbf{p}) = \mathbf{p} \times (-\boldsymbol{\omega}) = \boldsymbol{\omega} \times \mathbf{p}.
	\end{equation}
	This confirms that the tangential, divergence-free rotations map exactly to the $k=1$ modes of $\mathbf{H}^3$.
	
	\textit{(ii) Lorentzian Boosts (Poloidal Deformations).} A conformal boost in the direction of a vector $\mathbf{v}$ yields the generator $\mathbf{V}_{boost}(\mathbf{p}) = \mathbf{v} - (\mathbf{v} \cdot \mathbf{p})\mathbf{p}$.
	Using the solid harmonic $\widetilde{h}_1(X ) = \mathbf{v} \cdot X $ (where $\nabla \widetilde{h}_1 = \mathbf{v}$), we evaluate both the $\mathbf{H}^1$ and $\mathbf{H}^2$ modes for $k=1$
	\begin{equation}
	\mathbf{H}^1_2[\widetilde{h}_1](\mathbf{p}) = 3\mathbf{p}(\mathbf{v} \cdot \mathbf{p}) - \mathbf{v}, \quad \text{and} \quad \mathbf{H}^2_0[\widetilde{h}_1](\mathbf{p}) = \mathbf{v}.
	\end{equation}
	Constructing a precise linear combination of these two modes exactly recovers the boost generator
	\begin{equation}
	\frac{2}{3}\mathbf{H}^2_0[\widetilde{h}_1] - \frac{1}{3}\mathbf{H}^1_2[\widetilde{h}_1] = \frac{2}{3}\mathbf{v} - \left( \mathbf{p}(\mathbf{v} \cdot \mathbf{p}) - \frac{1}{3}\mathbf{v} \right) = \mathbf{v} - (\mathbf{v} \cdot \mathbf{p})\mathbf{p}.
	\end{equation}

	By explicitly reconstructing the infinitesimal generators of the entire steady-state family from the $k=0$ and $k=1$ vector spherical harmonics, we conclude that the $10$-dimensional kernel of $\mathcal{N}_1$ strictly coincides with the tangent space of the steady-state manifold evaluated at the identity.
    
\end{proof}

\vspace{-0.5cm}

\begin{figure}[h]
    \centering
\includegraphics[width=0.85\linewidth]{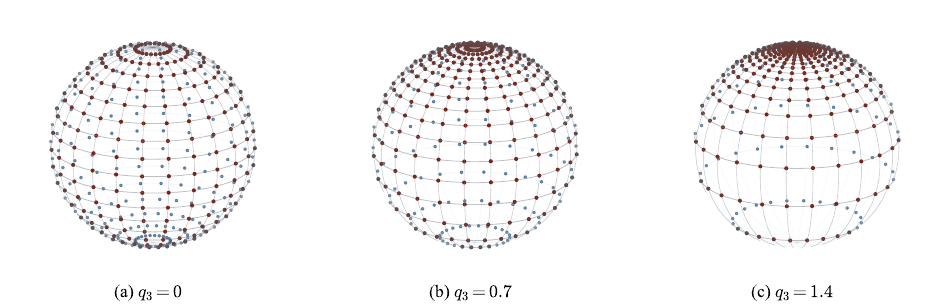}
    \caption{Lorentzian boost deformation}
    \label{fig:boost}
\end{figure}

\section{Nonlinear analysis and proof of the main theorem}\label{sec:nonlinear}

Letting $X(x)=x+Y(x)$, the nonlinearity \eqref{Pes1} can be written in the form
\begin{equation}\label{Pes20}
\mathcal{N}^i(x):=\int_{\SS^2}G_{ij}((x-p)+Y(x)-Y(p)))\Delta_{\SS^2}(p^j+Y^j(p))\,d\mu(p),
\end{equation}
where $i\in\{1,2,3\}$, $x\in\SS^2$. To understand the term $G_{ij}((x-p)+Y(x)-Y(p)))$ we start by writing
\begin{equation}\label{Pes21}
\begin{split}
&|(x-p)+(Y(x)-Y(p))|\\
&=\big[|x-p|^2+2(x^j-p^j)(Y^j(x)-Y^j(p))+(Y^j(x)-Y^j(p))(Y^j(x)-Y^j(p))\big]^{1/2}\\
&=|x-p|\Big[1+2\frac{(x^j-p^j)(Y^j(x)-Y^j(p))}{|x-p|^2}+\frac{(Y^j(x)-Y^j(p))(Y^j(x)-Y^j(p))}{|x-p|^2}\Big]^{1/2}\\
&=|x-p|\big[1+2\widetilde{Y}_{j,j}(x,p)+\widetilde{Y}_{k,j}(x,p)\widetilde{Y}_{k,j}(x,p)\big]^{1/2},
\end{split}
\end{equation}
where
\begin{equation}\label{Pes22}
\widetilde{Y}_{k,j}(x,p):=\frac{(x^k-p^k)(Y^j(x)-Y^j(p))}{|x-p|^2}.
\end{equation}
Therefore, recalling the definition \eqref{Intro2},
\begin{equation*}
\begin{split}
&G_{ij}((x-p)+(Y(x)-Y(p)))=\frac{1}{8\pi}\Big\{\frac{\delta_{ij}}{|x-p|\big[1+2\widetilde{Y}_{k,k}(x,p)+\widetilde{Y}_{k,l}(x,p)\widetilde{Y}_{k,l}(x,p)\big]^{1/2}}\\
&+\frac{[(x^i-p^i)+(Y^i(x)-Y^i(p)][(x^j-p^j)+(Y^j(x)-Y^j(p)]}{|x-p|^3\big[1+2\widetilde{Y}_{k,k}(x,p)+\widetilde{Y}_{k,l}(x,p)\widetilde{Y}_{k,l}(x,p)\big]^{3/2}}\Big\}\\
&=\frac{1}{8\pi|x-p|\big[1+2\widetilde{Y}_{k,k}(x,p)+\widetilde{Y}_{k,l}(x,p)\widetilde{Y}_{k,l}(x,p)\big]^{3/2}}\Big\{\delta_{ij}\big[1+2\widetilde{Y}_{k,k}(x,p)+\widetilde{Y}_{k,l}(x,p)\widetilde{Y}_{k,l}(x,p)\big]\\
&+\frac{[(x^i-p^i)+(Y^i(x)-Y^i(p)][(x^j-p^j)+(Y^j(x)-Y^j(p)]}{|x-p|^2}\Big\}\\
&=\frac{1}{8\pi|x-p|\big[1+2\widetilde{Y}_{k,k}(x,p)+\widetilde{Y}_{k,l}(x,p)\widetilde{Y}_{k,l}(x,p)\big]^{3/2}}\Big\{\delta_{ij}\big[1+2\widetilde{Y}_{k,k}(x,p)+\widetilde{Y}_{k,l}(x,p)\widetilde{Y}_{k,l}(x,p)\big]\\
&+\frac{(x^i-p^i)(x^j-p^j)}{|x-p|^2}+\widetilde{Y}_{i,j}(x,p)+\widetilde{Y}_{j,i}(x,p)+\widetilde{Y}_{k,i}(x,p)\widetilde{Y}_{k,j}(x,p)\Big\}.
\end{split}
\end{equation*}
Since $\Delta_{\SS^2}(p^j)=-2p^j$, our final formula of the Peskin nonlinearity is
\begin{equation}\label{Pes25}
\begin{split}
\mathcal{N}^i(x)&:=\frac{1}{8\pi}\int_{\SS^2}\frac{\big[1+2\widetilde{Y}_{k,k}(x,p)+\widetilde{Y}_{k,l}(x,p)\widetilde{Y}_{k,l}(x,p)\big]^{-3/2}}{|x-p|}[-2p^j+\Delta_{\SS^2}Y^j(p)]\\
&\times\Big\{\delta_{ij}+\frac{(x^i-p^i)(x^j-p^j)}{|x-p|^2}+\delta_{ij}\big[2\widetilde{Y}_{k,k}(x,p)+\widetilde{Y}_{k,l}(x,p)\widetilde{Y}_{k,l}(x,p)\big]+\\
&\quad\,\,\,+\widetilde{Y}_{i,j}(x,p)+\widetilde{Y}_{j,i}(x,p)+\widetilde{Y}_{k,i}(x,p)\widetilde{Y}_{k,j}(x,p)\Big\}\,d\mu(p).
\end{split}
\end{equation}
Therefore the Peskin nonlinearity can be thought of as an infinite polynomial expansion\footnote{The expansion converges as long as $|\widetilde{Y}_{k,j}(x,p)|\leq 1/4$ for any $k,j\in\{1,2,3\}$ and $x,p\in\mathbb{S}^2$.} of the functions $\Delta Y^j$ and $\widetilde{Y}_{k,j}$, with singular coefficients. 

To estimate the Peskin nonlinearity we recall the definition \eqref{Pes22} and consider multilinear operators of the form
\begin{equation}\label{Pes27}
\mathcal{T}_m(f;\underline{g})(x):=\int_{\SS^2}\Delta_{\SS^2}f(y)\cdot(g_1(x)-g_1(y))\cdot \ldots\cdot (g_m(x)-g_m(y)) K_m(x,y)\,d\mu(y),
\end{equation}
where $m\geq 1$, $f,g_1,\ldots,g_m:\mathbb{S}^2\to\mathbb{C}$ are suitable functions and the kernels $K_m=K_{m,m',\underline{k}}:\SS^2\times\SS^2\to\R$ are defined by
\begin{equation}\label{har2}
K_m(x,y):=\frac{(x^{k_1}-y^{k_1})\cdot\ldots\cdot(x^{k_{m'}}-y^{k_{m'}})}{|x-y|^{m+m'+1}},
\end{equation}
where $m'\leq m+2$ and $k_1,\ldots,k_{m'}\in\{1,2,3\}$. Our main nonlinear estimate is the following:

\begin{lemma}\label{LemmaNonlinMain}
With the notation in Theorem \ref{MainTHM}, there is a constant $C\geq 1$ such that if $m\geq 1$, $k_1,\ldots,k_m\in\{1,2,3\}$, and $f,g_1,\ldots,g_m\in Z[0,1]$ then
\begin{equation}\label{sif1}
\sup_{t\in[0,1]}\sup_{n\in\Z_+}(2^nt)^{\delta_0}\|\mathcal{P}'_n\mathcal{T}_{{\underline{k}}}(f;\underline{g})\|_{L^\infty}\leq C^m\|f\|_{Z[0,1]}\|g_1\|_{Z[0,1]}\cdot\ldots\cdot\|g_m\|_{Z[0,1]}.
\end{equation}
\end{lemma}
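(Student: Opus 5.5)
The plan is to prove \eqref{sif1} by a Littlewood--Paley paraproduct decomposition of every input of $\mathcal{T}_m$, combined with kernel bounds for the localized operators on $\SS^2$. We write $f=\sum_{a}\mathcal{P}'_af$ and $g_i=\sum_{b_i}\mathcal{P}'_{b_i}g_i$. The $Z[0,1]$ norm gives three pieces of information. First, $\|\mathcal{P}'_ag_i\|_{L^\infty}\lesssim 2^{-a}(1+2^at)^{-\delta_0}\|g_i\|_Z$. Second, from the Lipschitz bound, $|g_i(x)-g_i(y)|\le \|g_i\|_Z|x-y|$. Third, $\|\mathcal{P}'_a\Delta f\|_{L^\infty}\lesssim 2^{a}(1+2^at)^{-\delta_0}\|f\|_Z$ (here I would use the multiplier bounds from Section \ref{sec:appendix}). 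For each term we fix the output frequency $2^n$ and sort the contributions according to which frequency is largest.

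\textbf{Step 1: the $f$-high case ($a\ge \max b_i-O(1)$).} In this case each difference $g_i(x)-g_i(y)$ is bounded by $\|g_i\|_Z|x-y|$. The kernel $K_m(x,y)\prod_i (g_i(x)-g_i(y))$ then behaves like a Calderón--Zygmund kernel of order $|x-y|^{-2}$ on the two-dimensional sphere. It is, however, only Lipschitz-regular in $y$, and this is not enough for boundedness on $L^\infty$ by itself. I would therefore use the vector fields $V_{ab}$ of \eqref{vf14} to write $\Delta f=\sum V^2 f$ and integrate one $V$ by parts via \eqref{vf13}. This leaves $Vf\in L^\infty$ paired with $V_y$ of the kernel product. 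That kernel is of order $|x-y|^{-3}$ times the factors, and the derivative falling on a $g_i$ gives $\nabla g_i\in L^\infty$. The result is a singular integral of exactly critical homogeneity.

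\textbf{Step 2: the smoothing weight.} To produce the factor $(2^nt)^{\delta_0}$ I would localize: split $|x-y|\lesssim 2^{-n}$ from $|x-y|\gtrsim 2^{-n}$, and apply $\mathcal{P}'_n$ to the output. The high-frequency projection supplies cancellation, and the input $\mathcal{P}'_af$ with $a\approx n$ or $a>n$ supplies the factor $(1+2^at)^{-\delta_0}$. Summing over $a\ge n$ requires a small gain $2^{-(a-n)\kappa}$. I would obtain this gain from the Hölder regularity of the kernel away from the diagonal, exploiting the smoothness of $\mathcal{P}'_af$ at scale $2^{-a}$.

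\textbf{Step 3: the $g$-high case.} When some $b_i$ is largest, we expand $g_i(x)-g_i(y)$ and view the operator as acting on $\mathcal{P}'_{b_i}g_i$, placing the two derivatives of $\Delta f$ onto the kernel or onto the lower-frequency functions by integrating by parts with $V_{ab}$. The high-frequency factor then contributes $2^{b_i}\|\mathcal{P}'_{b_i}g_i\|_{L^\infty}\lesssim (1+2^{b_i}t)^{-\delta_0}$. This is symmetric to Step 1. The constant $C^m$ comes from counting the $m$ factors, each contributing $O(1)$, together with an $O(1)$ combinatorial factor for choosing which index carries the top frequency.

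\textbf{Main obstacle.} I expect the real difficulty to be the endpoint $L^\infty$ problem. Critical singular integrals are not bounded on $L^\infty$, so every summation must be made summable through the small $\delta_0$-gains. Near the diagonal at high frequency the inputs are only Lipschitz. My first attempt would be to prove a bound for the localized kernels $\mathcal{K}_a$ of the form $|\mathcal{K}_a(x,y)|\lesssim 2^{2a}(1+2^a|x-y|)^{-N}$, including its derivatives. With that estimate, the commutator between $\mathcal{P}'_n$ and multiplication by $g_i(x)-g_i(y)$ should yield the almost-orthogonality gain $2^{-\delta_0|a-n|}$.
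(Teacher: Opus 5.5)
There is a genuine gap. The proposal never produces the estimate that actually drives the lemma, and the mechanism you propose for summing the dyadic pieces is not available.

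For $a\ge n$ you want a gain $2^{-(a-n)\kappa}$ from H\"older regularity of the kernel, or from a commutator with $\mathcal{P}'_n$. But the $g_l$ are only Lipschitz. After the single $y$-integration by parts the kernel contains $Vg_l(y)$, which is merely bounded, so a second integration by parts is impossible. Each of $V_y$ and $V'_x$ can hit a given $g_l$ at most once, which is exactly why only $\|\nabla g_l\|_{L^\infty}$ is ever used. Nor could a kernel or commutator argument produce an exponent tied to $\delta_0$, since $\delta_0$ enters only through the time weights.

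In fact no structural gain is needed there. The weights give $(2^nt)^{\delta_0}(1+2^at)^{-\delta_0}\le 2^{-\delta_0(a-n)}$ for free, and this only has to absorb a logarithm. Cut at $|x-y|\sim 2^{-a}$ and $|x-y|\sim 2^{-n}$, and estimate the three pieces, each times $(1+2^at)^{-\delta_0}$:
the inner piece is bounded directly by $2^a\int_{|x-y|\lesssim 2^{-a}}|x-y|^{-1}\,d\mu(y)\lesssim 1$;
the annulus, after one integration by parts, is bounded by $\int_{2^{-a}\lesssim|x-y|\lesssim 2^{-n}}|x-y|^{-2}\,d\mu(y)\lesssim 1+|a-n|$;
the outer piece is bounded using $\|\mathcal{P}'_nF\|_{L^\infty}\lesssim 2^{-n}\sum_{V'}\|V'F\|_{L^\infty}$ (Corollary \ref{LiPaProj}(ii)) and one integration by parts, giving $2^{-n}\int_{|x-y|\gtrsim 2^{-n}}|x-y|^{-3}\,d\mu(y)\lesssim 1$.
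Your two-region split at $2^{-n}$ misses the cut at $2^{-a}$. Without it, the integrated-by-parts kernel $|x-y|^{-2}$ is not integrable at the diagonal.

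The more serious omission is the regime $a<n$, where the output frequency exceeds that of $f$. There the weights work against you: $(2^nt)^{\delta_0}(1+2^at)^{-\delta_0}$ can be as large as $2^{\delta_0(n-a)}$. So you must extract a power gain $2^{-c(n-a)}$ with $c>\delta_0$, using only Lipschitz bounds on the $g_l$. A cut at $2^{-n}$ alone does not produce it: the outer piece is $O\big((1+2^at)^{-\delta_0}\big)$ after integrating by parts, and carries a loss of order $n$ otherwise.

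The paper (Step 2 of its proof) cuts at the intermediate scale $|x-y|\sim 2^{-(n+a)/2}$. The inner piece is at most $2^a\cdot 2^{-(n+a)/2}$. The outer piece, after an $x$-derivative and one $y$-integration by parts, is at most $2^{-n}\int_{|x-y|\gtrsim 2^{-(n+a)/2}}|x-y|^{-3}\,d\mu(y)\lesssim 2^{(a-n)/2}$. The resulting factor $2^{-(n-a)/2}$ beats the weight loss because $\delta_0=1/8<1/2$.

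Your Step 3 is meant to cover this regime through a high-frequency $g_i$, but it does not work as written:
$g_i$ carries no derivative in $\mathcal{T}_m$, so $2^{b_i}\|\mathcal{P}'_{b_i}g_i\|_{L^\infty}$ is just its Lipschitz bound, and nothing is symmetric to Step 1;
splitting $g_i(x)-g_i(y)$ into its two terms leaves a kernel of size $|x-y|^{-2}$ against $\Delta f$, which is not absolutely integrable;
moving both derivatives of $\Delta f$ onto the kernel produces a non-integrable $|x-y|^{-3}$ singularity against $f$ at the diagonal, and puts derivatives of size $2^{b_i}$ on $\mathcal{P}'_{b_i}g_i(y)$.
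Sorting cases by the largest input frequency also tacitly assumes that $\mathcal{T}_m$ cannot create output above its input frequencies. For these singular kernels that would need its own proof.

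None of this is necessary: decomposing only $f$, and treating all $g_l$ as Lipschitz functions, suffices.
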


We will prove this lemma in subsection \ref{sif00} below. We show first how to use this lemma to prove part (i) of the main Theorem \ref{MainTHM}.

\begin{proof}[Proof of Theorem \ref{MainTHM}] (i) In terms of the variable $Y$, our main evolution equation \eqref{Intro1} can be written in the form
\begin{equation}\label{sif2}
\partial_tY^i=\mathcal{N}^i(Y)=\mathcal{N}^i_1(Y)+\mathcal{N}^i_{\geq 2}(Y),
\end{equation}
where $\mathcal{N}_1$ is the linear component (as defined in \eqref{Pes6}) and $\mathcal{N}_{\geq 2}:=\mathcal{N}-\mathcal{N}_1$. Letting $\mathcal{A}:=\mathcal{N}_1$ and recalling that $\mathcal{A}$ can be diagonalized and has only non-positive eigenvalues (see Proposition \ref{linearSummary}), the equation \eqref{sif2} can be written in integral (Duhamel) form as
\begin{equation}\label{sif3}
Y(t)=e^{t\mathcal{A}}Y_0+\int_0^te^{(t-s)\mathcal{A}}\mathcal{N}_{\geq 2}(Y)(s)\,ds.
\end{equation}
We construct the solution of the equation \eqref{sif3} using a fixed-point argument in the space $Z[0,1]$. 

{\bf{Step 1. }} 
We define the solution operator $\Phi: Z[0,1] \to Z[0,1]$ based on the right-hand side of the integral equation \eqref{sif3}
\begin{equation}
\Phi(Y)(t) := e^{t\mathcal{A}} Y_0 + \int_0^t e^{(t-s)\mathcal{A}} \mathcal{N}_{\geq 2}(Y)(s) \, ds \text{.}
\end{equation}
Let $B_\varepsilon = \{ Y \in Z[0,1] : \|Y\|_{Z[0,1]} \leq \varepsilon \}$ be the closed ball of radius $\varepsilon > 0$ centered at the origin. Our goal is to demonstrate that for sufficiently small initial data $\|Y_0\|_{W^{1,\infty}} \le \varepsilon_0$, there exists an $\varepsilon > 0$ such that $\Phi$ is a strict contraction from $B_\varepsilon$ into itself.

{\bf{Step 2.}} 
By Lemma \ref{tAop}, we obtain
\begin{equation} \label{eq:LocalLinBound}
\|e^{t\mathcal{A}} Y_0\|_{Z[0,1]} \leq C_L \|Y_0\|_{W^{1,\infty}},
\end{equation}
for the linear evolution. To bound the nonlinear remainder, we evaluate 
\begin{equation}
I_n(t) := 2^n (1+2^nt)^{\delta_0} \left\| \mathcal{P}'_n \int_0^t e^{(t-s)\mathcal{A}} \mathcal{N}_{\geq 2}(Y)(s) \, ds \right\|_{L^\infty}.
\end{equation}

The nonlinear remainder admits the formal multilinear expansion over degrees $m \ge 1$
\begin{equation}
\mathcal{N}_{\geq 2}(Y) = \sum_{m=1}^{\infty} \sum_{\underline{k}} c_{m,\underline{k}} \mathcal{T}_{\underline{k}}(Y; \underbrace{Y, \dots, Y}_{m \text{ times}}).
\end{equation}

Applying Lemma \ref{LemmaNonlinMain} with arguments $f = g_1 = \dots = g_m = Y$, we get
\begin{equation}
\|\mathcal{P}'_n \mathcal{T}_{\underline{k}}(Y; Y, \dots, Y)(s)\|_{L^\infty} \leq (2^n s)^{-\delta_0} C^m \|Y\|_{Z[0,1]}^{m+1}.
\end{equation}

Applying the triangle inequality and factoring yields
\begin{align}
\|\mathcal{P}'_n \mathcal{N}_{\geq 2}(Y)(s)\|_{L^\infty} &\leq \sum_{m=1}^{\infty} \widetilde{c}_m (2^n s)^{-\delta_0} C^m \|Y\|_{Z[0,1]}^{m+1} \nonumber \\
&= (2^n s)^{-\delta_0} \|Y\|_{Z[0,1]}^2 \left( \sum_{m=1}^{\infty} \widetilde{c}_m C^m \|Y\|_{Z[0,1]}^{m-1} \right),
\end{align}
where $\widetilde{c}_m = \sum_{\underline{k}} |c_{m,\underline{k}}|$. Choosing the initial data such that $\|Y\|_{Z[0,1]} \leq \varepsilon < C^{-1}$, we obtain
\begin{equation}
\widetilde{C}_N := \sum_{m=1}^{\infty} \widetilde{c}_m C^m \varepsilon^{m-1} < \infty.
\end{equation}
This implies 
\begin{equation}
\|\mathcal{P}'_n \mathcal{N}_{\ge 2}(Y)(s)\|_{L^\infty} \le \widetilde{C}_N (2^n s)^{-\delta_0} \|Y\|_{Z[0,1]}^2.
\end{equation}
Furthermore, by Lemma \ref{tAop}, the linear semigroup provides high-frequency decay bounded by a continuous envelope $\mathcal{E}(\tau) = \min(1, \tau^{-4})$, yielding $\|\mathcal{P}'_n e^{\tau \mathcal{A}} X\|_{L^\infty} \lesssim \mathcal{E}(\tau 2^n) \|X\|_{L^\infty}$. This yields
\begin{equation}
I_n(t) \lesssim 2^n (1+2^nt)^{\delta_0} \int_0^t \mathcal{E}((t-s)2^n) (2^n s)^{-\delta_0} \|Y\|_Z^2 \, ds.
\end{equation}
To evaluate this convolution, we set $x = 2^n t$ and $\sigma = 2^n s$. Notice that $ds = 2^{-n} d\sigma$ perfectly cancels the $2^n$ derivative weight outside the integral. We hence have
\begin{equation}
I_n(t) \lesssim (1+x)^{\delta_0} \left( \int_0^x \mathcal{E}(x-\sigma) \sigma^{-\delta_0} \, d\sigma \right) \|Y\|_Z^2 \equiv H(x) \|Y\|_Z^2.
\end{equation}

It remains to show that
\begin{equation}
    H(x) := (1+x)^{\delta_0} \int_0^x \mathcal{E}(x-\sigma) \sigma^{-\delta_0} \, d\sigma
\end{equation}
is uniformly bounded for $x \ge 0$. We first consider $0 < x \le 1$. Since $\mathcal{E} \le 1$, we have
\begin{equation*}
\begin{split}
    H(x) &\le (1+x)^{\delta_0} \int_0^x \sigma^{-\delta_0} \, d\sigma= \frac{(1+x)^{\delta_0} x^{1-\delta_0}}{1-\delta_0}\lesssim 1,
\end{split}
\end{equation*}
because $0 < \delta_0 < 1$.

It remains to consider \(x\ge1\). We split the integral at \(x/2\).
\begin{align}
H(x) &\le (1+x)^{\delta_0} \left( \int_0^{x/2} \mathcal{E}(x-\sigma) \sigma^{-\delta_0} \, d\sigma + \int_{x/2}^x \mathcal{E}(x-\sigma) \sigma^{-\delta_0} \, d\sigma \right) \nonumber \\
&\le (1+x)^{\delta_0} \left( \sup_{\sigma \in [0, x/2]} \mathcal{E}(x-\sigma) \int_0^{x/2} \sigma^{-\delta_0} \, d\sigma + \sup_{\sigma \in [x/2, x]} \sigma^{-\delta_0} \int_{x/2}^x \mathcal{E}(x-\sigma) \, d\sigma \right) \nonumber \\
&\le (1+x)^{\delta_0} \left( \min(1, 16x^{-4}) \left[ \frac{\sigma^{1-\delta_0}}{1-\delta_0} \right]_0^{x/2} + (x/2)^{-\delta_0} \int_0^\infty \mathcal{E}(\tau) \, d\tau \right) \nonumber \\
&\le \frac{2^{\delta_0-1}}{1-\delta_0} \min \Big( x^{1-\delta_0}(1+x)^{\delta_0}, 16x^{-3-\delta_0}(1+x)^{\delta_0} \Big) + 2^{\delta_0} \left(\frac{1+x}{x}\right)^{\delta_0} \|\mathcal{E}\|_{L^1(\mathbb{R}_+)}.
\end{align}
Since $x \ge 1$, we have $((1+x)/x)^{\delta_0} \lesssim 1$. Moreover,
\begin{equation*}
    \min \big( x^{1-\delta_0}(1+x)^{\delta_0}, 16x^{-3-\delta_0}(1+x)^{\delta_0} \big) \lesssim 1.
\end{equation*}
Therefore $H(x) \lesssim 1$ also for $x \ge 1$. Combining the two cases gives
\begin{equation*}
    \sup_{x \ge 0} H(x) < \infty.
\end{equation*}

Because $\delta_0 = 1/8 < 1$ and $\|\mathcal{E}\|_{L^1(\mathbb{R}_+)} < \infty$, the right-hand side is  continuous on $(0, \infty)$. Furthermore, taking the limit as $x \to \infty$ yields
\begin{equation}
\limsup_{x \to \infty} H(x) \le 0 + 2^{\delta_0} (1) \|\mathcal{E}\|_{L^1(\mathbb{R}_+)} < \infty.
\end{equation}
Thus, the integral is globally bounded by a uniform constant: $\sup_{x \ge 0} H(x) \equiv C_H < \infty$. Taking the supremum over $n \in \mathbb{Z}_+$ and $t \in [0,1]$ yields
\begin{equation} \label{eq:LocalNonlinBound}
\left\| \int_0^t e^{(t-s)\mathcal{A}} \mathcal{N}_{\geq 2}(Y)(s) \, ds \right\|_{Z[0,1]} \leq C_H \|Y\|_{Z[0,1]}^2 \equiv C_N \|Y\|_{Z[0,1]}^2.
\end{equation}
Finally, taking the $Z[0,1]$ norm of $\Phi(Y)$ and applying \eqref{eq:LocalLinBound} and \eqref{eq:LocalNonlinBound}, we obtain:
\begin{equation}
\|\Phi(Y)\|_{Z[0,1]} \leq C_L \varepsilon_0 + C_N \varepsilon^2.
\end{equation}
We choose the radius of our ball to be $\varepsilon = 2C_L \varepsilon_0$. To ensure $\Phi$ maps $B_\varepsilon$ into itself, we require $C_L \varepsilon_0 + C_N (2C_L \varepsilon_0)^2 \leq 2C_L \varepsilon_0$, which restricts the initial data to $\varepsilon_0 \leq (4C_L C_N)^{-1}$. Under this condition, $\|\Phi(Y)\|_{Z[0,1]} \leq \varepsilon$.

{\bf{Step 3.}} 
Let $Y_1, Y_2 \in B_\varepsilon$. We evaluate
\begin{equation}
\Phi(Y_1) - \Phi(Y_2) = \int_0^t e^{(t-s)\mathcal{A}} \big( \mathcal{N}_{\geq 2}(Y_1) - \mathcal{N}_{\geq 2}(Y_2) \big)(s) \, ds.
\end{equation}

Using the formal expansion $\mathcal{N}_{\geq 2}(Y) = \sum_{m=2}^{\infty} \sum_{\underline{k}} c_{m,\underline{k}} \mathcal{T}_{\underline{k}}(\underbrace{Y; \dots, Y}_{m \text{ times}})$, the multilinear difference expands via the standard algebraic identity:
\begin{equation}
\mathcal{T}_{\underline{k}}(Y_1^{(m)}) - \mathcal{T}_{\underline{k}}(Y_2^{(m)}) = \sum_{j=1}^m \mathcal{T}_{\underline{k}}(Y_1; \dots, Y_1, \underbrace{Y_1 - Y_2}_{j\text{-th slot}}, Y_2, \dots, Y_2).
\end{equation}

Applying Lemma \ref{LemmaNonlinMain} to each of the $m$ terms in the expansion yields
\begin{equation}
\|\mathcal{P}'_n \big( \mathcal{T}_{\underline{k}}(Y_1^{(m)}) - \mathcal{T}_{\underline{k}}(Y_2^{(m)}) \big)(s)\|_{L^\infty} \leq m (2^n s)^{-\delta_0} C^m \max(\|Y_1\|_{Z}, \|Y_2\|_{Z})^{m-1} \|Y_1 - Y_2\|_{Z}.
\end{equation}

Summing over $m \ge 2$ and $\underline{k}$, and applying $\|Y_1\|_{Z}, \|Y_2\|_{Z} \le \varepsilon$, we obtain the pointwise Lipschitz estimate on the remainder
\begin{align}
\|\mathcal{P}'_n \big( \mathcal{N}_{\geq 2}(Y_1) - \mathcal{N}_{\geq 2}(Y_2) \big)(s)\|_{L^\infty} &\leq (2^n s)^{-\delta_0} \left( \sum_{m=2}^{\infty} \widetilde{c}_m m C^m \varepsilon^{m-1} \right) \|Y_1 - Y_2\|_{Z[0,1]} \nonumber \\
&\equiv \widetilde{C}_{Lip} (2^n s)^{-\delta_0} \|Y_1 - Y_2\|_{Z[0,1]},
\end{align}
where $\widetilde{c}_m = \sum_{\underline{k}} |c_{m,\underline{k}}|$. For $\varepsilon > 0$ strictly small ($\varepsilon < C^{-1}$), the power series converges to the finite constant $\widetilde{C}_{Lip} > 0$. 

Using this bound and the bound $C_H$ derived in \eqref{eq:LocalNonlinBound} yields
\begin{equation}
\|\Phi(Y_1) - \Phi(Y_2)\|_{Z[0,1]} \leq C_H \widetilde{C}_{Lip} \varepsilon \|Y_1 - Y_2\|_{Z[0,1]}.
\end{equation}
Restricting the initial data such that $\varepsilon \leq (2 C_H \widetilde{C}_{Lip})^{-1}$ yields
\begin{equation}
\|\Phi(Y_1) - \Phi(Y_2)\|_{Z[0,1]} \leq \frac{1}{2} \|Y_1 - Y_2\|_{Z[0,1]}.
\end{equation}
This confirms that $\Phi$ is a strict contraction mapping on $B_\varepsilon$.

{\bf{Step 4. Conclusion.}} 
Because $B_\varepsilon$ is a closed subset of the complete Banach space $Z[0,1]$, and $\Phi: B_\varepsilon \to B_\varepsilon$ is a strict contraction, the Banach Fixed-Point Theorem guarantees the existence of a unique fixed point $Y \in B_\varepsilon$ satisfying $Y = \Phi(Y)$. This unique $Y \in Z[0,1]$ constitutes the solution to the evolution equation \eqref{sif3} on the interval $[0,1]$, completing the proof.

\end{proof}

\subsection{Proof of Lemma \ref{LemmaNonlinMain}}\label{sif00} By multilinearity, we may assume that
\begin{equation}\label{har1}
\|f\|_{Z[0,1]}=\|g_1\|_{Z[0,1]}=\ldots=\|g_m\|_{Z[0,1]}=1.
\end{equation}
We use the identity \eqref{vf14}. Let
\begin{equation}\label{Pes28}
\mathcal{T}'_{m,V}(f;\underline{g})(x):=\int_{\SS^2}V(V(f))(y)\cdot (g_1(x)-g_1(y))\cdot\ldots\cdot(g_m(x)-g_m(y))K_m(x,y)\,d\mu(y),
\end{equation}
where $V\in\{V_{12}, V_{23}, V_{31}\}$. It suffices to prove that if $m\geq 1$, $t\in[0,1]$, and $n\in\Z_+$ then
\begin{equation}\label{har3}
(2^nt)^{\delta_0}\sum_{n_0\in\Z_+}\big\|\mathcal{P}'_n\mathcal{T}'_{m,V}(\mathcal{P}'_{n_0}f;\underline{g})\big\|_{L^\infty}\leq C^m
\end{equation}
for some constant $C\geq 1$, provided that the functions $f,g_1,\ldots,g_m\in Z[0,1]$ satisfy \eqref{har1}.

\textbf{Step 1.} We estimate first the sum over $n_0\geq n-10$. Notice first that
\begin{equation}\label{K1bound}
|V^{j_1}_yV^{j_2}_x K_m(x,y)|\lesssim_m|x-y|^{-(m+1+j_1+j_2)}
\end{equation}
for any $j_1,j_2\in\{0,1, 2\}$ and $x,y\in\SS^2$, where $V^j$ denotes any combination of $j$ of the vector-fields $V_{12}, V_{23}, V_{31}$ (see definition \eqref{vf11}) and the notation $A\lesssim_mB$ means that there is a constant $C\geq 1$ such that $A\leq C^mB$. We  decompose
\begin{equation}\label{har4}
\mathcal{T}'_{m,V}(\mathcal{P}'_{n_0}f;\underline{g})=H_1+H_2+H_3,
\end{equation}
where
\begin{equation}\label{case1_H1H2}
\begin{split}
H_1(x)&:=\int_{\SS^2}V(V(\mathcal{P}'_{n_0}f))(y)\cdot \prod_{l=1}^m(g_l(x)-g_l(y))\cdot K_m(x,y)\varphi_{\leq -n_0-20}(x-y)\,d\mu(y),\\
H_2(x)&:=\int_{\SS^2}V(V(\mathcal{P}'_{n_0}f))(y)\cdot \prod_{l=1}^m(g_l(x)-g_l(y))\cdot K_m(x,y)\varphi_{(-n_0-20,-n+4]}(x-y)\,d\mu(y),\\
H_3(x)&:=\int_{\SS^2}V(V(\mathcal{P}'_{n_0}f))(y)\cdot \prod_{l=1}^m(g_l(x)-g_l(y))\cdot K_m(x,y)\varphi_{>-n+4}(x-y)\,d\mu(y).
\end{split}
\end{equation}

It follows from the assumption \eqref{har1}, the definition \eqref{Intro7} and Corollary \ref{LiPaProj} (ii) that
\begin{equation}\label{Pes29}
\begin{split}
&\|V(\mathcal{P}'_{n_0}f)\|_{L^\infty}+2^{-n_0}\|V(V(\mathcal{P}'_{n_0}f))\|_{L^\infty}\lesssim (1+2^{n_0}t)^{-\delta_0},\\
&\|V(g_l)\|_{L^\infty}+\sup_{x,y\in\SS^2}\frac{|g_l(x)-g_l(y)|}{|x-y|}\lesssim 1\qquad\text{ for any }l\in\{1,\ldots,m\}.
\end{split}
\end{equation}

We can use \eqref{Pes29} and \eqref{K1bound} to estimate directly
\begin{equation}\label{Pes30}
\|H_1\|_{L^\infty}\lesssim_m2^{n_0}(1+2^{n_0}t)^{-\delta_0}\sup_{x\in\SS^2}\int_{\SS^2}|x-y|^{-1}\varphi_{\leq -n_0-20}(x-y)\,d\mu(y)\lesssim_m(1+2^{n_0}t)^{-\delta_0}.
\end{equation}
To estimate $\|H_2\|_{L^\infty}$ we would like to integrate by parts using the identity \eqref{vf13}. Notice that
\begin{equation*}
\Big|V_y\Big\{\prod_{l=1}^m(g_l(x)-g_l(y))\cdot K_m(x,y)\varphi_{(-n_0-20,-n+4]}(x-y)\Big\}\Big|\lesssim _m|x-y|^{-2}\mathbf{1}_{[2^{-n_0-22}, 2^{-n+6}]}(|x-y|)
\end{equation*}
for any $x,y\in\SS^2$, as a consequence of \eqref{Pes29} and \eqref{K1bound}. Therefore
\begin{equation}\label{Pes31}
\begin{split}
\|H_2\|_{L^\infty}&\lesssim_m\sup_{x\in\SS^2}\int_{\SS^2}|V(\mathcal{P}'_{n_0}f)(y)||x-y|^{-2}\mathbf{1}_{[2^{-n_0-22}, 2^{-n+6}]}(|x-y|)\,d\mu(y)\\
&\lesssim_m(1+2^{n_0}t)^{-\delta_0}(1+|n_0-n|).
\end{split}
\end{equation}

Finally, we estimate $\|\mathcal{P}'_nH_3\|_{L^\infty}$. We may assume $n\geq 1$ (since $H_3\equiv 0$ if $n=0$) and use Corollary \ref{LiPaProj} (ii) to estimate
\begin{equation*}
\|\mathcal{P}'_nH_3\|_{L^\infty}\lesssim 2^{-n}\sum_{V'\in\{V_{12}, V_{23}, V_{31}\}}\|V'(\mathcal{P}'_nH_3)\|_{L^\infty}.
\end{equation*}
Moreover, we notice that for any $V,V'\in\{V_{12}, V_{23}, V_{31}\}$
\begin{equation*}
\Big|V'_x\Big\{V_y\Big\{\prod_{l=1}^m(g_l(x)-g_l(y))\cdot K_m(x,y)\varphi_{>-n+4}(x-y)\Big\}\Big\}\Big|\lesssim _m|x-y|^{-3}\mathbf{1}_{[2^{-n},2]}(|x-y|)
\end{equation*}
for any $x,y\in\SS^2$, as a consequence of \eqref{Pes29} and \eqref{K1bound}. Therefore
\begin{equation}\label{Pes32}
\begin{split}
\|\mathcal{P}'_nH_3\|_{L^\infty}&\lesssim_m2^{-n}\sup_{x\in\SS^2}\int_{\SS^2}|V(\mathcal{P}'_{n_0}f)(y)||x-y|^{-3}\mathbf{1}_{[2^{-n},2]}(|x-y|)\,d\mu(y)\\
&\lesssim_m(1+2^{n_0}t)^{-\delta_0}.
\end{split}
\end{equation}

Using \eqref{Pes30}--\eqref{Pes32} it follows that
\begin{equation}\label{Pes33}
(2^nt)^{\delta_0}\sum_{n_0\geq n-10}\big\|\mathcal{P}'_n\mathcal{T}'_{m,V}(\mathcal{P}'_{n_0}f;\underline{g})\big\|_{L^\infty}\lesssim_m\sum_{n_0\geq n-10}\frac{(2^nt)^{\delta_0}(1+|n_0-n|)}{(1+2^{n_0}t)^{\delta_0}}\lesssim_m1.
\end{equation}

\textbf{Step 2.} We proceed with the second sum over $n_0< n-10$. We first decompose
\begin{equation}\label{case2_split}
\begin{split}
&\mathcal{T}'_{m,V}(\mathcal{P}'_{n_0}f;\underline{g})=H'_1+H'_2\\
&H'_1(x):=\int_{\SS^2}V(V(\mathcal{P}'_{n_0}f))(y)\cdot \prod_{l=1}^m(g_l(x)-g_l(y))\cdot K_m(x,y)\varphi_{\leq-(n+n_0)/2}(x-y)\,d\mu(y),\\
&H'_2(x):=\int_{\SS^2}V(V(\mathcal{P}'_{n_0}f))(y)\cdot \prod_{l=1}^m(g_l(x)-g_l(y))\cdot K_m(x,y)\varphi_{>-(n+n_0)/2}(x-y)\,d\mu(y),
\end{split}
\end{equation}

As before, we can use \eqref{Pes29} and \eqref{K1bound} to estimate directly
\begin{equation}\label{Pes40}
\begin{split}
\|H'_1\|_{L^\infty}&\lesssim_m2^{n_0}(1+2^{n_0}t)^{-\delta_0}\sup_{x\in\SS^2}\int_{\SS^2}\frac{\varphi_{\leq-(n+n_0)/2}(x-y)}{|x-y|}\,d\mu(y)\\
&\lesssim_m2^{(n_0-n)/2}(1+2^{n_0}t)^{-\delta_0}.
\end{split}
\end{equation}

Next, we take derivatives of $H_2'$ to obtain, for any $V'\in\{V_{12}, V_{23}, V_{31}\}$,
\begin{equation}\label{case2H2_split}
\begin{split}
&V'(H_{2}')=H'_{21}+H'_{22},\\
&H'_{21}(x):=\sum_{r=1}^m(V'g_r)(x)\int_{\SS^2}V(V(\mathcal{P}'_{n_0}f))(y)\cdot \prod_{l=1,\,l\neq r}^m(g_l(x)-g_l(y))\cdot \widetilde{K}_{m,>-(n+n_0)/2}(x,y)\,d\mu(y),\\
 &H'_{22}(x):=\int_{\SS^2}V(V(\mathcal{P}'_{n_0}f))(y)\cdot \prod_{l=1}^m(g_l(x)-g_l(y))\cdot V'_x(\widetilde{K}_{m,>-(n+n_0)/2})(x,y)\,d\mu(y),
\end{split}
\end{equation}
where, for any $a\in\R$,
\begin{equation}\label{Pes41}
\widetilde{K}_{m,>a}(x,y):=K_m(x,y)\varphi_{>a}(x-y),\qquad x,y\in\SS^2.
\end{equation}

To estimate $H'_{21}$ we notice that
\begin{equation*}
\Big|V_y\Big\{\prod_{l=1,\,l\neq r}^m(g_l(x)-g_l(y))\cdot \widetilde{K}_{m,>-(n+n_0)/2}(x,y)\Big\}\Big|\lesssim _m|x-y|^{-3}\mathbf{1}_{[2^{-(n_0+n)/2-2}, 2]}(|x-y|)
\end{equation*}
for any $x,y\in\SS^2$, as a consequence of \eqref{Pes29} and \eqref{K1bound}. Therefore, we can integrate by parts in $y$ (using \eqref{vf13}) and use again the bounds \eqref{Pes29} to conclude that
\begin{equation}\label{Pes42}
\begin{split}
\|H'_{21}\|_{L^\infty}&\lesssim_m\sup_{x\in\SS^2}\int_{\SS^2}|V(\mathcal{P}'_{n_0}f)(y)||x-y|^{-3}\mathbf{1}_{[2^{-(n_0+n)/2-2}, 2]}(|x-y|)\,d\mu(y)\\
&\lesssim_m(1+2^{n_0}t)^{-\delta_0}2^{(n+n_0)/2}.
\end{split}
\end{equation}

Similarly, to estimate $H'_{22}$ we notice that
\begin{equation*}
\Big|V_y\Big\{\prod_{l=1}^m(g_l(x)-g_l(y))\cdot V'_x(\widetilde{K}_{m,>-(n+n_0)/2})(x,y)\Big\}\Big|\lesssim _m|x-y|^{-3}\mathbf{1}_{[2^{-(n_0+n)/2-2}, 2]}(|x-y|)
\end{equation*}
for any $x,y\in\SS^2$, as a consequence of \eqref{Pes29} and \eqref{K1bound}. The same estimate as in \eqref{Pes42} above then shows that $\|H'_{22}\|_{L^\infty}\lesssim_m(1+2^{n_0}t)^{-\delta_0}2^{(n+n_0)/2}$. Therefore $\|V'H'_2\|_{L^\infty}\lesssim_m(1+2^{n_0}t)^{-\delta_0}2^{(n+n_0)/2}$ for any $V'\in\{V_{12}, V_{23}, V_{31}\}$. Using Corollary \ref{LiPaProj} (ii) we have $\|\mathcal{P}'_n(H'_2)\|_{L^\infty}\lesssim_m(1+2^{n_0}t)^{-\delta_0}2^{(n_0-n)/2}$. Finally, we combine this with the bounds \eqref{Pes40} and the definitions \eqref{case2_split} to conclude that
\begin{equation}\label{Pes45}
\|\mathcal{P}'_n\mathcal{T}'_{m,V}(\mathcal{P}'_{n_0}f;\underline{g})\|_{L^\infty}\lesssim_m(1+2^{n_0}t)^{-\delta_0}2^{(n_0-n)/2}.
\end{equation}
Therefore
\begin{equation}\label{Pes46}
(2^nt)^{\delta_0}\sum_{n_0\leq n-10}\big\|\mathcal{P}'_n\mathcal{T}'_{m,V}(\mathcal{P}'_{n_0}f;\underline{g})\big\|_{L^\infty}
\lesssim_m\sum_{n_0\leq n-10}\frac{(2^nt)^{\delta_0}2^{(n_0-n)/2}}{(1+2^{n_0}t)^{\delta_0}}\lesssim_m1.
\end{equation}
The main bounds \eqref{har3} follow from \eqref{Pes33} and \eqref{Pes46}, as desired.

\section{Global extension and asymptotic stability}\label{sec:global}

\subsection{Parabolic Smoothing and Structural Decoupling}

By our local existence theory, the parabolic smoothing of the linear operator $\mathcal{A}$ instantly desingularizes the rough initial data. Consequently, at time $t=1$, the physical interface $X $  has regularized into the classical Hölder space, satisfying $\|X (1) - X_0 \|_{C^{1,\alpha}} \le \varepsilon$ for a fixed $\alpha \in (0,1)$. 

To study the global dynamics and asymptotic stability of the interface, we must decompose the flow into movements along the $10$-dimensional manifold of steady states and a strictly dissipative perturbation. Following the structural decomposition framework, we separate the deformations into volume fluctuations $\rho$, spatial translations $\mathbf{a}$, conformal parameters $\mathbf{q}$, and an orthogonal perturbation $Y$.

\begin{lemma}[Structural Parameterization and Decoupling] \label{Lem:ParameterDecoupling}
	Let $X (t)$ be a solution to \eqref{Intro1} on a time interval $I \subseteq [1, \infty)$, satisfying $\sup_{t \in I} \|X (t) - X_0 \|_{C^{1,\alpha}} < \varepsilon$ for a sufficiently small $\varepsilon > 0$. Then, $X $ admits a unique structural decomposition
	\begin{equation}
	X (\boldsymbol{\alpha}, t) = Z_{\rho}(\boldsymbol{\mathcal{S}}_{\mathbf{q}(t)}(\boldsymbol{\alpha}), t) - \frac{2}{3}\mathbf{v}(t) + \mathbf{a}(t), \quad \text{where} \quad Z_{\rho}(\boldsymbol{\beta}, t) = (1+\rho(t))\boldsymbol{\beta} + Y(\boldsymbol{\beta}, t) .
	\end{equation}
	Here, $(\rho, \mathbf{a}, \mathbf{q}) \in \mathbb{R} \times \mathbb{R}^3 \times \mathbb{R}^6$ represent volume dilations, spatial translations, and conformal symmetries respectively, with $\mathbf{v} = (q_1, q_2, q_3)^\top$. 
	
	This decomposition is uniquely determined by requiring that the infinite-dimensional perturbation $Y$ remains  orthogonal to the $10$-dimensional kernel of the linearized operator $\mathcal{N}_1$
	\begin{equation}\label{eq:KernelOrthogonality}
	\langle Y(\cdot, t), \mathbf{K}_n \rangle_{L^2(\mathbb{S}^2)} = 0, \quad \text{for all } n \in \{1, \dots, 10\}.
	\end{equation}
	
	Under this condition, the parameters $(\rho, \mathbf{a}, \mathbf{q})$ are governed by a locally invertible matrix ODE, while the decoupled state $Y$ evolves under the dissipative flow of $\mathcal{N}_1$. 
\end{lemma}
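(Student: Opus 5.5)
We prove existence and uniqueness of the decomposition with the implicit function theorem, and then derive the modulation ODE by differentiating the orthogonality conditions \eqref{eq:KernelOrthogonality} in time.

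\textbf{Step 1: the map to invert.} For a parameter vector $\mathbf{p}=(\rho,\mathbf{a},\mathbf{q})\in\mathbb{R}^{10}$ near $0$ and $X$ near $X_0$ in $C^{1,\alpha}$, define
\begin{equation*}
Y[\mathbf{p},X](\boldsymbol{\beta}) := X\big(\boldsymbol{\mathcal{S}}_{\mathbf{q}}^{-1}(\boldsymbol{\beta})\big)+\tfrac{2}{3}\mathbf{v}-\mathbf{a}-(1+\rho)\boldsymbol{\beta}.
\end{equation*}
Here $\boldsymbol{\mathcal{S}}_{\mathbf{q}}$ is the conformal map \eqref{ConformalAction1}, with $\Lambda=\exp(\text{boost}(\mathbf{v})+\text{rotation}(\mathbf{w}))$ and $\mathbf{q}=(\mathbf{v},\mathbf{w})$. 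Then set
\begin{equation*}
F_n(\mathbf{p},X):=\langle Y[\mathbf{p},X],\mathbf{K}_n\rangle_{L^2(\mathbb{S}^2)},\qquad n=1,\dots,10,
\end{equation*}
where $\mathbf{K}_n$ are the kernel fields \eqref{vec33}--\eqref{vec35}. The decomposition holds exactly when $F(\mathbf{p},X)=0$.

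\textbf{Step 2: smoothness and nondegeneracy.} First, $F$ is $C^1$ in $\mathbf{p}$. We can move the composition onto the smooth fields by changing variables, $\langle X\circ\boldsymbol{\mathcal{S}}_{\mathbf{q}}^{-1},\mathbf{K}_n\rangle=\int X\cdot(\mathbf{K}_n\circ\boldsymbol{\mathcal{S}}_{\mathbf{q}})\,J_{\mathbf{q}}\,d\mu$. This is smooth in $\mathbf{q}$ and continuous (indeed linear) in $X\in C^0$, so only $L^\infty$ control of $X$ is needed.

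Next we compute $D_{\mathbf{p}}F(0,X_0)$. At $X=X_0$, differentiating $Y$ in $\mathbf{p}$ gives:
\begin{itemize}
\item minus the dilation field $\mathbf{p}$ for $\rho$;
\item minus constant fields for $\mathbf{a}$;
\item minus the rotation fields $\boldsymbol{\omega}\times\mathbf{p}$ for $\mathbf{w}$;
\item for $\mathbf{v}$, minus the boost generator $\mathbf{v}-(\mathbf{v}\cdot\mathbf{p})\mathbf{p}$, plus the correction $\tfrac{2}{3}\mathbf{v}$.
\end{itemize}
In the $\mathbf{v}$ direction the result is $-\mathbf{v}+(\mathbf{v}\cdot\mathbf{p})\mathbf{p}+\tfrac23\mathbf{v}=\tfrac13\mathbf{H}^1_2[\mathbf{v}\cdot x]$. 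This is exactly why the shift $-\tfrac23\mathbf{v}$ appears in the statement: it cancels the $\mathbf{H}^2_0$ component of the boost, as computed in Lemma~\ref{LemSteadyKernel}. The Jacobian therefore maps the ten parameter directions onto the ten fields $\mathbf{H}^1_{1,0}$, $\mathbf{H}^2_{0,m}$, $\mathbf{H}^3_{1,m}$, $\mathbf{H}^1_{2,m}$, up to nonzero constants. Taking $\mathbf{K}_n$ to be these same fields, Proposition~\ref{linearSummary}(ii) says they are mutually orthogonal with nonzero norms. Hence $D_{\mathbf{p}}F(0,X_0)$ is diagonal with nonzero entries and so invertible.

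\textbf{Step 3: the decomposition on $I$.} The implicit function theorem gives, for $\|X-X_0\|_{C^{1,\alpha}}<\varepsilon$, a unique small $\mathbf{p}(X)$ with $|\mathbf{p}|\lesssim\varepsilon$. The map $X\mapsto\mathbf{p}(X)$ is $C^1$, so $t\mapsto\mathbf{p}(t)$ inherits the time regularity of $X(t)$. The resulting $Y$ is small in $C^{1,\alpha}$, since composition with the smooth diffeomorphism $\boldsymbol{\mathcal{S}}_{\mathbf{q}}$ preserves $C^{1,\alpha}$ bounds.

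\textbf{Step 4: the modulation ODE.} We differentiate $\langle Y,\mathbf{K}_n\rangle=0$ in time. The equation for $Y$ comes from \eqref{Intro1} together with the steady-state property from Lemma~\ref{LemConformalSteady} and the translation and dilation invariance of $\mathcal{N}$. Writing it out,
\begin{equation*}
\partial_tY=\mathcal{N}_1Y+\mathcal{N}_{\geq2}(Y)-\dot{\mathbf{p}}\cdot\big(\mathbf{E}+\mathcal{O}(|\mathbf{p}|+\|Y\|_{C^1})\big),
\end{equation*}
where $\mathbf{E}$ are the ten generators. Because $\mathcal{N}_1$ is self-adjoint and diagonal, it annihilates the kernel, so $\langle\mathcal{N}_1Y,\mathbf{K}_n\rangle=0$. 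Pairing with $\mathbf{K}_n$ then yields $M(\mathbf{p},Y)\dot{\mathbf{p}}=\langle\mathcal{N}_{\geq2}(Y),\mathbf{K}_n\rangle$. The matrix $M$ is a small perturbation of the invertible matrix from Step 2, so it is invertible.

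Projecting the $Y$-equation onto the complement of the kernel leaves the dissipative flow of $\mathcal{N}_1$, whose eigenvalues there are bounded above by $-a_2$, plus quadratic terms.

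\textbf{Main obstacle.} The delicate point is making the rescaling and reparametrization exact. The conformal change of variables must transform the full nonlinearity $\mathcal{N}$ covariantly, not just its steady states. This requires the conformal factor identity used in Lemma~\ref{LemConformalSteady}: the Laplacian's weight cancels the measure's Jacobian, so that $\mathcal{N}(Z\circ\boldsymbol{\mathcal{S}}_{\mathbf{q}})=\mathcal{N}(Z)\circ\boldsymbol{\mathcal{S}}_{\mathbf{q}}$. I would verify this identity first. Note that the dilation prefactor $(1+\rho)$ rescales time, since $\mathcal{N}$ is homogeneous of degree $0$ under dilation. Once this identity holds, the remaining steps reduce to finite-dimensional linear algebra.
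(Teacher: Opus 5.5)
Your proposal is correct. Its second half (differentiating $\langle Y,\mathbf K_n\rangle=0$ in time, discarding $\langle\mathcal N_1Y,\mathbf K_n\rangle$ by self-adjointness, and inverting a small perturbation of the Jacobian at the identity) is the same as the paper's projection of the exact modulated equation onto $\ker\mathcal N_1$.

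Where you genuinely differ is the existence/uniqueness step. The paper applies the implicit function theorem to the infinite-dimensional map $\Phi(\rho,\mathbf a,\mathbf q,Y)=Z_\rho\circ\boldsymbol{\mathcal S}_{\mathbf q}-\tfrac23\mathbf v+\mathbf a-X$ on $\mathbb R^{10}\times(\ker\mathcal N_1)^\perp$. It obtains invertibility of $\mathbf B_0$ abstractly, because the geometric generators and the kernel basis span the same $10$-dimensional space. You instead solve only the ten scalar equations $F(\mathbf p,X)=0$, with $X$ as a parameter, and define $Y$ as the residual. Your route buys two things:
\begin{itemize}
\item After your change of variables the composition sits on the smooth fields $\mathbf K_n$, so $F$ is smooth on $\mathbb R^{10}\times C^0$. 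The paper's $\Phi$ contains $Y\circ\boldsymbol{\mathcal S}_{\mathbf q}$, which is not Fr\'echet differentiable in $\mathbf q$ as a map into $C^{1,\alpha}$, since its $\mathbf q$-derivative involves $\nabla Y$. Your formulation is the standard way to make that step rigorous.
\item Your explicit computation shows the Jacobian is diagonal. The shift $-\tfrac23\mathbf v$ turns each boost direction into the pure mode $\tfrac13\mathbf H^1_2$, which explains why that shift appears in the ansatz.
\end{itemize}
In exchange, the paper's route presents the decomposition directly as a local chart $\mathbb R^{10}\times(\ker\mathcal N_1)^\perp\to C^{1,\alpha}$.

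Two small remarks. First, since $\mathcal N$ is $0$-homogeneous under dilations, the linear term in your $Y$-equation should be $(1+\rho)^{-1}\mathcal N_1Y$ rather than $\mathcal N_1Y$. The discrepancy is linear in $Y$, not quadratic. It lies in the range of $\mathcal N_1$, so your modulation ODE is unchanged; on the complement it only turns the dissipation rate into $a_2/(1+\rho)$, as in Lemma \ref{Lem:ParameterVelocityBound}. Second, the covariance $\mathcal N(Z\circ\boldsymbol{\mathcal S}_{\mathbf q})=\mathcal N(Z)\circ\boldsymbol{\mathcal S}_{\mathbf q}$ that you single out does hold for arbitrary $Z$. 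It follows from the same cancellation of conformal factors between the Laplacian and the measure as in Lemma \ref{LemConformalSteady}; the paper simply asserts it.
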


\begin{proof}
	
To study the asymptotic stability of the system, we must decouple the $10$-dimensional manifold of steady states from the dissipative flow. We parameterize the orientation-preserving conformal steady states $\boldsymbol{\mathcal{S}}_{\mathbf{q}}$ by mapping the restricted Lorentz group $\mathbb{SO}^+(3,1)$ to the sphere. Fixing a reference state $\Lambda_\star \in \mathbb{SO}^+(3,1)$, we represent perturbations using the exponential map
\begin{equation}\label{eq:ExpMap}
\Lambda_{\mathbf{q}} = \exp(\mathcal{G}_{\mathbf{q}}), \quad \mathcal{G}_{\mathbf{q}} = \sum_{i=1}^6 q_i Q_i = \begin{pmatrix} 0 & q_1 & q_2 & q_3 \\ q_1 & 0 & -q_6 & q_5 \\ q_2 & q_6 & 0 & -q_4 \\ q_3 & -q_5 & q_4 & 0 \end{pmatrix} ,
\end{equation}
where $\mathbf{q} \in \mathbb{R}^6$ resides in the vicinity of the origin. The parameters $q_1, q_2, q_3$ correspond to Lorentzian boosts, while $q_4, q_5, q_6$ generate rigid rotations. We define the steady state mapping as $\boldsymbol{\mathcal{S}}_{\mathbf{q}} \equiv \boldsymbol{\mathcal{S}}_{\Lambda_{\mathbf{q}}\Lambda_\star}$.

Defining the translational velocity component as $\mathbf{v}(t) = (q_1, q_2, q_3)^\top$, we parameterize an arbitrary interfacial deformation $X $ as
\begin{equation}\label{eq:DecompX}
X (\boldsymbol{\alpha}, t) = Z_{\rho}(\boldsymbol{\mathcal{S}}_{\mathbf{q}(t)}(\boldsymbol{\alpha}), t) - \frac{2}{3}\mathbf{v}(t) + \mathbf{a}(t), \quad Z_{\rho}(\boldsymbol{\beta}, t) = (1+\rho(t))\boldsymbol{\beta} + Y(\boldsymbol{\beta}, t) ,
\end{equation}
where $\mathbf{a}(t) \in \mathbb{R}^3$ represents spatial shifts and $\rho(t) \in \mathbb{R}$ captures volume dilations.

To guarantee the uniqueness of this decomposition, the perturbation $Y$ must be strictly confined to the orthogonal complement of the $10$-dimensional kernel of the linearized operator $\mathcal{N}_1$. We enforce the exact spatial $L^2(\mathbb{S}^2; \mathbb{C}^3)$ orthogonality condition against the explicit geometric basis spanning the kernel

\begin{equation}\label{eq:SpatialOrtho}
\langle Y(\cdot, t), \mathbf{K} \rangle_{L^2(\mathbb{S}^2; \mathbb{C}^3)} = 0, \quad \text{for all } \mathbf{K} \in \big\{ \mathbf{H}^1_{1,0}, \mathbf{H}^1_{2,m}, \mathbf{H}^2_{0,m}, \mathbf{H}^3_{1,m} \big\}_{m \in \{-1,0,1\}}.
\end{equation}
Let $\boldsymbol{\beta} = \boldsymbol{\mathcal{S}}_{\mathbf{q}(t)}(\boldsymbol{\alpha})$. To compute the kinematic time derivative of \eqref{eq:DecompX} we evaluate the total time derivative via the multivariable chain rule. Differentiating each constituent parameter component yields the expansion
\begin{equation}\label{eq:ChainRuleBase}
\partial_t X (\boldsymbol{\alpha}, t) = \dot{\rho}\boldsymbol{\beta} + (1+\rho)\partial_t \boldsymbol{\beta} + \partial_t Y(\boldsymbol{\beta}, t) + \big(\nabla_{\mathbb{S}^2}Y(\boldsymbol{\beta}, t)\cdot \partial_t \boldsymbol{\beta}\big) - \frac{2}{3}\dot{\mathbf{v}} + \dot{\mathbf{a}} .
\end{equation}
To evaluate $\partial_t \boldsymbol{\beta} = \partial_t \boldsymbol{\mathcal{S}}_{\mathbf{q}(t)}(\boldsymbol{\alpha})$, we apply the chain rule with respect to the conformal parameters $\mathbf{q}(t) \in \mathbb{R}^6$, yielding $\partial_t \boldsymbol{\beta} = \left(\partial_{q_i}\boldsymbol{\mathcal{S}}_{\mathbf{q}}(\boldsymbol{\alpha})\right)\dot{q}_i$. Recalling the projective action definition of the restricted Lorentz group $\mathbb{SO}^+(3,1)$ mapping onto the sphere
\begin{equation}\label{ConformalAction}
\boldsymbol{\mathcal{S}}_{\mathbf{q}}(\boldsymbol{\alpha}) = \frac{\mathbb{P}_3 \Lambda_{\mathbf{q}} \widetilde{\boldsymbol{\alpha}}}{\mathbf{e}_0^\top \Lambda_{\mathbf{q}} \widetilde{\boldsymbol{\alpha}}},
\end{equation}
where $\widetilde{\boldsymbol{\alpha}} = (1, \boldsymbol{\alpha}^\top)^\top \in \mathbb{R}^4$, we compute the partial derivative $\partial_{q_i}\boldsymbol{\mathcal{S}}_{\mathbf{q}}$ via the projective quotient rule on $\mathbb{R}^4$ to get
\begin{equation}\label{eq:ProjectiveQuotient}
\partial_{q_i}\boldsymbol{\mathcal{S}}_{\mathbf{q}}(\boldsymbol{\alpha}) = \frac{\mathbb{P}_3 \left(\partial_{q_i}\Lambda_{\mathbf{q}}\right)\widetilde{\boldsymbol{\alpha}}}{\mathbf{e}_0^\top \Lambda_{\mathbf{q}}\widetilde{\boldsymbol{\alpha}}} - \frac{\mathbb{P}_3 \Lambda_{\mathbf{q}}\widetilde{\boldsymbol{\alpha}}}{\left(\mathbf{e}_0^\top \Lambda_{\mathbf{q}}\widetilde{\boldsymbol{\alpha}}\right)^2} \left(\mathbf{e}_0^\top \left(\partial_{q_i}\Lambda_{\mathbf{q}}\right)\widetilde{\boldsymbol{\alpha}}\right) .
\end{equation}
Using the integral formulation of the non-commutative matrix exponential derivative, the partial derivative of the Lorentz matrix perturbation $\Lambda_{\mathbf{q}} = \exp(\mathcal{G}_{\mathbf{q}})$ is
\begin{equation}
\partial_{q_i}\Lambda_{\mathbf{q}} = \mathcal{A}_{\mathbf{q}}(Q_i)\Lambda_{\mathbf{q}}, \quad \text{where} \quad \mathcal{A}_{\mathbf{q}}(Q_i) = \int_0^1 \Lambda_{\sigma \mathbf{q}} Q_i \Lambda_{-\sigma \mathbf{q}} \, d\sigma.
\end{equation}
Substituting this back into \eqref{eq:ProjectiveQuotient} yields
\begin{equation}\label{eq:ProjectiveQuotientSubbed}
\partial_{q_i}\boldsymbol{\mathcal{S}}_{\mathbf{q}}(\boldsymbol{\alpha}) = \frac{\mathbb{P}_3 \mathcal{A}_{\mathbf{q}}(Q_i)\Lambda_{\mathbf{q}}\widetilde{\boldsymbol{\alpha}}}{\mathbf{e}_0^\top \Lambda_{\mathbf{q}}\widetilde{\boldsymbol{\alpha}}} - \left(\frac{\mathbf{e}_0^\top \mathcal{A}_{\mathbf{q}}(Q_i)\Lambda_{\mathbf{q}}\widetilde{\boldsymbol{\alpha}}}{\mathbf{e}_0^\top \Lambda_{\mathbf{q}}\widetilde{\boldsymbol{\alpha}}}\right) \frac{\mathbb{P}_3 \Lambda_{\mathbf{q}}\widetilde{\boldsymbol{\alpha}}}{\mathbf{e}_0^\top \Lambda_{\mathbf{q}}\widetilde{\boldsymbol{\alpha}}}.
\end{equation}
To express \eqref{eq:ProjectiveQuotientSubbed} directly in terms of $\boldsymbol{\beta}$ rather than the reference coordinates, we do a geometric pullback using the relation $\widetilde{\boldsymbol{\alpha}} = \Lambda_{-\mathbf{q}}\widetilde{\boldsymbol{\beta}}$. We notice that
\begin{equation}
\boldsymbol{\mathcal{M}}_{i,\mathbf{q}}(\boldsymbol{\beta}) \equiv \frac{\mathcal{A}_{\mathbf{q}}(Q_i)\Lambda_{\mathbf{q}}\widetilde{\boldsymbol{\alpha}}}{\mathbf{e}_0^\top \Lambda_{\mathbf{q}}\widetilde{\boldsymbol{\alpha}}} = \frac{\mathcal{A}_{\mathbf{q}}(Q_i)\Lambda_{\mathbf{q}}\Lambda_{-\mathbf{q}}\widetilde{\boldsymbol{\beta}}}{\mathbf{e}_0^\top \Lambda_{\mathbf{q}}\Lambda_{-\mathbf{q}}\widetilde{\boldsymbol{\beta}}} = \frac{\mathcal{A}_{\mathbf{q}}(Q_i)\widetilde{\boldsymbol{\beta}}}{\mathbf{e}_0^\top \widetilde{\boldsymbol{\beta}}} = \mathcal{A}_{\mathbf{q}}(Q_i)\widetilde{\boldsymbol{\beta}},
\end{equation}
where here we used that $\mathbf{e}_0^\top \widetilde{\boldsymbol{\beta}} = 1$, since $\beta$ is on the unit sphere. Reinserting $\boldsymbol{\mathcal{M}}_{i,\mathbf{q}}(\boldsymbol{\beta})$ into \eqref{eq:ProjectiveQuotientSubbed} yields
\begin{equation}\label{eq:ConformalGenerator}
\partial_{q_i}\boldsymbol{\mathcal{S}}_{\mathbf{q}}(\boldsymbol{\alpha}) = \mathbb{P}_3 \boldsymbol{\mathcal{M}}_{i,\mathbf{q}}(\boldsymbol{\beta}) - \left(\mathbf{e}_0^\top \boldsymbol{\mathcal{M}}_{i,\mathbf{q}}(\boldsymbol{\beta})\right)\boldsymbol{\beta} \equiv \boldsymbol{\mathcal{T}}_{i,\mathbf{q}}(\boldsymbol{\beta}),
\end{equation}
and hence, $\partial_t \boldsymbol{\beta} = \boldsymbol{\mathcal{T}}_{i,\mathbf{q}}(\boldsymbol{\beta})\dot{q}_i$.

Using \eqref{eq:ConformalGenerator}, we now also obtain
\begin{equation}
\nabla_{\mathbb{S}^2}Y(\boldsymbol{\beta}, t)\cdot \partial_t \boldsymbol{\beta} = \sum_{j=1}^3 \left(\partial_{\beta_j}Y(\boldsymbol{\beta}, t)\right) \boldsymbol{\mathcal{T}}^j_{i,\mathbf{q}}(\boldsymbol{\beta})\dot{q}_i = \langle \boldsymbol{\mathcal{T}}_{i,\mathbf{q}}(\boldsymbol{\beta}), \nabla_{\mathbb{S}^2}Y(\boldsymbol{\beta}, t) \rangle_{\mathbb{S}^2}\dot{q}_i.
\end{equation}
Finally, we use the fact that $X_0 (\boldsymbol{\beta}) = \boldsymbol{\beta}$ in $\dot{\rho}\boldsymbol{\beta}$, to obtain the total kinematic time derivative
\begin{equation}\label{eq:TimeDerivX}
\partial_t X (\boldsymbol{\alpha}, t) = (1+\rho)\boldsymbol{\mathcal{T}}_{i,\mathbf{q}}(\boldsymbol{\beta})\dot{q}_i + X_0 (\boldsymbol{\beta})\dot{\rho} + \partial_t Y(\boldsymbol{\beta}, t) + \langle \boldsymbol{\mathcal{T}}_{i,\mathbf{q}}(\boldsymbol{\beta}), \nabla_{\mathbb{S}^2}Y(\boldsymbol{\beta}, t) \rangle_{\mathbb{S}^2}\dot{q}_i - \frac{2}{3}\dot{\mathbf{v}} + \dot{\mathbf{a}}.
\end{equation}

Using the translation invariance and conformal covariance of the Peskin operator, the equation $\partial_t X = \mathcal{N}(X)$, pulled back by
\begin{equation*}
    \beta = \mathcal{S}_{\mathbf{q}(t)}(\alpha),
\end{equation*}
becomes the exact identity
\begin{equation}\label{5.15}
    \partial_t Y(\beta, t) + X_0 (\beta)\dot{\rho} + Q_i(\rho, \mathbf{q}, Y)(\beta)\mathbf{\dot{q}}_i - \frac{2}{3}\dot{\mathbf{v}} + \mathbf{\dot{a}} = \mathcal{F}_\rho(Y)(\beta), 
\end{equation}
where
\begin{equation*}
    \mathcal{F}_\rho(Y) := \mathcal{N}\big((1 + \rho)X_0  + Y\big), 
\end{equation*}
and
\begin{equation*}
    Q_i(\rho, \mathbf{q}, Y)(\beta) := (1 + \rho)\mathcal{T}_{i, \mathbf{q}}(\beta) + \big\langle \mathcal{T}_{i, \mathbf{q}}(\beta), \nabla_{\mathbb{S}^2}Y(\beta) \big\rangle_{\mathbb{S}^2}. 
\end{equation*}
Here $X_0 (\beta) = \beta$. Notice that no Taylor expansion has been made in \eqref{5.15}; this is only the exact equation in the modulated variables.

We now project \eqref{5.15} onto the null space by taking the $L^2(\mathbb{S}^2; \mathbb{C}^3)$ scalar product against the vector spherical harmonic basis spanning $\ker \mathcal{N}_1$:
\begin{equation*}
    K \in \mathbf{B}_{\mathrm{ker}} \equiv \{\mathbf{H}^1_{1, 0}\} \cup \{\mathbf{H}^1_{2, m}, \mathbf{H}^2_{0, m}, \mathbf{H}^3_{1, m}\}_{m \in \{-1, 0, 1\}}. 
\end{equation*}
Because the basis modes are static on the reference sphere, differentiating the orthogonality condition gives
\begin{equation*}
    \langle \partial_t Y, K \rangle_{L^2} = 0.
\end{equation*}
Therefore, projection of \eqref{5.15} gives the following finite-dimensional system:
\begin{equation*}
    \big\langle \mathbf{H}^1_{1, 0}, Q_i \big\rangle_{L^2}\mathbf{\dot{q}}_i + \big\langle \mathbf{H}^1_{1, 0}, X_0  \big\rangle_{L^2}\dot{\rho} = \big\langle \mathbf{H}^1_{1, 0}, \mathcal{F}_\rho(Y) \big\rangle_{L^2}, 
\end{equation*}
\begin{equation*}
    \big\langle \mathbf{H}^2_{0, m}, Q_i \big\rangle_{L^2}\mathbf{\dot{q}}_i - \frac{2}{3}\big\langle \mathbf{H}^2_{0, m}, \dot{\mathbf{v}} \big\rangle_{L^2} + \big\langle \mathbf{H}^2_{0, m}, \mathbf{\dot{a}} \big\rangle_{L^2} = \big\langle \mathbf{H}^2_{0, m}, \mathcal{F}_\rho(Y) \big\rangle_{L^2}, 
\end{equation*}
\begin{equation*}
    \big\langle \mathbf{H}^3_{1, m}, Q_i \big\rangle_{L^2}\mathbf{\dot{q}}_i = \big\langle \mathbf{H}^3_{1, m}, \mathcal{F}_\rho(Y) \big\rangle_{L^2}, 
\end{equation*}
\begin{equation*}
    \big\langle \mathbf{H}^1_{2, m}, Q_i \big\rangle_{L^2}\mathbf{\dot{q}}_i - \frac{2}{3}\big\langle \mathbf{H}^1_{2, m}, \dot{\mathbf{v}} \big\rangle_{L^2} + \big\langle \mathbf{H}^1_{2, m}, \mathbf{\dot{a}} \big\rangle_{L^2} = \big\langle \mathbf{H}^1_{2, m}, \mathcal{F}_\rho(Y) \big\rangle_{L^2}, 
\end{equation*}
for $m \in \{-1, 0, 1\}$. These equations can be written compactly as
\begin{equation*}
    \mathbf{B}(\rho, \mathbf{q}, Y)(\dot{\rho}, \mathbf{\dot{a}}, \mathbf{\dot{q}})^\top = \mathbf{R}(\rho, Y), 
\end{equation*}
where the components of $\mathbf{R}$ are the corresponding projections of $\mathcal{F}_\rho(Y)$ onto the basis elements of $\mathbf{B}_{\mathrm{ker}}$.

It remains to show that the matrix $\mathbf{B}(\rho, \mathbf{q}, Y)$ is invertible for small $(\rho, \mathbf{q}, Y)$. We evaluate the matrix at the identity configuration $(\rho, \mathbf{q}, Y) = (0, 0, 0)$. Let
\begin{equation*}
    \mathbf{B}_{\mathrm{ker}} = \{K_a\}_{a=1}^{10} = \{\mathbf{H}^1_{1, 0}\} \cup \{\mathbf{H}^2_{0, m}, \mathbf{H}^3_{1, m}, \mathbf{H}^1_{2, m}\}_{m \in \{-1, 0, 1\}} 
\end{equation*}
be the ordered orthonormal basis of $\ker \mathcal{N}_1$ established in Lemma~\ref{LemSteadyKernel}. The infinitesimal generators of the steady-state parameter manifold at the identity form the geometric set
\begin{equation*}
    \mathbf{B}_{\mathrm{geom}} = \{\mathbf{v}_b\}_{b=1}^{10} = \{\widehat{r}\} \cup \{e_k, \omega_k, g_k\}_{k=1}^3. 
\end{equation*}
The matrix
\begin{equation*}
    (\mathbf{B}_0)_{ab} = \langle K_a, \mathbf{v}_b \rangle_{L^2(\mathbb{S}^2; \mathbb{C}^3)} 
\end{equation*}
is the change-of-basis matrix between $\mathbf{B}_{\mathrm{geom}}$ and $\mathbf{B}_{\mathrm{ker}}$. By Lemma~\ref{LemSteadyKernel},
\begin{equation*}
    \operatorname{span}(\mathbf{B}_{\mathrm{geom}}) = \operatorname{span}(\mathbf{B}_{\mathrm{ker}}) = \ker \mathcal{N}_1,
\end{equation*}
and both spaces have dimension $10$. Hence,
\begin{equation*}
    \det \mathbf{B}_0 \neq 0. 
\end{equation*}
Since $\mathbf{B}(\rho, \mathbf{q}, Y)$ depends continuously on $(\rho, \mathbf{q}, Y)$, it remains invertible for
\begin{equation*}
    |\rho| + |\mathbf{q}| + \|Y\|_{C^{1, \alpha}}
\end{equation*}
sufficiently small.

Finally, define the nonlinear map
\begin{equation*}
    \Phi(\rho, \mathbf{a}, \mathbf{q}, Y) = Z_\rho(\mathbf{\mathcal{S}}_\mathbf{q}(\cdot)) - \frac{2}{3}\mathbf{v} + \mathbf{a} - X, \qquad Z_\rho(\beta) = (1 + \rho)\beta + Y(\beta). 
\end{equation*}
At the identity configuration, the Fr\'echet derivative of $\Phi$ with respect to $(\rho, \mathbf{a}, \mathbf{q}, Y)$ maps
\begin{equation*}
    (\delta \mathbf{p}, \delta Y) \longmapsto \mathbf{B}_0 \delta \mathbf{p} + \delta Y.
\end{equation*}
Since $\mathbf{B}_0 \delta \mathbf{p}$ spans $\ker \mathcal{N}_1$ and $\delta Y$ spans $(\ker \mathcal{N}_1)^\perp$, this derivative is an isomorphism from
\begin{equation*}
    \mathbb{R}^{10} \times (\ker \mathcal{N}_1)^\perp
\end{equation*}
onto $C^{1, \alpha}(\mathbb{S}^2; \mathbb{R}^3)$. The Implicit Function Theorem therefore gives, for every $X$ sufficiently close to $X_0 $ in $C^{1, \alpha}$, unique parameters $(\rho, \mathbf{a}, \mathbf{q})$ and a unique perturbation $Y \in (\ker \mathcal{N}_1)^\perp$. Applying this pointwise in time gives the claimed structural decomposition and the locally invertible matrix ODE for the parameters.
\end{proof}

\begin{remark}
    We will denote $r_*$ the exact radius of a sphere having the same volume $V_0$ as the initial data. By incompressibility, we will see that the volume is preserved, hence the final value of the parameter $r_\infty$ will be equal to $r_*$.
\end{remark}

\begin{lemma}[Volume constraint and dilation scaling]
\label{Lem:VolumeScaling}
Let $V_0 := V(X(0))$ and define the reference radius $r_* := (3V_0 / 4\pi)^{1/3}$. 
Assume the interface $X$ admits the decomposition
\begin{equation*}
    X = (1 + \rho)X_0  + Y, \qquad Y \in (\ker \mathcal{N}_1)^\perp,
\end{equation*}
subject to the volume constraint $V(X) = V_0$. If $\|X - X_0 \|_{C^{1, \alpha}}$ is sufficiently small, then
\begin{equation*}
    |\rho - (r_* - 1)| \le C \|Y\|_{C^{1, \alpha}}^2.
\end{equation*}
In particular, the distance to the rescaled steady state satisfies 
\begin{equation*}
    \|X - r_* X_0 \|_{C^{1, \alpha}} \le C\|Y\|_{C^{1, \alpha}}.
\end{equation*}
Furthermore, if $V_0 = V(X_0 )$, then the dilation parameter is quadratically controlled by the perturbation:
\begin{equation*}
    |\rho| \le C \|Y\|_{C^{1, \alpha}}^2.
\end{equation*}
\end{lemma}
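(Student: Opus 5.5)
The plan is to expand the enclosed volume as a polynomial in the perturbation and use the orthogonality $Y\in(\ker\mathcal{N}_1)^\perp$ to kill the linear term in $Y$. The volume enclosed by a Lipschitz parametrization $X:\SS^2\to\R^3$ is given by the boundary integral
\begin{equation*}
V(X)=\frac13\int_{\SS^2} X\cdot\big(\partial_{1}X\times\partial_{2}X\big)\,d\alpha^1 d\alpha^2 ,
\end{equation*}
written in local coordinates, or invariantly via the vector fields $V_{ij}$. This is a cubic form in $X$, so $V(X)$ is a trilinear expression in $X$ and its first derivatives.

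\textbf{Step 1 (expansion).} Substitute $X=(1+\rho)X_0+Y$ and expand the trilinear form. This gives
\begin{equation*}
V(X)=\tfrac{4\pi}{3}(1+\rho)^3+(1+\rho)^2 L(Y)+(1+\rho)Q(Y)+C(Y),
\end{equation*}
where $Q$ and $C$ are quadratic and cubic in $(Y,\nabla Y)$. They are bounded by $\|Y\|_{C^1}^2$ and $\|Y\|_{C^1}^3$, with constants depending only on the sphere. The linear term is the first variation of volume at the unit sphere, namely the flux of $Y$ through the sphere:
\begin{equation*}
L(Y)=\int_{\SS^2}Y\cdot x\,d\mu(x)=\langle Y,\H^1_{1,0}\rangle_{L^2},
\end{equation*}
using $\H^1_{1,0}(x)=x$ from \eqref{vec33}. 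To verify this, integrate by parts with \eqref{vf13}, or use the classical formula that the derivative of volume equals the integral of the normal velocity.

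\textbf{Step 2 (orthogonality).} Since $\H^1_{1,0}$ lies in the kernel basis of Lemma \ref{LemSteadyKernel}, the orthogonality \eqref{eq:KernelOrthogonality} gives $L(Y)=0$. The constraint $V(X)=V_0=\tfrac{4\pi}{3}r_*^3$ then becomes
\begin{equation*}
(1+\rho)^3-r_*^3=O\big(\|Y\|_{C^1}^2\big),
\end{equation*}
since $|\rho|$ is small. Factor the left side as $(1+\rho-r_*)\big((1+\rho)^2+(1+\rho)r_*+r_*^2\big)$. The second factor is close to $3$, because $r_*$ and $1+\rho$ are both near $1$; this holds when $\|X-X_0\|_{C^{1,\alpha}}$ is small, since then $V_0$ is near $4\pi/3$. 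This yields $|\rho-(r_*-1)|\le C\|Y\|_{C^{1,\alpha}}^2$. The $C^1$ norm suffices here, and the $C^{1,\alpha}$ norm dominates it.

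\textbf{Step 3 (consequences).} From $X-r_*X_0=(1+\rho-r_*)X_0+Y$ and the triangle inequality,
\begin{equation*}
\|X-r_*X_0\|_{C^{1,\alpha}}\le C\|Y\|^2_{C^{1,\alpha}}+\|Y\|_{C^{1,\alpha}}\le C'\|Y\|_{C^{1,\alpha}}
\end{equation*}
for small $Y$. If $V_0=V(X_0)$, then $r_*=1$, and Step 2 gives $|\rho|\le C\|Y\|_{C^{1,\alpha}}^2$ directly.

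\textbf{Main obstacle.} The main obstacle is Step 1. One must justify the volume formula for merely $C^{1,\alpha}$, possibly nonembedded-but-close parametrizations; this is fine because $X$ is a small $C^1$ perturbation of the identity, hence an embedding. One must also correctly identify the linear term as exactly $\langle Y,x\rangle_{L^2}$, including the normalization relative to $d\mu$. This is the point where the choice of the dilation mode $\H^1_{1,0}$ in the kernel basis is essential, and it explains why the estimate on $\rho$ is quadratic rather than linear.
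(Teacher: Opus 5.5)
Your proposal is correct and follows essentially the same route as the paper: expand $V((1+\rho)X_0+Y)$ around the dilated sphere, identify the linear term as $(1+\rho)^2\langle Y,X_0\rangle_{L^2}$, kill it using $X_0=\H^1_{1,0}\in\ker\mathcal{N}_1$, and invert the cube map near $1$. Your version is somewhat more explicit, since it writes the expansion as an exact cubic polynomial and derives the two ``in particular'' consequences that the paper leaves implicit, but the argument is the same.
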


\begin{proof}
By translation invariance of volume and invariance under reparametrization, the full decomposition
\begin{equation*}
    X (\alpha, t) = Z_\rho(S_{q(t)}(\alpha), t) - \frac{2}{3}\mathbf{v}(q(t)) + \mathbf{a}(t)
\end{equation*}
has the same enclosed volume as
\begin{equation*}
    Z_\rho(\beta, t) = (1 + \rho(t))\beta + Y(\beta, t).
\end{equation*}
Thus it suffices to prove the estimate for the reduced form
\begin{equation*}
    X  = (1 + \rho)X_0  + Y.
\end{equation*}

Since the Peskin velocity is induced by an incompressible Stokes flow, the enclosed volume is conserved:
\begin{equation*}
    V(X(t)) = V(X(0)) = V_0.
\end{equation*}
For a sphere of radius $r_*$, we have $V(r_* X_0 ) = \frac{4\pi}{3} r_*^3 = V_0$. Expanding the interface as
\begin{equation*}
    X = (1 + \rho)X_0  + Y,
\end{equation*}
we perform a Taylor expansion of the volume functional around $X_0 $ to obtain
\begin{equation*}
    V((1 + \rho)X_0  + Y) = \frac{4\pi}{3}(1 + \rho)^3 + \langle Y, (1 + \rho)^2 X_0  \rangle_{L^2} + O(\|Y\|_{C^{1, \alpha}}^2).
\end{equation*}
Because $Y \in (\ker \mathcal{N}_1)^\perp$ and $X_0 $ spans the kernel corresponding to radial dilations, the linear term in $Y$ vanishes. Consequently,
\begin{equation*}
    V_0 = \frac{4\pi}{3}(1 + \rho)^3 + O(\|Y\|_{C^{1, \alpha}}^2).
\end{equation*}
Substituting $V_0 = \frac{4\pi}{3}r_*^3$ yields
\begin{equation*}
    |(1 + \rho)^3 - r_*^3| \le C \|Y\|_{C^{1, \alpha}}^2.
\end{equation*}
Since $(1 + \rho)$ and $r_*$ are close to $1$, the map $f(z) = z^3$ is locally bi-Lipschitz, which implies
\begin{equation*}
    |\rho - (r_* - 1)| \le C \|Y\|_{C^{1, \alpha}}^2.
\end{equation*}
\end{proof}

\subsection{Global Function Spaces and Asymptotic Decay}

Applying Lemma \ref{Lem:ParameterDecoupling} at $t=1$, the decoupled state $Y$ inherits the regularity of $X $, yielding the initial bound $\|Y(1)\|_{C^{1,\alpha}} \le C\varepsilon \equiv \varepsilon_1$. 
For the long-time asymptotic stability, we introduce the time-weighted norm 
\begin{equation} \label{eq:GlobalXNorm}
\|Y\|_{\mathcal{X}} = \sup_{t \ge 1} \Big( e^{\lambda t} \|Y(\cdot, t)\|_{C^{1,\alpha}(\mathbb{S}^2)} \Big),
\end{equation}
where $0<\lambda<a_2/r_\infty$, with $a_2=8/35$ \eqref{tete4}.

By construction, proving that a solution remains uniformly bounded in this space, $\|Y\|_{\mathcal{X}} \le C$, is equivalent to proving that the interface converges to the steady-state spherical manifold at a rate of $\|Y(t)\|_{C^{1,\alpha}} = \mathcal{O}(e^{-\lambda t})$ as $t \to \infty$. 
Moreover, we will show at the end of the section that the exponential rate is actually $a_2/r_\infty$; see Proposition \ref{prop:decay_exact} and the numerical verification results in Section \ref{sec:numerical}.

\begin{lemma}[Schauder bound for the Peskin multilinear forms]
\label{Lem:PeskinSchauder}
Fix $\alpha \in (0, 1)$. Let $\mathcal{T}_m$ be one of the multilinear operators defined by
\begin{equation*}
    \mathcal{T}_m(f; g_1, \dots, g_m)(x) := \int_{\mathbb{S}^2} \Delta_{\mathbb{S}^2} f(y) \prod_{\ell=1}^m (g_\ell(x) - g_\ell(y)) K_m(x, y) \, d\mu(y),
\end{equation*}
with kernels $K_m$ as in \eqref{har2}. Then
\begin{equation*}
    \|\mathcal{T}_m(f; g_1, \dots, g_m)\|_{C^\alpha} \le C^m \|f\|_{C^{1, \alpha}} \prod_{\ell=1}^m \|g_\ell\|_{C^{1, \alpha}}.
\end{equation*}
Moreover, if all inputs are in $C^{2, \alpha}$, then the tame estimate
\begin{equation*}
    \|\mathcal{T}_m(f; g_1, \dots, g_m)\|_{C^{1, \alpha}} \le C^m \sum_{r=0}^m \|u_r\|_{C^{2, \alpha}} \prod_{s \ne r} \|u_s\|_{C^{1, \alpha}}
\end{equation*}
holds, where $u_0 = f$ and $u_s = g_s$ for $1 \le s \le m$.
\end{lemma}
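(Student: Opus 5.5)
We reduce to an operator with only first derivatives on $f$, exactly as in Subsection \ref{sif00}. Using \eqref{vf14}, write $\Delta_{\SS^2}f=\sum_V V(Vf)$ with $V\in\{V_{12},V_{23},V_{31}\}$. Then integrate one $V$ by parts in $y$ using \eqref{vf13} and the fact that $\mathrm{div}_{\SS^2}V=0$. This gives
\begin{equation*}
\mathcal{T}_m(f;\underline{g})(x)=-\sum_V\int_{\SS^2}V(f)(y)\,V_y\Big\{\prod_{\ell=1}^m(g_\ell(x)-g_\ell(y))K_m(x,y)\Big\}\,d\mu(y).
\end{equation*}
When $V_y$ falls on a factor $g_\ell(x)-g_\ell(y)$, it produces $-Vg_\ell(y)$. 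When it falls on $K_m$, it produces a kernel of size $|x-y|^{-(m+2)}$ by \eqref{K1bound}. Each remaining difference quotient $(g_\ell(x)-g_\ell(y))/|x-y|$ is bounded by $\|g_\ell\|_{C^1}$. So both pieces are integrals of the form $\int F(y)\,\Theta(x,y)\,d\mu(y)$. Here $F$ is a product of $C^\alpha$ functions, namely $Vf$ and $Vg_\ell$, and the effective kernel $\Theta$ behaves like $|x-y|^{-2}$ with the cancellation of a Calderón--Zygmund kernel on the two-dimensional sphere. The justification is a boundary-term check: first excise a ball $|x-y|<\epsilon$ and then let $\epsilon\to0$, exactly as in the $\epsilon$-regularization of Lemma \ref{PropExp}.

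\textbf{$C^\alpha$ bound.} For the $L^\infty$ part, split the integral near and far from $x$. Near $x$, write $Vf(y)=Vf(x)+O(|x-y|^\alpha)$. The frozen term $Vf(x)\int\Theta(x,y)\,d\mu(y)$ is handled by showing that the principal value of the kernel is bounded. To see this, expand $g_\ell(x)-g_\ell(y)=\nabla g_\ell(x)\cdot(x-y)+O(|x-y|^{1+\alpha})$. The leading part of $\Theta$ is then odd in $x-y$ in local tangent coordinates, up to the curvature of the sphere, which contributes a term of order $|x-y|^{-1}$. This odd part integrates to zero over annuli. The $O(|x-y|^{1+\alpha})$ remainders give kernels of size $|x-y|^{-2+\alpha}$, which are integrable. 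For the Hölder seminorm, use the standard Schauder/Calderón--Zygmund splitting. With $d=|x-x'|$, compare $\mathcal{T}_m$ at $x$ and $x'$ on the ball $B(x,2d)$ and on its complement. On the ball, estimate each point separately using the modified bounds above, which gives $O(d^\alpha)$. On the complement, use the $x$-regularity of the kernel together with the cancellation again, which gives $O(d^\alpha)$ times logarithm-free constants, because the differences of $g_\ell$ are Hölder. The constants grow like $C^m$ because each of the $m$ factors is differentiated or bounded at most once. Multilinearity gives the product structure.

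\textbf{Tame $C^{1,\alpha}$ bound.} Apply $V'_x$ to $\mathcal{T}_m$, as in the decomposition \eqref{case2H2_split}. If $V'_x$ hits $g_r(x)$, the result is $(V'g_r)(x)$ times an operator of the same type with $m-1$ difference factors. This is bounded in $C^\alpha$ by the first part, with $V'g_r\in C^\alpha$ controlled by $\|g_r\|_{C^{1,\alpha}}$. The main term, where $V'_x$ hits $K_m$, is not directly amenable. Instead, we commute: write $V'_x+V'_y$ acting on the kernel. Since the rotation fields generate isometries, $(V'_x+V'_y)$ annihilates $|x-y|$, and applied to the numerator factors $x^{k}-y^{k}$ it produces terms of the same form. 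Integrating the $V'_y$ part by parts moves the derivative onto the inputs, giving $\mathcal{T}_m(V'f;\underline g)$ and terms $\mathcal{T}_m(f;\dots,V'g_r,\dots)$. These land exactly one input in $C^{2,\alpha}$ norm, and the first part then gives the claimed sum. We expect the main obstacle to be the principal-value cancellation of the frozen-coefficient kernel uniformly in $m$. It requires verifying that, after linearizing the $g$-differences, each kernel $K_m\prod(\nabla g_\ell\cdot(x-y))$ is odd modulo curvature terms. Every such kernel has $2m+m'$ total factors of $x-y$ in homogeneous degree $-2$; here one exploits that the derivative from $V_y$ flips parity.
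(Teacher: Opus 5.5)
Your route is the paper's: integrate by parts in $y$ using \eqref{vf14} and \eqref{vf13}, treat the result as Calder\'on--Zygmund operators with divided-difference coefficients, and apply one rotation field for the tame bound. Moving a single derivative off $f$, as you do, is the right choice; moving both, as the paper's sketch literally says, would put $V^2g_\ell$ on $C^{1,\alpha}$ inputs.

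However, the cancellation you identify as the crux does not hold in the generality of the lemma. The frozen kernel $\prod_\ell(\nabla g_\ell(x)\cdot(x-y))\,K_m$ has parity $(-1)^{m+m'}$. So the two pieces you split off after applying $V_y$ are odd only when $m'\equiv m \pmod 2$, whereas \eqref{har2} allows every $m'\le m+2$. Take $m=1$, $m'=0$, so $K_1=|x-y|^{-2}$. The piece in which $V_y$ falls on $g$ is $\int_{\SS^2}(\nabla_{\SS^2}f\cdot\nabla_{\SS^2}g)(y)\,|x-y|^{-2}\,d\mu(y)$, which diverges logarithmically. Moreover, the boundary term on $\{|x-y|=\epsilon\}$ tends to a nonzero multiple of $\nabla_{\SS^2}f(x)\cdot\nabla_{\SS^2}g(x)$, not to $0$.

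The repair is not to split. After freezing $Vg_\ell(y)$ at $x$, the leading part of the full kernel $V_y\{\prod_\ell(g_\ell(x)-g_\ell(y))K_m\}$ is a derivative of a kernel homogeneous of degree $-1$. It therefore has zero mean on circles for either parity. The boundary term is then a local $C^\alpha$ function and must be kept. Alternatively, restrict to $m'\in\{m,m+2\}$. These are the only cases occurring in \eqref{Pes25}, hence the only ones Lemma \ref{Lem:NonlinearBounds} uses, and there your parity argument is correct.

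The tame step also has a bookkeeping error. The term where $V'_x$ falls on $g_r(x)$ is not an operator of the first type with $m-1$ differences. Its kernel $\prod_{\ell\ne r}(g_\ell(x)-g_\ell(y))K_m$ is homogeneous of degree $-2$ against $\Delta_{\SS^2}f$. It is therefore of order two in $f$ and cannot be bounded by $\|f\|_{C^{1,\alpha}}$. For $m+m'$ odd it diverges outright, e.g.\ $V'g(x)\int_{\SS^2}\Delta_{\SS^2}f(y)\,|x-y|^{-2}\,d\mu(y)$.

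Likewise, integrating $V'_y$ by parts produces $-\int_{\SS^2}\Delta_{\SS^2}f(y)\,V'g_r(y)\prod_{\ell\ne r}(g_\ell(x)-g_\ell(y))K_m\,d\mu(y)$, not $\mathcal{T}_m(f;\dots,V'g_r,\dots)$. Only the sum of these two terms equals $\mathcal{T}_m(f;g_1,\dots,V'g_r,\dots,g_m)$. As written, your accounting either counts $V'g_r(x)$ twice or mislabels the integrated term.

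The clean form of your commutation idea is as follows. Let $R_s$ be the rotation generated by $V'$, substitute $y=R_sy'$, and differentiate $s\mapsto\mathcal{T}_m(f;\underline g)(R_sx)$ at $s=0$. Since $V'$ commutes with $\Delta_{\SS^2}$, this gives, with no boundary terms,
\[
V'\mathcal{T}_m(f;\underline g)=\mathcal{T}_m(V'f;\underline g)+\sum_{r=1}^m\mathcal{T}_m(f;g_1,\dots,V'g_r,\dots,g_m)+\widetilde{\mathcal{T}}_m(f;\underline g).
\]
Here $\widetilde{\mathcal{T}}_m$ has kernel $(V'_x+V'_y)K_m$, which is a finite sum of kernels of the form \eqref{har2}. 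The $C^\alpha$ bound then yields the tame estimate immediately.
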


\begin{proof}
Using the identity
\begin{equation*}
    \Delta_{\mathbb{S}^2} = V_{12}^2 + V_{23}^2 + V_{31}^2
\end{equation*}
and the divergence-free properties \eqref{vf13}, we integrate by parts with respect to $y$ to shift the two derivatives off $f$. Following this integration by parts, the expression becomes a finite sum of singular integral operators on $\mathbb{S}^2$ whose kernels satisfy standard Calderón-Zygmund bounds in local coordinates. The coefficients are products of divided differences of the form
\begin{equation*}
    \frac{g_\ell(x) - g_\ell(y)}{|x - y|},
\end{equation*}
which are bounded by $\|g_\ell\|_{C^{1, \alpha}}$.

The $C^\alpha$ estimate follows from standard Schauder estimates for Calderón-Zygmund operators on compact smooth manifolds. The $C^{1, \alpha}$ estimate is obtained by differentiating once with respect to one of the vector fields $V_{12}, V_{23}, V_{31}$. When the derivative acts on a single input, that input is estimated in $C^{2, \alpha}$, while all remaining inputs are estimated in $C^{1, \alpha}$, yielding the stated tame estimate.
\end{proof}

\begin{lemma}[Nonlinear remainder estimates]
\label{Lem:NonlinearBounds}
Fix $\alpha \in (0, 1)$ and define the remainder term
\begin{equation*}
    \mathcal{R}(Z) := \mathcal{N}(X_0  + Z) - \mathcal{N}(X_0 ) - \mathcal{N}_1(Z).
\end{equation*}
There exist constants $\varepsilon_0 > 0$ and $C > 0$ such that, if $\|Z\|_{C^{1, \alpha}} \le \varepsilon_0$, then
\begin{equation*}
    \|\mathcal{R}(Z)\|_{C^\alpha} \le C \|Z\|_{C^{1, \alpha}}^2.
\end{equation*}
Moreover, if $Z \in C^{2, \alpha}$, then the following tame estimate holds:
\begin{equation*}
    \|\mathcal{R}(Z)\|_{C^{1, \alpha}} \le C \|Z\|_{C^{1, \alpha}} \|Z\|_{C^{2, \alpha}}.
\end{equation*}
\end{lemma}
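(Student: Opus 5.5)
The plan is to expand $\mathcal{N}(X_0+Z)$ as the convergent multilinear series from \eqref{Pes25} and to apply Lemma \ref{Lem:PeskinSchauder} to each term of degree at least two in $Z$.

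\textbf{Step 1 (the expansion).} Insert $Y=Z$ into \eqref{Pes25}. The quantities $\widetilde{Z}_{k,j}(x,p)=(x^k-p^k)(Z^j(x)-Z^j(p))/|x-p|^2$ are bounded pointwise by $C\|Z\|_{C^1}\le C\varepsilon_0$. If $\varepsilon_0$ is small enough that $|\widetilde{Z}_{k,j}|\le 1/4$, the factor $[1+2\widetilde{Z}_{k,k}+\widetilde{Z}_{k,l}\widetilde{Z}_{k,l}]^{-3/2}$ has a convergent binomial expansion. Multiplying out the braces and the factor $-2p^j+\Delta_{\SS^2}Z^j(p)$, we write $\mathcal{N}(X_0+Z)$ as a sum over $m\ge 0$ of terms of two kinds:
\begin{itemize}
\item operators $\mathcal{T}_m(Z^{j};Z^{j_1},\dots,Z^{j_m})$, in which $\Delta_{\SS^2}$ falls on $Z$;
\item analogous operators $\mathcal{T}_m(X_0^j;Z^{j_1},\dots,Z^{j_m})$, in which $\Delta_{\SS^2}$ falls on the coordinate function $p^j$.
\end{itemize}
In both cases the kernels are of the form \eqref{har2}, and the coefficients $c_{m,\underline{k}}$ satisfy $\sum_{\underline{k}}|c_{m,\underline{k}}|\le A^m$ for an absolute constant $A$. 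The terms of total degree $0$ in $Z$ reproduce $\mathcal{N}(X_0)$. The terms of degree $1$ are exactly $\mathcal{N}_{1,1}(Z)+\mathcal{N}_{1,2}(Z)=\mathcal{N}_1(Z)$, by \eqref{Pes6}. Hence $\mathcal{R}(Z)$ is precisely the sum of all terms of degree at least $2$, namely
\begin{itemize}
\item $\mathcal{T}_m(Z;Z,\dots,Z)$ with $m\ge1$;
\item $\mathcal{T}_m(X_0;Z,\dots,Z)$ with $m\ge2$.
\end{itemize}

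\textbf{Step 2 (bounds).} We apply Lemma \ref{Lem:PeskinSchauder} termwise, treating $f=X_0^j$ as a smooth input with $\|X_0\|_{C^{2,\alpha}}\lesssim1$. Each degree-$d$ term then has $C^\alpha$ norm at most $(AC)^{d}\|Z\|_{C^{1,\alpha}}^{d}$. Summing the geometric series for $\|Z\|_{C^{1,\alpha}}\le\varepsilon_0<(2AC)^{-1}$ gives $\|\mathcal{R}(Z)\|_{C^\alpha}\lesssim\|Z\|_{C^{1,\alpha}}^2$.

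For the tame estimate we use the second part of Lemma \ref{Lem:PeskinSchauder}. In each term exactly one input is measured in $C^{2,\alpha}$.
\begin{itemize}
\item If that input is one of the copies of $Z$, the bound is $(AC)^d\|Z\|_{C^{2,\alpha}}\|Z\|_{C^{1,\alpha}}^{d-1}$, with $d\ge2$.
\item If it is $X_0$ (which occurs only when $m\ge2$), the bound is $(AC)^d\|Z\|_{C^{1,\alpha}}^{d}\le (AC)^d\|Z\|_{C^{1,\alpha}}\|Z\|_{C^{2,\alpha}}\varepsilon_0^{d-2}$.
\end{itemize}
Summing over $d\ge2$ gives $\|\mathcal{R}(Z)\|_{C^{1,\alpha}}\lesssim\|Z\|_{C^{1,\alpha}}\|Z\|_{C^{2,\alpha}}$. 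The combinatorial factor $d$ coming from the choice of the distinguished input is absorbed into the geometric decay.

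\textbf{Main obstacle.} The delicate point is that Lemma \ref{Lem:PeskinSchauder} is stated for $\Delta_{\SS^2}f$ with $f$ in the integral. Two issues need checking.
\begin{itemize}
\item The terms in which $\Delta_{\SS^2}$ falls on $X_0$ produce a factor $-2p^j$ that is not a difference $g(x)-g(y)$. This is harmless: we regard it as $\Delta_{\SS^2}X_0^j$ with the smooth input $f=X_0^j$.
\item The kernels arising from the expansion must satisfy the constraint $m'\le m+2$ of \eqref{har2}. Each $\widetilde{Z}_{k,j}$ contributes one difference and one factor $(x^k-p^k)/|x-p|^2$, so it raises $m$ by $1$ and $m'$ by $1$. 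The braces contribute at most two extra factors of $(x-p)$ together with $|x-p|^{-2}$, and the overall $|x-p|^{-1}$ matches the exponent $m+m'+1$.
\end{itemize}
So the kernels fit \eqref{har2}. The remaining work is verifying that the constants in Lemma \ref{Lem:PeskinSchauder} grow only geometrically in $m$, which that lemma asserts through the factor $C^m$.
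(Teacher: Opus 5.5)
Your proposal is correct and follows essentially the same route as the paper: expand $\mathcal{N}(X_0+Z)$ via \eqref{Pes25}, identify the degree-$0$ and degree-$1$ parts as $\mathcal{N}(X_0)$ and $\mathcal{N}_1(Z)$, then apply Lemma \ref{Lem:PeskinSchauder} term by term and sum the geometric series for both the $C^\alpha$ and the tame $C^{1,\alpha}$ bounds. Your write-up makes explicit several details the paper leaves implicit: treating the $-2p^j$ factor as $\Delta_{\SS^2}X_0^j$ with a smooth input, checking the kernel constraint $m'\le m+2$, and handling the case where the distinguished $C^{2,\alpha}$ input is $X_0$.
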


\begin{proof}
The expansion of the Peskin nonlinearity presented in Section \ref{sec:nonlinear} demonstrates that, after subtracting $\mathcal{N}(X_0 )$ and $\mathcal{N}_1(Z)$, every term in the remainder $\mathcal{R}(Z)$ is at least quadratic in $Z$ and can be represented as a finite linear combination of the multilinear forms addressed in Lemma \ref{Lem:PeskinSchauder}. The coefficients of this expansion are analytic functions of the divided differences
\begin{equation*}
    \widetilde{Z}_{k, j}(x, y) = \frac{(x^k - y^k)(Z^j(x) - Z^j(y))}{|x - y|^2}.
\end{equation*}
For sufficiently small $\|Z\|_{C^{1, \alpha}}$, these analytic expansions converge absolutely.

Applying Lemma \ref{Lem:PeskinSchauder} term by term, we obtain the estimate
\begin{equation*}
    \|\mathcal{R}(Z)\|_{C^\alpha} \le \sum_{m \ge 2} C^m \|Z\|_{C^{1, \alpha}}^m \le C \|Z\|_{C^{1, \alpha}}^2.
\end{equation*}
If $Z \in C^{2, \alpha}$, the tame estimate established in Lemma \ref{Lem:PeskinSchauder} yields
\begin{equation*}
    \|\mathcal{R}(Z)\|_{C^{1, \alpha}} \le \sum_{m \ge 2} C^m \|Z\|_{C^{2, \alpha}} \|Z\|_{C^{1, \alpha}}^{m-1} \le C \|Z\|_{C^{1, \alpha}} \|Z\|_{C^{2, \alpha}},
\end{equation*}
where the final inequality holds due to the smallness of $\|Z\|_{C^{1, \alpha}}$.
\end{proof}

For $t\ge1$ the solution has already smoothed out. Therefore the tame
$C^{1,\alpha}$ estimate in Lemma \ref{Lem:NonlinearBounds} may be applied on $[1,\infty)$, with the required higher norm controlled by the smoothing bounds obtained in the local theory.

\begin{lemma}[Modulated equation and quadratic parameter bound]
\label{Lem:ParameterVelocityBound}
Let $X$ be decomposed as in Lemma \ref{Lem:ParameterDecoupling}:
\begin{equation*}
    X(\alpha, t) = Z_\rho(\mathbf{\mathcal{S}}_{\mathbf{q}(t)}(\alpha), t) - \frac{2}{3}\mathbf{v}(\mathbf{q}(t)) + \mathbf{a}(t), \qquad Z_\rho(\beta, t) = (1 + \rho(t))\beta + Y(\beta, t),
\end{equation*}
where
\begin{equation*}
    Y(t) \in (\ker \mathcal{N}_1)^\perp.
\end{equation*}
Set $r(t) = 1 + \rho(t)$, $\mathbf{p}(t) = (\rho(t), \mathbf{a}(t), \mathbf{q}(t))$, and let $\Pi_\perp$ denote the $L^2$-orthogonal projection onto $(\ker \mathcal{N}_1)^\perp$. Then $Y$ satisfies
\begin{equation*}
    \partial_t Y = r(t)^{-1} \mathcal{N}_1 Y + \Pi_\perp \mathcal{E}(Y, \mathbf{p}),
\end{equation*}
where
\begin{equation*}
    \|\mathcal{E}(Y(t), \mathbf{p}(t))\|_{C^{1, \alpha}} \le C \|Y(t)\|_{C^{1, \alpha}}^2.
\end{equation*}
Moreover, the modulation parameters satisfy the quadratic bound
\begin{equation*}
    |\mathbf{\dot{p}}(t)| \le C \|Y(t)\|_{C^{1, \alpha}}^2.
\end{equation*}
\end{lemma}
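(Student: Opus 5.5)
The plan is to start from the exact modulated identity \eqref{5.15} established in the proof of Lemma \ref{Lem:ParameterDecoupling}. There $\partial_t Y$ is expressed in terms of $\mathcal{F}_\rho(Y)=\mathcal{N}((1+\rho)X_0+Y)$ and the modulation terms $X_0\dot\rho+Q_i\dot q_i-\tfrac23\dot{\mathbf v}+\dot{\mathbf a}$. The first step is to exploit the scaling structure of the Peskin operator. Since $G_{ij}$ is homogeneous of degree $-1$ and $\Delta_{\SS^2}$ is linear, we have $\mathcal{N}(rW)=\mathcal{N}(W)$ for $r>0$. Writing $(1+\rho)X_0+Y=r(X_0+r^{-1}Y)$, this gives
\begin{equation*}
\mathcal{F}_\rho(Y)=\mathcal{N}(X_0+r^{-1}Y)=r^{-1}\mathcal{N}_1Y+\mathcal{R}(r^{-1}Y),
\end{equation*}
using $\mathcal{N}(X_0)=0$ from \eqref{Pes5}. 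By Lemma \ref{Lem:NonlinearBounds}, the remainder is quadratic: $\|\mathcal{R}(r^{-1}Y)\|_{C^{1,\alpha}}\lesssim\|Y\|_{C^{1,\alpha}}\|Y\|_{C^{2,\alpha}}$. On $t\ge1$ the smoothing bounds from the local theory control $\|Y\|_{C^{2,\alpha}}$, and we interpret the stated bound in this sense: a higher norm is absorbed into the constant and the result is quadratic in the $C^{1,\alpha}$ size. (Alternatively, I would interpolate through the $C^{1,\alpha}$ decay class.) This substitution is what produces the factor $r(t)^{-1}$ in front of $\mathcal{N}_1$.

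The second step is the parameter bound. I would project \eqref{5.15} onto $\ker\mathcal{N}_1$, exactly as in the derivation of $\mathbf{B}(\rho,\mathbf q,Y)(\dot\rho,\dot{\mathbf a},\dot{\mathbf q})^\top=\mathbf R$. The key observation is that $\langle K,\mathcal{N}_1Y\rangle=\langle\mathcal{N}_1K,Y\rangle=0$ for $K\in\ker\mathcal{N}_1$. This holds because $\mathcal{N}_1$ is self-adjoint: it is diagonal in the orthogonal basis of Proposition \ref{linearSummary} with real eigenvalues. Hence the linear part of $\mathbf R$ vanishes, and $\mathbf R=\big(\langle K_a,\mathcal{R}(r^{-1}Y)\rangle\big)_a$ is bounded by $C\|Y\|_{C^{1,\alpha}}^2$. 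Only the $C^\alpha$ (indeed $L^1$) bound of Lemma \ref{Lem:NonlinearBounds} is needed for this. Invertibility of $\mathbf B$ near $\mathbf B_0$, also shown in Lemma \ref{Lem:ParameterDecoupling}, then gives $|\dot{\mathbf p}|\le C\|Y\|^2_{C^{1,\alpha}}$.

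The third step is to apply $\Pi_\perp$ to \eqref{5.15}. Since $\partial_tY\in(\ker\mathcal{N}_1)^\perp$ and $\Pi_\perp\mathcal{N}_1Y=\mathcal{N}_1Y$, we obtain $\partial_tY=r^{-1}\mathcal{N}_1Y+\Pi_\perp\mathcal E$, with
\begin{equation*}
\mathcal E:=\mathcal{R}(r^{-1}Y)-\big(X_0\dot\rho+Q_i\dot q_i-\tfrac23\dot{\mathbf v}+\dot{\mathbf a}\big).
\end{equation*}
Since $Q_i$ is bounded in $C^{1,\alpha}$ by $C(1+\|Y\|_{C^{2,\alpha}})$ and $X_0$ and the constants are smooth, the modulation part is bounded by $C|\dot{\mathbf p}|\le C\|Y\|^2_{C^{1,\alpha}}$. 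It remains to check that $\Pi_\perp$ is bounded on $C^{1,\alpha}$. This holds because $\Pi_\perp=I-\sum_a\langle\cdot,K_a\rangle K_a/\|K_a\|^2$ with smooth $K_a$.

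The main obstacle is the remainder estimate in $C^{1,\alpha}$. Lemma \ref{Lem:NonlinearBounds} only provides the tame bound with one $C^{2,\alpha}$ factor. I would therefore rely on the parabolic smoothing from the $Z$-norm theory to bound $\|Y(t)\|_{C^{2,\alpha}}$ uniformly by a small constant for $t\ge1$, and accept the resulting constant $C$, which depends on that smoothing bound.
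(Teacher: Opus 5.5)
Your argument follows the paper's proof step for step. You use dilation invariance $\mathcal{N}(rW)=\mathcal{N}(W)$ to write $\mathcal{F}_\rho(Y)=r^{-1}\mathcal{N}_1Y+\mathcal{R}(r^{-1}Y)$. You then project onto $\ker\mathcal{N}_1$, where self-adjointness removes $\langle K_a,\mathcal{N}_1Y\rangle$, invert $\mathbf{B}$ near $\mathbf{B}_0$, and finally apply $\Pi_\perp$. Your parameter bound is tighter than the paper's. You only need the quadratic $C^\alpha$ (indeed $L^1$) estimate of Lemma \ref{Lem:NonlinearBounds} to control $\langle K_a,\mathcal{R}(r^{-1}Y)\rangle$, whereas the paper bounds $|\dot{\mathbf p}|$ through $\|\mathcal{R}_r(Y)\|_{C^{1,\alpha}}$. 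So in your version $|\dot{\mathbf p}|\le C\|Y\|_{C^{1,\alpha}}^2$ is fully justified.

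The step that does not work is the $C^{1,\alpha}$ bound on $\mathcal{E}$. The tame estimate gives $\|\mathcal{R}(r^{-1}Y)\|_{C^{1,\alpha}}\le C\|Y\|_{C^{1,\alpha}}\|Y\|_{C^{2,\alpha}}$. If you insert a uniform bound $\|Y(t)\|_{C^{2,\alpha}}\le M$, you obtain $CM\|Y\|_{C^{1,\alpha}}$, which is \emph{linear} in $\|Y\|_{C^{1,\alpha}}$. So ``absorbing the higher norm into the constant'' does not yield the claimed $C\|Y\|_{C^{1,\alpha}}^2$.

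There is a second problem with the source of that uniform bound. The $Z[0,1]$ theory only controls the solution on $[0,1]$. At $t=1$ it gives regularity only at the level $\sup_j 2^{j(1+\delta_0)}\|\mathcal{P}'_jY(1)\|_{L^\infty}$. It therefore supplies no uniform $C^{2,\alpha}$ bound on $[1,\infty)$.

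A genuinely quadratic estimate needs decaying control of the higher norm. One option is a unit-window smoothing bound such as $\|Y(t)\|_{C^{2,\alpha}}\lesssim\sup_{s\in[t-1,t]}\|Y(s)\|_{C^{1,\alpha}}$. That gives a time-lagged quadratic bound, which would suffice for the Duhamel estimate of Lemma \ref{Lem:DuhamelBound}. The other option is to carry a higher norm in the bootstrap.

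The paper's proof takes exactly the same step. It asserts $\|\mathcal{R}_r(Y)\|_{C^{1,\alpha}}\le C\|Y\|_{C^{1,\alpha}}^2$ ``from Lemma \ref{Lem:NonlinearBounds}, applied to $r^{-1}Y$'', and appeals to smoothing from the local theory for the higher norm. So this gap comes from the paper's route rather than from your changes, and you at least make it explicit.
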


\begin{proof}
Let 
\begin{equation*}
    \beta = \mathcal{S}_{\mathbf{q}(t)}(\alpha).
\end{equation*}
By the translation invariance and conformal covariance of the Peskin operator,
\begin{equation*}
    \mathcal{N}(X)(\alpha, t) = \mathcal{N}(Z_\rho)(\beta, t).
\end{equation*}
Since $Z_\rho(\beta, t) = r(t)\beta + Y(\beta, t)$, and since the Peskin operator is invariant under simultaneous dilations of the interface, we have
\begin{equation*}
    \mathcal{N}(rX_0  + Y) = \mathcal{N}(X_0  + r^{-1}Y).
\end{equation*}
Expanding around the unit sphere gives
\begin{equation*}
    \mathcal{N}(rX_0  + Y) = r^{-1}\mathcal{N}_1 Y + \mathcal{R}_r(Y),
\end{equation*}
where, uniformly for $r$ close to $1$,
\begin{equation*}
    \|\mathcal{R}_r(Y)\|_{C^{1, \alpha}} \le C\|Y\|_{C^{1, \alpha}}^2.
\end{equation*}
This follows from Lemma \ref{Lem:NonlinearBounds}, applied to $r^{-1}Y$.

We now differentiate the decomposition of $X$. Using
\begin{equation*}
    \partial_t \beta = \mathcal{T}_{i, q}(\beta)\mathbf{\dot{q}}_i,
\end{equation*}
we obtain
\begin{equation*}
    \partial_t X = \partial_t Y + \dot{\rho}\beta + r\mathcal{T}_{i, \mathbf{q}}(\beta)\mathbf{\dot{q}}_i + \langle \mathcal{T}_{i, \mathbf{q}}(\beta), \nabla_{\mathbb{S}^2}Y(\beta, t) \rangle \mathbf{\dot{q}}_i - \frac{2}{3}\dot{\mathbf{v}} + \mathbf{\dot{a}}.
\end{equation*}
We write the finite-dimensional terms schematically as
\begin{equation*}
    \mathcal{G}(p, Y)\mathbf{\dot{p}}.
\end{equation*}
Thus,
\begin{equation*}
    \partial_t X = \partial_t Y + \mathcal{G}(\mathbf{p}, Y)\mathbf{\dot{p}}.
\end{equation*}
Combining this identity with $\partial_t X = \mathcal{N}(X)$, we get
\begin{equation*}
    \partial_t Y + \mathcal{G}(p, Y)\mathbf{\dot{p}} = r^{-1}\mathcal{N}_1 Y + \mathcal{R}_r(Y).
\end{equation*}

We now project this equation onto the kernel of $\mathcal{N}_1$. Let \(\Pi_0:=I-\Pi_\perp\) denote the \(L^2\)-projection onto \(\ker\mathcal N_1\). Since
\begin{equation*}
    Y(t) \in (\ker \mathcal{N}_1)^\perp
\end{equation*}
for every $t$, we have
\begin{equation*}
    \Pi_0 \partial_t Y = 0.
\end{equation*}
Also, because $\mathcal{N}_1$ is self-adjoint and $Y \in (\ker \mathcal{N}_1)^\perp$, it follows that
\begin{equation*}
    \Pi_0 \mathcal{N}_1 Y = 0.
\end{equation*}
Therefore, applying the projection yields
\begin{equation*}
    \Pi_0 \mathcal{G}(p, Y)\dot{p} = \Pi_0 \mathcal{R}_r(Y).
\end{equation*}
At $(\mathbf{p}, Y) = (0, 0)$, the matrix $\Pi_0 \mathcal{G}(p, Y)$ is exactly the Jacobian matrix from Lemma \ref{Lem:ParameterDecoupling}, which is invertible. Hence, for $|p| + \|Y\|_{C^{1, \alpha}}$ sufficiently small, it remains uniformly invertible, and we obtain the bound
\begin{equation*}
    |\mathbf{\dot{p}}| \le C\|\mathcal{R}_r(Y)\|_{C^{1, \alpha}} \le C\|Y\|_{C^{1, \alpha}}^2.
\end{equation*}

Returning to the full equation, we have
\begin{equation*}
    \partial_t Y = r^{-1}\mathcal{N}_1 Y + \mathcal{R}_r(Y) - \mathcal{G}(\mathbf{p}, Y)\mathbf{\dot{p}}.
\end{equation*}
Define the remainder term
\begin{equation*}
    \mathcal{E}(Y, \mathbf{p}) := \mathcal{R}_r(Y) - \mathcal{G}(\mathbf{p}, Y)\mathbf{\dot{p}}.
\end{equation*}
Using the quadratic bound for $\dot{p}$, we obtain
\begin{equation*}
    \|\mathcal{E}(Y, \mathbf{p})\|_{C^{1, \alpha}} \le C\|Y\|_{C^{1, \alpha}}^2.
\end{equation*}
Finally, since both $\partial_t Y$ and $\mathcal{N}_1 Y$ lie in $(\ker \mathcal{N}_1)^\perp$, the right-hand side may be projected onto the stable subspace to yield
\begin{equation*}
    \partial_t Y = r^{-1}\mathcal{N}_1 Y + \Pi_\perp \mathcal{E}(Y, \mathbf{p}).
\end{equation*}
This completes the proof.
\end{proof}

To close the global bootstrap argument, we require time-decay estimates for the linear semigroup. The following corollary translates the high-frequency Littlewood-Paley bounds of Lemma \ref{tAop} into our time-weighted Hölder framework.

\begin{corollary}[Linear decay estimates]
\label{Cor:LinearDecay}
Fix $\alpha \in (0, 1)$ and $0 < \lambda  < a_2$. For any $T > 1$, define the time-weighted norm
\begin{equation*}
    \|F\|_{\mathcal{X}_T} := \sup_{t \in [1, T)} e^{\lambda  t} \|F(t)\|_{C^{1, \alpha}}.
\end{equation*}
If $Y \in (\ker \mathcal{N}_1)^\perp \cap C^{1, \alpha}(\mathbb{S}^2)$, then the linear semigroup exhibits the exponential decay bound
\begin{equation*}
    \|e^{\tau \mathcal{N}_1} Y\|_{C^{1, \alpha}} \le C e^{-\lambda  \tau} \|Y\|_{C^{1, \alpha}} \quad \text{for all } \tau \ge 0.
\end{equation*}
In particular, the evolution of the initial perturbation $Y(1)$ satisfies
\begin{equation*}
    \|e^{(t-1) \mathcal{N}_1} Y(1)\|_{\mathcal{X}_T} \le C_L \|Y(1)\|_{C^{1, \alpha}}.
\end{equation*}
\end{corollary}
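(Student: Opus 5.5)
The plan is to exploit the explicit diagonalization of $\mathcal{N}_1$ from Proposition \ref{linearSummary}. On $(\ker\mathcal{N}_1)^\perp$, the semigroup $e^{\tau\mathcal{N}_1}$ acts on each mode $\mathbf{H}^1_{k+1,m}$, $\mathbf{H}^2_{k-1,m}$, $\mathbf{H}^3_{k,m}$ by multiplication by $e^{-a_k\tau}$, $e^{-b_k\tau}$, $e^{-c_k\tau}$ respectively. The kernel modes ($k=0$ for the first family and $k=1$ for all three families) are excluded.

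\textbf{Step 1: spectral gap.} Among the remaining modes, the smallest eigenvalue is $a_2=\frac{2\cdot 1\cdot 4}{5\cdot 7}=8/35$. Indeed, $b_2=12/15$ and $c_2=4/5$, and $a_k,b_k,c_k$ are increasing in $k\ge 2$, growing like $k/4$, $k/4$, $k/2$. Hence $\|e^{\tau\mathcal{N}_1}Y\|_{L^2}\le e^{-a_2\tau}\|Y\|_{L^2}$ on $(\ker\mathcal{N}_1)^\perp$. Since $\lambda<a_2$, I will write
\[
e^{\tau\mathcal{N}_1}Y=e^{-\lambda\tau}\,e^{\tau(\mathcal{N}_1+\lambda)}Y .
\]
The task then reduces to showing that the shifted semigroup $e^{\tau(\mathcal{N}_1+\lambda)}$ is bounded on $C^{1,\alpha}\cap(\ker\mathcal{N}_1)^\perp$, uniformly in $\tau\ge0$.

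\textbf{Step 2: Hölder bound.} I will decompose $Y=\sum_n\mathcal{P}'_nY$ using the Littlewood--Paley projections. For $\mathcal{P}'_n$ with $n\ge 1$, the eigenvalues on the support satisfy roughly $\ge c2^n$. Lemma \ref{tAop} then gives $\|\mathcal{P}'_n e^{\tau\mathcal{A}}F\|_{L^\infty}\lesssim\mathcal{E}(\tau 2^n)\|F\|_{L^\infty}$, and $\mathcal{P}'_n$ commutes with the semigroup up to finitely overlapping neighbors. I will combine this with the characterization of $C^{1,\alpha}$ (at least the Besov $B^{1+\alpha}_{\infty,\infty}$ comparability, up to an $\epsilon$ loss absorbed by choosing $\alpha$ slightly differently or by using Zygmund spaces) via $\sup_n 2^{n(1+\alpha)}\|\mathcal{P}'_nY\|_{L^\infty}$, as supplied by Section \ref{sec:appendix}. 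This yields uniform boundedness for $\tau\le 1$.

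For $\tau\ge1$, the high frequencies $2^n\gtrsim 1$ carry the extra factor $e^{-c2^n\tau}\le e^{-a_2\tau}$, with room to spare. The low frequencies form a finite-dimensional range, namely finitely many spherical harmonics of degree $\lesssim 1$. On that range all norms are equivalent, so the $L^2$ spectral gap of Step 1 transfers directly to $C^{1,\alpha}$. Summing the two regimes gives $\|e^{\tau\mathcal{N}_1}Y\|_{C^{1,\alpha}}\le Ce^{-\lambda\tau}\|Y\|_{C^{1,\alpha}}$. The second claim then follows by substituting $\tau=t-1$ and multiplying by $e^{\lambda t}$, which produces the constant $C_L=Ce^{\lambda}$.

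\textbf{Main obstacle.} The delicate point is the high-frequency piece. The semigroup must be shown to decay at rate at least $e^{-a_2\tau}$ in a Hölder norm, not merely in $L^2$, and the Littlewood--Paley and semigroup interaction must be controlled on $L^\infty$ (the projections are not $L^\infty$-bounded uniformly in a trivial way). I would rely on Lemma \ref{tAop}, using its kernel bounds for the multiplier $e^{\tau\mathcal{N}_1}\varphi_n$ and noting that the lowest eigenvalue in the $n$-th dyadic block is $\gtrsim 2^n\ge a_2$ for $n$ beyond a fixed $n_*$. If the envelope $\mathcal{E}$ only gives polynomial decay $(\tau 2^n)^{-4}$, I would instead prove exponential decay by writing $e^{\tau\mathcal{N}_1}=e^{(\tau-1)\mathcal{N}_1}e^{\mathcal{N}_1}$. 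The factor $e^{\mathcal{N}_1}$ smooths $C^{1,\alpha}$ into $H^s$ for large $s$. Exponential decay in $H^s$ at rate $a_2$ is immediate from the diagonalization, and Sobolev embedding returns the estimate to $C^{1,\alpha}$.
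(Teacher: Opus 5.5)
Your proposal is correct, and for $\tau\ge1$ it takes a different route from the paper.

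\textbf{The paper's route.} It stays entirely inside the $L^\infty$ Littlewood--Paley framework. Using $\|F\|_{C^{1,\alpha}}\simeq\sup_a 2^{a(1+\alpha)}\|\mathcal{P}'_aF\|_{L^\infty}$, it sorts each dyadic block by the size of $\tau2^a$:
\begin{itemize}
\item If $\tau2^a\lesssim1$, then $\tau\lesssim1$, so Lemma \ref{tAop}(i) gives $\|\mathcal{P}'_a e^{\tau\mathcal N_1}Y\|_{L^\infty}\lesssim\|Y_{a;3}\|_{L^\infty}$, and inserting $e^{-\lambda\tau}$ costs only a constant.
\item If $\tau2^a\gtrsim1$, Lemma \ref{tAop}(ii) gives $(\tau2^a)^{-4}e^{-\lambda\tau}\|Y_{a;3}\|_{L^\infty}$ directly on $(\ker\mathcal N_1)^\perp$.
\end{itemize}
It then takes the supremum over $a$. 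Your $\tau\le1$ step is essentially this argument.

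\textbf{Your route for $\tau\ge1$.} What actually closes this case is your fallback in the last paragraph, not your main line. Lemma \ref{tAop} does not provide per-block $L^\infty$ decay $e^{-c2^n\tau}$; it gives only $(\tau2^n)^{-4}$. On $(\ker\mathcal N_1)^\perp$, part (ii) gives $(\tau2^n)^{-4}e^{-\lambda\tau}$; citing it would have closed your main line and reproduced the paper's proof, with no low/high frequency split needed. The fallback $e^{\tau\mathcal N_1}=e^{(\tau-1)\mathcal N_1}e^{\mathcal N_1}$ is sound, for these reasons:
\begin{itemize}
\item Each basis field of Proposition \ref{linearSummary} lies in a single $\mathcal{A}^3_{k'}$, so the $H^s$ norm is diagonal in that basis.
\item The eigenvalues grow linearly in $k'$, so $e^{\mathcal N_1}:L^2\to H^s$ is bounded for every $s$.
\item On $(\ker\mathcal N_1)^\perp$ the semigroup contracts by $e^{-a_2(\tau-1)}$ in $H^s$.
\item $H^s(\mathbb{S}^2)\hookrightarrow C^{1,\alpha}$ for $s>2+\alpha$.
\end{itemize}

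\textbf{What each route buys.} Yours needs only the $L^2$ diagonalization and Sobolev embedding. It bypasses part (ii) of Lemma \ref{tAop}, whose proof in the paper is only sketched. It also yields the full rate $e^{-a_2\tau}$ rather than $e^{-\lambda\tau}$ with $\lambda<a_2$. The paper's route uses the same machinery as the $Z[0,1]$ estimates and gives finer frequency-localized decay information.

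Finally, your hedge about an $\epsilon$-loss in the Besov characterization is unnecessary. Since $1+\alpha\notin\mathbb{Z}$, the space $C^{1,\alpha}$ coincides with $B^{1+\alpha}_{\infty,\infty}$ with equivalent norms, which is exactly what the paper assumes.
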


\begin{proof}
Let $\tau \ge 0$. We use the Besov space characterization
\begin{equation*}
    \|F\|_{C^{1, \alpha}} \simeq \sup_{a \in \mathbb{Z}_+} 2^{a(1+\alpha)} \|\mathcal{P}'_a F\|_{L^\infty}.
\end{equation*}
Fix $a \in \mathbb{Z}_+$. We distinguish two cases based on the frequency $2^a$ and time $\tau$.

\textbf{Case 1: $\tau 2^a \lesssim 1$.} By Lemma \ref{tAop}(i), we have
\begin{equation*}
    \|\mathcal{P}'_a(e^{\tau \mathcal{N}_1} - I)Y\|_{L^\infty} \lesssim (\tau 2^a)^{1/2} \|Y_{a;3}\|_{L^\infty} \lesssim \|Y_{a;3}\|_{L^\infty}.
\end{equation*}
Therefore,
\begin{equation*}
    \|\mathcal{P}'_a e^{\tau \mathcal{N}_1} Y\|_{L^\infty} \le \|\mathcal{P}'_a Y\|_{L^\infty} + \|\mathcal{P}'_a(e^{\tau \mathcal{N}_1} - I)Y\|_{L^\infty} \lesssim \|Y_{a;3}\|_{L^\infty}.
\end{equation*}
Since $\tau 2^a \lesssim 1$ and $a \ge 0$, it follows that $\tau \lesssim 1$. Consequently, after adjusting the implicit constant, we obtain
\begin{equation*}
    \|\mathcal{P}'_a e^{\tau \mathcal{N}_1} Y\|_{L^\infty} \lesssim e^{-\lambda  \tau} \|Y_{a;3}\|_{L^\infty}.
\end{equation*}

\textbf{Case 2: $\tau 2^a \gtrsim 1$.} Since $Y \in (\ker \mathcal{N}_1)^\perp$, Lemma \ref{tAop}(ii) implies
\begin{equation*}
    \|\mathcal{P}'_a e^{\tau \mathcal{N}_1} Y\|_{L^\infty} \lesssim (\tau 2^a)^{-4} e^{-\lambda  \tau} \|Y_{a;3}\|_{L^\infty} \lesssim e^{-\lambda  \tau} \|Y_{a;3}\|_{L^\infty}.
\end{equation*}

Combining these cases and multiplying by $2^{a(1+\alpha)}$, we take the supremum over $a \in \mathbb{Z}_+$ to find
\begin{equation*}
    \|e^{\tau \mathcal{N}_1} Y\|_{C^{1, \alpha}} \lesssim e^{-\lambda  \tau} \sup_{a \in \mathbb{Z}_+} 2^{a(1+\alpha)} \|Y_{a;3}\|_{L^\infty}.
\end{equation*}
Recalling that $Y_{a;3} := \sum_{|a'-a| \le 3} \mathcal{P}'_{a'} Y$, the supremum is bounded by $\|Y\|_{C^{1, \alpha}}$. Thus,
\begin{equation*}
    \|e^{\tau \mathcal{N}_1} Y\|_{C^{1, \alpha}} \le C e^{-\lambda  \tau} \|Y\|_{C^{1, \alpha}}.
\end{equation*}

Setting $\tau = t - 1$, we arrive at
\begin{equation*}
    \|e^{(t-1) \mathcal{N}_1} Y(1)\|_{C^{1, \alpha}} \le C e^{-\lambda  (t-1)} \|Y(1)\|_{C^{1, \alpha}} = C e^{\lambda } e^{-\lambda  t} \|Y(1)\|_{C^{1, \alpha}}.
\end{equation*}
Multiplying by $e^{\lambda  t}$ and taking the supremum over $t \in [1, T)$ yields the $\mathcal{X}_T$ bound, with $C_L = C e^{\lambda }$.
\end{proof}

\begin{lemma}[Nonlinear Integral Bound] \label{Lem:DuhamelBound}
	Let $0<\lambda <a_2/r_\infty$, with $a_2=8/35$ \eqref{tete4}, and let $Y(1) \in (\ker \mathcal{N}_1)^\perp \cap C^{1,\alpha}(\mathbb{S}^2)$. Define the truncated time-weighted norm on any sub-interval $[1, T)$ as
	\begin{equation}
	\|Y\|_{\mathcal{X}_T} = \sup_{t \in [1, T)} \Big( e^{\lambda t} \|Y(\cdot, t)\|_{C^{1,\alpha}(\mathbb{S}^2)} \Big).
	\end{equation}
	 Then we have
	\begin{equation}
	\left\| \int_1^t U(t,s)\Pi_\perp\mathcal E(Y(s),\mathbf{p}(s))\,ds \right\|_{\mathcal{X}_T} \le C_I \|Y\|_{\mathcal{X}_T}^2 \text{,}
	\end{equation}
	for all $t \in [1, T)$ and a uniform constant $C_I > 0$, where here 
    \begin{equation*}
        U(t,s):=\exp\left(\left(\int_s^t (1+\rho(\tau))^{-1}\,d\tau\right)\mathcal N_1\right).
    \end{equation*}
\end{lemma}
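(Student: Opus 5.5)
The plan is to reduce the Duhamel integral to a scalar convolution estimate in time. This uses two ingredients: the exponential decay of the linear semigroup on $(\ker\mathcal N_1)^\perp$ (Corollary \ref{Cor:LinearDecay}), and the quadratic bound on the forcing (Lemma \ref{Lem:ParameterVelocityBound}).

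\textbf{Step 1: the linear propagator.} Set $\theta(t,s):=\int_s^t(1+\rho(\tau))^{-1}\,d\tau$, so that $U(t,s)=e^{\theta(t,s)\mathcal N_1}$.

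\begin{itemize}
\item The operator $\mathcal N_1$ is diagonal in the basis of Proposition \ref{linearSummary}, so $U(t,s)$ commutes with $\Pi_\perp$. Hence Corollary \ref{Cor:LinearDecay} applies with $\tau=\theta(t,s)$.
\item On $(\ker\mathcal N_1)^\perp$ the smallest nonzero eigenvalue is $a_2=8/35$. Running the proof of Corollary \ref{Cor:LinearDecay} with any rate $\lambda'<a_2$ gives $\|U(t,s)\Pi_\perp F\|_{C^{1,\alpha}}\le Ce^{-\lambda'\theta(t,s)}\|F\|_{C^{1,\alpha}}$.
\item The radius $1+\rho$ is controlled: Lemma \ref{Lem:VolumeScaling} gives $|\rho-(r_\infty-1)|\le C\|Y\|_{C^{1,\alpha}}^2$, where $r_\infty=r_*$. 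It follows that $\theta(t,s)\ge (t-s)/r_\infty - C\int_s^t\|Y(\tau)\|_{C^{1,\alpha}}^2\,d\tau$.
\item On $[1,T)$ we have $\int_1^\infty\|Y\|^2\le \|Y\|_{\mathcal X_T}^2e^{-2\lambda}/(2\lambda)$, which is small under the bootstrap hypothesis. So the correction contributes only a bounded factor $e^{C\|Y\|_{\mathcal X_T}^2}\le 2$.
\item Since $\lambda<a_2/r_\infty$, I pick $\lambda'\in(\lambda r_\infty,a_2)$ and set $\lambda'':=\lambda'/r_\infty>\lambda$. This yields $\|U(t,s)\Pi_\perp F\|_{C^{1,\alpha}}\le C e^{-\lambda''(t-s)}\|F\|_{C^{1,\alpha}}$.
\end{itemize}

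\textbf{Step 2: the forcing.} By Lemma \ref{Lem:ParameterVelocityBound},
\[
\|\mathcal E(Y(s),\mathbf p(s))\|_{C^{1,\alpha}}\le C\|Y(s)\|_{C^{1,\alpha}}^2\le C e^{-2\lambda s}\|Y\|_{\mathcal X_T}^2 .
\]

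\textbf{Step 3: the time integral.} Combining the two steps,
\[
e^{\lambda t}\Big\|\int_1^tU(t,s)\Pi_\perp\mathcal E\,ds\Big\|_{C^{1,\alpha}}\le C\|Y\|_{\mathcal X_T}^2\int_1^t e^{\lambda t-\lambda''(t-s)-2\lambda s}\,ds .
\]
The exponent equals $-(\lambda''-\lambda)(t-s)-\lambda s$. Its integral is therefore bounded by $\int_0^\infty e^{-(\lambda''-\lambda)u}\,du=(\lambda''-\lambda)^{-1}$, uniformly in $t$ and $T$. Taking the supremum over $t\in[1,T)$ gives the claim with $C_I\sim C/(\lambda''-\lambda)$.

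\textbf{Main obstacle.} The delicate point is Step 1, specifically the time-dependent rescaling. The rate $\lambda$ is compared with $a_2/r_\infty$, while the semigroup sees the instantaneous factor $(1+\rho(\tau))^{-1}$. I would handle this as above, using the quadratic control of $\rho-(r_\infty-1)$ from Lemma \ref{Lem:VolumeScaling} together with the integrability of $\|Y\|^2$. If that control is not available in the required form, I would fall back on $|\dot{\mathbf p}|\le C\|Y\|^2$ from Lemma \ref{Lem:ParameterVelocityBound}: it shows that $\rho(\tau)$ converges, with $|\rho(\tau)-\rho_\infty|\le Ce^{-2\lambda\tau}\|Y\|_{\mathcal X_T}^2$. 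This gives the same bounded correction to $\theta(t,s)$.

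A secondary concern is that Corollary \ref{Cor:LinearDecay} loses no derivatives in $C^{1,\alpha}$, so the forcing must be estimated in $C^{1,\alpha}$ rather than in $C^\alpha$. Lemma \ref{Lem:ParameterVelocityBound} provides exactly this, relying on the smoothing for $t\ge1$.
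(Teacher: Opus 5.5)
Your proposal is correct and follows the paper's proof. Both use Duhamel's formula, Corollary~\ref{Cor:LinearDecay} applied with $\tau=\theta(t,s)$ on $\Pi_\perp\mathcal E$, the quadratic bound $\|\mathcal E\|_{C^{1,\alpha}}\le C\|Y\|_{C^{1,\alpha}}^2$ from Lemma~\ref{Lem:ParameterVelocityBound}, and the elementary estimate $\int_1^t e^{-\lambda(t-s)}e^{-2\lambda s}\,ds\lesssim e^{-\lambda t}$.

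The one difference is in the lower bound on the rescaled time. The paper relies only on smallness of $|\rho|$ to get $\theta(t,s)\ge c(t-s)$, which by itself yields only rates below $a_2/(1+\sup|\rho|)$. You instead combine Lemma~\ref{Lem:VolumeScaling} with the time-integrability of $\|Y\|_{C^{1,\alpha}}^2$ to get $\theta(t,s)\ge (t-s)/r_\infty-O(\|Y\|_{\mathcal X_T}^2)$. This is the sharper way to justify every rate $\lambda<a_2/r_\infty$.
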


\begin{proof}
On the interval under consideration, we assume $r(t)=1+\rho(t)$ remains uniformly close to $1$.

By Lemma \ref{Lem:ParameterVelocityBound}, the decoupled perturbation $Y$ satisfies the evolution equation
\begin{equation*}
    \partial_t Y = (1 + \rho(t))^{-1} \mathcal{N}_1 Y + \Pi_\perp \mathcal{E}(Y, \mathbf{p}),
\end{equation*}
with the quadratic nonlinearity bound
\begin{equation*}
    \|\mathcal{E}(Y(t), \mathbf{p}(t))\|_{C^{1, \alpha}} \le C \|Y(t)\|_{C^{1, \alpha}}^2.
\end{equation*}
Applying the Duhamel formula yields
\begin{equation*}
    Y(t) = U(t, 1)Y(1) + \int_1^t U(t, s) \Pi_\perp \mathcal{E}(Y(s), \mathbf{p}(s)) \, ds,
\end{equation*}
where the evolution operator is defined by
\begin{equation*}
    U(t, s) = \exp \left( \left( \int_s^t (1 + \rho(\tau))^{-1} \, d\tau \right) \mathcal{N}_1 \right).
\end{equation*}

Since $|\rho(t)|$ remains small, for any fixed $0 < \lambda < a_2/r_\infty$, we may shrink the smallness threshold such that
\begin{equation*}
    \int_s^t (1 + \rho(\tau))^{-1} \, d\tau \ge c(t - s)
\end{equation*}
with $c\lambda_* > \lambda$ for some $\lambda_* < a_2/r_\infty$. Consequently, Corollary \ref{Cor:LinearDecay} implies the decay estimate
\begin{equation*}
    \|U(t, s)W\|_{C^{1, \alpha}} \le C e^{-\lambda(t - s)} \|W\|_{C^{1, \alpha}}
\end{equation*}
for every $W \in (\ker \mathcal{N}_1)^\perp$.

Since $\Pi_\perp \mathcal{E}(Y(s), \mathbf{p}(s)) \in (\ker \mathcal{N}_1)^\perp$, we apply the decay estimate above to obtain
\begin{align*}
    \left\| \int_1^t U(t, s) \Pi_\perp \mathcal{E}(Y(s), \mathbf{p}(s)) \, ds \right\|_{C^{1, \alpha}} &\le C \int_1^t e^{-\lambda(t - s)} \|\mathcal{E}(Y(s), \mathbf{p}(s))\|_{C^{1, \alpha}} \, ds \\
    &\le C \int_1^t e^{-\lambda(t - s)} \|Y(s)\|_{C^{1, \alpha}}^2 \, ds \\
    &\le C \|Y\|_{\mathcal{X}_T}^2 \int_1^t e^{-\lambda(t - s)} e^{-2\lambda s} \, ds \\
    &\le C e^{-\lambda t} \|Y\|_{\mathcal{X}_T}^2.
\end{align*}
Multiplying by $e^{\lambda t}$ and taking the supremum over $t \in [1, T)$ gives
\begin{equation*}
    \left\| \int_1^t U(t,s) \Pi_\perp \mathcal{E}(Y(s), \mathbf{p}(s)) \, ds \right\|_{\mathcal{X}_T} \le C_I \|Y\|_{\mathcal{X}_T}^2.
\end{equation*}
\end{proof}

\begin{lemma}[Global Extension]
\label{Lem:BootstrapExtension}
Let $Y \in C([1, T^*); C^{1, \alpha}(\mathbb{S}^2))$ be a maximal solution of
\begin{equation*}
    \partial_t Y = (1 + \rho(t))^{-1} \mathcal{N}_1 Y + \Pi_\perp \mathcal{E}(Y, \mathbf{p}),
\end{equation*}
satisfying $Y(t) \in (\ker \mathcal{N}_1)^\perp$, with maximal time of existence $T^* > 1$. Let
\begin{equation*}
    U(t, s) := \exp \left( \left( \int_s^t (1 + \rho(\tau))^{-1} \, d\tau \right) \mathcal{N}_1 \right).
\end{equation*}
Assume that the linear evolution and the nonlinear Duhamel integral satisfy the time-weighted bounds
\begin{equation*}
    \|U(t, 1)Y(1)\|_{\mathcal{X}_T} \le C_L \|Y(1)\|_{C^{1, \alpha}}
\end{equation*}
and
\begin{equation*}
    \left\| \int_1^t U(t, s) \Pi_\perp \mathcal{E}(Y(s), \mathbf{p}(s)) \, ds \right\|_{\mathcal{X}_T} \le C_I \|Y\|_{\mathcal{X}_T}^2, 
\end{equation*}
for all $T \in (1, T^*)$, with uniform constants $C_I > 0$, $C_L \ge e^\lambda > 0$, and $0 < \lambda < a_2/r_\infty$, with $a_2 = 8/35$. 

For $\|Y(1)\|_{C^{1, \alpha}} \le \varepsilon_1$ with sufficiently small $\varepsilon_1$, the solution exists globally ($T^* = \infty$) and satisfies the uniform infinite-time bound
\begin{equation*}
    \|Y\|_{\mathcal{X}} \le 2C_L \varepsilon_1.
\end{equation*}
\end{lemma}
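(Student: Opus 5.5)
The plan is a standard continuity (bootstrap) argument in the truncated norm $\|Y\|_{\mathcal{X}_T}$. By the Duhamel representation from Lemma \ref{Lem:ParameterVelocityBound},
\begin{equation*}
Y(t)=U(t,1)Y(1)+\int_1^t U(t,s)\Pi_\perp\mathcal{E}(Y(s),\mathbf{p}(s))\,ds,
\end{equation*}
and the two assumed bounds, we get for every $T\in(1,T^*)$ the quadratic inequality
\begin{equation*}
M(T):=\|Y\|_{\mathcal{X}_T}\le C_L\varepsilon_1+C_I M(T)^2 .
\end{equation*}

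We first check the properties of $M$. It is nondecreasing in $T$. Since $Y\in C([1,T^*);C^{1,\alpha})$, it is continuous in $T$. As $T\downarrow 1$ it tends to $e^{\lambda}\|Y(1)\|_{C^{1,\alpha}}\le C_L\varepsilon_1<2C_L\varepsilon_1$, using the hypothesis $C_L\ge e^\lambda$. We take $\varepsilon_1$ small enough that $4C_IC_L\varepsilon_1<1$.

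Consider the set of $T$ with $M(T)\le 2C_L\varepsilon_1$. It is nonempty and closed by continuity. It is also open: on it, $M(T)\le C_L\varepsilon_1+C_I(2C_L\varepsilon_1)^2<2C_L\varepsilon_1$, a strict inequality, which persists to a neighbourhood. Hence $M(T)\le 2C_L\varepsilon_1$ for all $T<T^*$.

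To show $T^*=\infty$, suppose $T^*<\infty$. The uniform bound gives $\sup_{t<T^*}\|Y(t)\|_{C^{1,\alpha}}\le 2C_L\varepsilon_1$, which is small. Two things must then hold on all of $[1,T^*)$.
\begin{itemize}
\item \textbf{Modulation parameters.} They stay in the regime where Lemma \ref{Lem:ParameterDecoupling} applies. By Lemma \ref{Lem:ParameterVelocityBound}, $|\dot{\mathbf p}|\le C\|Y\|^2_{C^{1,\alpha}}\le C\varepsilon_1^2e^{-2\lambda t}$, which is integrable. So $|\mathbf p(t)-\mathbf p(1)|\lesssim\varepsilon_1^2$, and $\mathbf p$ has a limit as $t\to T^*$ (and later as $t\to\infty$). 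In particular $\rho$ stays small, so the lower bound on $\int_s^t(1+\rho)^{-1}d\tau$ used in Lemma \ref{Lem:DuhamelBound} remains valid.
\item \textbf{Smallness of $X$.} The full $X$ stays $C^{1,\alpha}$-close to $X_0$, since $X$ is reconstructed from small $(\mathbf p,Y)$.
\end{itemize}
Restarting the local theory at a time close to $T^*$, with small $C^{1,\alpha}\subset W^{1,\infty}$ data, gives existence on a further interval of length one. The smoothing from part (i) supplies the higher norm needed for the tame estimates. Re-decomposing by the implicit function theorem in Lemma \ref{Lem:ParameterDecoupling} then extends $Y$ beyond $T^*$, contradicting maximality. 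Letting $T\to\infty$ yields $\|Y\|_{\mathcal{X}}\le 2C_L\varepsilon_1$.

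The main obstacle is this continuation step. It must be checked that the hypotheses feeding the constants $C_L$ and $C_I$ hold uniformly up to $T^*$: that is, $\rho$ small, $\mathbf p$ in the invertibility neighbourhood of $\mathbf{B}$, and the tame $C^{2,\alpha}$ control. The plan is to absorb these into the bootstrap set itself. Alongside $M(T)\le 2C_L\varepsilon_1$, we also require $|\mathbf p(t)-\mathbf p(1)|\le\varepsilon_1$ on $[1,T)$. We then show this second condition improves to $\lesssim\varepsilon_1^2$ via the integrated quadratic bound on $\dot{\mathbf p}$, so the same open–closed argument closes both conditions simultaneously.
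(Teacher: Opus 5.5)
Your proposal is correct and follows the paper's proof essentially step for step. It uses the same set $\{T:\|Y\|_{\mathcal X_T}\le 2C_L\varepsilon_1\}$, nonemptiness via $C_L\ge e^\lambda$, closedness by continuity, and openness from the strict improvement $C_L\varepsilon_1+4C_IC_L^2\varepsilon_1^2<2C_L\varepsilon_1$ (the paper fixes $\varepsilon_1\le(8C_LC_I)^{-1}$ to get $\tfrac32 C_L\varepsilon_1$). The only difference is that you spell out the continuation to $T^*=\infty$ --- tracking $\mathbf p$ through the integrated bound $|\dot{\mathbf p}|\lesssim\|Y\|_{C^{1,\alpha}}^2$ and restarting the local theory --- where the paper simply invokes ``standard continuation criteria,'' so your version is, if anything, more complete.
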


\begin{proof}
We employ a bootstrap argument. Define the set of times $S \subseteq [1, T^*)$ as
\begin{equation*}
    S = \left\{ T \in [1, T^*) : \|Y\|_{\mathcal{X}_T} \le 2C_L \varepsilon_1 \right\}.
\end{equation*}
First, we verify that $S$ is non-empty. At the initial time $T=1$, the time weight is $e^\lambda$, so the norm evaluates to
\begin{equation*}
    e^\lambda \|Y(1)\|_{C^{1, \alpha}} \le e^\lambda \varepsilon_1.
\end{equation*}
Since $C_L \ge e^\lambda$, we have $e^\lambda \varepsilon_1 \le 2C_L \varepsilon_1$, so $1 \in S$. By the continuity of the solution in $C^{1, \alpha}(\mathbb{S}^2)$, $S$ forms a closed interval.

Assume $T \in S$. Applying the truncated $\mathcal{X}_T$ norm to the Duhamel integral formulation
\begin{equation*}
    Y(t) = U(t, 1)Y(1) + \int_1^t U(t, s) \Pi_\perp \mathcal{E}(Y(s), \mathbf{p}(s)) \, ds,
\end{equation*}
and using Corollary \ref{Cor:LinearDecay} and the Duhamel estimate, we obtain
\begin{equation*}
    \|Y\|_{\mathcal{X}_T} \le C_L \|Y(1)\|_{C^{1, \alpha}} + C_I \|Y\|_{\mathcal{X}_T}^2 \le C_L \varepsilon_1 + C_I \|Y\|_{\mathcal{X}_T}^2. 
\end{equation*}
Because $T \in S$, we have
\begin{equation*}
    \|Y\|_{\mathcal{X}_T} \le C_L \varepsilon_1 + C_I(2C_L \varepsilon_1)^2 = C_L \varepsilon_1 + 4C_L^2 C_I \varepsilon_1^2. 
\end{equation*}
Defining $\varepsilon_0 = (8C_L C_I)^{-1}$, for any $\varepsilon_1 \le \varepsilon_0$, we obtain
\begin{equation*}
    \|Y\|_{\mathcal{X}_T} \le C_L \varepsilon_1 + \frac{1}{2}C_L \varepsilon_1 = \frac{3}{2}C_L \varepsilon_1 < 2C_L \varepsilon_1. 
\end{equation*}
This strict inequality shows that the norm never reaches the boundary $2C_L \varepsilon_1$. By continuity, the bound must hold on an open neighborhood extending beyond $T$. Thus, $S$ is both closed and open in $[1, T^*)$. By connectedness, $S = [1, T^*)$.

Because the time-weighted $\mathcal{X}_T$ norm remains uniformly bounded, the physical norm $\|Y(t)\|_{C^{1, \alpha}}$ cannot blow up in finite time. By standard continuation criteria, the maximal time of existence is infinite ($T^* = \infty$), completing the proof.
\end{proof}

\begin{lemma}[Asymptotic Parameter Convergence] \label{Lem:ParameterConvergence}
	Let $$\mathbf{p}(t) = (\rho(t), \mathbf{a}(t), \mathbf{q}(t))^\top \in C^1([1, \infty); \mathbb{R}^{10})$$ be a solution to the globally decoupled finite-dimensional system. Suppose $\|Y\|_{\mathcal{X}} \le C$, which guarantees $\|Y(t)\|_{C^{1,\alpha}(\mathbb{S}^2)} \le C e^{-\lambda t}$, where $\lambda$ is as in the previous lemmas.
	
	Then, as $t \to \infty$, the parameter vector $\mathbf{p}(t)$ converges to a unique constant steady-state configuration $\mathbf{p}_\infty \in \mathbb{R}^{10}$, satisfying the exponential convergence bound
	\begin{equation}
	|\mathbf{p}(t) - \mathbf{p}_\infty| \le \frac{C_p}{2\lambda} \|Y\|_{\mathcal{X}}^2 e^{-2\lambda t} \text{,}
	\end{equation}
	for all $t \ge 1$, where $C_p > 0$ is the uniform constant from Lemma \ref{Lem:ParameterVelocityBound}.
\end{lemma}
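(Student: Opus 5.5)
The plan is to integrate the quadratic velocity bound of Lemma \ref{Lem:ParameterVelocityBound} in time and use a Cauchy criterion. By that lemma, for every $t\ge 1$,
\begin{equation*}
|\dot{\mathbf{p}}(t)|\le C_p\|Y(t)\|_{C^{1,\alpha}}^2\le C_p\|Y\|_{\mathcal{X}}^2e^{-2\lambda t}.
\end{equation*}
The second inequality is just the definition \eqref{eq:GlobalXNorm} of the $\mathcal{X}$ norm. Before using this, I will check that the hypotheses of Lemma \ref{Lem:ParameterVelocityBound} hold for all $t\ge1$. Concretely, $(\rho,\mathbf{q},Y)$ must stay small enough that the matrix $\Pi_0\mathcal{G}(\mathbf{p},Y)$ remains uniformly invertible. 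This I will obtain by a continuity argument at the end.

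First I show that the limit exists. For $1\le t<s$, the fundamental theorem of calculus (valid since $\mathbf{p}\in C^1$) gives
\begin{equation*}
|\mathbf{p}(s)-\mathbf{p}(t)|\le\int_t^s|\dot{\mathbf{p}}(\tau)|\,d\tau\le C_p\|Y\|_{\mathcal{X}}^2\int_t^\infty e^{-2\lambda\tau}\,d\tau=\frac{C_p}{2\lambda}\|Y\|_{\mathcal{X}}^2e^{-2\lambda t}.
\end{equation*}
The right-hand side tends to $0$ as $t\to\infty$, uniformly in $s$. Hence $\mathbf{p}(t)$ is Cauchy in $\mathbb{R}^{10}$ and converges to a unique limit $\mathbf{p}_\infty$. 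Uniqueness is automatic because limits in $\mathbb{R}^{10}$ are unique. Letting $s\to\infty$ in the same inequality gives exactly the stated bound for every $t\ge1$.

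The remaining point, and the only real obstacle, is the self-consistency of the smallness assumptions along the whole trajectory. The decay of $Y$ holds only while $\mathbf{p}$ stays small, and $\mathbf{p}$ stays small only while $Y$ decays. I will close this loop by bootstrap, combining it with Lemma \ref{Lem:BootstrapExtension}. At $t=1$, Lemma \ref{Lem:ParameterDecoupling} gives $|\mathbf{p}(1)|\lesssim\varepsilon$. On any interval where the bootstrap bounds hold, the estimate above gives
\begin{equation*}
|\mathbf{p}(t)-\mathbf{p}(1)|\le \frac{C_p}{2\lambda}(2C_L\varepsilon_1)^2e^{-2\lambda}\lesssim\varepsilon_1^2.
\end{equation*}
So $|\mathbf{p}(t)|\lesssim\varepsilon$ stays well inside the invertibility neighborhood, and a strict improvement holds. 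By continuity, the bounds therefore propagate to all $t\ge1$.

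Finally, I will note that the constant $\mathbf{p}_\infty$ is a steady-state configuration. The corresponding $\mathcal{S}_{r,b,\Lambda}$ is a steady state by Lemma \ref{LemConformalSteady} and the translation/dilation invariance. In addition, $\rho_\infty=r_*-1$ by Lemma \ref{Lem:VolumeScaling}, since $\|Y(t)\|\to0$.
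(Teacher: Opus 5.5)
Your core argument is exactly the paper's proof: bound $|\dot{\mathbf{p}}(t)|\le C_p\|Y\|_{\mathcal{X}}^2e^{-2\lambda t}$ via Lemma \ref{Lem:ParameterVelocityBound}, integrate from $t$ to $s$ to get a Cauchy estimate, then let $s\to\infty$. Your bootstrap propagating the smallness of $\mathbf{p}$, and your identification $\rho_\infty=r_*-1$, go beyond what the paper writes here, since its proof takes the smallness for granted as part of the lemma's hypotheses; both additions are consistent with the paper's lemmas and harmless.
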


\begin{proof}
	By combining $\|Y(t)\|_{C^{1,\alpha}} \le \|Y\|_{\mathcal{X}} e^{-\lambda t}$ with the bounds of Lemma \ref{Lem:ParameterVelocityBound}, we obtain 
	\begin{equation} \label{eq:SpeedBound}
	|\dot{\mathbf{p}}(t)| \le C_p \|Y(t)\|_{C^{1,\alpha}}^2 \le C_p \|Y\|_{\mathcal{X}}^2 e^{-2\lambda t},
	\end{equation}
	implying $\dot{\mathbf{p}} \in L^1([1, \infty); \mathbb{R}^{10})$. 
	
	We now apply the Fundamental Theorem of Calculus. For any two times $t_2 > t_1 \ge 1$, we get
	\begin{equation} \label{eq:CauchyBound}
	|\mathbf{p}(t_2) - \mathbf{p}(t_1)| \le \int_{t_1}^{t_2} |\dot{\mathbf{p}}(s)| \, ds \le \int_{t_1}^\infty C_p \|Y\|_{\mathcal{X}}^2 e^{-2\lambda s} \, ds = \frac{C_p}{2\lambda }\|Y\|_{\mathcal{X}}^2 e^{-2\lambda t}.
	\end{equation}
	As $t_1 \to \infty$, the right-hand side of \eqref{eq:CauchyBound} vanishes. Therefore, $\mathbf{p}(t)$ forms a Cauchy sequence in the complete Banach space $\mathbb{R}^{10}$, guaranteeing convergence to a unique constant vector $\mathbf{p}_\infty$.
	
	Taking the limit as $t_2 \to \infty$ in \eqref{eq:CauchyBound} directly yields the explicit algebraic convergence rate
	\begin{equation}
	|\mathbf{p}_\infty - \mathbf{p}(t)| \le \frac{C_p}{2\lambda } \|Y\|_{\mathcal{X}}^2 e^{-2\lambda t},
	\end{equation}
	completing the proof.
    
\end{proof}

The previous argument proves exponential convergence at every rate below $a_2/r_\infty$. We now identify the leading asymptotic mode and show that, unless the projection onto the first stable eigenspace vanishes, the decay rate is exactly $a_2$ in rescaled time $\sigma(t)$, equivalently, $a_2/r_\infty$, in $L^2$.

\begin{proposition}[Sharp decay rate]\label{prop:decay_exact}
Let \(a_2=8/35\) be the first positive eigenvalue of \(-\mathcal N_1\) on \((\ker\mathcal N_1)^\perp\). Let
\begin{equation*}
    E_2:=\operatorname{span}\{H^1_{3,m}:-2\le m\le 2\}
\end{equation*}
be the corresponding eigenspace, and let \(\Pi_2\) denote the \(L^2\)-orthogonal projection onto \(E_2\). Define
\begin{equation*}
    \sigma(t):=\int_1^t r(s)^{-1}\,ds, \qquad r(t)=1+\rho(t).
\end{equation*}
Assume that the decoupled perturbation \(Y\) satisfies
\begin{equation*}
    \partial_tY=r(t)^{-1}\mathcal N_1Y+\mathcal E(t), \qquad t\ge1,
\end{equation*}
where the error term \(\mathcal E\) satisfies the quadratic estimate
\begin{equation*}
    \|\mathcal E(t)\|_{L^2(\mathbb S^2)} \le C\|Y(t)\|_{C^{1,\alpha}(\mathbb S^2)}^2.
\end{equation*}
Assume moreover that, for some \(\mu\in(a_2/2,a_2)\),
\begin{equation*}
    \|Y(t)\|_{C^{1,\alpha}(\mathbb S^2)} \le C_\mu\varepsilon e^{-\mu\sigma(t)}, \qquad t\ge1. 
\end{equation*}
Then there exists a vector \(Y_{2,\infty}\in E_2\) such that
\begin{equation}\label{sharp_decay01}
    \|\Pi_2Y(t)-e^{-a_2\sigma(t)}Y_{2,\infty}\|_{L^2} \lesssim e^{-2\mu\sigma(t)}, 
\end{equation}
and, for some \(\gamma>a_2\),
\begin{equation}\label{sharp_decay02}
    \|(I-\Pi_2)Y(t)\|_{L^2} \lesssim_\gamma e^{-\gamma\sigma(t)}. 
\end{equation}
Consequently, if \(Y_{2,\infty}\neq0\), then
\begin{equation*}
    \lim_{t\to\infty} e^{a_2\sigma(t)}\|Y(t)\|_{L^2} = \|Y_{2,\infty}\|_{L^2} > 0. 
\end{equation*}
In particular, if \(r(t)\to r_\infty\), then \(\sigma(t)=r_\infty^{-1}t+O(1)\), so the generic sharp physical-time decay rate is \(a_2/r_\infty\).
\end{proposition}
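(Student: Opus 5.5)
The plan is to pass to the rescaled time $\sigma$ and project the equation onto the eigenspaces of $\mathcal N_1$ from Proposition \ref{linearSummary}. Since $d\sigma/dt=r(t)^{-1}$, the equation becomes
\begin{equation*}
\partial_\sigma Y=\mathcal N_1Y+r\,\mathcal E,\qquad \|r\,\mathcal E\|_{L^2}\lesssim \varepsilon^2 e^{-2\mu\sigma},
\end{equation*}
where $r$ stays close to $1$ and the hypothesis on $\|Y\|_{C^{1,\alpha}}$ is used. From \eqref{tete4}, the nonzero eigenvalues of $-\mathcal N_1$ are $a_k$ for $k\ge2$, $b_k$ for $k\ge2$, and $c_k$ for $k\ge2$. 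Here $a_2=8/35$, $b_2=12/15$ and $c_2=4/5$, and all three sequences are increasing in $k$. So $a_2$ is isolated, and the spectral gap on $(\ker\mathcal N_1)^\perp$ is $\gamma_1:=\min(a_3,b_2,c_2)$, with $a_3=30/63>a_2$. Because $\mathcal N_1$ is self-adjoint and diagonal in an orthogonal basis, $\Pi_2$ commutes with $e^{s\mathcal N_1}$. Moreover $e^{s\mathcal N_1}$ acts on $E_2$ as $e^{-a_2s}$ and is bounded by $e^{-\gamma_1 s}$ on $E_2^\perp\cap(\ker\mathcal N_1)^\perp$ in $L^2$. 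I also need the fact that $Y$ stays in $(\ker\mathcal N_1)^\perp$, and I would take the equation in its projected form with $\Pi_\perp\mathcal E$, as in Lemma \ref{Lem:ParameterVelocityBound}.

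\emph{Step 1 (leading mode).} Write $Y_2(\sigma)=\Pi_2Y$. It satisfies $\partial_\sigma(e^{a_2\sigma}Y_2)=e^{a_2\sigma}\Pi_2(r\mathcal E)$, and the right-hand side has $L^2$ norm $\lesssim e^{(a_2-2\mu)\sigma}$. Since $\mu>a_2/2$, this is integrable on $[0,\infty)$. I therefore define
\begin{equation*}
Y_{2,\infty}:=Y_2(0)+\int_0^\infty e^{a_2 s}\Pi_2(r\mathcal E)(s)\,ds\in E_2 .
\end{equation*}
Then $e^{a_2\sigma}Y_2(\sigma)-Y_{2,\infty}=-\int_\sigma^\infty e^{a_2 s}\Pi_2(r\mathcal E)\,ds$, which is $O(e^{(a_2-2\mu)\sigma})$. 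Multiplying by $e^{-a_2\sigma}$ gives \eqref{sharp_decay01}.

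\emph{Step 2 (remainder).} Write $W=(I-\Pi_2)Y$, which lies in $(\ker\mathcal N_1)^\perp\cap E_2^\perp$. Duhamel's formula gives
\begin{equation*}
\|W(\sigma)\|_{L^2}\le e^{-\gamma_1\sigma}\|W(0)\|_{L^2}+C\int_0^\sigma e^{-\gamma_1(\sigma-s)}e^{-2\mu s}\,ds.
\end{equation*}
This gives \eqref{sharp_decay02} for any $\gamma<\min(\gamma_1,2\mu)$; the borderline case $\gamma_1=2\mu$ only costs a factor $\sigma$. Since $2\mu>a_2$ and $\gamma_1>a_2$, some $\gamma>a_2$ is admissible.

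\emph{Step 3 (conclusion).} The two steps together give $\|e^{a_2\sigma}Y-Y_{2,\infty}\|_{L^2}\to0$, which yields the stated limit. If $r\to r_\infty$, then by Lemma \ref{Lem:ParameterConvergence} the rate $|r(t)-r_\infty|\lesssim e^{-2\lambda t}$ is integrable. Hence $\sigma(t)=r_\infty^{-1}(t-1)+\int_1^t(r^{-1}-r_\infty^{-1})\,ds=r_\infty^{-1}t+O(1)$.

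The main obstacle I expect is bookkeeping rather than anything deep. The delicate points are the invariance of $(\ker\mathcal N_1)^\perp$ under the projected equation, and checking that the error is measured in $L^2$ so that orthogonal projections are contractions. There is also the conversion between the hypothesis stated in $\sigma$-decay and the quadratic bound, which needs $r$ bounded above and below uniformly. The spectral gap claim itself amounts to checking the monotonicity of the three sequences in \eqref{tete4}.
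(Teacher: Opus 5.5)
Your proposal is correct and follows the paper's proof essentially step for step. You project onto $E_2$ and integrate $e^{a_2\sigma}\Pi_2Y$ to define $Y_{2,\infty}$, then apply Duhamel to $(I-\Pi_2)Y$ with a spectral gap above $a_2$ and choose $a_2<\gamma<\min\{a_3,2\mu\}$; the differences are minor refinements the paper leaves implicit or merely asserts: you work in $\sigma$ from the outset, identify the gap explicitly as $a_3=10/21$, note that the forcing must lie in $(\ker\mathcal N_1)^\perp$, and justify $\sigma(t)=r_\infty^{-1}t+O(1)$ through the integrability of $r-r_\infty$.
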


\begin{proof}
We decompose the solution into its first stable spectral component and its orthogonal complement:
\begin{equation*}
    Y(t)=\Pi_2Y(t)+(I-\Pi_2)Y(t).
\end{equation*}
Set \(Z(t)=\Pi_2Y(t)\). By definition of the projection \(\Pi_2\), we have
\begin{equation*}
    \Pi_2\mathcal N_1Y=-a_2\Pi_2Y.
\end{equation*}
Projecting the equation onto \(E_2\), we obtain
\begin{equation*}
    \partial_tZ(t)=-a_2r(t)^{-1}Z(t)+\Pi_2\mathcal E(t). 
\end{equation*}
Since \(\sigma'(t)=r(t)^{-1}\), this gives
\begin{equation}\label{z_eq}
    \frac{d}{dt}\left(e^{a_2\sigma(t)}Z(t)\right) = e^{a_2\sigma(t)}\Pi_2\mathcal E(t).
\end{equation}
By the quadratic estimate on \(\mathcal E\) from Lemmas \ref{Lem:NonlinearBounds} and \ref{Lem:ParameterVelocityBound} and the decay assumption in Lemma \ref{Lem:DuhamelBound},
\begin{equation*}
    \|\mathcal E(t)\|_{L^2} \le C\|Y(t)\|_{C^{1,\alpha}}^2 \le C_\mu\varepsilon^2e^{-2\mu\sigma(t)}.
\end{equation*}

Since \(\mu>a_2/2\), we have \(2\mu-a_2>0\). Since $r(t)$ remains uniformly close to $1$, $\sigma'(t)=r(t)^{-1}$ is bounded above and below by positive constants; hence, integrals in $t$ and in $\sigma$ are equivalent up to constants. Therefore
\begin{equation*}
    \int_1^\infty e^{a_2\sigma(s)}\|\Pi_2\mathcal E(s)\|_{L^2}\,ds \lesssim \varepsilon^2 \int_1^\infty e^{-(2\mu-a_2)\sigma(s)}\,ds < \infty.
\end{equation*}
Define
\begin{equation*}
    Y_{2,\infty} = \Pi_2Y(1) + \int_1^\infty e^{a_2\sigma(s)}\Pi_2\mathcal E(s)\,ds \in E_2. 
\end{equation*}
Integrating \eqref{z_eq} gives
\begin{equation*}
    e^{a_2\sigma(t)}Z(t) = Y_{2,\infty} - \int_t^\infty e^{a_2\sigma(s)}\Pi_2\mathcal E(s)\,ds.
\end{equation*}
Therefore,
\begin{align*}
    \|Z(t)-e^{-a_2\sigma(t)}Y_{2,\infty}\|_{L^2} &\le e^{-a_2\sigma(t)} \int_t^\infty e^{a_2\sigma(s)}\|\Pi_2\mathcal E(s)\|_{L^2}\,ds \\
    &\lesssim e^{-a_2\sigma(t)} \int_t^\infty e^{-(2\mu-a_2)\sigma(s)}\,ds \\
    &\lesssim e^{-2\mu\sigma(t)}.
\end{align*}
This proves \eqref{sharp_decay01}.

We now estimate the complementary component. Set
\begin{equation*}
    Y(t):=(I-\Pi_2)Y(t).
\end{equation*}
By Duhamel's formula,
\begin{equation*}
    Y(t) = e^{\sigma(t)\mathcal N_1}(I-\Pi_2)Y(1) + \int_1^t e^{(\sigma(t)-\sigma(s))\mathcal N_1} (I-\Pi_2)\mathcal E(s)\,ds.
\end{equation*}
On the subspace \((I-\Pi_2)(\ker\mathcal N_1)^\perp\), the spectrum of \(-\mathcal N_1\) is bounded from below by some \(a_2^+>a_2\). Hence, for every \(\gamma<a_2^+\),
\begin{equation*}
    \|e^{(\sigma(t)-\sigma(s))\mathcal N_1}(I-\Pi_2)\mathbf{F}\|_{L^2} \lesssim_\gamma e^{-\gamma(\sigma(t)-\sigma(s))}\|\mathbf{F}\|_{L^2}.
\end{equation*}
Using again the quadratic bound on \(\mathcal E\), we obtain
\begin{align*}
    \|Y(t)\|_{L^2} &\lesssim_\gamma e^{-\gamma\sigma(t)}\|Y(1)\|_{L^2} + \int_1^t e^{-\gamma(\sigma(t)-\sigma(s))} \|\mathcal E(s)\|_{L^2}\,ds \\
    &\lesssim_\gamma e^{-\gamma\sigma(t)}\|Y(1)\|_{L^2} + \varepsilon^2 \int_1^t e^{-\gamma(\sigma(t)-\sigma(s))} e^{-2\mu\sigma(s)}\,ds.
\end{align*}

If, in addition, \(\gamma<2\mu\), then
\begin{equation*}
    \int_1^t e^{-\gamma(\sigma(t)-\sigma(s))} e^{-2\mu\sigma(s)}\,ds \lesssim_\gamma e^{-\gamma\sigma(t)}.
\end{equation*}
Since \(a_2^+>a_2\) and \(2\mu>a_2\), we may choose \(a_2<\gamma<\min\{a_2^+,2\mu\}\). This proves \eqref{sharp_decay01}.

Combining the estimate for \(\Pi_2Y\) with the estimate for \((I-\Pi_2)Y\), we get
\begin{equation*}
    \lim_{t\to\infty} e^{a_2\sigma(t)}\|Y(t)\|_{L^2} = \|Y_{2,\infty}\|_{L^2},
\end{equation*}
provided \(Y_{2,\infty}\neq0\). This completes the proof.
\end{proof}

\begin{remark}
    In Section \ref{sec:numerical}, we numerically verify that the solution converges to the steady state manifold at exponential rate with exponent equal to $a_2/r_\infty$ for the deviation $Y$ and $2a_2/r_\infty$ for the parameter vector $\mathbf{p}$. For simplicity, in Section \ref{sec:numerical} the results are shown with $r_\infty=1$.
\end{remark}

We are now ready to finish the proof of our main theorem. 

\begin{proof}[Proof of Theorem \ref{MainTHM}(ii)]
By part (i), the solution exists on $[0,1]$ and satisfies
\begin{equation*}
    \|Y\|_{Z[0,1]} \lesssim \varepsilon.
\end{equation*}
The parabolic smoothing effect inherent in the local theory implies that for every fixed $\alpha \in (0, 1)$,
\begin{equation*}
    \|X(1) - X_0 \|_{C^{1, \alpha}(\mathbb{S}^2)} \le C\varepsilon.
\end{equation*}
By choosing $\varepsilon > 0$ sufficiently small, Lemma \ref{Lem:ParameterDecoupling} applies at time $t = 1$. Let $T^* > 1$ denote the maximal time for which the solution remains within the small $C^{1, \alpha}$ neighborhood where the structural decomposition is valid. For $t \in [1, T^*)$, we express the interface as
\begin{equation*}
    X(\mathbf{\alpha}, t) = Z_\rho(\mathbf{\mathcal{S}}_{\mathbf{q(t)}}(\mathbf{\alpha}), t) - \frac{2}{3}\mathbf{v}(t) + \textbf{a}(t), \qquad Z_\rho(\mathbf{\beta}, t) = (1 + \rho(t))\mathbf{\beta} + Y(\mathbf{\beta}, t),
\end{equation*}
subject to the orthogonality condition $Y(t) \in (\ker \mathcal{N}_1)^\perp$. The modulation equations derived in Lemma \ref{Lem:ParameterDecoupling}, in conjunction with Lemmas \ref{Lem:NonlinearBounds} and \ref{Lem:ParameterVelocityBound}, imply that the decoupled perturbation $Y$ satisfies a Duhamel integral equation of the form
\begin{equation*}
    Y(t) = U(t,1) Y(1) + \int_1^t U(t,s)\Pi_\perp \mathcal{E}(Y(s), p(s)) \, ds,
\end{equation*}
where $p = (\rho, a, q)$ and $\|\mathcal{E}(Y(t), p(t))\|_{C^{1, \alpha}} \lesssim \|Y(t)\|_{C^{1, \alpha}}^2$. Here, we use the tame $C^{1, \alpha}$ remainder estimate valid after the solution has smoothed at positive time.

Since $r(t)$ remains close to $1$, Corollary \ref{Cor:LinearDecay} applies to $U(t,s)$ with the same exponential bound, after decreasing the admissible rate $\lambda<a_2$ if necessary. Applying Corollary \ref{Cor:LinearDecay} and Lemma \ref{Lem:DuhamelBound}, the bootstrap argument in Lemma \ref{Lem:BootstrapExtension} yields $T^* = \infty$ and the uniform bound
\begin{equation*}
    \|Y\|_{\mathcal{X}} \le C \|Y(1)\|_{C^{1, \alpha}} \le C\varepsilon.
\end{equation*}
Consequently, we obtain the exponential decay estimate
\begin{equation*}
    \|Y(t)\|_{C^{1, \alpha}} \le C\varepsilon e^{-\lambda  t} \quad \text{for all } t \ge 1,
\end{equation*}
for any $0 < \lambda  < a_2/r_\infty$, with $a_2 = 8/35$. Furthermore, Lemma \ref{Lem:ParameterVelocityBound} implies
\begin{equation*}
    |\mathbf{\dot{p}}(t)| \lesssim \|Y(t)\|_{C^{1, \alpha}}^2 \lesssim \varepsilon^2 e^{-2\lambda  t}.
\end{equation*}
This shows $\mathbf{\dot{p}} \in L^1([1, \infty))$, and by Lemma \ref{Lem:ParameterConvergence}, there exists a limit $\mathbf{p}_\infty = (\rho_\infty, \mathbf{a}_\infty, \mathbf{q}_\infty)$ such that
\begin{equation*}
    |\mathbf{p}(t) - \mathbf{p}_\infty| \lesssim \varepsilon^2 e^{-2\lambda  t}.
\end{equation*}
Defining $\Lambda_\infty := \Lambda_{\mathbf{q}_\infty} \Lambda_\star$, $r_\infty := 1 + \rho_\infty$, and $b_\infty := \mathbf{a}_\infty - \frac{2}{3} \mathbf{v}(\mathbf{q}_\infty)$, the limiting steady state is given by $\mathcal{S}_{r_\infty, b_\infty, \Lambda_\infty}(\alpha) = b_\infty + r_\infty \mathcal{S}_{\Lambda_\infty}(\alpha)$. Since the mapping
\begin{equation*}
    (\rho, \mathbf{a}, \mathbf{q}) \mapsto (1 + \rho)\mathbf{\mathcal{S}}_\mathbf{q}(\alpha) - \frac{2}{3}\mathbf{v}(\mathbf{q}) + \mathbf{a}
\end{equation*}
is smooth in a neighborhood of the identity, we have
\begin{equation*}
    \|X(t) - \mathcal{S}_{r_\infty, b_\infty, \Lambda_\infty}\|_{C^1} \lesssim \|Y(t)\|_{C^1} + |\mathbf{p}(t) - \mathbf{p}_\infty| \lesssim \varepsilon e^{-\lambda  t}.
\end{equation*}
Thus, $X(t)$ converges in $C^{1,\alpha}$ (and hence in $C^1$) to the translated and dilated conformal steady state. Finally, uniqueness on $[0, \infty)$ follows from the local uniqueness on $[0, 1]$ combined with the standard continuation uniqueness for the smoothed equation on $[1, \infty)$. This concludes the proof of Theorem \ref{MainTHM}(ii).
\end{proof}

\section{Functional analysis on the sphere $\mathbb{S}^2$}\label{sec:appendix}

\subsection{Littlewood--Paley theory on $\mathbb{S}^2$} Recall the spectral Littlewood-Paley operators and the notation introduced in subsection \ref{LiPaley}. 

\begin{lemma}\label{LiPaLem}
Assume $a\in\Z_+$ and $m:\R\to\C$ is a $C^6$ function supported in the interval $[-2^a,2^a]$ and satisfying the bounds
\begin{equation}\label{lipa4.1}
2^{pa}|\partial^p_xm(x)|\leq 1 \qquad \text{ for any }x\in\R\text{ and }p\in\{0,\ldots,6\}.
\end{equation}
We define the operator $T_m$ and its kernel $H_m$ associated to the multiplier $m$ by
\begin{equation}\label{lipa4.2}
\begin{split}
&T_mY(x):=\sum_{k\in\Z_+}m(k)Y_k(x)=\int_{\SS^2}Y(y)H_m(x,y)\,d\mu(y),\\
&H_m(x,y):=\sum_{k\in\Z_+}m(k)Z_k(x,y).
\end{split}
\end{equation}
Then the kernel $H_m$ satisfy the identities and the bounds
\begin{equation}\label{lipa5}
H_m(x,y)=G_m(x\cdot y)\quad\text{ where }\quad G_m(\alpha):=\sum_{k\in\Z_+}m(k)F_k(\alpha),
\end{equation}
\begin{equation}\label{lipa5.5}
|H_m(x,y)|\lesssim 2^{2a}(1+2^a|x-y|)^{-3}\qquad\text{ for any }x,y\in\mathbb{S}^2.
\end{equation}
\end{lemma}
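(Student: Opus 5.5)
The identity \eqref{lipa5} is immediate from \eqref{zon4}: $Z_k(x,y)=F_k(x\cdot y)$, and the sum defining $H_m$ is finite because $m$ is supported in $[-2^a,2^a]$. So the content of the lemma is the kernel bound \eqref{lipa5.5}, and I split it into a near-diagonal and an off-diagonal regime.

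\emph{Near the diagonal}, $|x-y|\le 2^{-a}$. Here I use \eqref{zon6}, $|Z_k(x,y)|\le (2k+1)/(4\pi)$, together with $|m|\le 1$, to get
\[
|H_m(x,y)|\le\sum_{k\le 2^a}\frac{2k+1}{4\pi}\lesssim 2^{2a}.
\]

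\emph{Off the diagonal}, $|x-y|\gtrsim 2^{-a}$. The aim is to gain $(2^a|x-y|)^{-3}$ by trading smoothness of $m$ for decay. Write $\alpha=x\cdot y=\cos\theta$, so $|x-y|\simeq\theta$ for $\theta\le\pi/2$. By \eqref{zon7.6}, $G_m(\cos\theta)=\frac{1}{4\pi}\sum_k m(k)(2k+1)P_k(\cos\theta)$. I will exploit an operator for which the Legendre polynomials are eigenfunctions. Since $\Delta_{\SS^2}$ acts on zonal functions with eigenvalue $-k(k+1)$, a discrete difference operator in $k$ can be realised as multiplication by a function of $\theta$. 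Concretely, the three-term recurrence $(2k+1)\alpha P_k=(k+1)P_{k+1}+kP_{k-1}$ gives, after summation by parts,
\[
(1-\alpha)\sum_k c_k(2k+1)P_k=\sum_k (\mathcal{D}c)_k(2k+1)P_k,
\]
where $\mathcal{D}$ is a second-order-like difference operator with coefficients bounded in $k$; roughly $(\mathcal{D}c)_k\approx -\tfrac12\Delta^2 c_k$ plus lower-order terms of size $k^{-1}\Delta c_k$.

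Each application of $\mathcal{D}$ costs two derivatives of $m$ and gains $2^{-2a}$ by \eqref{lipa4.1}, while multiplying by $(1-\alpha)^{-1}\simeq\theta^{-2}$. Applying $\mathcal{D}$ three times uses $m\in C^6$ and yields
\[
|H_m|\lesssim \theta^{-6}\sum_{k\le 2^{a+1}}(2k+1)\,2^{-6a}\lesssim 2^{2a}(2^a\theta)^{-6}.
\]
This is better than required for $\theta\le\pi/2$.

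The region near the antipode, $\alpha\to-1$, where $1-\alpha$ does not vanish but $1+\alpha$ does, needs separate handling. Two options:
\begin{itemize}
\item Use the symmetric weight $(1+\alpha)$ with the identity $P_k(-\alpha)=(-1)^kP_k(\alpha)$, which turns it into the same computation for the alternating sequence $(-1)^k m(k)$. That sequence is not smooth, so this route is delicate.
\item Use the crude bound instead: $|x-y|\simeq 1$ there, and I need $|H_m|\lesssim 2^{-a}$. This can be obtained by interpolating the $\theta^{-6}$ bound at $\theta\sim 1$, which is valid up to $\theta\le\pi-\delta$.
\end{itemize}
For $\theta$ close to $\pi$ itself I would fall back on the Christoffel--Darboux/Mehler-type bound $|P_k(\cos\theta)|\lesssim (k\sin\theta)^{-1/2}$ combined with summation by parts in $k$ only once or twice.

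An alternative that avoids the antipode subtlety is to write $m(k)=\tilde m(k(k+1))$ or $\tilde m(k+\tfrac12)$. One then expresses $H_m$ through the heat or wave kernel of $\sqrt{-\Delta+1/4}$ via the Fourier transform of $\tilde m$, and uses the finite-propagation-speed / Hadamard parametrix on $\SS^2$. This is standard but heavier, so I would try it only if the direct approach fails.

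The main obstacle is the difference-operator identity: getting the exact coefficients of $\mathcal{D}$ from the recurrence, and checking that the lower-order terms like $k^{-1}\Delta c_k$ do not destroy the $2^{-2a}$ gain at small $k$. The boundary terms at $k=0$ in the summation by parts need care, though there $m$ and its differences are still controlled by \eqref{lipa4.1}.
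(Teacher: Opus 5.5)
\textbf{There is a genuine gap: the off-diagonal step fails.} Your proof of \eqref{lipa5}, the near-diagonal bound, and the form of $\mathcal D$ are fine. The recurrence gives $(\mathcal Dc)_k=-\tfrac12(c_{k+1}-2c_k+c_{k-1})-\tfrac{1}{2(2k+1)}(c_{k+1}-c_{k-1})$ for $k\ge1$, and $(\mathcal Dc)_0=c_0-c_1$.

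The problem is the claim that each application of $\mathcal D$ gains $2^{-2a}$. The first-order term and the row $k=0$ see only one difference of $m$. So they gain only $2^{-a}$, and they produce sequences of size $2^{-a}(2k+1)^{-1}$ that are not smooth at scale $2^a$. Further differences of such sequences gain powers of $k^{-1}$, not of $2^{-a}$. The model case is exact: $c_k=k$ gives $(2k+1)(\mathcal Dc)_k=-1$ for all $k\ge0$. Hence, in the Abel sense, $\sum_k(2k+1)kP_k(\alpha)=-2(2-2\alpha)^{-3/2}$.

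In fact the bound $2^{2a}(2^a\theta)^{-6}$ you claim is false, and the exponent $3$ in \eqref{lipa5.5} is sharp. Take $m(x)=c\,e^{-2^{-a}x}\chi(2^{-a}x)$, where $\chi$ equals $1$ on $[-\tfrac12,\tfrac12]$ and is supported in $[-1,1]$.
\begin{itemize}
\item The untruncated multiplier $r^k$, with $r=e^{-2^{-a}}$, has the Poisson kernel $\frac{1-r^2}{4\pi|x-ry|^3}\approx\frac{2^{-a}}{2\pi}|x-y|^{-3}$ for $2^{-a}\ll|x-y|\ll1$.
\item The truncation error vanishes for $k\le 2^{a-1}$. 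There your $\mathcal D^3$ argument is legitimate and shows the error is negligible.
\end{itemize}
The same non-even part of $m$, viewed as a function of $k+\tfrac12$, would also break finite propagation speed in your wave-kernel alternative.

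\textbf{What your method actually yields.} Absolute summation gives $|G_m|\lesssim\theta^{-2}$ after one application of $\mathcal D$. After two it gives $|G_m|\lesssim 2^{-a}\theta^{-4}$: the sum $\sum_k(2k+1)|(\mathcal D^2m)_k|$ is dominated by $k\lesssim1$ and has size $2^{-a}$ when $m'(0)\sim2^{-a}$.
\begin{itemize}
\item This suffices for $\theta\gtrsim1$. So the antipode, where $1-\alpha\ge1$, is not a problem for your method at all, and the Christoffel--Darboux discussion is unnecessary.
\item In the intermediate range $2^{-a}\ll\theta\ll1$, neither bound, nor their minimum, reaches $2^{-a}\theta^{-3}$. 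The loss is up to $2^{a/2}$ at $\theta\sim2^{-a/2}$.
\end{itemize}
The missing ingredient is a low-$k$ cancellation that absolute summation cannot see. For $c_k=k$ one has $\sum_k(2k+1)(\mathcal D^2c)_kP_k(\alpha)=-\tfrac12(2-2\alpha)^{1/2}$, which vanishes on the diagonal, although $\sum_k(2k+1)|(\mathcal D^2c)_k|\sim1$.

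To repair your route, you would have to subtract the low-frequency linear part of $m$ using multipliers with explicit kernels, for instance $Ar^k+Bkr^k$, computed from the Poisson kernel and $r\partial_r$. You would then apply $\mathcal D^2$ only to a remainder that vanishes to second order at $k=0$.

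The paper avoids this bookkeeping. It writes $r^kF_k$ as the contour integral \eqref{lipa7} and uses Poisson summation to localize $s(\theta)=\sum_k(2k+1)m(k)e^{-ik\theta}$ to $|\theta|\lesssim2^{-a}$. A single integration by parts in $\theta$ then produces the factor $1-r^2e^{2i\theta}\approx-2i\theta$, which is exactly the source of the sharp $\beta^{-3}$.
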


\begin{proof} Recall that $Z_k(x,y)=F_k(x\cdot y)$ (see \eqref{zon4}), so the identity \eqref{lipa5} follows from the definition of the kernel $H_m$.

To prove the estimates \eqref{lipa5.5} we use the identities \eqref{zon7}, \eqref{zon7.5}, and \eqref{zon7.6}. Notice that for any $\alpha\in[-1,1]$ the function $z\mapsto 1+z^2-2\alpha z=(z-\rho(\alpha))(z-\overline{\rho(\alpha)})$, where $\rho(\alpha):=\alpha+i\sqrt{1-\alpha^2}$, is analytic and does not vanish in the ball $B_1:=\{z\in\mathbb{C}:\,|z|<1\}$ (in fact $1+z^2-2\alpha z\in \C\setminus (-\infty,0]$ for any $z\in B_1$). In particular, the function $z\mapsto (1+z^2-2\alpha z)^{-1/2}$ is a well-defined analytic function in $B_1$, equal to $1$ where $z=0$ (corresponding to the principal branch of the function $\sqrt{}$). Letting $z=re^{i\theta}$ it follows from \eqref{zon7.5} that
\begin{equation}\label{lipa6}
(1+r^2e^{2i\theta}-2re^{i\theta} \alpha)^{-1/2}=\sum_{k\geq 0}e^{ik\theta}r^kP_k(\alpha),
\end{equation}
for any $\alpha\in[-1,1]$, $r\in[0,1)$ and $\theta\in[-\pi,\pi]$. In particular, using \eqref{zon7.6},
\begin{equation}\label{lipa7}
\begin{split}
r^kF_k(\alpha)&=\frac{2k+1}{8\pi^2}\int_{-\pi}^\pi e^{-ik\theta}(1+r^2e^{2i\theta}-2re^{i\theta} \alpha)^{-1/2}\,d\theta\qquad \text{ if }k\geq 0,\\
0&=\frac{2k+1}{8\pi^2}\int_{-\pi}^\pi e^{-ik\theta}(1+r^2e^{2i\theta}-2re^{i\theta} \alpha)^{-1/2}\,d\theta\qquad \text{ if }k\leq -1,
\end{split}
\end{equation}
for any $\alpha\in[-1,1]$, $r\in[0,1)$ and $k\in\Z$. We multiply these identities by $m(k)$ and sum over $k$, thus, for any $\alpha\in[-1,1]$ and $r\in[0,1)$,
\begin{equation}\label{lipa8}
\sum_{k\in\Z_+}m(k)r^kF_k(\alpha)=\frac{1}{8\pi^2}\int_{-\pi}^\pi \Big(\sum_{k\in\Z}e^{-ik\theta}(2k+1)m(k)\Big)(1+r^2e^{2i\theta}-2re^{i\theta} \alpha)^{-1/2}\,d\theta.
\end{equation}

We use the Poisson summation formula to write
\begin{equation}\label{lipa9}
s(\theta):=\sum_{k\in\Z}e^{-ik\theta}(2k+1)m(k)=\sum_{n\in\Z}\int_{\R}e^{-ix\theta}e^{-ix\cdot 2\pi n}(2x+1)m(x)\,dx=\sum_{n\in\Z}\chi(\theta+2\pi n)
\end{equation}
where
\begin{equation}\label{lipa10}
\begin{split}
&\chi(\rho):=\int_{\R}e^{-ix\rho}(2x+1)m(x)\,dx=\widehat{m}(\rho)+2i(\partial_\rho\widehat{m})(\rho),\\
&\widehat{m}(\xi):=\int_\R m(y)e^{-iy\xi}\,dy.
\end{split}
\end{equation}
The assumptions \eqref{lipa4.1} show that
\begin{equation}\label{lipa11}
|s(\theta)|+2^{-a}|\partial_\theta s(\theta)|\lesssim 2^{2a}(1+2^a|\theta|)^{-6}\text{ for any }\theta\in[-\pi,\pi].
\end{equation}

Let $\alpha=\cos\beta$ for some $\beta\in[0,\pi]$, so $1+r^2e^{2i\theta}-2re^{i\theta} \alpha=(1-re^{i(\theta-\beta)})(1-re^{i(\theta+\beta)})$. Since $|x-y|^2=2-2x\cdot y$, the bounds \eqref{lipa5.5} are equivalent to proving that $|G_{\leq a}(\alpha)|\lesssim 2^{2a}(1+2^{2a}(1-\alpha))^{-3/2}$ for any $\alpha\in[-1,1]$. In view of the identities \eqref{lipa9} and \eqref{lipa4}, it suffices to prove that
\begin{equation}\label{lipa15}
\Big|\int_{-\pi}^\pi s(\theta)\big[(1-re^{i(\theta-\beta)})(1-re^{i(\theta+\beta)})\big]^{-1/2}\,d\theta\Big|\lesssim 2^{2a}(1+2^a\beta)^{-3}
\end{equation}
for any $\beta\in[0,\pi]$ and $r\in[1-2^{-a-2},1)$. 

{\bf{Step 1.}} We bound first the contribution of $\theta$ away from $0$, and we prove the stronger estimates
\begin{equation}\label{lipa16}
\Big|\int_{-\pi}^\pi \varphi_{>-6}(\theta)s(\theta)\big[(1-re^{i(\theta-\beta)})(1-re^{i(\theta+\beta)})\big]^{-1/2}\,d\theta\Big|\lesssim 2^{-a}
\end{equation}
for any $\beta\in[0,\pi]$ and $r\in[1/2,1)$, where the cutoff function $\varphi_{>-6}$ is defined as in \eqref{lipa1}.

We record two elementary bounds: if $r\in[1/2,1)$ and $\gamma\in[-2\pi+1/100,2\pi-1/100]$ then
\begin{equation}\label{lipa17}
|1-re^{i\gamma}|^{-1/2}\lesssim (|1-r\cos\gamma|+r|\sin\gamma|)^{-1/2}\lesssim ((1-r)+|\gamma|)^{-1/2}.
\end{equation}
Moreover, if $x\geq y>0$ then we have the cancellation bounds
\begin{equation}\label{lipa18}
|(-x+iy)^{-1/2}+(-x-iy)^{-1/2}|\lesssim yx^{-3/2}.
\end{equation}
Indeed, to prove \eqref{lipa18}, we write $-x+iy=re^{i(\pi-\gamma)}$, $-x-iy=re^{-i(\pi-\gamma)}$, where $r=\sqrt{a^2+b^2}$ and $\tan\gamma=y/x$, $\gamma\in(0,\pi/4]$. By the definition of the principal branch of the function $\sqrt{}$ we thus have
\begin{equation*}
|(-x+iy)^{-1/2}+(-x-iy)^{-1/2}|=r^{-1/2}|e^{-i(\pi-\gamma)/2}+e^{i(\pi-\gamma)/2}|=r^{-1/2}|e^{i\gamma/2}-e^{-i\gamma/2}|,
\end{equation*}
and the desired bounds \eqref{lipa18} follow.

We can now prove the bounds \eqref{lipa16}. First, using only \eqref{lipa11} and \eqref{lipa17}, if $\beta\in[0,\pi-1/10]$ then we estimate the left-hand side of \eqref{lipa16} by
\begin{equation}\label{lipa19}
C\int_{-\pi}^\pi |\varphi_{>-6}(\theta)|2^{-4a}\frac{1}{|\theta-\beta|^{1/2}|\theta+\beta|^{1/2}}\,d\theta\lesssim 2^{-4a},
\end{equation}
as desired. On the other hand, if $\beta\in[\pi-1/10,\pi]$ then we need to use the cancellation estimates \eqref{lipa18}. For this we first make the change of variables $\theta\to-\theta$ when $\theta\in[-\pi,0]$, so we rewrite 
\begin{equation*}
\begin{split}
&\int_{-\pi}^\pi \varphi_{>-6}(\theta)s(\theta)\big[(1-re^{i(\theta-\beta)})(1-re^{i(\theta+\beta)})\big]^{-1/2}\,d\theta\\
&=\int_{0}^\pi \varphi_{>-6}(\theta)\Big\{\frac{s(\theta)}{\big[(1-re^{i(\theta-\beta)})(1-re^{i(\theta+\beta)})\big]^{1/2}}+\frac{s(-\theta)}{\big[(1-re^{i(-\theta-\beta)})(1-re^{i(-\theta+\beta)})\big]^{1/2}}\Big\}\,d\theta.
\end{split}
\end{equation*}
The contribution of the integral for $\theta\in[0,\pi-1/5]$ can be bounded as in \eqref{lipa19} by $C2^{-4a}$, using just \eqref{lipa11} and \eqref{lipa17}. Then we make the changes of variable $\theta=\pi-\rho$, $\beta=\pi-\beta'$; for \eqref{lipa16} it remains to prove that
\begin{equation}\label{lipa20}
\int_{0}^{1/5} \Big|\frac{s(\pi-\rho)}{\big[(1-re^{-i(\rho-\beta')})(1-re^{-i(\rho+\beta')})\big]^{1/2}}+\frac{s(-\pi+\rho)}{\big[(1-re^{i(\rho-\beta')})(1-re^{i(\rho+\beta')})\big]^{1/2}}\Big|\,d\rho\lesssim 2^{-a}
\end{equation}
for any $r\in[1/2,1)$ and $\beta'\in[0,1/10]$. 

The integral over $\rho\lesssim (1-r)+\beta'$ can be estimated as before, using just \eqref{lipa11} and \eqref{lipa17}. Also, using the derivative bounds in \eqref{lipa11} we may replace the factors $s(\pi-\rho)$ and $s(-\pi+\rho)=s(\pi+\rho)$ with $s(\pi)$, at the expense of $2^{-3a}$ errors. After these reductions, and using the bounds $|s(\pi)|\lesssim 2^{-4a}$ in \eqref{lipa11}, for \eqref{lipa20} it suffices to prove that
\begin{equation}\label{lipa21}
\int_{\rho_0}^{1/5} \Big|\big[(1-re^{-i(\rho-\beta')})(1-re^{-i(\rho+\beta')})\big]^{-1/2}+\big[(1-re^{i(\rho-\beta')})(1-re^{i(\rho+\beta')})\big]^{-1/2}\Big|\,d\rho\lesssim 1,
\end{equation}
for any $r\in[1/2,1)$ and $\beta'\in[0,1/10]$, where $\rho_0:=100[(1-r)+\beta']$. 

To prove \eqref{lipa21} we apply the cancellation bounds \eqref{lipa18}. Notice that
\begin{equation*}
\begin{split}
&(1-re^{-i(\rho-\beta')})(1-re^{-i(\rho+\beta')})=-x+iy,\\
&x:=-1-r^2\cos(2\rho)+2r\cos\rho\cos\beta',\qquad y:=-r^2\sin(2\rho)+2r\sin\rho\cos\beta'.
\end{split}
\end{equation*}
The integral in the left-hand side of \eqref{lipa21} is nontrivial only if $(1-r)+\beta'\leq 1/500$. In this case we can write, for $\rho\in[100[(1-r)+\beta'],1/5]$,
\begin{equation*}
\begin{split}
x&=-1-r^2(\cos\rho)^2+r^2(\sin\rho)^2+2r\cos\rho-2r\cos\rho(1-\cos\beta')\\
&=r^2(\sin\rho)^2-(1-r\cos\rho)^2-2r\cos\rho-4r\cos\rho[\sin(\beta'/2)]^2\\
&\in[r^2(\sin\rho)^2/2,r^2(\sin\rho)^2],
\end{split}
\end{equation*}
and
\begin{equation*}
\begin{split}
y=2r\sin\rho (\cos\beta'-r\cos\rho)\in [2r\sin\rho (1-r), 2r\sin\rho (1-r)+2r^3(\sin\rho)^3].
\end{split}
\end{equation*}
Therefore, using \eqref{lipa18}, we have
\begin{equation*}
\Big|\big[(1-re^{-i(\rho-\beta')})(1-re^{-i(\rho+\beta')})\big]^{-1/2}+\big[(1-re^{i(\rho-\beta')})(1-re^{i(\rho+\beta')})\big]^{-1/2}\Big|\lesssim 1+(1-r)/\rho^2,
\end{equation*}
and the desired bounds \eqref{lipa21} follow.
\medskip

{\bf{Step 2.}} We bound now the contribution of $\theta$ very close to $0$, so we show that
\begin{equation}\label{lipa25}
\Big|\int_{-\pi}^\pi \varphi_{\leq l}(\theta)s(\theta)\big[(1-re^{i(\theta-\beta)})(1-re^{i(\theta+\beta)})\big]^{-1/2}\,d\theta\Big|\lesssim 2^{2a}(1+2^a\beta)^{-3}
\end{equation}
for any $\beta\in[0,\pi]$ and $r\in[1-2^{-a-2},1)$, where $l\in\Z$ is such that $2^l\leq 2^{-10}\beta< 2^{l+1}$. 

Indeed, these bounds follow easily from \eqref{lipa17} and \eqref{lipa11} if $\beta\lesssim 2^{-a}$. Moreover, letting
\begin{equation*}
s'(\theta):=\sum_{n\in\Z\setminus\{0\}}\chi(\theta+2\pi n)=s(\theta)-\chi(\theta),
\end{equation*}
we have $|s'(\theta)|\lesssim 2^{-4a}$ for any $\theta\in[-\pi,\pi]$. Therefore the contribution of $s'$ leads to an acceptable error $\lesssim 2^{-4a}$, using \eqref{lipa17} as before; for \eqref{lipa25} it suffices to prove that
\begin{equation}\label{lipa26}
\Big|\int_\R \varphi_{\leq l}(\theta)\chi(\theta)\big[(1-re^{i(\theta-\beta)})(1-re^{i(\theta+\beta)})\big]^{-1/2}\,d\theta\Big|\lesssim 2^{2a}(1+2^a\beta)^{-3}
\end{equation}
for any $\beta\in[2^{-a+10},\pi]$ and $r\in[1-2^{-a-2},1)$.

To prove \eqref{lipa26} we need to integrate by parts in $\theta$. Since $\chi(\rho)=\widehat{m}(\rho)+2i\partial_\rho\widehat{m}(\rho)$, we have
\begin{equation}\label{lipa27}
\begin{split}
&\Big|\int_\R \varphi(2^{-l}\theta)\chi(\theta)\big[(1-re^{i(\theta-\beta)})(1-re^{i(\theta+\beta)})\big]^{-1/2}\,d\theta\Big|\lesssim I_1+I_2,\\
&I_1:=\Big|\int_\R 2^{-l}\varphi'(2^{-l}\theta)\widehat{m}(\theta)\big[(1-re^{i(\theta-\beta)})(1-re^{i(\theta+\beta)})\big]^{-1/2}\,d\theta\Big|\\
&I_2:=\Big|\int_\R \varphi(2^{-l}\theta)\widehat{m}(\theta)(1-r^2e^{2i\theta})\big[(1-re^{i(\theta-\beta)})(1-re^{i(\theta+\beta)})\big]^{-3/2}\,d\theta\Big|,
\end{split}
\end{equation}
where $\varphi'$ denote the derivative of $\varphi$ and the $I_2$ integral is obtained after integration by parts and careful algebraic simplifications. Since $|\widehat{m}(\theta)|\lesssim 2^{a}(1+2^a|\theta|)^{-6}$ and $\beta\approx 2^l\gtrsim 2^{-a}$ we can use \eqref{lipa17} to estimate
\begin{equation*}
I_1\lesssim \Big|\int_{|\theta|\in [2^{l-1},2^{l+1}]} 2^{-l}2^{a}(1+2^a|\theta|)^{-6}\beta^{-1}\,d\theta\Big|\lesssim 2^{a}(1+2^a\beta)^{-6}\beta^{-1}.
\end{equation*}
Using also the fact that $|1-r^2e^{2i\theta}|\leq 2|1-re^{i\theta}|\lesssim (1-r)+|\theta|\lesssim 2^{-a}+|\theta|$ we also estimate
\begin{equation*}
I_2\lesssim \int_{|\theta|\leq 2^{l+1}}2^{a}(1+2^a|\theta|)^{-6}(2^{-a}+|\theta|)\beta^{-3}\,d\theta\lesssim 2^{-a}\beta^{-3}.
\end{equation*}
The desired bounds \eqref{lipa26} follow from \eqref{lipa27} and the last two inequalities.

{\bf{Step 3.}} Finally we bound the contribution of the intermediate values of $\theta$, so we show that
\begin{equation}\label{lipa30}
\Big|\int_{-\pi}^\pi \varphi_{>l}(\theta)\varphi_{\leq -6}(\theta)s(\theta)\big[(1-re^{i(\theta-\beta)})(1-re^{i(\theta+\beta)})\big]^{-1/2}\,d\theta\Big|\lesssim 2^{2a}(1+2^a\beta)^{-3}
\end{equation}
for any $\beta\in[0,\pi]$ and $r\in[1/2,1)$, where $l\in\Z$ is such that $2^l\leq 2^{-10}\beta< 2^{l+1}$.

Using first just \eqref{lipa17} and \eqref{lipa11} we estimate
\begin{equation*}
\begin{split}
&\Big|\int_{|\theta|\in[2^{l-1},2^{l+20}]}\varphi_{>l}(\theta)\varphi_{\leq -6}(\theta)s(\theta)\big[(1-re^{i(\theta-\beta)})(1-re^{i(\theta+\beta)})\big]^{-1/2}\,d\theta\Big|\\
&\lesssim \int_{|\theta|\in[2^{l-1},2^{l+20}]}2^{2a}(1+2^a|\theta|)^{-6}|\theta-\beta|^{-1/2}|\theta+\beta|^{-1/2}\,d\theta\lesssim 2^{2a}(1+2^a\beta)^{-6}
\end{split}
\end{equation*}
and
\begin{equation*}
\begin{split}
&\Big|\int_{|\theta|\geq\max(2^{l+20},2^{-a-20})}\varphi_{>l}(\theta)\varphi_{\leq -6}(\theta)s(\theta)\big[(1-re^{i(\theta-\beta)})(1-re^{i(\theta+\beta)})\big]^{-1/2}\,d\theta\Big|\\
&\lesssim \int_{|\theta|\geq\max(2^{l+20},2^{-a-20})}2^{2a}(1+2^a|\theta|)^{-6}|\theta|^{-1}\,d\theta\lesssim 2^{2a}(1+2^a\beta)^{-5}.
\end{split}
\end{equation*}
For \eqref{lipa30} it remains to prove that if $l\leq -a-40$, $\beta\in[2^{l+10},2^{l+11}]$, and $r\in[1/2,1)$ then
\begin{equation}\label{lipa31}
\Big|\int_{|\theta|\in[2^{l+20},2^{-a-20}]}s(\theta)\big[(1-re^{i(\theta-\beta)})(1-re^{i(\theta+\beta)})\big]^{-1/2}\,d\theta\Big|\lesssim 2^{2a}.
\end{equation}

To prove \eqref{lipa31} (without logarithmic loss) we need to use again the cancellation bounds \eqref{lipa18}. First we replace $s(\theta)$ with $s(0)$ at the expense of an acceptable $O(2^{2a})$ error. Since $|s(0)|\lesssim 2^{2a}$ for \eqref{lipa31} it suffices to prove that
\begin{equation}\label{lipa32}
\int_{2^n}^{2^{-20}}\Big|\big[(1-re^{i(\theta-\beta)})(1-re^{i(\theta+\beta)})\big]^{-1/2}+\big[(1-re^{i(-\theta+\beta)})(1-re^{i(-\theta-\beta)})\big]^{-1/2}\Big|\,d\theta\lesssim 1,
\end{equation}
provided that $n\leq -20$, $\beta\in[0,2^{n-9}]$, and $r\in[1/2,1)$. The contribution of the integral over $\theta\lesssim (1-r)+\beta$ can be bounded using just \eqref{lipa17}, while the contribution of the integral over $\theta\geq 100((1-r)+\beta)$ is suitably bounded due to \eqref{lipa21}. This completes the proof of the main estimates \eqref{lipa15}. 
\end{proof}

We specialize now to the case of the spectral projections and prove the following:

\begin{corollary}\label{LiPaProj}
(i) For any $a\in\Z_+$ the operators $\mathcal{P}'_{\leq a}$ and their kernels $\mathcal{K}_{\leq a}$ satisfy the bounds
\begin{equation}\label{lipa5.6}
\|\mathcal{P}'_{\leq a}Y\|_{L^q(\mathbb{S}^2)}\lesssim 2^{2a(1/p-1/q)}\|Y\|_{L^p(\SS^2)}\qquad\text{ for any }p\leq q\in[1,\infty],
\end{equation}
and
\begin{equation}\label{lipa33}
|\mathcal{K}_{\leq a}(x,y)|\lesssim 2^{2a}(1+2^a|x-y|)^{-3}\qquad\text{ for any }x,y\in\mathbb{S}^2.
\end{equation}

(ii) Assume $V\in\{V_{12}, V_{23}, V_{31}\}$ (see definition \eqref{vf11}) and $a\in\Z_+$. Then $V\mathcal{P}'_a=\mathcal{P}'_aV$ and
\begin{equation}\label{lipa77}
\|V\mathcal{P}'_af\|_{L^p}\lesssim 2^a\|\mathcal{P}'_af\|_{L^p}
\end{equation}
for any $p\in[1,\infty]$ and $f\in L^p(\SS^2)$. Moreover, if $a\geq 1$ then
\begin{equation}\label{lipa78}
\|\mathcal{P}'_af\|_{L^p}\lesssim 2^{-a}\sum_{V\in\{V_{12}, V_{23}, V_{31}\}}\|V\mathcal{P}'_af\|_{L^p}.
\end{equation}
\end{corollary}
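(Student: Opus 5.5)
For part (i), we apply Lemma \ref{LiPaLem} directly. For $a\in\Z_+$ we take $m(x)=\varphi_{\leq a}(x)=\varphi(|x|/2^a)$. It is smooth, supported in $[-2^{a+1},2^{a+1}]$, and satisfies $2^{pa}|\partial^p m|\lesssim 1$ for $p\le 6$. After rescaling by a harmless constant (replacing $a$ by $a+1$ in the support condition), the lemma gives \eqref{lipa33} immediately, since $\mathcal{K}_{\leq a}=H_m$.

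The $L^p\to L^q$ bounds \eqref{lipa5.6} then follow from Young's inequality on $\SS^2$. Fix $x$ and integrate the kernel bound in polar coordinates around $x$, where $d\mu\approx \rho\,d\rho$:
\[
\sup_x\int_{\SS^2}|\mathcal{K}_{\leq a}(x,y)|^r\,d\mu(y)\lesssim \int_0^\pi 2^{2ar}(1+2^a\rho)^{-3r}\rho\,d\rho\lesssim 2^{2ar-2a}.
\]
This gives $\sup_x\|\mathcal{K}_{\leq a}(x,\cdot)\|_{L^r}\lesssim 2^{2a(1-1/r)}$, and the same bound holds with $x,y$ swapped by symmetry. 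Young/Schur with $1+1/q=1/p+1/r$ then yields the factor $2^{2a(1/p-1/q)}$. In particular $\mathcal{P}'_{\leq a}$, and hence $\mathcal{P}'_a=\mathcal{P}'_{\leq a}-\mathcal{P}'_{\leq a-1}$, is bounded on every $L^p$.

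For part (ii), commutation comes first. Each $V$ is an infinitesimal rotation generator, so it commutes with $\Delta_{\SS^2}$ and preserves every $\mathcal{H}_k$: it maps harmonic homogeneous polynomials of degree $k$ to the same class. Hence $V(Y_k)=(VY)_k$, so $V\mathcal{P}'_a=\mathcal{P}'_aV$ on $L^2$, and by density and duality in the distributional sense. For \eqref{lipa77} we write $\mathcal{P}'_a=\widetilde{\mathcal{P}}_a\mathcal{P}'_a$, where $\widetilde{\mathcal{P}}_a$ is the multiplier $\varphi_{[a-1,a+1]}$, equal to $1$ on the support of $\varphi_a$. Then $V\mathcal{P}'_af=\int V_x\widetilde H(x,y)\,\mathcal{P}'_af(y)\,d\mu(y)$, and it suffices to show
\[
|V_x\widetilde H(x,y)|\lesssim 2^{3a}(1+2^a|x-y|)^{-3},
\]
which gives the $2^a$ gain via Schur. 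Since $\widetilde H(x,y)=G(x\cdot y)$, we have $V_x\widetilde H=G'(x\cdot y)\,V_x(x\cdot y)$ with $|V_x(x\cdot y)|\lesssim|x-y|$. So we need a derivative bound $|G'(\alpha)|\lesssim 2^{4a}(1+2^{2a}(1-\alpha))^{-2}$. We would obtain it by redoing the Poisson-summation/contour argument of Lemma \ref{LiPaLem} with $\partial_\alpha$ applied to $(1+r^2e^{2i\theta}-2re^{i\theta}\alpha)^{-1/2}$. A cheaper alternative is the rotation trick: $V_x\widetilde H(x,y)=-V_y\widetilde H(x,y)$ by rotation invariance, so $V\mathcal{P}'_af=\widetilde{\mathcal{P}}_a V\mathcal{P}'_af$ and one can integrate by parts onto the kernel. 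Either way a derivative estimate on the kernel is needed. I expect this kernel-derivative bound to be the main obstacle. I would try to handle it by noting that $\partial_\alpha$ of the generating function introduces an extra factor $re^{i\theta}(\cdots)^{-1}$, which the cancellation bounds \eqref{lipa17}--\eqref{lipa18} control with one more power of $(2^{-a}+|\theta|)^{-1}$.

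For the reverse bound \eqref{lipa78} with $a\ge1$, we use \eqref{vf14}: $\sum_V V^2=\Delta_{\SS^2}$. On the support of $\varphi_a$ we have $k\approx 2^a$ and $k\ge 1$, so $\mathcal{P}'_a f=-\sum_V \mathcal{Q}_a V(V\mathcal{P}'_af)$, where $\mathcal{Q}_a$ has multiplier $\varphi_{[a-1,a+1]}(k)/(k(k+1))$. This multiplier equals $2^{-2a}$ times a function satisfying \eqref{lipa4.1}, supported away from $k=0$ since $a\ge1$. Hence $\mathcal{Q}_aV$ has an $L^p$ norm $\lesssim 2^{-2a}\cdot 2^{a}$, by the same kernel-derivative estimate used above applied to the $\mathcal{Q}_a$ kernel. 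This gives $\|\mathcal{P}'_af\|_{L^p}\lesssim 2^{-a}\sum_V\|V\mathcal{P}'_af\|_{L^p}$.
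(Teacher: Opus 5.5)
Your part (i) is the paper's argument. Your proof of $V\mathcal{P}'_a=\mathcal{P}'_aV$ (each $V$ maps $\mathcal{A}_k$ into itself, hence preserves $\mathcal{H}_k$) is a valid alternative to the paper's kernel identity $V_xG_m(x\cdot y)=-V_yG_m(x\cdot y)$ combined with \eqref{vf13}.

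The gap is \eqref{lipa77}, and with it \eqref{lipa78}, which in your write-up rests on the same input. You reduce \eqref{lipa77} to the bound $|V_x\widetilde H(x,y)|\lesssim 2^{3a}(1+2^a|x-y|)^{-3}$, but you never prove it, and the mechanism you sketch does not give it.
\begin{itemize}
\item Differentiating the generating function in $\alpha$ multiplies the integrand by $re^{i\theta}\big[(1-re^{i(\theta-\beta)})(1-re^{i(\theta+\beta)})\big]^{-1}$. This factor is comparable to $((1-r)+|\theta-\beta|)^{-1}((1-r)+|\theta+\beta|)^{-1}$, not to $(2^{-a}+|\theta|)^{-1}$.
\item Its singularities sit at $\theta=\pm\beta$, where the integrand becomes of order $|\theta\mp\beta|^{-3/2}$. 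This is not integrable uniformly as $r\to1$, which is the limit Lemma~\ref{LiPaLem} uses to reach the kernel.
\item Even away from $\theta=\pm\beta$ there is a problem. In the dominant region $|\theta|\lesssim 2^{-a}$, where $|s(\theta)|$ is of size $2^{2a}$, the extra factor is about $\beta^{-2}$. Absolute-value bounds then give at best $|G'(\cos\beta)|\lesssim 2^a\beta^{-3}$ for $\beta\gg 2^{-a}$. That yields only $\sup_x\int_{\SS^2}|V_x\widetilde H(x,y)|\,d\mu(y)\lesssim a2^a$, a logarithmic loss.
\end{itemize}
The kernel estimate is true. Proving it this way, however, means redoing Steps 1--3 of Lemma~\ref{LiPaLem} with $1-r\approx 2^{-a}$ frozen (absorbing $r^{-k}$ into the multiplier) and with an additional integration by parts in $\theta$. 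None of that is in the proposal.

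The missing idea, which is how the paper proves \eqref{lipa77}, is Bernstein's inequality along the orbits of $V$; it needs no kernel derivatives at all.
\begin{itemize}
\item $V_{23}$ and $V_{31}$ are conjugate to $V_{12}$ by rotations, and both the $L^p(\SS^2)$ norms and each $\mathcal{H}_k$ are rotation invariant. So you may take $V=V_{12}=\partial_\phi$.
\item The function $g=\mathcal{P}'_af$ lies in $\bigoplus_{k\le 2^{a+1}}\mathcal{H}_k$. It is therefore a combination of $e^{im\phi}P_{k,m}(\cos\theta)$ with $|m|\le k\le 2^{a+1}$.
\item For each fixed $\theta$, the map $\phi\mapsto g(\theta,\phi)$ is thus a trigonometric polynomial of degree at most $2^{a+1}$. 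Bernstein's inequality gives $\|\partial_\phi g(\theta,\cdot)\|_{L^p[0,2\pi]}\le 2^{a+1}\|g(\theta,\cdot)\|_{L^p[0,2\pi]}$ for every $p\in[1,\infty]$.
\item Integrating against $\sin\theta\,d\theta$ yields \eqref{lipa77}.
\end{itemize}

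With this in hand, your reduction for \eqref{lipa78} closes exactly as in the paper:
\begin{itemize}
\item $\|\mathcal{P}'_af\|_{L^p}\lesssim 2^{-2a}\|\Delta_{\SS^2}\mathcal{P}'_af\|_{L^p}$, by Lemma~\ref{LiPaLem} applied to a smooth extension of $2^{2a}\varphi_{[a-1,a+1]}(k)/(k(k+1))$.
\item Then write $\Delta_{\SS^2}=\sum_V V^2$.
\item Finally apply Bernstein to $V\mathcal{P}'_af$, which again lies in $\bigoplus_{k\le 2^{a+1}}\mathcal{H}_k$. This replaces the kernel-derivative estimate you invoked for $\mathcal{Q}_a$.
\end{itemize}
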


\begin{proof} (i) We apply Lemma \ref{LiPaLem} with $m(x)=\varphi(2^{-a}x)$, so $\mathcal{P}_{\leq a}=T_m$ and the kernel bounds \eqref{lipa33} follow from \eqref{lipa5.5}. In particular
\begin{equation*}
\sup_{x\in\mathbb{S}^2}\|\mathcal{K}_{\leq a}(x,.)\|_{L^r(\mathbb{S}^2)}+\sup_{y\in\mathbb{S}^2}\|\mathcal{K}_{\leq a}(.,y)\|_{L^r(\mathbb{S}^2)}\lesssim 2^{2a(1-1/r)}
\end{equation*}
for any $r\in[1,\infty]$, and the desired $L^p\to L^q$ bounds \eqref{lipa5.6} follow.

(ii) More generally, assume that
\begin{equation*}
T_mf(x)=\int_{\SS^2}f(y)H_m(x,y)\, d\mu(y)=\int_{\SS^2}f(y)G_m(x\cdot y)\, d\mu(y)
\end{equation*}
is a multiplier operator as in Lemma \ref{LiPaLem}. If $V\in\{V_{12}, V_{23}, V_{31}\}$ and $x\in \SS^2$ then
\begin{equation*}
\begin{split}
V_x(T_mf)(x)&=\int_{\SS^2}f(y)G'_m(x\cdot y)(x_iy_j-x_jy_i)\, d\mu(y)=-\int_{\SS^2}f(y)(V_yH_m)(x,y)\, d\mu(y)\\
&=\int_{\SS^2}(Vf)(y)H_m(x,y)\, d\mu(y)=T_m(Vf)(x),
\end{split}
\end{equation*}
using \eqref{vf13}. Thus $V$ commutes with all operators defined by Fourier multipliers.

We prove now the bounds \eqref{lipa77}--\eqref{lipa78}. Notice that the bounds \eqref{lipa78} would in fact follow from \eqref{lipa77}: if $a\geq 1$ then
\begin{equation*}
\begin{split}
\|\mathcal{P}'_af\|_{L^p}&\lesssim 2^{-2a}\|\Delta_{\SS^2}\mathcal{P}'_af\|_{L^p}\\
&\lesssim 2^{-2a}\sum_{V\in\{V_{12}, V_{23}, V_{31}\}}\|V\mathcal{P}'_aVf\|_{L^p}\\
&\lesssim 2^{-a}\sum_{V\in\{V_{12}, V_{23}, V_{31}\}}\|\mathcal{P}'_aVf\|_{L^p}
\end{split}
\end{equation*}
using Lemma \ref{LiPaLem}, the identity \eqref{coo7}, the commutation identity $V\mathcal{P}'_a=\mathcal{P}'_aV$, and the bounds \eqref{lipa77}. This gives the desired estimates \eqref{lipa78}.

It remains to prove the bounds \eqref{lipa77}. Let \(g:=\mathcal{P}'_a f\). Without loss of generality we may assume that $V=V_{12}$. Using the standard $(\theta,\phi)$ coordinates defined in section \ref{coord}, $V=\partial_\phi$, so it suffices to prove that 
\begin{equation}\label{lipa77.5}
\|\partial_\phi g(\theta,.)\|_{L^p[0,2\pi]}\lesssim 2^a\|g(\theta,.)\|_{L^p[0,2\pi]}
\end{equation}
for any $\theta\in[0,\pi]$ and $p\in[1,\infty]$. However, the function $g$ is the restriction to the sphere of a harmonic polynomial in the Euclidean variables of degree $\leq 2^{a+1}$. Thus, for any $\theta$ fixed, the function $\phi\to g(\theta,\phi)$ is a trigonometric polynomial in $\phi$ of degree $\leq 2^{a+1}$. The desired bounds \eqref{lipa77.5} follow from the Bernstein inequality.

\end{proof}

\subsection{Bounds on vector spherical harmonics} We prove first quantitative bounds on the radial-poloidal-toroidal decomposition of vector-fields on the sphere.

\begin{lemma}\label{rptDec0}
(i) Any vector-valued function $X\in H^1(\SS^2:\R^3)$ admits a unique radial-poloidal-toroidal decomposition 
\begin{equation}\label{rptDec1}
\begin{split}
&X=\mathbf{Y}(X)+\mathbf{\Psi}(X)+\mathbf{\Phi}(X),\\
&\mathbf{Y}(X):=(x^iX^i)\vec{\mathbf{r}},\\
&\mathbf{\Psi}(X):={}^t(W_1f_{\mathbf{\Psi}}(X), W_2f_{\mathbf{\Psi}}(X), W_3f_{\mathbf{\Psi}}(X)),\\
&\mathbf{\Phi}(X):={}^t(V_{23}f_{\mathbf{\Phi}}(X), V_{31}f_{\mathbf{\Phi}}(X), V_{12}f_{\mathbf{\Phi}}(X)),
\end{split}
\end{equation}
where $\vec{\mathbf{r}}(x):={}^t(x^1,x^2,x^3)$, $L:=x^1\partial _1+x^2\partial_2+x^3\partial_3$, $W_j:=\partial_j-x^jL$, $j\in\{1,2,3\}$, $V_{12}, V_{23}, V_{31}$ are defined as in \eqref{vf11}, and $f_{\mathbf{\Psi}}(X), f_{\mathbf{\Phi}}(X)\in H^2(\SS^2:\R)$.

(ii) For any $a\in\Z_+$ and $p\in[2,\infty]$ we have
\begin{equation}\label{rptDec2}
\begin{split}
\|\mathcal{P}'_a\mathbf{Y}(X)\|_{L^p}+\|\mathcal{P}'_a\mathbf{\Psi}(X)\|_{L^p}+\|\mathcal{P}'_a\mathbf{\Phi}(X)\|_{L^p}+2^a\|\mathcal{P}'_af_{\mathbf{\Psi}}(X)\|_{L^p}+2^a\|\mathcal{P}'_af_{\mathbf{\Phi}}(X)\|_{L^p}\\
\lesssim \big\|\sum_{a'\in\Z_+,\,|a'-a|\leq 3}\mathcal{P}'_{a'}X\big\|_{L^p}.
\end{split}
\end{equation}
\end{lemma}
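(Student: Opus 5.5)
The plan is to build the decomposition in (i) by a Hodge decomposition of the tangential part of $X$. For (ii), I will express every piece as a frequency-localized operator built from three ingredients: multiplication by the coordinate functions $x^i$, the vector fields $V_{12},V_{23},V_{31}$ (and $W_j$), and the inverse Laplacian $\Delta_{\SS^2}^{-1}$. Each of these interacts with the projections $\mathcal{P}'_a$ in a way the paper already controls.

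\emph{Part (i).} Set $\mathbf{Y}(X)=(x^iX^i)\vec{\mathbf{r}}$ and $T:=X-\mathbf{Y}(X)$, which is tangent to $\SS^2$. On $\SS^2$ the operators $W_j$ give the components of $\nabla_{\SS^2}$. Up to a fixed sign, ${}^t(V_{23}f,V_{31}f,V_{12}f)$ equals $x\wedge\nabla_{\SS^2}f$. Since $\SS^2$ carries no harmonic $1$-forms, $T=\nabla_{\SS^2}f_{\mathbf{\Psi}}+x\wedge\nabla_{\SS^2}f_{\mathbf{\Phi}}$. Here the two potentials are the mean-zero solutions of
\[
\Delta_{\SS^2}f_{\mathbf{\Psi}}=\mathrm{div}_{\SS^2}T,\qquad \Delta_{\SS^2}f_{\mathbf{\Phi}}=\pm\,\mathrm{div}_{\SS^2}(x\wedge T).
\]
Concretely, $f_{\mathbf{\Phi}}=\pm\Delta_{\SS^2}^{-1}(V_{23}X^1+V_{31}X^2+V_{12}X^3)$, and $f_{\mathbf{\Psi}}=\Delta_{\SS^2}^{-1}\big(\sum_jW_jX^j+c\,x^iX^i\big)$ for the appropriate constant $c$. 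Uniqueness follows because $\ker\Delta_{\SS^2}$ consists of constants and the two families are $L^2$-orthogonal (the divergence-free computation in Step 2 of the proof of Proposition \ref{linearSummary}). The $H^2$ regularity of the potentials is standard elliptic regularity applied to $H^1$ data in divergence form.

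\emph{Part (ii).} The key identity is a frequency-shift property. By Lemma \ref{zon20}, $x^iY_k\in\mathcal{H}_{k+1}+\mathcal{H}_{k-1}$. Since $\varphi_a$ is supported where $k\in[2^{a-1},2^{a+1}]$, this gives
\[
\mathcal{P}'_a(x^ig)=\mathcal{P}'_a\big(x^i\widetilde{\mathcal{P}}_ag\big),\qquad \widetilde{\mathcal{P}}_a:=\sum_{|a'-a|\le3}\mathcal{P}'_{a'}.
\]
The same holds with two factors $x^ix^j$, with the shift at most $2$; the small values of $a$ are handled by inspection. The operators $\mathcal{P}'_a$ are uniformly bounded on every $L^p$ by the kernel bound \eqref{lipa33}, and multiplication by $x^i$ is bounded, so the radial term $\mathbf{Y}(X)=(x^iX^i)x$ is controlled directly.

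For the potentials, the tools are as follows.
\begin{itemize}
\item By Corollary \ref{LiPaProj}(ii), $V$ commutes with $\mathcal{P}'_a$ and costs at most $2^a$ (Bernstein).
\item $W_j$ can be written as $\sum\pm x^mV_{mj}$, so it is a combination of $x$-multiplications and $V$'s.
\item On the range of $\widetilde{\mathcal{P}}_a$, the multiplier $-\frac{1}{k(k+1)}$ of $\Delta_{\SS^2}^{-1}$ can be cut off smoothly at scale $2^a$. Lemma \ref{LiPaLem}, applied to $2^{2a}m$, then gives $\|\Delta^{-1}_{\SS^2}\mathcal{P}'_ah\|_{L^p}\lesssim2^{-2a}\|\mathcal{P}'_ah\|_{L^p}$.
\end{itemize}
Combining these, $2^a\|\mathcal{P}'_af_{\mathbf{\Psi},\mathbf{\Phi}}\|_{L^p}\lesssim2^a\cdot2^{-2a}\cdot2^a\|\widetilde{\mathcal{P}}_aX\|_{L^p}$. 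Finally, $\mathbf{\Psi}$ and $\mathbf{\Phi}$ are one further application of $W_j$ or $V$ to $f_{\mathbf{\Psi}}$ or $f_{\mathbf{\Phi}}$. Commuting $\mathcal{P}'_a$ through the $x$-factors via the shift identity, and through $V$ via Corollary \ref{LiPaProj}, gives the same bound.

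The main obstacle is the bookkeeping in these commutations. Each $x$-multiplication shifts degree by one, and this must be tracked so that every projection lands in the window $|a'-a|\le3$. In addition, the $\Delta^{-1}$ multiplier has to be placed where the frequency is genuinely comparable to $2^a$, to avoid the zero mode. I will handle this by first inserting $\widetilde{\mathcal{P}}_a$ at the outset, then using exact identities of the form $\mathcal{P}'_a=\mathcal{P}'_a\widetilde{\mathcal{P}}_a$ on degree-shifted pieces before applying any $L^p$ bound. The restriction $p\ge2$ is not needed by this argument, since all the kernel bounds hold on every $L^p$.
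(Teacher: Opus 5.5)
Your proposal is correct and follows essentially the paper's route. In (i), the potentials are $\Delta_{\SS^2}^{-1}$ of the surface divergence and curl of the tangential part. Your constant is $c=-2$, and your $V$-form of $f_{\mathbf{\Phi}}$ agrees with \eqref{rptDec4} because the radial part has zero curl. In (ii), you use the paper's reduction $\mathcal{P}'_aF(X)=\mathcal{P}'_aF(X_{a;3})$ via Lemma \ref{zon20}, followed by $L^p$-boundedness of the projections, Bernstein, and a localized $\Delta_{\SS^2}^{-1}$. Apply these to the whole composite map on the localized input, as you propose; this needs Bernstein and the localized $\Delta_{\SS^2}^{-1}$ bound for functions with spectrum in a fattened annulus $k\approx 2^a$, which the proofs of \eqref{lipa77} and Lemma \ref{LiPaLem} give directly.

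The only real difference is in (i). You kill the divergence-free, curl-free tangential remainder by citing the absence of harmonic $1$-forms on $\SS^2$, which is the intrinsic argument the paper mentions but sets aside. The paper instead proves it extrinsically: it extends the remainder homogeneously to an annulus, writes it there as a gradient, and uses that a degree-$0$ homogeneous harmonic function is constant.
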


\begin{proof} (i) Notice that the vector-fields $W_1, W_2, W_3$ are tangent to the sphere $\SS^2$ and 
\begin{equation}\label{rptDec3}
W_1=x^3V_{31}-x^2V_{12}, \qquad W_2=x^1V_{12}-x^3V_{23},\qquad W_3=x^2V_{23}-x^1V_{31}.
\end{equation}
Notice also that $[V_{12},V_{23}]=-V_{31}$, $[V_{23},V_{31}]=-V_{12}$, $[V_{31},V_{12}]=-V_{23}$, and $x^1V_{23}+x^2V_{31}+x^3V_{12}=0$. Using these formulas it is easy to see that 
\begin{equation*}
V_{23}W_1+V_{31}W_2+V_{12}W_3=W_1V_{23}+W_2V_{31}+W_3V_{12}=0.
\end{equation*}
Moreover, using the identities \eqref{vf12} we have
\begin{equation*}
W_1=\cos\phi\cos\theta\partial_\theta-\frac{\sin\phi}{\sin\theta}\partial_\phi,\qquad W_2=\sin\phi\cos\theta\partial_\theta+\frac{\cos\phi}{\sin\theta}\partial_\phi,\qquad W_3=-\sin\theta\partial_\theta,
\end{equation*}
and $\Delta_{\SS^2}=W_1^2+W_2^2+W_3^2$.

Using these identities we can find the poloidal and toroidal potentials  $f_{\mathbf{\Psi}}(X), f_{\mathbf{\Phi}}(X)$. We decompose first $X=\mathbf{Y}(X)+\mathbf{Y}_{tan}(X)$ where $\mathbf{Y}_{tan}(X):=X-(x^iX^i)\vec{\mathbf{r}}$ is the tangential component of $X$. Since $\mathbf{Y}_{tan}(X)=\mathbf{\Psi}(X)+\mathbf{\Phi}(X)$, we necessarily have
\begin{equation*}
\begin{split}
&W_1(\mathbf{Y}_{tan}^1(X))+W_2(\mathbf{Y}_{tan}^2(X))+W_3(\mathbf{Y}_{tan}^3(X))=\Delta_{\SS^2}f_{\mathbf{\Psi}}(X),\\
&V_{23}(\mathbf{Y}_{tan}^1(X))+V_{31}(\mathbf{Y}_{tan}^2(X))+V_{12}(\mathbf{Y}_{tan}^3(X))=\Delta_{\SS^2}f_{\mathbf{\Phi}}(X).
\end{split}
\end{equation*}
Moreover, using \eqref{vf13} and \eqref{rptDec4} and letting $h_j:=\mathbf{Y}_{tan}^j(X)$, $j\in\{1,2,3\}$, we have
\begin{equation*}
\int_{\SS^2}(V_{23}h_1+V_{31}h_2+V_{12}h_3)\,d\mu=0
\end{equation*}
and
\begin{equation*}
\begin{split}
&\int_{\SS^2}(W_1h_1+W_2h_2+W_3h_3)\,d\mu\\
&=\int_{\SS^2}[(x^3V_{31}-x^2V_{12})h_1+(x^1V_{12}-x^3V_{23})h_2+(x^2V_{23}-x^1V_{31})h_3]\,d\mu\\
&=\int_{\SS^2}(2x^1h_1+2x^2h_2+2x^3h_3)\,d\mu=0.
\end{split}
\end{equation*}
The last identity follows since $x^1h_1+x^2h_2+x^3h_3=0$, due to the fact that $\mathbf{Y}_{tan}(X)$ is tangent to $\SS^2$. Thus, the potentials $f_{\mathbf{\Psi}}(X), f_{\mathbf{\Phi}}(X)$ can be defined (uniquely up to constants) by
\begin{equation}\label{rptDec4}
\begin{split}
&f_{\mathbf{\Psi}}(X):=\Delta_{\SS^2}^{-1}\big[W_1(\mathbf{Y}_{tan}^1(X))+W_2(\mathbf{Y}_{tan}^2(X))+W_3(\mathbf{Y}_{tan}^3(X))\big],\\
&f_{\mathbf{\Phi}}(X):=\Delta_{\SS^2}^{-1}\big[V_{23}(\mathbf{Y}_{tan}^1(X))+V_{31}(\mathbf{Y}_{tan}^2(X))+V_{12}(\mathbf{Y}_{tan}^3(X))\big].
\end{split}
\end{equation}
Clearly, the definitions show that
\begin{equation}\label{rptDec5}
\|\mathbf{Y}_{tan}(X)\|_{H^1}+\|f_{\mathbf{\Psi}}(X)\|_{H^2}+\|f_{\mathbf{\Phi}}(X)\|_{H^2}\lesssim \|X\|_{H^1}.
\end{equation}

It remains to prove the identity in the first line of \eqref{rptDec1}, which is equivalent to proving that $\mathbf{Y}_{tan}(X)=\mathbf{\Psi}(X)+\mathbf{\Phi}(X)$. Let $\rho:=\mathbf{Y}_{tan}(X)-\mathbf{\Psi}(X)-\mathbf{\Phi}(X)$; the definitions show that
\begin{equation}\label{rptDec7}
\begin{split}
&x^1\rho_1+x^2\rho_2+x^3\rho_3=0,\\
&V_{23}\rho_1+V_{31}\rho_2+V_{12}\rho_3=0,\\
&W_1\rho_1+W_2\rho_2+W_3\rho_3=0,
\end{split}
\end{equation}
on the sphere $\SS^2$, and we have to prove that $\rho\equiv 0$. 

This claim can be proved intrinsically, using covariant div-curl systems on $\SS^2$. We prefer to prove it extrinsically. For this we extend the functions $\rho_j$ to the annulus $B_{1/2,2}:=\{x\in\R^3:\,|x|\in (1/2,2)\}$ as homogeneous functions of degree $-1$, $\widetilde{\rho}_j(x):=|x|^{-1}\rho_j(x/|x|)$. Then we use the first two equations in \eqref{rptDec7} to eliminate the variable $\widetilde{\rho}_3$, i.e.
\begin{equation*}
\widetilde{V}_{12}(x^1\widetilde{\rho}_1+x^2\widetilde{\rho}_2)-x^3(\widetilde{V}_{23}\widetilde{\rho}_1+\widetilde{V}_{31}\widetilde{\rho}_2)=0,
\end{equation*}
where $\widetilde{V}_{12}=x^1\partial_2-x^2\partial_1$, $\widetilde{V}_{23}=x^2\partial_3-x^3\partial_2$, $\widetilde{V}_{31}:=x^3\partial_1-x^1\partial_3$ are the natural extensions of the vector-fields $V_{12}, V_{23}, V_{31}$ to the domain $B_{1/2,2}$. We can also eliminate the $x^3\partial_3$ derivatives, since $(x^1\partial_1+x^2\partial_2+x^3\partial_3)\widetilde{\rho}_j=-\widetilde{\rho}_j$ by homogeneity, $j\in\{1,2,3\}$, so the identity above gives $\partial_2\widetilde{\rho}_1-\partial_1\widetilde{\rho}_2=0$ in $B_{1/2,2}$. Similar calculations show that $\partial_3\widetilde{\rho}_2-\partial_2\widetilde{\rho}_3=0$ and $\partial_1\widetilde{\rho}_3-\partial_3\widetilde{\rho}_1=0$ in $B_{1/2,2}$. Since the domain $B_{1/2,2}$ is simply connected it follows that 
\begin{equation}\label{rptDec8}
\widetilde{\rho}_j=\partial_jp\qquad \text{ for some function }p\in H^2_{loc}(B_{1/2,2}).
\end{equation}

The function $p$ is homogeneous of degree $0$. We substitute \eqref{rptDec8} into the last equation in \eqref{rptDec7}, therefore $\Delta_{\R^3}p=0$ in  $B_{1/2,2}$. Using \eqref{coo4} it follows that $\Delta_{S^2}p=0$, thus $p$ is a constant function, so $\widetilde{\rho}_j\equiv 0$ as a consequence of \eqref{rptDec8}.

(ii) To prove the frequency localized bounds in \eqref{rptDec2} we notice that if $X\in\mathcal{H}_k^3$ for some $k\in\Z_+$ then, as a consequence of Lemma \ref{zon20} and the formulas \eqref{rptDec1}, 
\begin{equation}\label{rptDec10}
\begin{split}
&\mathbf{Y}(X),f_{\mathbf{\Phi}}(X),\mathbf{\Phi}(X)\in\bigcup_{k'\in\Z_+,\,|k'-k|\leq 2}\mathcal{H}_{k'}^3,\\
&f_{\mathbf{\Psi}}(X)\in\bigcup_{k'\in\Z_+,\,|k'-k|\leq 3}\mathcal{H}_{k'}^3,\qquad \mathbf{\Psi}(X)\in\bigcup_{k'\in\Z_+,\,|k'-k|\leq 4}\mathcal{H}_{k'}^3.
\end{split}
\end{equation}
Therefore $\mathcal{P}'_a\mathbf{Y}(X)=\mathcal{P}'_a\mathbf{Y}(X_{a;3})$, where for $m\geq 0$, $X_{a;m}:=\sum_{a'\in\Z_+,\,|a-a'|\leq m}\mathcal{P}'_{a'}(X)$. Thus
\begin{equation*}
\|\mathcal{P}'_a\mathbf{Y}(X)\|_{L^p}\lesssim \|\mathbf{Y}(X_{a;3})\|_{L^p}\lesssim \|X_{a;3}\|_{L^p},
\end{equation*}
for any $a\in\Z_+$ and $p\in [2,\infty]$, as claimed in \eqref{rptDec2}. Similarly $\mathcal{P}'_a f_{\mathbf{\Phi}}(X)=\mathcal{P}'_af_{\mathbf{\Phi}}(X_{a;3})$ and $\mathcal{P}'_a\mathbf{\Phi}(X)=\mathcal{P}'_a\mathbf{\Phi}(X_{a;3})$, so, using \eqref{rptDec4}, \eqref{lipa5.6}, and \eqref{lipa77},
\begin{equation*}
\begin{split}
\|\mathcal{P}'_af_{\mathbf{\Phi}}(X)\|_{L^p}&\lesssim 2^{-2a}\|\mathcal{P}'_a\Delta_{\SS^2} (f_{\mathbf{\Phi}}(X_{a;3}))\|_{L^p}\lesssim 2^{-a}\|\mathbf{Y}_{tan}(X_{a;3})\|_{L^p}\lesssim 2^{-a}\|X_{a;3}\|_{L^p},
\end{split}
\end{equation*}
as desired. Using \eqref{rptDec1}, \eqref{lipa5.6}, and \eqref{lipa77} it follows that $\|\mathcal{P}'_a\mathbf{\Phi}(X)\|_{L^p}\lesssim \|X_{a;3}\|_{L^p}$. The bounds for the function $f_{\mathbf{\Psi}}(X)$ and the vector-field $\mathbf{\Psi}(X)$ follow in the same way, which completes the proof of the lemma.
\end{proof}

We prove now bounds on the linear semigroup of our Peskin system.

\begin{lemma}\label{tAop}
(i) Recall the operator $\mathcal{A}=\mathcal{N}_1$ defined in \eqref{vec31}. Then, for any $p\in[2,\infty]$, $t\in[0,\infty)$, $a\in\Z_+$, and $X\in L^p(\SS^2:\R^3)$
\begin{equation}\label{lipa80}
\begin{split}
\big\|\mathcal{P}'_a(e^{t\mathcal{A}}-I)X\big\|_{L^p}&\lesssim (t2^a)^{1/2}\|X_{a;3}\|_{L^p}\qquad \text{ if }\qquad t2^{a}\lesssim 1,\\
\big\|\mathcal{P}'_ae^{t\mathcal{A}}X\big\|_{L^p}&\lesssim (t2^a)^{-4}\|X_{a;3}\|_{L^p}\qquad \text{ if }\qquad t2^{a}\gtrsim 1,
\end{split}
\end{equation}
where, as before, $X_{a;m}:=\sum_{a'\in\Z_+,\,|a-a'|\leq m}\mathcal{P}'_{a'}(X)$ for any $m\geq 0$.

(ii) Moreover, if $Y\in (\ker \mathcal{N}_1)^\perp$ and $0<\lambda <a_2=8/35$, then
\begin{equation}\label{lipa80_exp}
\begin{split}
\big\|\mathcal{P}'_ae^{t\mathcal{A}}Y\big\|_{L^p}&\lesssim (t2^a)^{-4}e^{-t\lambda }\|Y_{a;3}\|_{L^p}\qquad \text{ if }\qquad t2^{a}\gtrsim 1,
\end{split}
\end{equation}

(iii) In particular, for any $t>0$, $p\in[2,\infty]$ and $X\in L^p(\SS^2:\R^3)$ we have
\begin{equation}\label{lipa85}
\|e^{t\mathcal{A}}X\big\|_{L^p}\lesssim \|X\|_{L^p}.
\end{equation}
\end{lemma}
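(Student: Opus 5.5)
The plan is to reduce everything to scalar spectral multipliers, using the diagonalization of Proposition \ref{linearSummary} together with the radial--poloidal--toroidal decomposition of Lemma \ref{rptDec0}.

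\textbf{Step 1: scalar multipliers.} I would first express each family in terms of the potentials. On $\SS^2$, $\mathbf{H}^1_{k+1}$ and $\mathbf{H}^2_{k-1}$ built from $h_k$ are combinations of $x h_k$ and $\nabla_{\SS^2}h_k$, and $\mathbf{H}^3_k=x\wedge\nabla_{\SS^2}h_k$. So a vector field $X$ with radial scalar $x\cdot X$, poloidal potential $f_{\mathbf{\Psi}}$ and toroidal potential $f_{\mathbf{\Phi}}$ has the following description at each degree $k$.
\begin{itemize}
\item The toroidal part evolves under $e^{-tc_k}$.
\item The pair (radial degree-$k$ component, poloidal degree-$k$ component) evolves under a $2\times2$ matrix. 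This matrix is diagonal in the $\mathbf{H}^1,\mathbf{H}^2$ basis with eigenvalues $a_k$ and $b_k$.
\end{itemize}
Writing the change of basis explicitly, $e^{t\mathcal{A}}$ acts on $(x\cdot X,\ f_{\mathbf{\Psi}},\ f_{\mathbf{\Phi}})$ as a matrix of multipliers. Each entry has the form $\sum_j \beta_j(k)e^{-t\lambda_j(k)}$. The coefficients $\beta_j(k)$ are rational functions of $k$ that are bounded, with symbol-type derivative bounds, and the $k$-powers are matched so that $f_{\mathbf{\Psi}}$ carries the scaling $2^{-a}$. The eigenvalues $a_k,b_k,c_k$ are rational, asymptotic to $k/2$, and satisfy $|\partial_k^p\lambda(k)|\lesssim k^{1-p}$.

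\textbf{Step 2: dyadic kernel estimates.} Fix $a$. Let $\tilde\varphi$ be a fattened cutoff, and apply Lemma \ref{LiPaLem} to the multiplier $m(k)=\tilde\varphi_a(k)\beta(k)(e^{-t\lambda(k)}-1)$ for the first bound, and to $m(k)=\tilde\varphi_a(k)\beta(k)e^{-t\lambda(k)}$ for the second. This requires controlling $2^{pa}|\partial^p m|$ for $p\le6$.
\begin{itemize}
\item When $t2^a\lesssim1$, the factor $e^{-t\lambda}-1$ is $O(t2^a)$ with derivatives scaling correctly. After normalization, the kernel is $\lesssim (t2^a)\,2^{2a}(1+2^a|x-y|)^{-3}$, which is integrable with $L^1$ norm $\lesssim t2^a\le(t2^a)^{1/2}$.
\item When $t2^a\gtrsim1$, each derivative of $e^{-t\lambda(k)}$ costs $t\lesssim (t2^a)\,2^{-a}$. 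This is compensated by $e^{-ct2^a}$, giving a bound $\lesssim_N (t2^a)^{-N}$, in particular $(t2^a)^{-4}$.
\end{itemize}
Young's inequality then bounds each operator $L^p\to L^p$. Composing with Lemma \ref{rptDec0}(ii), the map from $X$ to potentials at scale $a$ is bounded by $\|X_{a;3}\|_{L^p}$ for $p\ge2$. Reconstructing $\mathbf{\Psi},\mathbf{\Phi}$ costs $2^a$, via \eqref{lipa77} and the frequency-shift bookkeeping \eqref{rptDec10}. Together these give \eqref{lipa80}.

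\textbf{Step 3: parts (ii) and (iii).} For (ii), on $(\ker\mathcal{N}_1)^\perp$ only modes with nonzero eigenvalue survive, and all of these satisfy $\lambda(k)\ge a_2=8/35$. I would split $e^{-t\lambda(k)}=e^{-t\lambda_0}e^{-t(\lambda(k)-\lambda_0)}$ with $\lambda_0\in(\lambda,a_2)$ and rerun Step 2 on the second factor. For low $a$ (finitely many modes) the $(t2^a)^{-4}$ factor is absorbed, using $e^{-t(a_2-\lambda)}\lesssim_\lambda t^{-4}$. For (iii), sum the dyadic pieces: the low frequencies $t2^a\lesssim1$ are handled via (i), writing $e^{t\mathcal{A}}=I+(e^{t\mathcal{A}}-I)$ in aggregate through a single multiplier $\varphi_{\le a_t}(k)e^{-t\lambda(k)}$ treated by Lemma \ref{LiPaLem}. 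The high frequencies are summable because $\sum_{t2^a\ge1}(t2^a)^{-4}<\infty$.

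\textbf{Main obstacle.} The hardest step is Step 1's change of basis. Radial and poloidal components mix the $\mathbf{H}^1,\mathbf{H}^2$ modes with $k$-dependent coefficients, and at degree $k$ they come from $h_{k\pm1}$. I must verify that the resulting entries, after conjugation, are smooth symbols in $k$ satisfying \eqref{lipa4.1} uniformly, with no singular denominators such as $2k-1$ at small $k$. If this bookkeeping proves unwieldy, I would instead use the explicit action of $\mathcal{A}$ from \eqref{vec20} and write $e^{t\mathcal{A}}$ via scalar multipliers of $\Delta_{\SS^2}$ composed with multiplication by $x^i$ and $\partial_i$ of the harmonic extension.
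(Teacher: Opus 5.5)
Your plan follows the paper's proof:
\begin{itemize}
\item the radial--poloidal--toroidal decomposition of Lemma \ref{rptDec0};
\item explicit multipliers acting on the potentials (the paper's $m_1,\dots,m_5$ in \eqref{tete13});
\item dyadic normalization of the localized symbols so that Lemma \ref{LiPaLem} applies (Remark \ref{rem_lem61});
\item \eqref{rptDec2} to return to $X_{a;3}$.
\end{itemize}
The obstacle you anticipate does not occur. We have $x h_k=(\mathbf{H}^1_{k+1}+\mathbf{H}^2_{k-1})/(2k+1)$ and $\nabla_{\SS^2}h_k=(-k\mathbf{H}^1_{k+1}+(k+1)\mathbf{H}^2_{k-1})/(2k+1)$, with both $\mathbf{H}$'s built from the same $h_k$. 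So the $2\times 2$ radial--poloidal blocks never mix degrees, and the only denominators are $2k+1$. The pole of $b_k$ at $k=1/2$ is removed by modifying the symbol off the integers. A minor correction: $a_k,b_k\sim k/4$, and only $c_k\sim k/2$.

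There is, however, a step that fails: the large-time half of Step 2. The claim that each $k$-derivative costs $t$ and is compensated by $e^{-ct2^a}$ requires $\lambda(k)\gtrsim 2^a$ on the whole support of the localized symbol. This fails on the lowest shells, because $a_0=a_1=b_1=c_1=0$. There the second line of \eqref{lipa80} is actually false for large $t$, so no argument can prove it.
\begin{itemize}
\item At $a=0$, take $X=\vec{\mathbf r}=\mathbf{H}^1_{1,0}$. Then $e^{t\mathcal{A}}X=X$ and $\mathcal{P}'_0X=X=X_{0;3}$. The left side equals $\|X\|_{L^p}$, while the right side is $\approx t^{-4}\|X\|_{L^p}\to0$.
\item At $a=1$, the same happens for $X=\mathbf{H}^1_{2,m}$, since $\varphi_1(2)=1$.
\end{itemize}
The paper's own bound \eqref{tete17} has the same defect at $l=0$: the symbol $\varphi_{\le 0}(k)m_1(k,t)$ equals $1$ at $k=0$ for every $t$.

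What you can prove is the second line in two ranges: for $t\lesssim 1$ and all $a$, which is the range used in the local theory; or for $a\ge 2$ and all $t$. The shells $a\ge2$ only involve potentials of degree $\ge 2$, where $\lambda(k)\gtrsim k$, so choose your fattened cutoffs to avoid $k\le 1$. For $a\in\{0,1\}$ you only get the $t$-uniform bound $\|\mathcal{P}'_ae^{t\mathcal{A}}X\|_{L^p}\lesssim\|X_{a;3}\|_{L^p}$. This holds because finitely many degrees are involved and all exponents are $\ge 0$.

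Consequently, in (iii) for $t\gtrsim 1$ you may not sum $(t2^a)^{-4}$ over all $a\ge 0$. Handle $a\in\{0,1\}$ with the uniform bound instead; the conclusion of (iii) remains true. Your part (ii) is fine as written. After projecting out the kernel, the potentials have degree $\ge 2$, and absorbing $t^{-4}$ into $e^{-t(a_2-\lambda)}$ correctly handles exactly these low shells.
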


\begin{proof} Notice first that the operators $\mathcal{A}$ and $e^{t\mathcal{A}}$ are not defined by spectral multipliers, since different parts of the space of spherical harmonics of degree $k$ are multiplied by different multipliers depending on $k$, as given by \eqref{vec31}, so Lemma \eqref{LiPaLem} cannot be applied directly. 

We proceed using the radial-poloidal-toroidal decomposition in Lemma \ref{rptDec0}. For any vector-valued function $X\in H^1(\SS^2:\R^3)$ we decompose it as in Lemma \ref{rptDec0},
\begin{equation}\label{tete1}
\begin{split}
&X=\mathbf{Y}(X)+\mathbf{\Psi}(X)+\mathbf{\Phi}(X),\\
&\mathbf{Y}(X):=f_{\mathbf{Y}}(X)\cdot\vec{\mathbf{r}},\\
&\mathbf{\Psi}(X):={}^t(W_1f_{\mathbf{\Psi}}(X), W_2f_{\mathbf{\Psi}}(X), W_3f_{\mathbf{\Psi}}(X)),\\
&\mathbf{\Phi}(X):={}^t(V_{23}f_{\mathbf{\Phi}}(X), V_{31}f_{\mathbf{\Phi}}(X), V_{12}f_{\mathbf{\Phi}}(X)),
\end{split}
\end{equation}
 where $f_{\mathbf{\Psi}}(X), f_{\mathbf{\Phi}}(X)\in H^2(\SS^2:\R)$ and $f_{\mathbf{Y}}(X):=x^iX^i\in H^1(\SS^2:\R)$. We would like to understand the action of the operators $\mathcal{A}$ and $e^{t\mathcal{A}}$ on the components $\mathbf{Y}(X)$, $\mathbf{\Psi}(X)$, $\mathbf{\Phi}(X)$. 
 \medskip

{\bf{Step 1.}} More generally, assume that $h\in H^1(\SS^2:\R)$, let $h=\sum_{k\in\Z}h_k$, $k\in\Z_+$, $h_k\in\mathcal{H}_k$, denote its decomposition in spherical harmonics, and let $\widetilde{h}_k\in\mathcal{A}_k$ denote the homogeneous extension of $h_k$. For $k\in\Z$ let
\begin{equation}\label{tete2}
\mathbf{Y}_{h_k}:=h_k\cdot\vec{\mathbf{r}}, \qquad \mathbf{\Psi}_{h_k}:={}^t(W_1h_k, W_2h_k, W_3h_k), \qquad \mathbf{\Phi}_{h_k}:={}^t(V_{23}h_k, V_{31}h_k, V_{12}h_k).
\end{equation}
Using the definitions \eqref{vec2} we have
\begin{equation}\label{tete3}
\begin{split}
&\mathbf{Y}_{h_k}=\frac{1}{2k+1}\mathbf{H}^1_{k+1}+\frac{1}{2k+1}\mathbf{H}^2_{k-1},\\
&\mathbf{\Psi}_{h_k}=\frac{-k}{2k+1}\mathbf{H}^1_{k+1}+\frac{k+1}{2k+1}\mathbf{H}^2_{k-1},\\
&\mathbf{\Phi}_{h_k}=\mathbf{H}^3_k.
\end{split}
\end{equation}
Therefore, using \eqref{vec31} and recalling \eqref{tete4},
we have
\begin{equation}\label{tete5}
\begin{split}
e^{t\mathcal{A}}(\mathbf{Y}_{h_k})&=\frac{1}{2k+1}e^{-ta_k}\H_{k+1}^{1}+\frac{1}{2k+1}e^{-tb_k}\mathbf{H}^2_{k-1}\\
&=\frac{e^{-ta_k}}{2k+1}[(k+1)\mathbf{Y}_{h_k}-\mathbf{\Psi}_{h_k}]+\frac{e^{-tb_k}}{2k+1}[k\mathbf{Y}_{h_k}+\mathbf{\Psi}_{h_k}]\\
&=\frac{(k+1)e^{-ta_k}+ke^{-tb_k}}{2k+1}\mathbf{Y}_{h_k}+\frac{-e^{-ta_k}+e^{-tb_k}}{2k+1}\mathbf{\Psi}_{h_k},
\end{split}
\end{equation}
\begin{equation}\label{tete6}
\begin{split}
e^{t\mathcal{A}}(\mathbf{\Psi}_{h_k})&=\frac{-k}{2k+1}e^{-ta_k}\H_{k+1}^{1}+\frac{k+1}{2k+1}e^{-tb_k}\mathbf{H}^2_{k-1}\\
&=\frac{-ke^{-ta_k}}{2k+1}[(k+1)\mathbf{Y}_{h_k}-\mathbf{\Psi}_{h_k}]+\frac{(k+1)e^{-tb_k}}{2k+1}[k\mathbf{Y}_{h_k}+\mathbf{\Psi}_{h_k}]\\
&=\frac{k(k+1)(-e^{-ta_k}+e^{-tb_k})}{2k+1}\mathbf{Y}_{h_k}+\frac{ke^{-ta_k}+(k+1)e^{-tb_k}}{2k+1}\mathbf{\Psi}_{h_k},
\end{split}
\end{equation}
\begin{equation}\label{tete7}
e^{t\mathcal{A}}(\mathbf{\Phi}_{h_k})=e^{-tc_k}\H_k^3=e^{-tc_k}\mathbf{\Phi}_{h_k}.
\end{equation}

{\bf{Step 2.}} We are now ready to prove the bounds \eqref{lipa80}. Assume $X\in H^1(\SS^2:\R^3)$ is given and decompose it as in \eqref{tete1}, 
\begin{equation*}
X=f_{\mathbf{Y}}\cdot\vec{\mathbf{r}}+{}^t(W_1f_{\mathbf{\Psi}}, W_2f_{\mathbf{\Psi}}, W_3f_{\mathbf{\Psi}})+{}^t(V_{23}f_{\mathbf{\Phi}}, V_{31}f_{\mathbf{\Phi}}, V_{12}f_{\mathbf{\Phi}})
\end{equation*}
where $f_{\mathbf{Y}}=f_{\mathbf{Y}}(X)\in H^1(\SS^2:\R)$, $f_{\mathbf{\Psi}}=f_{\mathbf{\Psi}}(X)\in H^2(\SS^2:\R)$, $f_{\mathbf{\Phi}}=f_{\mathbf{\Phi}}(X)\in H^2(\SS^2:\R)$. Then
\begin{equation}\label{tete10}
e^{t\mathcal{A}}(X)=g_{\mathbf{Y}}\cdot\vec{\mathbf{r}}+{}^t(W_1g_{\mathbf{\Psi}}, W_2g_{\mathbf{\Psi}}, W_3g_{\mathbf{\Psi}})+{}^t(V_{23}g_{\mathbf{\Phi}}, V_{31}g_{\mathbf{\Phi}}, V_{12}g_{\mathbf{\Phi}}),
\end{equation}
where the functions $g_{\mathbf{Y}}=g_{\mathbf{Y}}(X), g_{\mathbf{\Psi}}=g_{\mathbf{\Psi}}(X), g_{\mathbf{\Phi}}=g_{\mathbf{\Phi}}(X)$ are defined by linear operators,
\begin{equation}\label{tete11}
\begin{split}
&g_{\mathbf{Y}}=T_1(f_{\mathbf{Y}})+T_2(f_{\mathbf{\Psi}}),\qquad g_{\mathbf{\Psi}}=T_3(f_{\mathbf{Y}})+T_4(f_{\mathbf{\Psi}}),\qquad g_{\mathbf{\Phi}}=T_5(f_{\mathbf{\Phi}}).
\end{split}
\end{equation} 
In view of the identities \eqref{tete5}--\eqref{tete7}, the linear operators $T_1, T_2, T_3, T_4, T_5$ are defined by Fourier multipliers. More precisely,
\begin{equation}\label{tete12}
T_j\Big(\sum_{k\in\Z_+}h_k\Big)=\sum_{k\in\Z_+}m_j(k,t)h_k,\qquad j\in\{1,2,3,4,5\}
\end{equation}
where $h_k\in\mathcal{H}_k$ are spherical harmonics of degree $k$, and the multipliers $m_j$ are given by
\begin{equation}\label{tete13}
\begin{split}
&m_1(k,t):=\frac{(k+1)e^{-ta_k}+ke^{-tb_k}}{2k+1},\qquad m_2(k,t):=\frac{k(k+1)(e^{-tb_k}-e^{-ta_k})}{2k+1},\\
&m_3(k,t):=\frac{e^{-tb_k}-e^{-ta_k}}{2k+1},\qquad m_4(k,t):=\frac{ke^{-ta_k}+(k+1)e^{-tb_k}}{2k+1},\\
&m_5(k,t):=e^{-tc_k}.
\end{split}
\end{equation}
The point is that the operators $T_j$ are defined by spectral multipliers, and we can use Lemma \ref{LiPaLem} to bound these operators on $L^p$ spaces (see Remark \ref{rem_lem61}). Indeed, recalling the definitions \eqref{tete4}, it follows from Lemma \ref{LiPaLem} that if $l\in\Z_+$ satisfies $t2^l\lesssim 1$ then 
\begin{equation}\label{tete16}
\begin{split}
&\|\mathcal{P}'_l(T_1-I)\|_{L^p\to L^p}+\|\mathcal{P}'_l(T_4-I)\|_{L^p\to L^p}+\|\mathcal{P}'_l(T_5-I)\|_{L^p\to L^p}\lesssim t2^l,\\
&\|\mathcal{P}'_lT_2\|_{L^p\to L^p}\lesssim t2^{2l}, \qquad \|\mathcal{P}'_lT_3\|_{L^p\to L^p}\lesssim t.
\end{split}
\end{equation}
Moreover, if $l\in\Z_+$ satisfies $t2^l\gtrsim 1$ then 
\begin{equation}\label{tete17}
\begin{split}
&\|\mathcal{P}'_lT_1\|_{L^p\to L^p}+\|\mathcal{P}'_lT_4\|_{L^p\to L^p}+\|\mathcal{P}'_lT_5\|_{L^p\to L^p}\lesssim (t2^l)^{-5},\\
&\|\mathcal{P}'_lT_2\|_{L^p\to L^p}\lesssim 2^l(t2^l)^{-5}, \qquad \|\mathcal{P}'_lT_3\|_{L^p\to L^p}\lesssim 2^{-l}(t2^l)^{-5}.
\end{split}
\end{equation}

Assume now that $p\in [2,\infty]$, $a\in\Z_+$, $t2^a\lesssim 1$, and $X\in L^p(\SS^2:\R^3)$. Then, using \eqref{tete10}, \eqref{tete11}, and \eqref{tete16}, and \eqref{rptDec2},
\begin{equation*}
\begin{split}
\|\mathcal{P}'_a(e^{t\mathcal{A}}-I)X\|_{L^p}&\lesssim \sum_{l\in\Z_+,\,|l-a|\leq 1}\Big[\|\mathcal{P}'_l(g_{\mathbf{Y}}-f_{\mathbf{Y}})\|_{L^p}+2^l\|\mathcal{P}'_l(g_{\mathbf{\Psi}}-f_{\mathbf{\Psi}})\|_{L^p}+2^l\|\mathcal{P}'_l(g_{\mathbf{\Phi}}-f_{\mathbf{\Phi}})\|_{L^p}\Big]\\
&\lesssim \sum_{l\in\Z_+,\,|l-a|\leq 1}\Big[\|\mathcal{P}'_l(T_1-I)f_{\mathbf{Y}})\|_{L^p}+\|\mathcal{P}'_lT_2f_{\mathbf{\Psi}}\|_{L^p}+2^l\|\mathcal{P}'_lT_3f_{\mathbf{Y}})\|_{L^p}\\
&\qquad\qquad\qquad+2^l\|\mathcal{P}'_l(T_4-I)f_{\mathbf{\Psi}})\|_{L^p}+2^l\|\mathcal{P}'_l(T_5-I)f_{\mathbf{\Phi}})\|_{L^p}\Big]\\
&\lesssim \sum_{l\in\Z_+,\,|l-a|\leq 2}\Big[t2^l\|\mathcal{P}'_lf_{\mathbf{Y}}\|_{L^p}+t2^{2l}\|\mathcal{P}'_lf_{\mathbf{\Psi}}\|_{L^p}+t2^{2l}\|\mathcal{P}'_lf_{\mathbf{\Phi}}\|_{L^p}\Big]\\
&\lesssim t2^a\|X\|_{L^p}.
\end{split}
\end{equation*}
Similarly, using \eqref{tete10}, \eqref{tete11}, \eqref{tete17}, and \eqref{rptDec2}, if $t2^a\gtrsim 1$ then
\begin{equation*}
\begin{split}
\|\mathcal{P}'_ae^{t\mathcal{A}}X\|_{L^p}&\lesssim \sum_{l\in\Z_+,\,|l-a|\leq 1}\Big[\|\mathcal{P}'_lg_{\mathbf{Y}}\|_{L^p}+2^l\|\mathcal{P}'_lg_{\mathbf{\Psi}}\|_{L^p}+2^l\|\mathcal{P}'_lg_{\mathbf{\Phi}}\|_{L^p}\Big]\\
&\lesssim \sum_{l\in\Z_+,\,|l-a|\leq 1}\Big[\|\mathcal{P}'_lT_1f_{\mathbf{Y}})\|_{L^p}+\|\mathcal{P}'_lT_2f_{\mathbf{\Psi}}\|_{L^p}+2^l\|\mathcal{P}'_lT_3f_{\mathbf{Y}})\|_{L^p}\\
&\qquad\qquad\qquad+2^l\|\mathcal{P}'_lT_4f_{\mathbf{\Psi}})\|_{L^p}+2^l\|\mathcal{P}'_lT_5f_{\mathbf{\Phi}})\|_{L^p}\Big]\\
&\lesssim \sum_{l\in\Z_+,\,|l-a|\leq 2}\Big[(t2^l)^{-5}\|\mathcal{P}'_lf_{\mathbf{Y}}\|_{L^p}+2^l(t2^l)^{-5}\|\mathcal{P}'_lf_{\mathbf{\Psi}}\|_{L^p}+2^l(t2^{l})^{-5}\|\mathcal{P}'_lf_{\mathbf{\Phi}}\|_{L^p}\Big]\\
&\lesssim (t2^a)^{-5}\|X\|_{L^p}.
\end{split}
\end{equation*}
The desired bounds \eqref{lipa80} follow from these last two inequalities and the observation that $\mathcal{P}'_ae^{t\mathcal{A}}X=\mathcal{P}'_ae^{t\mathcal{A}}X_{a;3}$ for any $a\in\Z_+$. This completes the proof of part (i).

(ii) For $Y\in (\ker \mathcal{N}_1)^\perp$, the values $k=0,1$ are excluded. Therefore,  \eqref{tete13} and Lemma \ref{LiPaLem} give that, 
if $l\in\Z_+$ satisfies $t2^l\gtrsim 1$, then 
\begin{equation}\label{tete17_exp}
\begin{split}
&\|\mathcal{P}'_lT_1\|_{L^p\to L^p}+\|\mathcal{P}'_lT_4\|_{L^p\to L^p}+\|\mathcal{P}'_lT_5\|_{L^p\to L^p}\lesssim (t2^l)^{-5}e^{-\lambda t},\\
&\|\mathcal{P}'_lT_2\|_{L^p\to L^p}\lesssim 2^l(t2^l)^{-5}e^{-\lambda t}, \qquad \|\mathcal{P}'_lT_3\|_{L^p\to L^p}\lesssim 2^{-l}(t2^l)^{-5}e^{-\lambda t},
\end{split}
\end{equation}
and the conclusion follows as in (i).

(iii) We fix an integer $b_t$ such that $2^{b_t}t\in[1/2,2]$. Then we estimate, using \eqref{lipa80} and \eqref{lipa5.6} and the convention $\mathcal{P}'_{\leq b_t}=0$ if $b_t\leq -1$,
\begin{equation*}
\begin{split}
\|e^{t\mathcal{A}}X\|_{L^p}&\leq \|\mathcal{P}'_{\leq b_t}(e^{t\mathcal{A}}-I)X\|_{L^p}+\|\mathcal{P}'_{\leq b_t}X\|_{L^p}+\sum_{b\in\Z_+,\,b>b_t}\|\mathcal{P}'_be^{t\mathcal{A}}X\|_{L^p}\\
&\lesssim \|X\|_{L^p}+\sum_{b\in\Z_+,\,b\leq b_t}\|\mathcal{P}'_b(e^{t\mathcal{A}}-I)X\|_{L^p}+\sum_{b\in\Z_+,\,b>b_t}\|\mathcal{P}'_be^{t\mathcal{A}}X\|_{L^p}\\
&\lesssim \|X\|_{L^p}+\sum_{b\in\Z_+,\,b\leq b_t}(t2^b)^{1/2}\|X\|_{L^p}+\sum_{b\in\Z_+,\,b>b_t}(t2^b)^{-4}\|X\|_{L^p}\\
&\lesssim \|X\|_{L^p},
\end{split}
\end{equation*}
as desired.
\end{proof}

\begin{remark}\label{rem_lem61}
To apply the uniform bounds of Lemma \ref{LiPaLem} to the spectral operators $T_j$, whose multipliers $m_j(k,t)$ do not satisfy the order-zero condition \eqref{lipa4.1}, we employ the following procedure.
For a fixed $l \in \mathbb{Z}_+$ and operator index $j$, we define the (localized) multiplier
\begin{equation}
M_{j,l}(k) := \varphi_l(k) m_j(k,t).
\end{equation}
By the compact support of $\varphi_l$, $M_{j,l}$ vanishes outside the dyadic annulus $k \sim 2^l$. We extract the maximal amplitude $A_{j,l}$ of this composite multiplier by bounding its derivatives up to order $6$
\begin{equation}
A_{j,l} := \max_{0 \le p \le 6} \left( \sup_{k \in \mathbb{R}}  2^{pl} |\partial_k^p M_{j,l}(k)| \right).
\end{equation}
We then define the normalized multiplier $\widetilde{m}_{j,l}(k) := A_{j,l}^{-1} M_{j,l}(k)$. By construction, $\widetilde{m}_{j,l}$ is compactly supported in $[-2^{l+1}, 2^{l+1}]$ and satisfies the hypothesis \eqref{lipa4.1} of Lemma \ref{LiPaLem}
\begin{equation}
2^{pl}|\partial_k^p \widetilde{m}{j,l}(k)| \leq 1 \qquad \text{for all } p \in {0, \dots, 6}.
\end{equation}
Consequently, Lemma \ref{LiPaLem} implies that the physical-space kernel associated with $\widetilde{m}{j,l}$ is uniformly bounded in $L^1(\mathbb{S}^2)$, yielding the uniform operator bound on $L^p(\mathbb{S}^2)$
\begin{equation}
\|T_{\widetilde{m}} Y\|_{L^p} \lesssim \|Y\|_{L^p}.
\end{equation}
By the linearity of the spectral operators, $\mathcal{P}'_l T_j = A_{j,l} T_{\widetilde{m}}$. Multiplying the uniform bound by the extracted amplitude yields 
\begin{equation}
\|\mathcal{P}'_l T_j\|_{L^p \to L^p} \lesssim A_{j,l}.
\end{equation}
    
\end{remark}

\subsubsection{Local coordinates} It is useful sometimes to be able to work in local coordinates on the sphere. Precisely, we fix two smooth coordinate charts $\psi_1:B_1\to U_1$ and $\psi_2:B_1\to U_2$, where $B_1:=\{x\in\R^2:\,|x|<1\}$, $U_1:=\{x\in\SS^2:\,x_3>-1/2\}$, and $U_2:=\{x\in\SS^2:\,x_3<1/2\}$, and an associated smooth partition of unity 
\begin{equation}\label{Pes50}
\begin{split}
&\phi_1,\phi_2:\SS^2\to[0,1],\qquad \phi_1+\phi_2\equiv 1\,\,\text{ on }\,\,\SS^2,\\
&\mathrm{supp}\,\phi_1\subseteq U'_1:=\{x\in\SS^2:\,x_3>-1/4\},\quad \mathrm{supp}\,\phi_2\subseteq U'_2:=\{x\in\SS^2:\,x_3<1/4\}.
\end{split}
\end{equation}
Given any function $f:\SS^2\to\C$ we can decompose it and write it in local coordinates
\begin{equation}\label{Pes51}
\begin{split}
&f=f_1+f_2, \qquad f_a(x):=f(x)\phi_a(x),\,a\in\{1,2\},\\
&\widetilde{f}_a(x):=f_a(\psi_a(x)),\qquad \widetilde{f}_a:B_1\to\C.
\end{split}
\end{equation}


\section{Numerical verification}\label{sec:numerical}

The numerical computation is performed with a curved quadratic triangulation
$\Gamma_h$ of the reference sphere.  On each element $T$, with quadratic
basis $\{\varphi_\alpha\}_{\alpha=1}^6$, we represent the surface as
\[
  \mathbf X_h|_T
  =
  \sum_{\alpha=1}^6 \mathbf X_{T,\alpha}\varphi_\alpha .
\]
The topology and quadrature remain fixed on the reference mesh.  In
particular, the elastic force is assembled from the reference
Laplace--Beltrami operator: if
\[
  M^T_{\alpha\beta}
  =
  \int_T\varphi_\alpha\varphi_\beta\,d\mu_h,
  \qquad
  K^T_{\alpha\beta}
  =
  \int_T
  \nabla_{\SS^2}\varphi_\alpha\cdot\nabla_{\SS^2}\varphi_\beta\,d\mu_h ,
\]
then $L_h f_h$, the discrete approximation of $\Delta_{\SS^2} f$, is
defined by $M_h L_h f_h=-K_h f_h$.  At a reference degree of freedom
$\widehat{\boldsymbol\alpha}_m$, the velocity is then evaluated by the
reference-measure single-layer formula
\begin{equation}\label{eq:verification-discrete-velocity}
  U_h^i(\widehat{\boldsymbol\alpha}_m)
  =
  \sum_T
  \int_T
  G_{ij}(\mathbf X_h(\widehat{\boldsymbol\alpha}_m)-\mathbf X_h(p))
  (L_h\mathbf X_h)^j(p)\,d\mu_h(p).
\end{equation}
Thus the source measure and the elastic force are tied to the reference
sphere, while the Stokeslet kernel sees the current deformed surface.  This is
also the reference sphere used for the structural projection below.

For $\mathbf q\in\R^6$, let $\Lambda_{\mathbf q}$ and $\mathcal G_{\mathbf q}$ as in \eqref{eq:ExpMap} and define
\[
  \boldsymbol{\mathcal S}_{\mathbf q}(\boldsymbol\alpha)
  =
  \frac{\mathbb P_3\Lambda_{\mathbf q}\widetilde{\boldsymbol\alpha}}
       {\mathbf{e}_0^\top\Lambda_{\mathbf q}\widetilde{\boldsymbol\alpha}},
  \qquad
  \widetilde{\boldsymbol\alpha}=(1,\boldsymbol\alpha^\top)^\top ,
  \quad
  \mathbb P_3=(\mathbf 0_3\;\mathbb I_3),
  \quad
  \mathbf{e}_0=(1,\mathbf 0_3^\top)^\top .
\]
Writing
$\mathbf v(\mathbf q)=(q_1,q_2,q_3)^\top$, the discrete structural
coordinates are defined by
\begin{equation}\label{eq:verification-structural-decomposition}
  \mathbf X_h(\boldsymbol\alpha,t)
  =
  \mathbf Z_{\rho_h}(\boldsymbol{\mathcal S}_{\mathbf q_h(t)}
     (\boldsymbol\alpha),t)
  -\frac{2}{3}\mathbf v(\mathbf q_h(t))
  +\mathbf a_h(t),
  \qquad
  \mathbf Z_{\rho_h}(\boldsymbol\beta,t)
  =
  (1+\rho_h(t))\boldsymbol\beta+\mathbf Y_h(\boldsymbol\beta,t).
\end{equation}
Thus
\[
  \mathbf p_h(t)=(\rho_h(t),\mathbf a_h(t),\mathbf q_h(t))
  \in \R\times\R^3\times\R^6
\]
is the finite-dimensional coordinate vector, and
$\mathbf Y_h$ is the perturbation field in the conformal variable
$\boldsymbol\beta$.

It remains to specify how the parameters are extracted from a given numerical 
surface.  For fixed $t$, they are obtained by the $L^2(\SS^2)$ projection
onto the structural manifold:
\begin{equation}\label{eq:verification-continuum-optimization}
  \mathbf p_h(t)
  =
  \operatorname*{argmin}_{\rho>-1,\,\mathbf a\in\R^3,\,\mathbf q\in\R^6}
  \frac12
  \int_{\SS^2}
  \left|
  \mathbf X_h(\boldsymbol{\mathcal S}_{\mathbf q}^{-1}
       (\boldsymbol\beta),t)
  +\frac{2}{3}\mathbf v(\mathbf q)-\mathbf a
  -(1+\rho)\boldsymbol\beta
  \right|^2
  d\mu(\boldsymbol\beta).
\end{equation}
Equivalently, pulling this objective back to the reference variable
$\boldsymbol\alpha$ gives the Jacobian
\[
  J_{\mathbf q}(\boldsymbol\alpha)
  =
  \left(\mathbf{e}_0^\top\Lambda_{\mathbf q}
        \widetilde{\boldsymbol\alpha}\right)^{-2}.
\]
If $\hat{\boldsymbol\alpha}_{T,\alpha}$ denotes a reference degree of freedom and
$\{\varphi_\alpha\}$ is the local quadratic basis, the discrete objective is
\begin{equation}\label{eq:verification-structural-optimization}
  \frac12
  \sum_T\sum_{\alpha,\beta=1}^6
  \overline M^T_{\alpha\beta}(\mathbf q)\,
  \eta_{T,\alpha}(\rho,\mathbf a,\mathbf q,t)\cdot
  \eta_{T,\beta}(\rho,\mathbf a,\mathbf q,t),
\end{equation}
where
\[
  \eta_{T,\alpha}
  =
  \mathbf X_{T,\alpha}(t)
  +\frac{2}{3}\mathbf v(\mathbf q)
  -\mathbf a
  -(1+\rho)\boldsymbol{\mathcal S}_{\mathbf q}
     (\hat{\boldsymbol\alpha}_{T,\alpha}),
  \qquad
  \overline M^T_{\alpha\beta}(\mathbf q)
  =
  \int_T
  \varphi_\alpha\varphi_\beta
  J_{\mathbf q}\,d\mu_h .
\]
This is the finite-dimensional analogue of imposing the
$L^2(\SS^2)$-orthogonality conditions against the neutral modes. We solve this optimization problem numerically with the Gauss-Newton method. 
Once $\mathbf p_h(t)$ has been found, the deformation is defined as the residual in
\eqref{eq:verification-structural-decomposition}:
\begin{equation}\label{eq:verification-Y}
  \mathbf Y_h(\boldsymbol\beta,t)
  =
  \mathbf X_h(\boldsymbol{\mathcal S}_{\mathbf q_h(t)}^{-1}
      (\boldsymbol\beta),t)
  +\frac{2}{3}\mathbf v(\mathbf q_h(t))-\mathbf a_h(t)
  -(1+\rho_h(t))\boldsymbol\beta .
\end{equation}
We then compute the convergence rate of the perturbation field and the parameters.

We choose the following 4 smooth surfaces as initial conditions,
\[
  \mathbf X_h^{(k)}(0,x)
  =
  \begin{pmatrix}
  (1+\varepsilon e^{\cos x^2})x^1\\
  (1+\varepsilon e^{\sin x^3})x^2\\
  (1+\varepsilon e^{\cos(kx^1)})x^3
  \end{pmatrix},
  \qquad k=1,2,3,4,\qquad \varepsilon=0.25 .
\]
The mesh is obtained by three successive refinements of an icosahedron.  We use the time step $\Delta t=0.01$, Heun's second-order
Runge--Kutta method, and final time $T=15$. The numerical results are shown in Figure~\ref{fig:numerical-verification-structural-convergence}, which shows the decay of the fitted perturbation field and the parameters for these four initial surfaces.

\begin{figure}[htbp]
\centering
\includegraphics[width=0.95\linewidth]{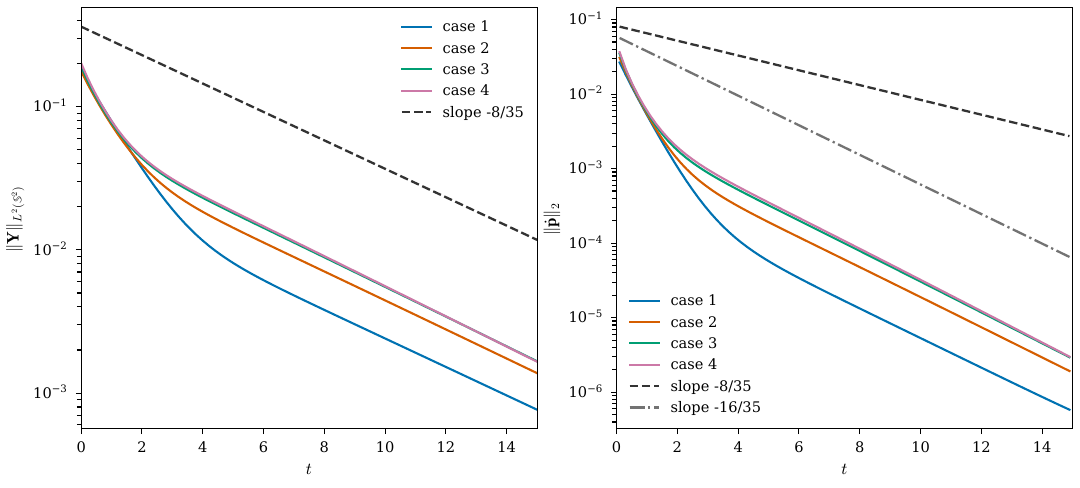}
\caption{Decay of $\norm{\mathbf Y_h(t)}_{L^2(\SS^2)}$ and $\norm{\dot{\mathbf p}_h(t)}_2$ on $0\le t\le15$.}
\label{fig:numerical-verification-structural-convergence}
\end{figure}

\section*{Acknowledgements}
EGJ was partially supported by the RYC2021-032877 research grant, the RED2022-134784-T funded by  MCIN/AEI/10.13039/50110001103, the AEI project PID2022-140494NA-I00, and the IMUS-Maria de Maeztu grant CEX2024-001517-M, funded by MICIU/AEI/ 10.13039/501100011033. SVH was partially supported by the NSF grant DMS-2511086. YM was partially supported by NSF DMR-2309034, Materials Research Science and Engineering Center (MRSEC) awarded to the University of Pennsylvania.

\bibliographystyle{acm}
\bibliography{references}

\begin{thebibliography}{10}

\bibitem{AlazardNguyen21}
{\sc Alazard, T., and Nguyen, Q.-H.}
\newblock On the {C}auchy problem for the {M}uskat equation. {II}: {C}ritical initial data.
\newblock {\em Ann. PDE 7}, 1 (2021), Paper No. 7, 25.

\bibitem{AlazardNguyen22}
{\sc Alazard, T., and Nguyen, Q.-H.}
\newblock Quasilinearization of the 3{D} {M}uskat equation, and applications to the critical {C}auchy problem.
\newblock {\em Adv. Math. 399\/} (2022), Paper No. 108278, 52.

\bibitem{AlazardNguyen23}
{\sc Alazard, T., and Nguyen, Q.-H.}
\newblock Endpoint {S}obolev theory for the {M}uskat equation.
\newblock {\em Comm. Math. Phys. 397}, 3 (2023), 1043--1102.

\bibitem{BaldiJulinLaManna26}
{\sc Baldi, P., Julin, V., and La~Manna, D.~A.}
\newblock Liquid drop with capillarity and rotating traveling waves.
\newblock {\em Arch. Ration. Mech. Anal. 250}, 1 (2026), Paper No. 4, 57.

\bibitem{CameronStrain24}
{\sc Cameron, S., and Strain, R.~M.}
\newblock Critical local well-posedness for the fully nonlinear {P}eskin problem.
\newblock {\em Comm. Pure Appl. Math. 77}, 2 (2024), 901--989.

\bibitem{ChenHuNguyen26}
{\sc Chen, K., Hu, R., and Nguyen, Q.-H.}
\newblock Schauder-type estimates and well-posedness for nonlocal quasilinear evolution equations in fluid dynamics.
\newblock {\em Preprint arXiv:2604.10682\/} (2026).

\bibitem{ChenNguyen23}
{\sc Chen, K., and Nguyen, Q.-H.}
\newblock The {P}eskin problem with {$\dot B^1_{\infty,\infty }$} initial data.
\newblock {\em SIAM J. Math. Anal. 55}, 6 (2023), 6262--6304.

\bibitem{ChenNguyenXu21}
{\sc Chen, K., Nguyen, Q.-H., and Xu, Y.}
\newblock The {M}uskat problem with ${C}^1$ data.
\newblock {\em Trans. Am. Math. Soc. 375\/} (2021), 3039--3060.

\bibitem{GancedoGJPatelStrain23}
{\sc Gancedo, F., Garc\'ia-Ju\'arez, E., Patel, N., and Strain, R.~M.}
\newblock Global regularity for gravity unstable {M}uskat bubbles.
\newblock {\em Mem. Amer. Math. Soc. 292}, 1455 (2023), v+87.

\bibitem{GancedoGJPatelStrain25}
{\sc Gancedo, F., Garc\'ia-Ju\'arez, E., Patel, N., and Strain, R.~M.}
\newblock On nonlinear stability of {M}uskat bubbles.
\newblock {\em J. Math. Pures Appl. (9) 194\/} (2025), Paper No. 103664, 31.

\bibitem{GancedoGraneroBelinchonScrobogna23}
{\sc Gancedo, F., Granero-Belinch\'on, R., and Scrobogna, S.}
\newblock Global existence in the {L}ipschitz class for the {N}-{P}eskin problem.
\newblock {\em Indiana Univ. Math. J. 72}, 2 (2023), 553--602.

\bibitem{GJGomezSerranoHaziotPausader24}
{\sc Garc\'ia-Ju\'arez, E., G\'omez-Serrano, J., Haziot, S.~V., and Pausader, B.}
\newblock Desingularization of small moving corners for the {M}uskat equation.
\newblock {\em Ann. PDE 10}, 2 (2024), Paper No. 17, 71.

\bibitem{GJHaziot25}
{\sc Garc\'ia-Ju\'arez, E., and Haziot, S.~V.}
\newblock Critical well-posedness for the 2{D} {P}eskin problem with general tension.
\newblock {\em Adv. Math. 460\/} (2025), Paper No. 110047, 46.

\bibitem{GJPoChunMori25}
{\sc Garc\'{\i}a-Ju\'{a}rez, E., Kuo, P.-C., and Mori, Y.}
\newblock The immersed inextensible interface problem in 2{D} {S}tokes flow.
\newblock {\em SIAM J. Math. Anal. 57}, 4 (2025), 3454--3487.

\bibitem{GJPChunMoriStrain25}
{\sc Garc\'ia-Ju\'arez, E., Kuo, P.-C., Mori, Y., and Strain, R.~M.}
\newblock Well-posedness of the 3{D} {P}eskin problem.
\newblock {\em Math. Models Methods Appl. Sci. 35}, 1 (2025), 113--216.

\bibitem{GJMoriStrain23}
{\sc Garc\'ia-Ju\'arez, E., Mori, Y., and Strain, R.~M.}
\newblock The {P}eskin problem with viscosity contrast.
\newblock {\em Anal. PDE 16}, 3 (2023), 785--838.

\bibitem{GigaGu26}
{\sc Giga, Y., and Gu, Z.}
\newblock On existence of a collapsed bubble with surface tension in viscous incompressible fluid.
\newblock {\em Preprint, arXiv:2606.31266\/} (2026).

\bibitem{KandalamSpirn26}
{\sc Kandalam, A.~T., and Spirn, D.}
\newblock {A}nchored {P}eskin {P}roblem.
\newblock {\em Preprint, arXiv:2604.27219\/} (2026).

\bibitem{LaiWeinstein23}
{\sc Lai, C.-C., and Weinstein, M.~I.}
\newblock Free boundary problem for a gas bubble in a liquid, and exponential stability of the manifold of spherically symmetric equilibria.
\newblock {\em Arch. Ration. Mech. Anal. 247}, 5 (2023), Paper No. 100, 87.

\bibitem{Li21}
{\sc Li, H.}
\newblock Stability of the {S}tokes immersed boundary problem with bending and stretching energy.
\newblock {\em J. Funct. Anal. 281}, 9 (2021), Paper No. 109204, 65.

\bibitem{LinTong19}
{\sc Lin, F.-H., and Tong, J.}
\newblock Solvability of the {S}tokes immersed boundary problem in two dimensions.
\newblock {\em Comm. Pure Appl. Math. 72}, 1 (2019), 159--226.

\bibitem{MeyerNiebelChristian25}
{\sc Meyer, D., Niebel, L., and Seis, C.}
\newblock Steady bubbles and drops in inviscid fluids.
\newblock {\em Calc. Var. Partial Differential Equations 64}, 9 (2025), Paper No. 299, 30.

\bibitem{MoriOhmSpirn20}
{\sc Mori, Y., Ohm, L., and Spirn, D.}
\newblock Theoretical justification and error analysis for slender body theory.
\newblock {\em Comm. Pure Appl. Math. 73}, 6 (2020), 1245--1314.

\bibitem{MoriOhmSpirn202}
{\sc Mori, Y., Ohm, L., and Spirn, D.}
\newblock Theoretical justification and error analysis for slender body theory with free ends.
\newblock {\em Arch. Ration. Mech. Anal. 235}, 3 (2020), 1905--1978.

\bibitem{MoriRodenbergSpirn19}
{\sc Mori, Y., Rodenberg, A., and Spirn, D.}
\newblock Well-posedness and global behavior of the {P}eskin problem of an immersed elastic filament in {S}tokes flow.
\newblock {\em Comm. Pure Appl. Math. 72}, 5 (2019), 887--980.

\bibitem{Ohm26}
{\sc Ohm, L.}
\newblock A free boundary problem for an immersed filament in 3{D} {S}tokes flow.
\newblock {\em To appear in Ann. PDE\/} (2025).

\bibitem{Ohm25}
{\sc Ohm, L.}
\newblock The slender body free boundary problem.
\newblock {\em Preprint, arXiv:2509.16800\/} (2025).

\bibitem{Peskin77}
{\sc Peskin, C.~S.}
\newblock Numerical analysis of blood flow in the heart.
\newblock {\em J. Comput. Phys. 25}, 3 (1977), 220--252.

\bibitem{Peskin02}
{\sc Peskin, C.~S.}
\newblock The immersed boundary method.
\newblock {\em Acta Numer. 11\/} (2002), 479--517.

\bibitem{PrussSimonett16}
{\sc Pr\"{u}ss, J., and Simonett, G.}
\newblock {\em Moving interfaces and quasilinear parabolic evolution equations}, vol.~105 of {\em Monographs in Mathematics}.
\newblock Birkh\"{a}user/Springer, [Cham], 2016.

\bibitem{Shao26}
{\sc Shao, C.}
\newblock On the {C}auchy problem of spherical capillary water waves.
\newblock {\em Forum Math. 38}, 3 (2026), 727--769.

\bibitem{Steinbook71}
{\sc Stein, E.~M., and Weiss, G.}
\newblock {\em Introduction to {F}ourier analysis on {E}uclidean spaces}, vol.~No. 32 of {\em Princeton Mathematical Series}.
\newblock Princeton University Press, Princeton, NJ, 1971.

\bibitem{Tong21}
{\sc Tong, J.}
\newblock {{R}egularized {S}tokes immersed boundary problems in two dimensions: Well-posedness, singular limit, and error estimates}.
\newblock {\em Comm. Pure Appl. Math. 74(2):366–449\/} (2021).

\bibitem{Tong24}
{\sc Tong, J.}
\newblock Global solutions to the tangential {P}eskin problem in 2-{D}.
\newblock {\em Nonlinearity 37}, 1 (2024), Paper No. 015006, 52.

\bibitem{TongDongyi24}
{\sc Tong, J., and Wei, D.}
\newblock Geometric properties of the 2-{D} {P}eskin problem.
\newblock {\em Ann. PDE 10}, 2 (2024), Paper No. 24, 62.

\bibitem{TongWei25}
{\sc Tong, J., and Wei, D.}
\newblock The immersed boundary problem in 2-{D}: the {N}avier-{S}tokes case.
\newblock {\em Preprint arXiv:2511.16189\/} (2025).

\end{thebibliography}

\end{document}